\newif\ifmsbmloaded@
\def\loadmsbm{\msbmloaded@true
  \font\tenmsb=msbm10 scaled 1\@ptsize00
  \font\sevenmsb=msbm7 scaled 1\@ptsize00
  \font\fivemsb=msbm5 scaled 1\@ptsize00
  \alloc@8\fam\chardef\sixt@@n\msbfam
  \textfont\msbfam=\tenmsb
  \scriptfont\msbfam=\sevenmsb
  \scriptscriptfont\msbfam=\fivemsb
  }
\def\R{\mathbb R}
\def\no{\noindent}
\def\f#1#2{\frac{#1}{#2}}
\def\f{\frac}
\def\ov{\overline}
\def\pa{\partial}
\def\be{\beta}
\def\ve{\varepsilon}
\def\na{\nabla}
\def\al{\alpha}
\def\T{{\mathbb T}}
\def\cL{{\mathcal L}}
\def\bu{\overline u}
\def\tu{\widetilde u}
\def\td{\widetilde}
\newcommand{\beq}{\begin{equation}}
\newcommand{\eeq}{\end{equation}}
\newcommand{\ben}{\begin{eqnarray}}
\newcommand{\een}{\end{eqnarray}}
\newcommand{\beno}{\begin{eqnarray*}}
\newcommand{\eeno}{\end{eqnarray*}}
\newtheorem{Theorem}{Theorem}[section]
\newtheorem{Lemma}[Theorem]{Lemma}
\newtheorem{Proposition}{Proposition}[section]
\begin{document}

\title[Transition threshold for the 3D Couette flow]
{Transition threshold for the  3D Couette flow in Sobolev space}

\author{Dongyi Wei}
\address{School of Mathematical Science, Peking University, 100871, Beijing, P. R. China}
\email{jnwdyi@163.com}

\author{Zhifei Zhang}
\address{School of Mathematical Science, Peking University, 100871, Beijing, P. R. China}
\email{zfzhang@math.pku.edu.cn}

\date{\today}

\maketitle

\begin{abstract}
In this paper, we study the transition threshold of the 3D Couette flow
in Sobolev space at high Reynolds number $\text{Re}$.  It was proved that if the initial velocity $v_0$ satisfies $\|v_0-(y,0,0)\|_{H^2}\le c_0\text{Re}^{-1}$, then the solution of the 3D Navier-Stokes equations is global in time and does not transition away from the Couette flow. This result confirms the transition threshold conjecture in physical literatures.
\end{abstract}

\section{Introduction}
In this paper, we consider the 3D incompressible Navier-Stokes
equations at high Reynolds number $\text{Re}$ regime:
\begin{align}\label{eq:NS}
\left\{
\begin{aligned}
&\partial_t v-\nu\Delta v+v\cdot\nabla v+\nabla p=0,\\
&\nabla\cdot v=0,\\
&v(0,x,y,z)=v_0(x,y,z),
\end{aligned}
\right.
\end{align}
where $v=\big(v^1(t,x,y,z),v^2(t,x,y,z),v^3(t,x,y,z)\big)$ is the velocity, $p(t,x,y,z)$ is the pressure, and $\nu=\text{Re}^{-1}>0$ is the viscosity coefficient. To avoid the boundary effect, we consider the problem in a simple domain $\Omega=\T\times \R\times \T\ni (x,y,z)$.

Beginning with Reynolds's famous paper \cite{Rey} in 1883, the stability and transition to turbulence of the laminar flows at high Reynolds number(i.e., $\nu\to 0$) has been an important and active field in the fluid mechanics \cite{Sch, Yag}. In this paper, we are concerned with the stability and transition of the Couette flow $U(y)=(y,0,0)$, which may be the simplest steady solution of \eqref{eq:NS}. Therefore, we introduce the perturbation $u(t,x,y,z)=v(t,x,y,z)-U(y)$, which satisfies
\begin{align}\label{eq:NS-p}
\left\{
\begin{aligned}
&\partial_t u-\nu\Delta u+y\partial_x u+\left(\begin{array}{l}u^2\\0\\0\end{array}\right)+\nabla p^{L}+u\cdot\nabla u+\nabla p^{NL}=0,\\
&\nabla\cdot u=0,\\
&u(0,x,y,z)=u_0(x,y,z),
\end{aligned}
\right.
\end{align}
where  the pressure $p^{L}$ and $p^{NL}$ are determined by
\ben
&&\Delta p^{L}=-2\partial_xu^2,\label{eq:p-L}\\
&&\Delta p^{NL}=-\text{div}(u\cdot\nabla u)=-\partial_iu^j\partial_ju^i.\label{eq:p-NL}
\een

It is well-known that the Couette flow $U$ is linearly stable for any $\nu\ge 0$ \cite{Rom}. However, it could be unstable and transition to turbulence for small perturbations at high Reynolds number \cite{Cha, GG, OK, Sch, Yag}, which is referred to as subcritical transition. Up to now, we are still lacking a good understanding of this transition even for some simple flows
such as Couette flow and Poiseuille flow.

To understand the transition, the traditional method is the linear stability analysis. The non-normality of the linearized operator may give rise to the transient growth of the solution even for the stable flow \cite{TTR, Tre}.  Indeed, the linear stability analysis of Couette flow predicts a linear growth of the solution for $t\lesssim \f 1 \nu$ due to the 3-D lift-up effect.  This turns out to be a primary source leading to the transition to turbulence for small perturbations.  Our goal of this paper is to find the largest perturbations (threshold) below which the solution does not transition away from the Couette flow. More precisely, as suggested by Kelvin \cite{Kel}, we study the following classical question: \smallskip

{\it
Given a norm $\|\cdot\|_X$, find a $\beta=\beta(X)$ so that
\beno
&&\|V_0\|_X\le \nu^\beta\Longrightarrow  {stability},\\
&&\|V_0\|_X\gg \nu^\beta\Longrightarrow  {instability}.
\eeno
}
The exponent $\beta$ is referred to as the transition threshold in the applied literatures. \smallskip

There are a lot of works \cite{DBL, LK, LHR, RSB, Yag}  in applied mathematics and physics devoted to estimating $\beta$. Recently, Bedrossian, Germain, Masmoudi  et  al.  made an important progress on the stability threshold problem for the 3-D Couette flow in $\mathbb{T}\times \R\times \T$ in a series of works \cite{BGM1, BGM2, BGM3, BMV, BWV}. Roughly speaking, their results could be summarized as follows:

\begin{itemize}

\item if the perturbation is in Gevrey class, then $\beta\le 1$ \cite{BGM1};

\item if the perturbation is in Sobolev space, then $\beta\le \f32$ \cite{BGM3}.

\end{itemize}
While in $\mathbb{T}\times \R$, the transition threshold is smaller:
\begin{itemize}

\item if the perturbation is in Gevrey class, then $\beta=0$ \cite{BMV};

\item if the perturbation is in Sobolev space, then $\beta\le \f12$\cite{BWV}.

\end{itemize}

More precisely, the authors in \cite{BGM3} showed that if the initial perturbation $u_0$ satisfies $\|u_0\|_{H^\sigma}\le \delta \nu^{\f32}$ for $\sigma>\f92$, then the solution is global in time, remains within $O(\nu^\f12)$ of the Couette flow in $L^2$ for any time, and converges to the streak solution for $t\gg \nu^{-\f13}$.  Compared with the result in \cite{BGM1}, it seems to mean that  the regularity of the initial data has an important effect on the transition threshold. Moreover, the result in \cite{BGM1} is consistent with the threshold conjecture in some physical literatures \cite{BT, TTR, LHR, Cha, Wal, DBL}. However, the physical literatures do not carefully consider the possible effect of the regularity. Thus, it remains open whether the transition threshold $\beta\le 1$ holds in Sobolev regularity. See the review paper \cite{BGM} for more introductions and open questions.

\smallskip

In this paper, we confirm the transition threshold conjecture in physical
literatures in Sobolev regularity. To state our result, we define
\beno
P_0f=\overline{f}=\int_{\T}f(x,y,z)dx,\quad P_{\neq}f=f_{\neq}=f-P_0f.
\eeno

Our stability result is stated as follows.

\begin{Theorem}\label{thm:main}

There exists a constant $c_0>0$ such that if $\nu\in(0,1)$ and $u_{0}$ is divergence free with $\|u_{0}\|_{H^2}\leq c_0\nu$, then the solution $u$ to the system \eqref{eq:NS-p} is global in time and satisfies the following uniform estimates: for any $t\ge 1$
\begin{align}
&\nu\|\bu(t)\|_{H^4}+\|\partial_t\bu(t)\|_{H^2}+\nu e^{2\nu^{1/3}t}\|\partial_xu(t)\|_{H^3}\leq C\|u_{0}\|_{H^2},\label{eq:u-uniform1}\\
&\|\bu^2(t)\|_{H^2}+\|\bu^3(t)\|_{H^1}+e^{2\nu^{1/3}t}\big(\|u_{\neq}^2(t)\|_{H^2}+\|(\partial_x^2+\partial_z^2)u_{\neq}^3(t)\|_{L^2}\big)\leq C\|u_{0}\|_{H^2}.\label{eq:u-uniform2}
\end{align}
Here $C$ is a constant independent of $\nu, c_0$ and $t$.
\end{Theorem}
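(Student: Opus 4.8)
The plan is to run a continuity/bootstrap argument on a family of weighted energies, keeping the zero mode $\bu=P_0u$ and the nonzero modes $u_{\neq}=P_{\neq}u$ strictly separate, since the two obey completely different dynamics. First I would average \eqref{eq:NS-p} in $x$: the transport $y\pa_xu$ drops out, $(\bu^2,\bu^3)$ becomes a genuine two-dimensional, divergence-free flow on the $(y,z)$ cylinder forced only by the Reynolds stress $P_0(u_{\neq}\cdot\na u_{\neq})$, and $\bu^1$ solves a transport--diffusion equation with advecting field $(\bu^2,\bu^3)$, the lift-up source $-\bu^2$, and the same quadratic forcing. This dictates the shape of \eqref{eq:u-uniform1}: the planar part $(\bu^2,\bu^3)$ does not grow, while $\bu^1$ is amplified by lift-up up to the viscous time $t\sim\nu^{-1}$, saturating at size $\sim\nu^{-1}\|u_0\|_{H^2}$; multiplying by $\nu$ recovers exactly the $O(\|u_0\|_{H^2})$ bound. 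The $H^4$ regularity (from only $H^2$ data) and the restriction $t\ge1$ come from parabolic smoothing of this zero-mode system, and will be needed because $\bu$ enters as a coefficient in the nonzero-mode equations.

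For $u_{\neq}$ I would pass to the Orr--Sommerfeld/Squire unknowns, namely the wall-normal velocity $u_{\neq}^2$ and the wall-normal vorticity $\om^2=\pa_zu^1-\pa_xu^3$, whose coupling is once more the lift-up. In Fourier in $x$ (frequency $k\neq0$) the operator $\nu\Delta+iky$ exhibits enhanced dissipation at rate $\nu^{1/3}$, and the aim is to upgrade this to the decay $e^{-2\nu^{1/3}t}$ of \eqref{eq:u-uniform2}. I would obtain it by a hypocoercive energy estimate built from a time-dependent Fourier multiplier $m(t,k,\cdot)$ encoding the Orr mechanism, arranged so that $e^{2\nu^{1/3}t}\big(\|u_{\neq}^2\|_{H^2}+\|(\pa_x^2+\pa_z^2)u_{\neq}^3\|_{L^2}\big)$ obeys a closed differential inequality at the linear level; the inviscid-damping gain hidden in $m$ is what lets these particular norms close (note $(\pa_x^2+\pa_z^2)u_{\neq}^3$ is a combination of $\pa_y\pa_zu_{\neq}^2$ and $\pa_x\om^2$).

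With the linear mechanisms in place, the heart of the matter is the nonlinear bootstrap: assume every quantity in \eqref{eq:u-uniform1}--\eqref{eq:u-uniform2} is bounded by $2C\|u_0\|_{H^2}$ on a maximal interval and recover the same bounds with constant $C$. The nonlinearity $u\cdot\na u$ splits into interactions of type $0\!\cdot\!0$, $0\!\cdot\!{\neq}$, and ${\neq}\!\cdot\!{\neq}$ (the pressures $\na p^L,\na p^{NL}$ being handled by the Leray projection), and I would test each against the matching energy. The feedback ${\neq}\!\cdot\!{\neq}\to0$ that forces the mean flow is integrable in time precisely thanks to the $e^{-2\nu^{1/3}t}$ factors, which is what yields the uniform-in-time bound on $\bu$; the $0\!\cdot\!0$ terms are controlled by the two-dimensional structure.

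The main obstacle I expect is the $0\!\cdot\!{\neq}$ interaction. Because $\bu^1$ is only $O(1)$, the term $\bu^1\pa_xu_{\neq}$ is not a small perturbation: it distorts the background shear from $y$ to $y+\bu^1(t,y,z)$, so the enhanced-dissipation estimate of the second paragraph must be shown to be \emph{robust} under an $O(1)$ change of the advecting field. Proving that the multiplier $m$ survives this substitution, so that the $\nu^{1/3}$ decay persists for the true (nonlinear) transport, is the crux. Closing everything at the threshold $\|u_0\|_{H^2}\le c_0\nu$ (i.e.\ $\be=1$), rather than the earlier $\nu^{3/2}$, leaves no room for lossy estimates on these resonances; I would absorb the shear distortion into the multiplier, exploit the regularity gap between the $H^4$ control of $\bu$ and the lower-order norms of $u_{\neq}$, and use the smallness of $c_0$ to close the loop.
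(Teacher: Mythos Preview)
Your overall architecture---bootstrap, separation of zero and nonzero modes, classification of interactions $0\!\cdot\!0$, $0\!\cdot\!{\neq}$, ${\neq}\!\cdot\!{\neq}$, and identification of $\bu^1\pa_xu_{\neq}$ as the critical term---matches the paper. But the mechanism you propose for the crux is different from, and weaker than, what the paper actually does, and this is where the $\nu$ versus $\nu^{3/2}$ distinction lives.

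You propose a hypocoercive multiplier $m(t,k,\cdot)$ and say you will ``absorb the shear distortion into the multiplier'' so that enhanced dissipation survives the $O(1)$ perturbation $y\mapsto y+\bu^1(t,y,z)$. This is essentially the Bedrossian--Germain--Masmoudi strategy that produced $\beta\le\tfrac32$; the multiplier is a Fourier-side object and does not respect the modified transport $V\pa_x$ with $V=y+\bu^1$, which is what forces the losses there. The paper does \emph{not} try to make a multiplier robust. Instead it (i) performs the physical-space change of variables $(x,y,z)\mapsto(x,V(t,y,z),z)$, turning $\cL=\pa_t-\nu\Delta+V\pa_x$ into $\cL_0$ plus genuinely small error terms controlled by $\|\bu^1\|_{H^4}+\nu^{-1}\|\pa_t\bu^1\|_{H^2}$ (this is why those quantities appear in $E_1$); (ii) replaces $u_{\neq}^2$ by $W^2=u_{\neq}^2+\kappa u_{\neq}^3$ with $\kappa=\pa_zV/\pa_yV$, so that the dangerous pressure $p^{L1}$ satisfies $\Delta p^{L1}=-2\pa_yV\,\pa_xW^2$ and the operator $\cL_1$ acting on $W^2$ satisfies $\Delta\cL_1=\cL\Delta+\text{lower order}$; and (iii) uses the ``good derivative'' $\pa_z-\kappa\pa_y$, which commutes with $\cL$ up to controllable errors, to close the estimate on $u_{\neq}^3$. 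The estimate of $E_5$ in Section~7.1 requires a further nontrivial decomposition $W^2=W^{2,1}+\nu f_2$ with auxiliary functions $f_1,f_2$ solving linear $\cL$-equations, designed so that the residual term $-2\nu\nabla\kappa\cdot\nabla u_{\neq}^3$ becomes harmless. None of these structures are visible from a Fourier-multiplier viewpoint; without them I do not see how your bootstrap closes at $\|u_0\|_{H^2}\le c_0\nu$.

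A secondary point: the paper does not work with the Squire variable $\om^2=\pa_zu^1-\pa_xu^3$ but rather with the pair $\big(\Delta u_{\neq}^2,\,(\pa_x^2+\pa_z^2)u_{\neq}^3\big)$ measured in the space-time norm $X_a$, and proves sharp linear estimates for coupled systems of this form directly from the explicit Fourier solution (Proposition~3.2). This is what produces the precise decay factor $e^{-2\nu^{1/3}t}$ in \eqref{eq:u-uniform2}.
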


Let us give some remarks on our result.

\begin{itemize}

\item[1.] Thanks to \eqref{eq:u-uniform1} and $\|u_{0}\|_{H^2}\leq c_0\nu$, it holds that for any $t\ge 1$
\beno
\|u(t)\|_{H^3}+e^{2\nu^{1/3}t}\|u_{\neq}(t)\|_{H^3}\le C\nu^{-1}\|u_0\|_{H^2}\le Cc_0.
\eeno
This means that the solution will remain within $Cc_0$ of the Couette flow in $H^3$ for any $t\ge 1$,
and the dynamics of the solution could be described by the streak solution for $t\gg \nu^{-\f13}$(see section 2.2).
Furthermore, the estimate $\|\pa_t\bu\|_{H^2}\le Cc_0\nu$ is very crucial to handle the linearized equation
with variable coefficients.

\item[2.] In this paper, we consider the special domain $\T\times \R\times \T$ in order to avoid the boundary effect.
Thus, it remains open whether the transition threshold conjecture holds for the system \eqref{eq:NS} in more physical domain $\T\times (a,b)\times \T$
with non-slip boundary condition on $y=a,b$.

\item[3.] In a joint work \cite{LWZ} with Li, we proved that the transition threshold $\beta\le \f 74$ for the 3D Kolmogorov flow.
This result should be not optimal. We conjecture that the transition threshold should be $\beta\le 1$ for monotone flows, and
$\beta\le \f32$ for non-monotone flows such as Kolmogorov flow and Poiseuille flow.

\end{itemize}

To prove Theorem \ref{thm:main}, we will use two important stabilizing effects: the enhanced dissipation and inviscid damping due to the mixing induced by the Couette flow. We will unify two kinds of effects into various space-time estimates of the solution for the linearized equations.
We refer to \cite{BW, CKR, Ga, IMM, WZZ3} and \cite{BM, WZZ1, WZZ2, Z1, Z2, BCV} for related works. The main instability mechanism is the 3D lift-up effect, which leads to a linear growth of the solution for $t\lesssim \f 1 \nu$. For this, we need to
study carefully nonlinear interactions between different modes of the solution, especially zero mode(streak solution) and non-zero modes.
Similar to null forms for quasilinear wave equations introduced in \cite{Kla}, some good(null) structures may avoid bad nonlinear interactions
such as the interaction between $\bu^1$ and itself.

\medskip

\no{\bf Notations.}   We use $ \partial_1, \pa_2, \pa_3$ to denote the derivative with respect to $x,y,z$ respectively. Summation notation is assumed: the repeated upper and lower indices are summed over $i,j\in\{1,2,3\}$ and $\al,\be\in\{2,3\}.$ When using other indices such as $k$, summation is not assumed. The Fourier transform $\widehat{f}(k,\eta,l)$ or $\mathcal{F}f$ of a function $f(x,y,z)$ denoted  is defined by
\begin{align*}
\widehat{f}(k,\eta,l)=\int_{x\in\T}\int_{y\in\R}\int_{z\in\T}{f}(x,y,z)e^{-2\pi i(kx+\eta y+lz)}dxdydz.
\end{align*}
Then $f(x,y,z)=\sum_{k,l\in\mathbb{Z}}\int_{\eta\in\R}\widehat{f}(k,\eta,l)e^{2\pi i\eta y}d\eta e^{2\pi i(kx+lz)}.$
The Fourier multiplier $ \langle D\rangle^N$ is defined by
\begin{align*}
&\langle D\rangle^Nf(x,y,z)=\mathcal{F}^{-1}\big(1+(2\pi)^2(k^2+\eta^2+l^2)\big)^{\frac{N}{2}}\mathcal{F}f.
\end{align*}
Then $\langle D\rangle^2=I-\Delta$ and the norm of Sobolev space $H^N(\Omega)$ is given by
\begin{align*}
\|f\|_{H^N}=\|\langle D\rangle^Nf\|_{L^2}.
\end{align*}
The inverse operator $\Delta^{-1}$ is defined by $\widehat{\Delta^{-1}f_{\neq}}=\dfrac{-\widehat{f}}{(2\pi)^2(k^2+\eta^2+l^2)}$ for $k\neq 0$ and $\widehat{\Delta^{-1}f_{\neq}}=0$ for $k= 0.$ We use $ \langle,\rangle$ to denote the $L^2$ inner product.\medskip

Throughout this paper, we denote by $C$ a constant independent of $\nu,T$.

\section{Key ingredients of the proof}

\subsection{Linear effects}

There are four kinds of linear effects: lift-up, inviscid damping, enhanced
dissipation and vortex stretching, which play a crucial role in the stability analysis.

The linearized system of \eqref{eq:NS-p}  reads
\begin{align*}
\partial_t u-\nu\Delta u+y\partial_x u+\left(\begin{array}{l}u^2\\0\\0\end{array}\right)-\nabla\Delta^{-1} 2\partial_xu^2=0.
\end{align*}
Introduce new variables $(\overline{x},y,z)=(x-ty,y,z)$ and set $\tu(t,\overline{x},y,z)={u}(t,{x},y,z)$, which solves
\begin{align*}
\partial_t \tu-\nu\Delta_{L} \tu+\left(\begin{array}{l}\tu^2\\0\\0
\end{array}\right)-\nabla_{L}\Delta_{L}^{-1} 2\partial_{\overline{x}}\tu^2=0,
\end{align*}
where $\nabla_{L}=(\partial_{\overline{x}},\partial_y-t\partial_{\overline{x}},\partial_z)$ and $\Delta_{L}=\nabla_{L}\cdot\nabla_{L}. $

Notice that $P_0\tu=\bu$, and hence it reads
\begin{align*}
\partial_t\bu-\nu\Delta {\bu}+\left(\begin{array}{l}{\bu}^2\\0\\0\end{array}\right)=0.
\end{align*}
The solution of this linear problem is given by
\begin{align*}
\bu(t)=\left(\begin{array}{c}e^{\nu t\Delta}({\bu}^1-t{\bu}^2)(0)\\e^{\nu t\Delta}{\bu}^2(0)\\e^{\nu t\Delta}{\bu}^3(0)\end{array}\right).
\end{align*}
The linear growth predicted by this solution for times $t\lesssim1/\nu
$ is known as the lift-up effect first observed in \cite{EP}. This is a crucial  mechanism leading to the instability in 3D case.
\smallskip

Turning now to nonzero frequencies in $\overline{x} $, $\widetilde{q}_{\neq}^2=\Delta_{L}{\tu}_{\neq}^2$ reads
\begin{align}\label{eq:q2}
\partial_t{\widetilde q}_{\neq}^2-\nu\Delta_{L}{\widetilde q}_{\neq}^2=0.
\end{align}
Taking the Fourier transform (denoting by $k, \eta, l$ the dual variables of $\overline{x}, y, z$ respectively), the equation \eqref{eq:q2} can be recast as
\begin{align*}
\partial_t \widehat{\widetilde{q}_{\neq}^2}-\nu(2\pi)^2(k^2+(\eta-kt)^2+l^2) \widehat{\widetilde{q}_{\neq}^2}=0.
\end{align*}
Thus, we have
\beno
\widehat{\widetilde{q}_{\neq}^2}(t,k,\eta,l)=e^{-\nu(2\pi)^2\int_0^t(k^2+(\eta-k\tau)^2+l^2)d\tau}
\widehat{\widetilde{q}_{\neq}^2}(0,k,\eta,l).
\eeno
 Due to $\int_0^t(k^2+(\eta-k\tau)^2+l^2)d\tau\geq k^2t^3/12 $,  we deduce that
 \begin{align*}
\|{\widetilde{q}_{\neq}^2}(t)\|_{H^{s}}\leq e^{-c\nu t^3}\|{\widetilde{q}_{\neq}^2}(0)\|_{H^s}\leq e^{Ca^{3/2}}e^{-a\nu^{1/3}t}\|{\widetilde{q}_{\neq}^2}(0)\|_{H^s}.
\end{align*}
Here we only take $a\in [0,4]$ for the sake of definiteness.
The exponent $\nu t^3$ gives a dissipation time scale $\nu^{-1/3}$, which is much shorter than the dissipation time scale $\nu^{-1}$. We refer to this phenomenon as the enhanced dissipation. In this paper, we will use various space-time estimates of the type:
\begin{align}\label{eq:q2-L2decay}
\|e^{a\nu^{1/3}t}{\widetilde{q}_{\neq}^2}\|_{L^2L^2}\leq C\nu^{-\frac{1}{6}}\|{\widetilde{q}_{\neq}^2}(0)\|_{L^2}.
\end{align}

The velocity field can be recovered by the formula $\widetilde{u}_{\neq}^2=\Delta_{L}^{-1}\widetilde{q}_{\neq}^2$:
\begin{align*}
 \widehat{{\tu}_{\neq}^2}=-\frac{1}{(2\pi)^2(k^2+(\eta-kt)^2+l^2)} \widehat{\widetilde{q}_{\neq}^2}.
\end{align*}
Due to the bound $\int_0^t\frac{1}{k^2+(\eta-k\tau)^2+l^2}d\tau\leq \frac{C}{k^2},$ it is expected from \eqref{eq:q2-L2decay}  that
\begin{align*}
 \|e^{a\nu^{1/3}t}\partial_{\overline{x}}\nabla_{L}{\td{u}_{\neq}^2}\|_{L^2L^2}\leq C\|{\td{q}_{\neq}^2}(0)\|_{L^2}.
\end{align*}
This effect was discovered by Orr in 1907 \cite{Orr}, and is now known as inviscid damping.\smallskip

In section 3, we will establish various space-time estimates for the linearized equations including the variable coefficient version,
which are based on two linear effects: enhanced dissipation and inviscid damping.

\subsection{Streak solutions}

If the initial data in \eqref{eq:NS-p} is independent of $x$, then so does the solution, i.e., $ u(t,x,y,z)=u(t,y,z)$. In such case, $(u^{2}, u^3)$ solves the 2D Navier-Stokes equations in $(y,z)\in\R\times\T$, whereas $u^1$ solves the linear advection-diffusion equation
\begin{align*}
\partial_t {u}^1-\nu\Delta {u}^1+(u^2\partial_y+u^3\partial_z){u}^1+u^2=0.
\end{align*}
These solutions are refereed to as the streak. Due to the lift-up effect,
main nonlinear effect comes from the interaction between the streak solution and nonzero modes. \smallskip

Our result shows that for $t\gg \nu^{-\f13}$, the streak solutions describe the dynamics of the system if the perturbation is below the threshold.

\subsection{Nonlinear interaction}

There are several nonlinear mechanisms which may lead to the instability.
So, we have to study nonlinear interactions very carefully and use null structures hidden in the system to avoid bad interactions.
To this end, we decompose the solution $u$ into $\bu+u_{\neq}$, where $\bu$ is zero mode and $u_{\neq}$ is non-zero mode. Then nonlinear interactions can be classified as follows:

\begin{itemize}

\item zero mode and zero mode interaction: $0\cdot0 \to0$;

\item zero mode and nonzero mode interaction: $0\cdot\neq \to\neq$;

\item nonzero mode and nonzero mode interaction: $\neq\cdot\neq \to\neq$ or $ \neq\cdot\neq \to0.$
\end{itemize}

As the nonlinear term takes the form $u^i\pa_iu^j$, there is no interaction
between $\bu^1$ and itself. The worst interaction between zero mode and nonzero mode is $\bu^1\pa_xu_{\neq}$ due to the lift-up effect. This seems a primary source so that the solution could transition to turbulence if  the perturbation exceeds some threshold.\smallskip

In section 5, we will study nonlinear interactions between different modes based on the bilinear anisotropic Sobolev estimates
established in section 4.

\subsection{New formulation}

According to the analysis above, we decompose the nonlinear pressure $p^{NL}$ as
\ben\label{eq:p-decom}
p^{NL}=p^{(1)}+p^{(2)}+p^{(3)}+p^{(4)},
\een
where
\begin{align*}
&\Delta p^{(1)}=-2\big(\partial_y\bu^1\partial_xu_{\neq}^2+\partial_z\bu^1\partial_xu_{\neq}^3\big),\quad
\Delta p^{(2)}=-\partial_i\bu^j\partial_j\bu^i,\\
&\Delta p^{(3)}=-2\partial_{\al}\bu^{\beta}\partial_{\beta}u_{\neq}^{\al},\quad\Delta p^{(4)}=-\partial_iu_{\neq}^j\partial_ju_{\neq}^i.
\end{align*}
The main challenge is the $0\cdot \neq\to \neq$ interaction, especially
the first part $p^{(1)}$.  So, we introduce
\beno
p^{L1}=p^L+p^{(1)},\quad V(t,y,z)=y+\bu^1(t,y,z).
\eeno
Then we have
\ben
\Delta p^{L1}=-2\big(\partial_yV\partial_xu_{\neq}^2+\partial_zV\partial_xu_{\neq}^3\big).\label{def:PL1}
\een
It holds that for $j=2,3$,
\begin{align*}
\partial_t {u}^j-\nu\Delta {u}^j+V\partial_x u^j+\partial_j p^{L1}+g_j=0,
\end{align*}
where
\beno
g_j=(\bu^2\partial_y+\bu^3\partial_z) u^j+u_{\neq}\cdot\nabla u^j+\partial_j\big(p^{(2)}+p^{(3)}+p^{(4)}\big).
\eeno
We introduce the linearized operators
\beno
&&\cL_0=\partial_t -\nu\Delta +y\partial_x,\quad \cL=\partial_t -\nu\Delta +V\partial_x.
\eeno
Then for $j=2,3$, we can write
\ben
\cL u_{\neq}^j+\partial_j p^{L1}+(g_j)_{\neq}=0.\label{eq:u23-non}
\een
Under the assumption that
\beno
\|\bu^1\|_{H^4}+\|\partial_t\bu^1\|_{H^2}/\nu<c
\eeno
for some small constant $c$, the operator $\cL$ could be viewed as a perturbation of $\cL_0$.\smallskip

Let $\kappa(t,y,z)=\f {\partial_zV} {\partial_yV}=\f {\partial_z\bu^1} {(1+\partial_y\bu^1)}$ and $W^2=u_{\neq}^2+\kappa u_{\neq}^3$.
Then we find from \eqref{def:PL1} that
\beno
\Delta p^{L1}=-2\partial_yV\partial_xW^2.
\eeno
Thus, it is natural to derive the equation of $W^2$, which satisfies
\ben
\cL_1W^2+G_2=(\partial_t \kappa-\nu\Delta \kappa)u_{\neq}^3-2\nu\nabla\kappa\cdot\nabla u_{\neq}^3,\label{eq:W2}
\een
where $G_2=(g_{2}+\kappa g_3)_{\neq}$ and
\beno
\cL_1 f=\cL f-2(\partial_y+\kappa\partial_z)\Delta^{-1}(\partial_yV\partial_xf).\label{def:L1}
\eeno
For the linearized operator $\cL_1$, an important property is that
\beno
\Delta\cL_1f=\cL\Delta f+\text{good terms}.
\eeno
In the equation \eqref{eq:W2}, the main trouble term is $-2\nu\nabla\kappa\cdot\nabla u_{\neq}^3. $ To handle it, we introduce a good derivative $(\partial_z-\kappa\partial_y),$ which has a good communication relation with $\cL$.  Let
\ben\label{def:rho12}
 \rho_1=\dfrac{\partial_y\kappa+\kappa\partial_z\kappa}{\partial_y V(1+\kappa^2)},\quad \rho_2=\dfrac{\partial_z\kappa-\kappa\partial_y\kappa}{(1+\kappa^2)}.
 \een
Then it holds that
\ben
 \nabla\kappa\cdot\nabla u_{\neq}^3=\rho_1\nabla V\cdot\nabla u_{\neq}^3+\rho_2(\partial_z-\kappa\partial_y)u_{\neq}^3.
\een
Now the second term is good. For the first term, we need to use more subtle structure, which will be uncovered by introducing a good decomposition. See section 7.1 for more details.

 \subsection{Energy functional}

First of all, the following local well-posedness result is standard.

\begin{Proposition}\label{prop:lwp}
There exist two positive constants ${c_{00}}, C_0$ independent of $\nu$ so that if  $\|u_{0}\|_{H^2}\le {c_{00}}\nu$, then the system \eqref{eq:NS-p} has a unique solution $u\in C\big([0,2];H^2\big)$ which satisfies
\beno
&&\|u(t)\|_{H^2}\leq C_0\|u_{0}\|_{H^2}\quad \text{for}\quad t\in[0,2],\\
&&\nu\|u(t)\|_{H^4}+\|\partial_tu(t)\|_{H^2}\leq C_0\|u_{0}\|_{H^2}\quad \text{for}\quad t\in[1,2].
\eeno
\end{Proposition}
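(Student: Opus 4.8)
The plan is to treat the perturbation system \eqref{eq:NS-p} as a semilinear parabolic problem and run a standard contraction-mapping / a priori energy argument in $H^2$, extracting the smalling allowed by the hypothesis $\|u_0\|_{H^2}\le c_{00}\nu$. First I would record the equation in the form
\beno
\partial_t u-\nu\Delta u=-y\partial_x u-\big(u^2,0,0\big)^{\mathrm t}-\nabla p^L-u\cdot\nabla u-\nabla p^{NL},
\eeno
and note that the pressures are recovered from \eqref{eq:p-L}--\eqref{eq:p-NL} by the divergence-free constraint, so $\nabla p^L$ and $\nabla p^{NL}$ are bounded in $H^1$ by the corresponding quadratic (respectively linear) expressions via the Calder\'on--Zygmund estimate $\|\nabla^2\Delta^{-1}\|_{L^2\to L^2}\le C$. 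The transport term $y\partial_x u$ is the only one whose coefficient is unbounded on $\Omega=\T\times\R\times\T$, but since we only ask for a time interval of fixed length (length $2$), this factor of $y$ is harmless after one integration by parts against $\langle D\rangle^2 u$: it produces a commutator that is controlled by $\|u\|_{H^2}^2$ uniformly on $[0,2]$.

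Next I would set up the iteration or, equivalently, the basic energy estimate. Applying $\langle D\rangle^2$, pairing with $\langle D\rangle^2 u$ in $L^2$, and using $\nabla\cdot u=0$ to kill the pressure contribution together with the bulk part of the transport term, one obtains a differential inequality of the schematic form
\beno
\f12\f{d}{dt}\|u\|_{H^2}^2+\nu\|\nabla u\|_{H^2}^2\le C\|u\|_{H^2}^3+C\|u\|_{H^2}^2,
\eeno
where the cubic term comes from $u\cdot\nabla u$ and $\nabla p^{NL}$ (controlled using that $H^2(\Omega)$ is an algebra in three dimensions), and the quadratic term from the linear lift-up coupling $(u^2,0,0)$ together with the transport commutator. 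Gr\"onwall on $[0,2]$ then yields $\|u(t)\|_{H^2}\le C_0\|u_0\|_{H^2}$ provided $\|u_0\|_{H^2}$ is small; here the smallness $c_{00}\nu$ is more than enough since we only need smallness, not $\nu$-dependence, for this first bound. Local existence and uniqueness in $C([0,2];H^2)$ follow by the usual fixed-point argument in the mild (Duhamel) formulation using the heat semigroup $e^{\nu t\Delta}$, the above nonlinear estimates guaranteeing that the solution map is a contraction on a small ball.

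The second, parabolic-smoothing bound is where the factor $\nu$ in the hypothesis is genuinely used. For $t\in[1,2]$ I would exploit the regularizing effect of $e^{\nu t\Delta}$: from the Duhamel formula, each derivative beyond $H^2$ costs a factor $(\nu t)^{-1/2}$, so $\nu\|u(t)\|_{H^4}$ picks up two extra derivatives at the price of $\nu\cdot(\nu)^{-1}=1$, giving $\nu\|u(t)\|_{H^4}\le C_0\|u_0\|_{H^2}$ after inserting the already-established $H^2$ control on the nonlinearity; likewise $\partial_t u=\nu\Delta u-(\text{lower order})$ is estimated in $H^2$ directly once $u\in L^\infty_tH^4$, yielding $\|\partial_t u(t)\|_{H^2}\le C_0\|u_0\|_{H^2}$. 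The one point requiring care is that the smoothing estimate must absorb the unbounded coefficient $y$ in $y\partial_x u$; this is handled by first passing to the shifted variable $\overline x=x-ty$ (as in Section 2.1) so that the drift is removed and the semigroup becomes $e^{\nu t\Delta_L}$, whose kernel still gains the two derivatives uniformly in $t\in[1,2]$. The main obstacle is therefore bookkeeping rather than conceptual: tracking the unbounded $y$-weight through the energy and smoothing estimates and verifying that all constants $C_0, c_{00}$ can be chosen independent of $\nu$.
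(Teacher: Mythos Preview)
The paper does not actually prove this proposition; it simply declares the result ``standard'' and moves on. Your outline for the $H^2$ a priori bound and for the $H^4$ parabolic smoothing is indeed the standard route and is correct in spirit. In particular, your observation that the commutator $[\langle D\rangle^2,y\partial_x]=-2\partial_x\partial_y$ is of order two (hence controlled by $\|u\|_{H^2}$) is precisely what makes the $H^2$ energy estimate close despite the unbounded transport coefficient, and either the moving-frame semigroup you describe or a time-weighted bootstrap $H^2\to H^3\to H^4$ on nested subintervals of $[0,1]$ yields $\nu\|u(t)\|_{H^4}\le C_0\|u_0\|_{H^2}$ on $[1,2]$.

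There is, however, a genuine gap in your treatment of $\|\partial_t u(t)\|_{H^2}$. You write that ``$\partial_t u=\nu\Delta u-(\text{lower order})$ is estimated in $H^2$ directly once $u\in L^\infty_tH^4$,'' but the term $y\partial_x u$ is not lower order in this sense: for $u_0\in H^2$ with no decay in $y$, the quantity $\|y\partial_x u(t)\|_{L^2}$ is in general infinite, because neither the heat semigroup nor the shear transport generates spatial decay in $y$. Your proposed fix---passing to the frame $\bar x=x-ty$---removes the drift from the equation for $\tilde u$, but it does not help here, since $\partial_t u=\partial_t\tilde u-y\partial_{\bar x}\tilde u$ still carries the unbounded factor $y$. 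In fact the paper never uses the full $\|\partial_t u\|_{H^2}$: the time derivative enters only through $E_1$, which contains $\|\partial_t\bu\|_{H^2}$ for the zero-in-$x$ mode $\bu=P_0u$, and for that mode $y\partial_x\bu\equiv 0$. Reading the second estimate as a bound on $\partial_t\bu$, your argument goes through verbatim from the equation $\partial_t\bu=\nu\Delta\bu-(\bu^2,0,0)^{T}-\overline{\mathbb P(u\cdot\nabla u)}$; but as written it does not deliver the bound for the full $\partial_t u$, and no argument can without an additional weighted hypothesis on $u_0$.
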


\no Therefore, we only need to establish the uniform estimates of the solution
in the interval $[1,T]$ for any $T>1$. From now on, we always assume that  $\nu\in(0,1)$ and $T>1,$ and all norms are taken over the interval [1, T] unless stated otherwise,  such as
\begin{align*}
&\|f\|_{L^pH^s}=\big\|\|f(t)\|_{H^s(\Omega)}\big\|_{L^p(1,T)},\quad \|f\|_{L^pL^q}=\big\|\|f(t)\|_{L^q(\Omega)}\big\|_{L^p(1,T)}.
\end{align*}

For $a\ge 0$, we introduce two norms
\begin{align*}
\|f\|_{Y_0}^2=&\|f\|_{L^{\infty}L^2}^2+\nu\|\nabla f\|_{L^{2}L^2}^2,\\ \|f\|_{X_a}^2=&\|e^{a\nu^{1/3}t}f\|_{L^{\infty}L^2}^2+\|e^{a\nu^{1/3}t}\nabla\Delta^{-1}\partial_x f\|_{L^{2}L^2}^2\\&+\nu^{1/3}\|e^{a\nu^{1/3}t} f\|_{L^{2}L^2}^2+\nu\|e^{a\nu^{1/3}t}\nabla f\|_{L^{2}L^2}^2.
\end{align*}
Now we introduce the energy functional
\begin{align*}
&E_1=\|\bu\|_{L^{\infty}H^4}+
\nu^{\frac{1}{2}}\|\nabla \bu\|_{L^{2}H^4}+\big(\|\partial_t\bu\|_{L^{\infty}H^2}+\|\bu(1)\|_{H^2}\big)/\nu,\\
&E_2=\|\Delta \bu^2\|_{Y_0}+\|\bu^3\|_{Y_0}+\|\nabla \bu^3\|_{Y_0}+\|\min(\nu^{\frac{2}{3}}+\nu t,1)^{\frac{1}{2}}\Delta \bu^3\|_{Y_0},\\
&E_3=\|\Delta u_{\neq}^2\|_{X_2}+\| (\partial_x^2+\partial_z^2)u_{\neq}^3\|_{X_2}+\nu^{\frac{2}{3}}\|\Delta u_{\neq}^3\|_{X_3},\\
&E_4= \|e^{2\nu^{1/3}t}(\partial_x,\partial_z)u_{\neq}\|_{L^{\infty}H^3}+
\nu^{\frac{1}{2}}\|e^{2\nu^{1/3}t}\nabla(\partial_x,\partial_z)u_{\neq}\|_{L^{2}H^3},\\
&E_5=\|\partial_x^2 u^2\|_{X_3}+\| \partial_x^2u^3\|_{X_3}.
\end{align*}
In fact, the norms $X_2$ and $X_3$ can be replaced by any $X_{c}$ and $X_{c'}$ with $c<c'<2c.$
Each norm defined in $X_a$ and $Y_0$ has the same scaling in some sense. \smallskip

The energy $E_1, E_2$ correspond to zero mode of the solution, while the energy $E_3, E_4, E_5$ correspond to non-zero mode of
the solution. Due to the lift-up effect,  $E_1, E_4$ are expected to be small at best, and $E_2, E_3, E_5$ are expected to be bounded by $o(\nu)$.
Based on the the evolutional equation of $\bu$ or its vorticity formulation, we will use the energy method to estimate $E_1$ and $E_2$.
The estimate of $E_3$ is based on the space-time estimate for the linearized operator $\cL_0$(Proposition \ref{prop:decay-L0-2}) and the formulation:
\begin{align*}
\cL_0 u_{\neq}^j+\partial_j p^{L}+\partial_j p^{(1)}+\bu^1\partial_xu_{\neq}^j+(g_j)_{\neq}=0,\quad \Delta p^{L}=-2\partial_x{u}^2,\quad j=2,3.
\end{align*}
The estimate of $E_4$ is based on the space-time estimate for $\cL_0$(Proposition \ref{prop:decay-L0}) and the formulation:
\begin{align*}
&\cL_0 u+\mathbb{P}\left(\begin{array}{l}u^2\\0\\0\end{array}\right)+\mathbb{P}(u\cdot\nabla u)=0.
\end{align*}
Here $ \mathbb{P}$ is the Helmholtz-Leray projection.
The estimate of $E_5$ is the most difficult. For this, we need to use the space-time estimates for the linearized operators $\cL, \cL_1$ and the subtle structure
hidden in the system.\smallskip

The estimates of $E_1, E_2$ and $E_3, E_4, E_5$ will be conducted in section 6 and section 7 respectively.

 \subsection{Proof of Theorem \ref{thm:main}}

The proof of Theorem \ref{thm:main} is based on a bootstrap argument.
Firstly, we assume that
\ben\label{ass:boot}
E_1\le \ve_0, \quad E_2\le \ve_0\nu,\quad  E_3\le \ve_0\nu,
\een
where $\ve_0$ is determined later. These are true for some $T>1$ by Proposition \ref{prop:lwp}.
In section 6 and section 7, we will show that  there exists a constant $C$ independent of $\nu, \ve_0, T$ so that
\begin{align*}
&E_1\le C\big(\|\bu(1)\|_{H^4}+\nu^{-1}\|\bu(1)\|_{H^2}+\nu^{-1}E_2+E_3+E_4\big),\\
&E_2\le C\big(\|u(1)\|_{H^2}+\nu^{-1}E_3^2\big),\\
&E_3\le C\|u(1)\|_{H^2},\\
&E_4\le C\big(\|u(1)\|_{H^4}+\nu^{-1}E_3+\nu^{-1}E_5\big),\\
&E_5\le C\big(\|u(1)\|_{H^2}+\nu^{-1}E_3^2\big).
\end{align*}
Then it follows from Proposition \ref{prop:lwp}  that
\beno
E_3\le Cc_0\nu, \quad E_2\le Cc_0\nu+C{c_0^2}\nu\le Cc_0\nu,\quad
E_5\le Cc_0\nu.
\eeno
and hence,
\beno
E_4\le Cc_0,\quad E_1\le Cc_0.
\eeno
Taking $\ve_0=2Cc_0$, we can conclude that $T=+\infty$, and the uniform estimates \eqref{eq:u-uniform1} and \eqref{eq:u-uniform2}  follow easily from the definitions of $E_1$-$E_5$.

\section{Enhanced decay estimates of the linearized equations}

In this section, we derive the enhanced decay estimates of the linearized equations, which will make use of two linear effects of the Couette flow: inviscid damping and enhanced dissipation.

\subsection{Decay estimates of the linearized equation $\cL_0f=g$}

\begin{Proposition}\label{prop:decay-L0}
Let $f$ solve the linear equation
\beno
\cL_0 f=\partial_xf_1+f_2+\text{div}f_3
\eeno
 for $t\in [1,T]$. If $P_0 f=P_0f_1=P_0f_2=P_0f_3=0$, then for $a\in[0,4]$, we have
 \begin{align*}
&\|f\|_{X_a}^2\leq C\big(\|f(1)\|_{L^2}^2+\|e^{a\nu^{1/3}t}\nabla f_1\|_{L^2L^2}^2+\nu^{-\frac{1}{3}}\|e^{a\nu^{1/3}t} f_2\|_{L^2L^2}^2+\nu^{-1}\|e^{a\nu^{1/3}t} f_3\|_{L^2L^2}^2\big).
\end{align*}
\end{Proposition}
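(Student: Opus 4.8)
The plan is to work in the moving frame $(\overline{x},y,z)=(x-ty,y,z)$, where $\cL_0$ becomes the constant-coefficient operator $\partial_t-\nu\Delta_L$ acting mode-by-mode after Fourier transform. Writing $\widetilde f(t,\overline{x},y,z)$ for the transported function and taking the Fourier transform in all variables (with dual variables $k,\eta,l$), the equation $\cL_0 f=\partial_x f_1+f_2+\operatorname{div}f_3$ decouples into a family of scalar ODEs in $t$ for each frozen frequency $(k,\eta,l)$ with $k\neq 0$ (the hypothesis $P_0f=P_0f_j=0$ removes the $k=0$ modes entirely). The key object is the integrating factor $e^{\nu(2\pi)^2\int_0^t(k^2+(\eta-k\tau)^2+l^2)\,d\tau}$ already computed in the enhanced-dissipation discussion, whose growth encodes the $e^{-c\nu t^3}$ decay. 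First I would solve each scalar ODE by Duhamel's formula and multiply through by the weight $e^{a\nu^{1/3}t}$.

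The core of the argument is then a set of uniform-in-frequency convolution estimates for the Duhamel kernel against the three types of forcing. The initial-data term contributes $\|f(1)\|_{L^2}^2$ directly. For the three forcing terms I would establish, for each fixed $(k,\eta,l)$, bounds on the $L^2_t$-to-$L^2_t$ (or $L^2_t$-to-$L^\infty_t$) mapping norm of the convolution with the kernel $K(t,s)=\exp\bigl(-\nu(2\pi)^2\int_s^t(k^2+(\eta-k\tau)^2+l^2)\,d\tau\bigr)$, matching the four pieces of the $X_a$ norm. The crucial inputs are the two elementary frequency integrals flagged in the excerpt: the cubic lower bound $\int_0^t(k^2+(\eta-k\tau)^2+l^2)\,d\tau\ge k^2t^3/12$, which drives the enhanced dissipation and the $\nu^{1/3}$-gain, and the inviscid-damping bound $\int_0^t\frac{d\tau}{k^2+(\eta-k\tau)^2+l^2}\le C/k^2$, which controls the $\|\nabla\Delta_L^{-1}\partial_{\overline x}\cdot\|$ piece. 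The weight factor $e^{a\nu^{1/3}t}$ must be absorbed into the kernel; since $a\in[0,4]$ is bounded and the genuine decay rate is $\nu t^3$, the shift costs only a harmless constant $e^{Ca^{3/2}}$, exactly as in the displayed bound $e^{-c\nu t^3}\le e^{Ca^{3/2}}e^{-a\nu^{1/3}t}$.

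For the power-counting I expect the three forcing terms to scale differently, which is why the right-hand side carries the weights $1$, $\nu^{-1/3}$, $\nu^{-1}$. The $\partial_x f_1$ term pairs with the inviscid-damping gain, so it enters at unit weight after integrating by parts to move the $\partial_{\overline x}$ onto the kernel (producing the $\nabla_L\Delta_L^{-1}\partial_{\overline x}$ structure). The undifferentiated source $f_2$ sits at the natural $L^2_tL^2$ scale and the half-derivative mismatch against the $\nu^{1/3}\|\cdot\|_{L^2L^2}^2$ term in $X_a$ forces the $\nu^{-1/3}$ loss via a Cauchy--Schwarz in time against the dissipative kernel. The divergence term $\operatorname{div}f_3$ costs a full derivative, which against the viscous smoothing $\nu\|\nabla_L\cdot\|_{L^2L^2}^2$ in $X_a$ produces the $\nu^{-1}$ weight. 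In each case I would bound the relevant kernel norm by integrating the scalar inequality and then sum over $(k,l)\in\mathbb Z^2$ and integrate over $\eta$, using Plancherel to return to $L^2$-based norms. Finally I would translate back from $\widetilde f$ to $f$; because the change of variables $(\overline x,y,z)\mapsto(x,y,z)$ is volume-preserving and $\nabla_L$ in the new frame equals the ordinary gradient of $f$ in the old frame, all the moving-frame norms coincide with the stated $X_a$ norms of $f$.

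\textbf{Main obstacle.} The delicate point will be the $f_2$ estimate and its $\nu^{-1/3}$ weight: unlike the other two terms, $f_2$ carries no derivative to exploit, so the gain must come entirely from the enhanced-dissipation smoothing of the kernel, and one must verify that the $L^2_t\to L^2_t$ convolution norm of $K$ is genuinely $O(\nu^{-1/3})$ uniformly in $(k,\eta,l)$ with $k\neq 0$ — this is where the cubic-in-$t$ phase, rather than the naive $\nu^{-1}$ heat scale, is indispensable, and it is the step most sensitive to the precise form of the $\nu^{1/3}$ terms in the definition of $X_a$.
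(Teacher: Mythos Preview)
Your proposal is correct and follows essentially the same approach as the paper: pass to the moving frame, take the Fourier transform to reduce to scalar ODEs in $t$ for each frequency $(k,\eta,l)$ with $k\neq 0$, solve by Duhamel with the integrating factor, and estimate the four pieces of $X_a$ using the cubic dissipation bound and the inviscid-damping integral $\int_{\R}\frac{d\tau}{k^2+(\eta-k\tau)^2+l^2}\le C/|k|$, then sum by Plancherel. The paper packages the frequency-by-frequency estimate as a separate lemma (Lemma~\ref{lem3b}), and for the $\nu\|e^{a\nu^{1/3}t}\nabla f\|_{L^2L^2}^2$ piece it uses a direct energy identity $\partial_t|f|^2+2\nu\gamma(t)|f|^2=\cdots$ rather than the Duhamel kernel, but these are organizational rather than substantive differences.
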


Switch to new variables $(\overline{x},y,z)=(x-ty,y,z)$ by setting $\td{f}(t,\overline{x},y,z)={f}(t,{x},y,z),\ \td{f}_j(t,\overline{x},y,z)={f}_j(t,{x},y,z),$ then it holds that
\begin{align}\label{L0f}
&\partial_t\td{f}-\nu\Delta_L\td{f}=\partial_{\overline{x}}\td{f}_1+\td{f}_2+\nabla_L\cdot \td{f}_3.
\end{align}
Taking the Fourier transform, we get
\begin{align}\label{L0f1}
\partial_t \widehat{\td{f}}+\nu(2\pi)^2\big(k^2+(\eta-kt)^2+l^2\big) \widehat{\td{f}}=2\pi i k \widehat{\td{f}_{1}}+\widehat{\td{f}_{2}}+2\pi i(k,\eta-kt,l)\cdot\widehat{\td{f}_{3}}.
\end{align}
Then it is obvious to study  \eqref{L0f1} first.

\begin{Lemma}\label{lem3b}
Let $f$ solve the equation
\beno
\partial_t {f}+\nu(2\pi)^2\big(k^2+(\eta-kt)^2+l^2\big){f}= 2\pi ik {f}_{1}+{f}_{2}+{f}_{3}
\eeno
for $t\in [1,T]$, $k,l\in\mathbb{Z},\ \eta\in\R$. Then for $a\in[0,4]$ and $k\neq 0$, it holds that
 \begin{align*}
&\|e^{a\nu^{1/3}t}f\|_{L^{\infty}(1,T)}^2+\|e^{a\nu^{1/3}t} 2\pi ik\gamma(t)^{-\frac{1}{2}}f\|_{L^{2}(1,T)}^2
+\nu\|e^{a\nu^{1/3}t}\gamma(t)^{\frac{1}{2}}f\|_{L^{2}(1,T)}^2\\&\quad+\nu^{\frac{1}{3}}\|e^{a\nu^{1/3}t}f\|_{L^{2}(1,T)}^2\leq C\Big(|f(1)|^2+\|e^{a\nu^{1/3}t}\gamma(t)^{\frac{1}{2}} f_1\|_{L^2(1,T)}^2|k|(k^2+l^2)^{-\frac{1}{2}}\\&\qquad+\nu^{-\frac{1}{3}}\|e^{a\nu^{1/3}t} f_2\|_{L^2(1,T)}^2+\nu^{-1}\|e^{a\nu^{1/3}t}\gamma(t)^{-\frac{1}{2}} f_3\|_{L^2(1,T)}^2\Big),\end{align*}
where $\gamma(t)= (2\pi)^2\big(k^2+(\eta-kt)^2+l^2\big).$
\end{Lemma}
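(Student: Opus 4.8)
The plan is to integrate this scalar linear ODE explicitly and reduce the whole estimate to a short list of one-dimensional integral bounds on the resulting kernel. Setting $g=e^{a\nu^{1/3}t}f$ and $g_j=e^{a\nu^{1/3}t}f_j$, the equation becomes
\[
\partial_t g+\big(\nu\gamma(t)-a\nu^{1/3}\big)g=2\pi i k\,g_1+g_2+g_3,
\]
so Duhamel's formula gives
\[
g(t)=K(t,1)\,g(1)+\int_1^t K(t,s)\big(2\pi i k\,g_1(s)+g_2(s)+g_3(s)\big)\,ds,\quad K(t,s)=\exp\!\Big(a\nu^{1/3}(t-s)-\nu\!\int_s^t\!\gamma(\tau)\,d\tau\Big).
\]
Each of the four quantities on the left-hand side is a weighted $L^2$ (or $L^\infty$) norm of $g$, so I would bound the contribution of each of the three forcing terms to each norm one at a time; this organizes the proof into a finite table of elementary estimates rather than a single delicate energy identity.

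Everything then reduces to four properties of $\gamma$ and $K$. First, the lower bound $\int_s^t\gamma\ge c\,k^2(t-s)^3$ (with $k^2\ge1$) shows $a\nu^{1/3}(t-s)\le C+\tfrac{\nu}{2}\int_s^t\gamma$ uniformly for $a\in[0,4]$, so the growth weight is harmless: $0\le K(t,s)\le C\exp(-\tfrac{\nu}{2}\int_s^t\gamma)$. Second, the inviscid-damping identity $\int_{\mathbb R}\gamma(\tau)^{-1}\,d\tau=\big(4\pi|k|(k^2+l^2)^{1/2}\big)^{-1}$. Third, the enhanced-dissipation bound $\int_1^t\exp(-c\nu\int_s^t\gamma)\,ds\lesssim\nu^{-1/3}$, again from the cubic lower bound, and symmetrically in $t$. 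Fourth, the dissipation identities $\int_1^t\exp(-c\nu\int_s^t\gamma)\,\gamma(s)\,ds\lesssim\nu^{-1}$, obtained by recognizing $\gamma(s)$ as minus the $s$-derivative of $\int_s^t\gamma$, and symmetrically in $t$.

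With these in hand each bound is a Cauchy--Schwarz in $s$ followed by a Schur-type $t$-integration. The $f_2$ forcing pairs with the enhanced-dissipation norm $\nu^{1/3}\|g\|_{L^2}^2$: splitting $K=K^{1/2}K^{1/2}$ and using the third fact twice produces exactly $\nu^{-1/3}\|g_2\|_{L^2}^2$. The $f_3$ forcing pairs with the dissipation norm $\nu\|\gamma^{1/2}g\|_{L^2}^2$, weighting the Cauchy--Schwarz by $\gamma(s)^{\pm1/2}$ and invoking the fourth fact twice to reach $\nu^{-1}\|\gamma^{-1/2}g_3\|_{L^2}^2$. The $L^\infty$ bound follows directly from the Duhamel formula plus one Cauchy--Schwarz, and the homogeneous term $K(t,1)g(1)$ reproduces the $|f(1)|^2$ contribution in every norm since $|g(1)|\le e^4|f(1)|$.

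The delicate point, which I expect to be the main obstacle, is the $f_1$ forcing paired with the inviscid-damping norm $\|2\pi ik\,\gamma^{-1/2}g\|_{L^2}^2$. A pointwise-in-time energy pairing cannot close here: when $l\gg|k|$ a naive Young's inequality loses a power of $\nu^{1/3}$, so the gain must come from the time integration itself. Bounding this contribution by $C k^4\int\gamma(t)^{-1}\big(\int_1^t K(t,s)|g_1|\,ds\big)^2\,dt$, a Cauchy--Schwarz in $s$ weighted by $\gamma(s)^{\mp1/2}$ together with the inviscid-damping identity $\int\gamma^{-1}\lesssim(|k|(k^2+l^2)^{1/2})^{-1}$ applied in both the $s$- and the $t$-integral yields the factor $k^2/(k^2+l^2)\le|k|(k^2+l^2)^{-1/2}$, which is precisely the claimed weight. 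Thus it is the coupling of inviscid damping (the $\int\gamma^{-1}$ gain) with enhanced dissipation (the decay of $K$) that makes the $f_1$ estimate close, and verifying this interplay carefully is the crux of the argument.
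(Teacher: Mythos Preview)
Your proposal is correct and rests on exactly the same ingredients as the paper's proof: the Duhamel representation, the cubic lower bound on $\int_s^t\gamma$, the identity $\int_{\mathbb R}\gamma^{-1}\,d\tau=\pi\,|k|^{-1}(k^2+l^2)^{-1/2}$, and the integral $\int K\gamma\,\lesssim\nu^{-1}$. The paper's organization is a bit different and, in one respect, simpler. It first establishes the $L^\infty$ and $\nu^{1/3}\|g\|_{L^2}^2$ bounds from Duhamel exactly as you describe; then the inviscid-damping norm is obtained in one line from the $L^\infty$ bound via
\[
\|2\pi ik\,\gamma^{-1/2}g\|_{L^2(1,T)}\le \|2\pi k\,\gamma^{-1/2}\|_{L^2(\mathbb R)}\,\|g\|_{L^\infty(1,T)},
\]
and finally the dissipation norm $\nu\|\gamma^{1/2}g\|_{L^2}^2$ is obtained by a direct energy estimate on $\partial_t|g|^2$ (pairing each forcing term with $g$ via Young's inequality) rather than by another Schur argument on the Duhamel formula.

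The point you single out as ``the crux'' --- the $f_1$ contribution to the inviscid-damping norm --- therefore disappears in the paper's arrangement: once the $L^\infty$ bound on $F_1$ is in hand (which already carries the factor $|k|^{1/2}(k^2+l^2)^{-1/4}$), the inviscid-damping norm follows with no further delicacy. Your direct double-integral argument for this term is also valid and even yields the slightly sharper weight $k^2/(k^2+l^2)$, but it is more work than necessary. Both routes arrive at the same estimate.
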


\begin{proof}By the definition of $\gamma(t)$, we have $\partial_t {f}+\nu\gamma(t) {f}= 2\pi ik {f}_{1}+{f}_{2}+{f}_{3}$. Let $\gamma_1(t)=\int_1^t\gamma(s)ds,$ then the solution is given by
\begin{align*}
f(t)=e^{-\nu\gamma_1(t)}f(1)+\int_1^te^{-\nu(\gamma_1(t)-\gamma_1(s))}(2\pi ik {f}_{1}(s)+{f}_{2}(s)+{f}_{3}(s))ds=\sum_{j=0}^3F_j(t),
\end{align*}
where $F_0(t)=e^{-\nu\gamma_1(t)}f(1)$ and
\beno
&&F_1(t)=\int_1^te^{-\nu(\gamma_1(t)-\gamma_1(s))}2\pi ik {f}_{1}(s)ds,\\
&&F_j(t)=\int_1^te^{-\nu(\gamma_1(t)-\gamma_1(s))} {f}_{j}(s)ds\quad j=2,3.
\eeno

Using the fact that for $t_1>t_2$
\begin{align*}
\nu(\gamma_1(t_1)-\gamma_1(t_2))&=\nu\int_{t_2}^{t_1}\gamma(s)ds\geq \nu(2\pi)^2 \int_{t_2}^{t_1}(\eta-ks)^2ds\\
&\geq \nu(2\pi)^2k^2(t_1-t_2)^3/12\geq (a+1)\nu^{1/3}(t_1-t_2)-C,
\end{align*}
we deduce that for $t\in [1,T]$
 \begin{align*}
 |F_0(t)|&\leq e^{-\nu\gamma_1(t)}|f(1)|\leq Ce^{-(a+1)\nu^{1/3}(t-1)}|f(1)|\\
 &\leq Ce^{-(a+1)\nu^{1/3}t}|f(1)|,
\end{align*}
which shows that
\beno
&&\|e^{a\nu^{1/3}t}F_0\|_{L^{\infty}(1,T)}\leq C|f(1)|,\\
&&\|e^{a\nu^{1/3}t}F_0\|_{L^{2}(1,T)}^2
\leq C\|e^{-\nu^{1/3}t}\|_{L^{2}(1,T)}^2|f(1)|^2\leq C\nu^{-\frac{1}{3}}|f(1)|^2.
\eeno

Using the fact that
 \begin{align}\label{kL2}
&\left\|\frac{2\pi k}{\gamma(t)^{\frac{1}{2}}}\right\|_{L^2(\R)}^2=\int_{\R}\frac{ k^2dt}{k^2+(\eta-kt)^2+l^2}=\int_{\R}\frac{ |k|dt}{k^2+t^2+l^2}=\frac{|k|\pi}{(k^2+l^2)^{\frac{1}{2}}}\leq \pi,
\end{align}
we deduce that for $t\in[1,T]$
\begin{align*}
|F_1(t)|\leq& \int_1^tCe^{-(a+1)\nu^{1/3}(t-s)}|2\pi k {f}_{1}(s)|ds\\
\leq& C\|e^{(a+1)\nu^{1/3}(s-t)}\gamma(s)^{\frac{1}{2}} f_1(s)\|_{L^2(1,t)}\|2\pi k\gamma(s)^{-\frac{1}{2}} \|_{L^2(\R)}\\ \leq& C\|e^{(a+1)\nu^{1/3}(s-t)}\gamma(s)^{\frac{1}{2}} f_1(s)\|_{L^2(1,t)}|k|^{\frac{1}{2}}(k^2+l^2)^{-\frac{1}{4}}\\ \leq& C\|e^{a\nu^{1/3}(s-t)}\gamma(s)^{\frac{1}{2}} f_1(s)\|_{L^2(1,t)}|k|^{\frac{1}{2}}(k^2+l^2)^{-\frac{1}{4}},
\end{align*}
which implies
\begin{align*}
\|e^{a\nu^{1/3}t}F_1\|_{L^{\infty}(1,T)}&\leq C\sup_{t\in[1,T]}\|e^{a\nu^{1/3}s}\gamma(s)^{\frac{1}{2}} f_1(s)\|_{L^2(1,t)}|k|^{\frac{1}{2}}(k^2+l^2)^{-\frac{1}{4}}\\ &\leq C\|e^{a\nu^{1/3}t}\gamma(t)^{\frac{1}{2}}f_1\|_{L^2(1,T)}|k|^{\frac{1}{2}}(k^2+l^2)^{-\frac{1}{4}},
\end{align*}
and
\begin{align*}
\|e^{a\nu^{1/3}t}F_1\|_{L^{2}(1,T)}^2(k^2+l^2)^{\frac{1}{2}}/|k|\leq& C\int_1^T\int_1^te^{2a\nu^{1/3}s+2\nu^{1/3}(s-t)}\gamma(s)|f_1(s)|^2dsdt\\
=&C\int_1^Te^{2a\nu^{1/3}s}\gamma(s)|f_1(s)|^2\int_s^Te^{2\nu^{1/3}(s-t)}dtds\\
=&C\nu^{-\frac{1}{3}}\int_1^Te^{2a\nu^{1/3}s}\gamma(s)|f_1(s)|^2ds=C\nu^{-\frac{1}{3}}\|e^{a\nu^{1/3}t}\gamma(t)^{\frac{1}{2}} f_1\|_{L^2(1,T)}^2.
\end{align*}

For $t\in[1,T],$ we have
\begin{align*}
|F_2(t)|\leq& \int_1^tCe^{-(a+1)\nu^{1/3}(t-s)}| {f}_{2}(s)|ds\\ \leq& C\|e^{(a+1/2)\nu^{1/3}(s-t)} f_2(s)\|_{L^2(1,t)}\|e^{-\nu^{1/3}(t-s)/2} \|_{L^2(1,t)}\\ \leq& C\|e^{(a+1/2)\nu^{1/3}(s-t)} f_2(s)\|_{L^2(1,t)}\nu^{-\frac{1}{6}}\leq C\|e^{a\nu^{1/3}(s-t)} f_2(s)\|_{L^2(1,t)}\nu^{-\frac{1}{6}},
\end{align*}
which implies
\begin{align*}
&\|e^{a\nu^{1/3}t}F_2\|_{L^{\infty}(1,T)}\leq C\sup_{t\in[1,T]}\|e^{a\nu^{1/3}s} f_2(s)\|_{L^2(1,t)}\nu^{-\frac{1}{6}}\leq C\|e^{a\nu^{1/3}t}f_2\|_{L^2(1,T)}\nu^{-\frac{1}{6}},\\
&\|e^{a\nu^{1/3}t}F_2\|_{L^{2}(1,T)}^2\leq C\nu^{-\frac{2}{3}}\|e^{a\nu^{1/3}t} f_2\|_{L^2(1,T)}^2.
\end{align*}

For $t\in[1,T],$ we have
\begin{align*}
|F_3(t)|\leq& \int_1^te^{-\nu(\gamma_1(t)-\gamma_1(s))} | {f}_{3}(s)|ds\\ \leq& C\|e^{(a+1/2)\nu^{1/3}(s-t)}\gamma(s)^{-\frac{1}{2}} f_3(s)\|_{L^2(1,t)}\|e^{-\nu(\gamma_1(t)-\gamma_1(s))+(a+1/2)\nu^{1/3}(t-s)}\gamma(s)^{\frac{1}{2}} \|_{L^2(1,t)}.
\end{align*}
Thanks to $ \gamma=\gamma_1'$ and $a\in[0,4],$ we have
\begin{align*}
&\|e^{-\nu(\gamma_1(t)-\gamma_1(s))+(a+1/2)\nu^{1/3}(t-s)}\gamma(s)^{\frac{1}{2}} \|_{L^2(1,t)}^2\\=&\int_1^te^{-2\nu(\gamma_1(t)-\gamma_1(s))}e^{(2a+1)\nu^{1/3}(t-s)}\gamma(s)ds\\
&=(2\nu)^{-1}e^{-2\nu(\gamma_1(t)-\gamma_1(s))}e^{(2a+1)\nu^{1/3}(t-s)}\big|_{s=1}^t\\&\quad+(2\nu)^{-1}(2a+1)\nu^{1/3}
\int_1^te^{-2\nu(\gamma_1(t)-\gamma_1(s))}e^{(2a+1)\nu^{1/3}(t-s)}ds\\
&\leq(2\nu)^{-1}+C(2\nu)^{-1}(2a+1)\nu^{1/3}
\int_1^te^{-2(a+1)\nu^{1/3}(t-s)}e^{(2a+1)\nu^{1/3}(t-s)}ds\leq C\nu^{-1}.
\end{align*}
Then we infer that
\begin{align*}
&|F_3(t)|\leq  C\|e^{(a+1/2)\nu^{1/3}(s-t)}\gamma(s)^{-\frac{1}{2}} f_3(s)\|_{L^2(1,t)}\nu^{-\frac{1}{2}},
\end{align*}
which implies
\begin{align*}
&\|e^{a\nu^{1/3}t}F_3\|_{L^{\infty}(1,T)}\leq C\|e^{a\nu^{1/3}t}\gamma(t)^{-\frac{1}{2}} f_3\|_{L^2(1,T)}\nu^{-\frac{1}{2}},\\
&\|e^{a\nu^{1/3}t}F_3\|_{L^{2}(1,T)}^2\leq C\nu^{-\frac{4}{3}}\|e^{a\nu^{1/3}t} \gamma(t)^{-\frac{1}{2}} f_3\|_{L^2(1,T)}^2.
\end{align*}

Summing up, we conclude that
\begin{align}\label{f1e1}
&\|e^{a\nu^{1/3}t}f\|_{L^{\infty}(1,T)}^2+\nu^{\frac{1}{3}}\|e^{a\nu^{1/3}t}f\|_{L^{2}(1,T)}^2\leq C\Big(|f(1)|^2+\nu^{-\frac{1}{3}}\|e^{a\nu^{1/3}t} f_2\|_{L^2(1,T)}^2 \\ \nonumber
&\quad+\|e^{a\nu^{1/3}t}\gamma(t)^{\frac{1}{2}}f_1\|_{L^2(1,T)}^2|k|(k^2+l^2)^{-\frac{1}{2}} +\nu^{-1}\|e^{a\nu^{1/3}t}\gamma(t)^{-\frac{1}{2}} f_3\|_{L^2(1,T)}^2\Big).
\end{align}

By \eqref{kL2}, we have
\begin{align}\label{f1e2}
\|e^{a\nu^{1/3}t} 2\pi ik\gamma(t)^{-\frac{1}{2}}f\|_{L^{2}(1,T)}^2&\leq \| 2\pi k\gamma(t)^{-\frac{1}{2}}\|_{L^{2}(\R)}^2\|e^{a\nu^{1/3}t}f\|_{L^{\infty}(1,T)}^2\\
\nonumber&\leq C|k|(k^2+l^2)^{-\f12}\|e^{a\nu^{1/3}t}f\|_{L^{\infty}(1,T)}^2.
\end{align}

Using the fact that
\begin{align*}
\partial_t |{f}|^2+2\nu\gamma(t) |{f}|^2= 2\text{Re}((2\pi ik {f}_{1}+{f}_{2}+{f}_{3})\overline{f})\leq \gamma(t) |f_1|^2+\gamma(t)^{-1}|2\pi ikf|^2\\+\nu^{\frac{1}{3}}|f|^2+\nu^{-\frac{1}{3}}|f_2|^2+\nu\gamma(t) |{f}|^2+\nu^{-1}\gamma(t)^{-1} |{f}_3|^2,
\end{align*}
 where $\overline{f} $ is the complex conjugate of $f,$ we infer that\begin{align*}
&\partial_t |e^{a\nu^{1/3}t}{f}|^2+\nu\gamma(t) |e^{a\nu^{1/3}t}{f}|^2\leq e^{2a\nu^{1/3}t}\big(\gamma(t)|k|(k^2+l^2)^{-\f12} |f_1|^2+\gamma(t)^{-1}|k|^{-1}(k^2+l^2)^{\f12}|2\pi ikf|^2\\&\quad+\nu^{\frac{1}{3}}|f|^2+\nu^{-\frac{1}{3}}|f_2|^2\big)+\nu^{-1}\gamma(t)^{-1} |e^{a\nu^{1/3}t}{f}_3|^2+2a\nu^{1/3}|e^{a\nu^{1/3}t}{f}|^2,
\end{align*}
which implies
\begin{align*}
&\nu\|e^{a\nu^{1/3}t}\gamma(t)^{\frac{1}{2}}f\|_{L^{2}(1,T)}^2\leq -|e^{a\nu^{1/3}t}{f}|^2\big|_1^T+\|e^{a\nu^{1/3}t}\gamma(t)^{\frac{1}{2}} f_1\|_{L^2(1,T)}^2|k|(k^2+l^2)^{-\frac{1}{2}}\\
&\quad+|k|^{-1}(k^2+l^2)^{\f12}\|e^{a\nu^{1/3}t} 2\pi ik\gamma(t)^{-\frac{1}{2}}f\|_{L^{2}(1,T)}^2+(1+2a)\nu^{\frac{1}{3}}\|e^{a\nu^{1/3}t}f\|_{L^{2}(1,T)}^2
\\&\qquad+\nu^{-\frac{1}{3}}\|e^{a\nu^{1/3}t} f_2\|_{L^2(1,T)}^2+\nu^{-1}\|e^{a\nu^{1/3}t}\gamma(t)^{-\frac{1}{2}} f_3\|_{L^2(1,T)}^2.
\end{align*}
Thanks to $-|e^{a\nu^{1/3}t}{f}|^2\big|_1^T\leq |e^{a\nu^{1/3}}{f}(1)|^2\leq C|{f}(1)|^2$, we infer from \eqref{f1e1} and \eqref{f1e2}  that
\begin{align}\label{f1e3}
&\nu\|e^{a\nu^{1/3}t}\gamma(t)^{\frac{1}{2}}f\|_{L^{2}(1,T)}^2\leq C\Big(|f(1)|^2+\|e^{a\nu^{1/3}t}\gamma(t)^{\frac{1}{2}} f_1\|_{L^2(1,T)}^2|k|(k^2+l^2)^{-\frac{1}{2}}\\ \nonumber&+\nu^{-\frac{1}{3}}\|e^{a\nu^{1/3}t} f_2\|_{L^2(1,T)}^2+\nu^{-1}\|e^{a\nu^{1/3}t}\gamma(t)^{-\frac{1}{2}} f_3\|_{L^2(1,T)}^2\Big).
\end{align}
Now the lemma follows from \eqref{f1e1},\eqref{f1e2} and \eqref{f1e3}.
\end{proof}
\smallskip

Now we are in a position to prove Proposition \ref{prop:decay-L0}.

\begin{proof}
As $P_0 f=P_0 f_j=0(j=1,2,3)$, we have $ \widehat{\td{f}}=\widehat{\td{f}_j}=0$ for $k=0$. Since $\widehat{\td{f}}$ satisfies \eqref{L0f1}, we deduce from Lemma \ref{lem3b} that for every $k,l\in\mathbb{Z},\ \eta\in\R,\ k\neq 0,$
\begin{align*}
&\big\|e^{a\nu^{1/3}t}\widehat{\td{f}}\big\|_{L^{\infty}(1,T)}^2+\big\|e^{a\nu^{1/3}t} k(k^2+(\eta-kt)^2+l^2)^{-\frac{1}{2}}\widehat{\td{f}}\big\|_{L^{2}(1,T)}^2
+\nu^{\frac{1}{3}}\big\|e^{a\nu^{1/3}t}\widehat{\td{f}}\big\|_{L^{2}(1,T)}^2
\\&\quad+\nu\big\|e^{a\nu^{1/3}t}(2\pi)(k^2+(\eta-kt)^2+l^2)^{\frac{1}{2}}\widehat{\td{f}}\big\|_{L^{2}(1,T)}^2\\
&\leq C\Big(\big|\widehat{\td{f}}(1,k,\eta,l)\big|^2+\big\|e^{a\nu^{1/3}t}(2\pi)(k^2+(\eta-kt)^2+l^2)^{\frac{1}{2}} \widehat{\td{f}_1}\big\|_{L^2(1,T)}^2\\&\qquad+\nu^{-\frac{1}{3}}\big\|e^{a\nu^{1/3}t} \widehat{\td{f}_2}\big\|_{L^2(1,T)}^2+\nu^{-1}\big\|e^{a\nu^{1/3}t}\widehat{\td{f}_3}\big\|_{L^2(1,T)}^2\Big).
\end{align*}
Using Plancherel's formula, we find that
\begin{align*}
&\|e^{a\nu^{1/3}t}\td{f}\|_{L^{\infty}L^{2}}^2\leq\sum_{k\in\mathbb{Z}}\int_{\eta\in\R}\sum_{l\in\mathbb{Z}}
\big\|e^{a\nu^{1/3}t}\widehat{\td{f}}(\cdot,k,\eta,l)\big\|_{L^{\infty}(1,T)}^2d\eta,\\
&\|e^{a\nu^{1/3}t}\td{f}\|_{L^{2}L^{2}}^2=\sum_{k\in\mathbb{Z}}\int_{\eta\in\R}\sum_{l\in\mathbb{Z}}
\big\|e^{a\nu^{1/3}t}\widehat{\td{f}}(\cdot,k,\eta,l)\big\|_{L^{2}(1,T)}^2d\eta,
\end{align*}
and
\begin{align*}
&\|e^{a\nu^{1/3}t}\nabla_L\Delta_L^{-1}\partial_{\overline{x}}\td{f}\|_{L^{2}L^{2}}^2
\le\sum_{k\in\mathbb{Z}}\int_{\eta\in\R}\sum_{l\in\mathbb{Z}}
\big\|\frac{e^{a\nu^{1/3}t}k}{(k^2+(\eta-kt)^2+l^2)^{\frac{1}{2}}}
\widehat{\td{f}}(\cdot,k,\eta,l)\big\|_{L^{2}(1,T)}^2d\eta,\\
&{\|e^{a\nu^{1/3}t}\nabla_L\td{f}\|_{L^{2}L^{2}}^2}
\le{(2\pi)^2}\sum_{k\in\mathbb{Z}}\int_{\eta\in\R}\sum_{l\in\mathbb{Z}}
\big\|e^{a\nu^{1/3}t}(k^2+(\eta-kt)^2+l^2)^{\frac{1}{2}}\widehat{\td{f}}(\cdot,k,\eta,l)\big\|_{L^{2}(1,T)}^2d\eta.
\end{align*}
Then we can conclude that
\begin{align*}
&\|e^{a\nu^{1/3}t}\td{f}\|_{L^{\infty}L^{2}}^2+
\|e^{a\nu^{1/3}t}\nabla_L\Delta_L^{-1}\partial_{\overline{x}}\td{f}\|_{L^{2}L^{2}}^2
+\nu^{\frac{1}{3}}\|e^{a\nu^{1/3}t}\td{f}\|_{L^{2}L^{2}}^2+\nu\|e^{a\nu^{1/3}t}\nabla_L\td{f}\|_{L^{2}L^{2}}^2
\\ &\leq\sum_{k\in\mathbb{Z}}\int_{\eta\in\R}\sum_{l\in\mathbb{Z}}\bigg(
\big\|e^{a\nu^{1/3}t}\widehat{\td{f}}\big\|_{L^{\infty}(1,T)}^2+\big\|e^{a\nu^{1/3}t} k(k^2+(\eta-kt)^2+l^2)^{-\frac{1}{2}}\widehat{\td{f}}\big\|_{L^{2}(1,T)}^2
\\&\qquad+\nu^{\frac{1}{3}}\big\|e^{a\nu^{1/3}t}\widehat{\td{f}}\big\|_{L^{2}(1,T)}^2
+\nu\big\|e^{a\nu^{1/3}t}(2\pi)(k^2+(\eta-kt)^2+l^2)^{\frac{1}{2}}\widehat{\td{f}}\big\|_{L^{2}(1,T)}^2\bigg)d\eta\\
&\leq C\sum_{k\in\mathbb{Z}}\int_{\eta\in\R}\sum_{l\in\mathbb{Z}}\bigg(\big|\widehat{\td{f}}(1,k,\eta,l)\big|^2
+\big\|e^{a\nu^{1/3}t}(2\pi)(k^2+(\eta-kt)^2+l^2)^{\frac{1}{2}} \widehat{\td{f}_1}\big\|_{L^2(1,T)}^2\\&\qquad+\nu^{-\frac{1}{3}}\big\|e^{a\nu^{1/3}t} \widehat{\td{f}_2}\big\|_{L^2(1,T)}^2+\nu^{-1}\big\|e^{a\nu^{1/3}t}\widehat{\td{f}_3}\big\|_{L^2(1,T)}^2\bigg)d\eta\\
&\leq C\Big(\|\td{f}(1)\|_{L^{2}}^2
+\|e^{a\nu^{1/3}t}\nabla_L\td{f}_1\|_{L^{2}L^{2}}^2+\nu^{-\frac{1}{3}}\|e^{a\nu^{1/3}t}\td{f}_2\|_{L^{2}L^{2}}^2
+\nu^{-1}\|e^{a\nu^{1/3}t}\td{f}_3\|_{L^{2}L^{2}}^2\Big).
\end{align*}
This gives our result by the fact that $\|{f}\|_{L^{2}}=\|\td{f}\|_{L^{2}},\ \|\nabla\Delta^{-1}\partial_{{x}}{f}\|_{L^{2}}=\|\nabla_L\Delta_L^{-1}\partial_{\overline{x}}\td{f}\|_{L^{2}},$ and $\|\nabla{f}\|_{L^{2}}=\|\nabla_L\td{f}\|_{L^{2}}$.
\end{proof}
\smallskip

The following proposition will be used to estimate $\Delta u_{\neq}^2$ and $u_{\neq}^3$.

\begin{Proposition}\label{prop:decay-L0-2}
Let $(f,h)$ solve
\beno
\cL_0 f=\Delta f_1,\quad \cL_0 h-2\partial_x\partial_z\Delta^{-2}f=h_1,
\eeno
for $t\in [1,T]$. If $P_0 f=P_0 f_1=P_0 h=P_0 h_1=0,$ then for
$a\in[0,4]$, it holds that
\begin{align*}
\|f\|_{X_a}^2+\|(\partial_x^2+\partial_z^2)h\|_{X_a}^2\leq& C\Big(\|f(1)\|_{L^2}^2+\|h(1)\|_{H^2}^2\\&+\nu^{-1}\|e^{a\nu^{1/3}t} \nabla f_1\|_{L^2L^2}^2+\nu^{-1}\|e^{a\nu^{1/3}t} (\partial_x,\partial_z) h_1\|_{L^2L^2}^2\Big).
\end{align*}
\end{Proposition}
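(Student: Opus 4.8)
The plan is to treat the two equations separately, reducing each to the space--time estimate of Proposition~\ref{prop:decay-L0}.

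For $f$ the bound is immediate. Writing $\Delta f_1=\mathrm{div}(\nabla f_1)$ and using $P_0f=P_0f_1=0$, the equation $\cL_0f=\Delta f_1$ is of the form treated in Proposition~\ref{prop:decay-L0} with the $\partial_x$- and $f_2$-forcings absent and divergence forcing $\nabla f_1$, whence
\[
\|f\|_{X_a}^2\le C\big(\|f(1)\|_{L^2}^2+\nu^{-1}\|e^{a\nu^{1/3}t}\nabla f_1\|_{L^2L^2}^2\big).
\]
This already produces every $f$-dependent term on the right-hand side, so it remains to bound $\|(\partial_x^2+\partial_z^2)h\|_{X_a}$ by $C\big(\|f\|_{X_a}^2+\|h(1)\|_{H^2}^2+\nu^{-1}\|e^{a\nu^{1/3}t}(\partial_x,\partial_z)h_1\|_{L^2L^2}^2\big)$.

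Set $\phi=(\partial_x^2+\partial_z^2)h$. Since $\cL_0$ commutes with $\partial_x,\partial_z$, $\phi$ solves
\[
\cL_0\phi=2\partial_x\partial_z(\partial_x^2+\partial_z^2)\Delta^{-2}f+(\partial_x^2+\partial_z^2)h_1 .
\]
The inhomogeneity $(\partial_x^2+\partial_z^2)h_1=\mathrm{div}(\partial_xh_1,0,\partial_zh_1)$ is benign: in the divergence slot of Proposition~\ref{prop:decay-L0} it costs exactly $\nu^{-1}\|e^{a\nu^{1/3}t}(\partial_x,\partial_z)h_1\|_{L^2L^2}^2$. The real difficulty, which I expect to be the main obstacle, is the pressure-type coupling $S:=2\partial_x\partial_z(\partial_x^2+\partial_z^2)\Delta^{-2}f$. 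In the moving frame $(\overline x,y,z)=(x-ty,y,z)$, with $Q(t)=k^2+(\eta-kt)^2+l^2$, the symbol of $S$ relative to $\widehat{\widetilde f}$ is $m(t)=\tfrac{2kl(k^2+l^2)}{Q(t)^2}$. A direct estimate of $S$ by any single control packaged in $\|f\|_{X_a}$ fails: near the critical time $t\approx\eta/k$ the factor $\partial_z$ (the frequency $l$) is invisible to the inviscid-damping weight $|k|/Q^{1/2}$, while matching $S$ to the enhanced- or viscous-dissipation weights loses a negative power of $\nu$, the loss being sharp at the scale $|k|\sim|l|\sim\nu^{-1/3}$.

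The resolution is to cancel $S$ exactly rather than estimate it. The multiplier $m(t)$ is integrable in time with a uniform bound: since $\int_{\R}Q(\tau)^{-2}\,d\tau=\tfrac{\pi}{2|k|(k^2+l^2)^{3/2}}$, the function
\[
b(t):=2kl(k^2+l^2)\int_1^t Q(\tau)^{-2}\,d\tau
\]
satisfies $b(1)=0$, $b'(t)=m(t)$ and $|b(t)|\le \pi\,|l|(k^2+l^2)^{-1/2}\le\pi$. Let $B(t)$ be the moving-frame Fourier multiplier with symbol $b(t)$ and put $\Psi=\widehat{\widetilde\phi}-b\,\widehat{\widetilde f}$. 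Using the ODE for $\widehat{\widetilde f}$, namely $\partial_t\widehat{\widetilde f}+\nu(2\pi)^2Q\,\widehat{\widetilde f}=-(2\pi)^2Q\,\widehat{\widetilde f_1}$, a one-line computation shows the $S$-forcing drops out identically and that $\Psi$ solves, mode by mode, the clean equation corresponding to
\[
\cL_0\psi=(\partial_x^2+\partial_z^2)h_1-\mathrm{div}\big(B(t)\nabla f_1\big),\qquad \psi(1)=(\partial_x^2+\partial_z^2)h(1).
\]
Applying Lemma~\ref{lem3b} to $\Psi$ for each frequency and summing by Plancherel (equivalently, Proposition~\ref{prop:decay-L0} for $\psi$), where the uniform bound $|b|\le\pi$ keeps the $f_1$-cost admissible, gives
\[
\|\psi\|_{X_a}^2\le C\big(\|h(1)\|_{H^2}^2+\nu^{-1}\|e^{a\nu^{1/3}t}(\partial_x,\partial_z)h_1\|_{L^2L^2}^2+\nu^{-1}\|e^{a\nu^{1/3}t}\nabla f_1\|_{L^2L^2}^2\big).
\]
Finally, since $B(t)$ is a Fourier multiplier with $|b|\le\pi$ it commutes with all the weights defining $X_a$, so $\|B(t)f\|_{X_a}\le\pi\|f\|_{X_a}$; hence $\|(\partial_x^2+\partial_z^2)h\|_{X_a}\le\|\psi\|_{X_a}+\pi\|f\|_{X_a}$, and combining with the bound for $\|f\|_{X_a}$ closes the proof. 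The crux of the whole argument is the boundedness $|b|\le\pi$, which is precisely the convergence of $\int_{\R}Q^{-2}\,d\tau$.
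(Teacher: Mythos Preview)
Your proof is correct, and it is a genuinely different route from the paper's. The paper does not subtract a corrector: it works directly with $\widehat{\td h}$ (not $\widehat{\td\phi}$), applies Lemma~\ref{lem3b} to the $h$-equation with the coupling term $-2kl(2\pi)^{-2}Q^{-2}\widehat{\td f}$ placed in the $f_1$-slot (the $\partial_x$-type forcing), and then bounds
\[
\big\|e^{a\nu^{1/3}t}\,lQ^{-3/2}\,\widehat{\td f}\big\|_{L^2(1,T)}^2\,|k|(k^2+l^2)^{-1/2}
\le \big\|e^{a\nu^{1/3}t}\widehat{\td f}\big\|_{L^\infty(1,T)}^2\,\big\|lQ^{-3/2}\big\|_{L^2(\R)}^2\,|k|(k^2+l^2)^{-1/2}
\le C(k^2+l^2)^{-2}\big\|e^{a\nu^{1/3}t}\widehat{\td f}\big\|_{L^\infty}^2,
\]
using $\int_\R l^2|k|Q^{-3}\,dt=Cl^2(k^2+l^2)^{-5/2}$. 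Multiplying by $(k^2+l^2)^2$ from $(\partial_x^2+\partial_z^2)$ and summing recovers $\|f\|_{X_a}^2$. Thus your ``direct estimate fails'' remark is slightly off: the paper does estimate $S$ directly, but only after choosing the $f_1$-slot, whose built-in weight $\gamma^{1/2}|k|(k^2+l^2)^{-1/2}$ is the inviscid-damping gain.

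The analytic core is identical in both arguments: the convergence of $\int_\R Q^{-2}\,d\tau=\tfrac{\pi}{2|k|(k^2+l^2)^{3/2}}$. You package it as the uniform bound $|b|\le\pi$ on the primitive, so the coupling is \emph{cancelled} by the corrector $B(t)f$; the paper packages it as an $L^2$-in-time bound on $lQ^{-3/2}$, so the coupling is \emph{estimated}. Your subtraction makes the mechanism more transparent and avoids having to track which slot of Lemma~\ref{lem3b} to use; the paper's route is shorter since it just invokes Lemma~\ref{lem3b} once with no new object introduced. One small cosmetic point: your ``$-\mathrm{div}(B(t)\nabla f_1)$'' is only schematic, since $B(t)$ is a moving-frame multiplier; what you actually need (and what your mode-by-mode argument delivers) is that the resulting $f_3$ satisfies $\|f_3(t)\|_{L^2}\le\pi\|\nabla f_1(t)\|_{L^2}$, which is exactly the bound $|b|\le\pi$.
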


\begin{proof}
Thanks to $\cL_0 f=\Delta f_1=\text{div}\nabla f_1, $  we infer from Proposition \ref{prop:decay-L0}  that
\begin{align*}
\|f\|_{X_a}^2\leq C\big(\|f(1)\|_{L^2}^2+\nu^{-1}\|e^{a\nu^{1/3}t} \nabla f_1\|_{L^2L^2}^2\big).
\end{align*}
So, it suffices to estimate $\|(\partial_x^2+\partial_z^2)h\|_{X_a}^2. $ Let $(\td{f},\td{f}_1,\td{h},\td{h}_1)(t,\overline{x},y,z)=({f},f_1,h,h_1)(t,{x},y,z)$ with $(\overline{x},y,z)=(x-ty,y,z)$. Then we have
\begin{align*}
&\partial_t\td{f}-\nu\Delta_L\td{f}=\Delta_L\td{f}_1,\quad\partial_t\td{h}-\nu\Delta_L\td{h}-2\partial_{\overline{x}}\partial_z\Delta_L^{-2}\td{f}=\td{h}_1.
\end{align*}
Thanks to $P_0 f=P_0 f_1=P_0 h=P_0 h_1=0,$ we have $ \widehat{\td{f}}=\widehat{\td{f}_1}=\widehat{\td{h}}=\widehat{\td{h}_1}=0$ for $k=0.$ Taking the Fourier transform,  we obtain
\begin{align*}
&\partial_t \widehat{\td{f}}+\nu(2\pi)^2\big(k^2+(\eta-kt)^2+l^2\big) \widehat{\td{f}}=(2\pi)^2\big(k^2+(\eta-kt)^2+l^2\big)\widehat{\td{f}_{1}},\\ &\partial_t \widehat{\td{h}}+\nu(2\pi)^2\big(k^2+(\eta-kt)^2+l^2\big) \widehat{\td{h}}+2kl(2\pi)^{-2}\big(k^2+(\eta-kt)^2+l^2\big)^{-2}\widehat{\td{f}}= \widehat{\td{h}_{1}}.
\end{align*}

Now we deduce from Lemma \ref{lem3b} that for every $k,l\in\mathbb{Z},\ \eta\in\R,\ k\neq 0,$
\begin{align*}
&\big\|e^{a\nu^{1/3}t}\widehat{\td{f}}\big\|_{L^{\infty}(1,T)}^2\leq C\big|\widehat{\td{f}}(1,k,\eta,l)\big|^2
+C\nu^{-1}\big\|e^{a\nu^{1/3}t}(k^2+(\eta-kt)^2+l^2)^{\frac{1}{2}}\widehat{\td{f}_1}\big\|_{L^2(1,T)}^2,
\end{align*}
and then
\begin{align*}
&\big\|e^{a\nu^{1/3}t}\widehat{\td{h}}\big\|_{L^{\infty}(1,T)}^2+\big\|e^{a\nu^{1/3}t} k(k^2+(\eta-kt)^2+l^2)^{-\frac{1}{2}}\widehat{\td{h}}\big\|_{L^{2}(1,T)}^2
+\nu^{\frac{1}{3}}\big\|e^{a\nu^{1/3}t}\widehat{\td{h}}\big\|_{L^{2}(1,T)}^2
\\&\quad+\nu\big\|e^{a\nu^{1/3}t}(2\pi)(k^2+(\eta-kt)^2+l^2)^{\frac{1}{2}}\widehat{\td{h}}\big\|_{L^{2}(1,T)}^2\\
&\leq C\Big(\big|\widehat{\td{h}}(1,k,\eta,l)\big|^2+\big\|e^{a\nu^{1/3}t}l(k^2+(\eta-kt)^2+l^2)^{-\frac{3}{2}} \widehat{\td{f}}\big\|_{L^2(1,T)}^2|k|(k^2+l^2)^{-\frac{1}{2}}\\&
\quad+\nu^{-1}\big\|e^{a\nu^{1/3}t}(k^2+(\eta-kt)^2+l^2)^{-\frac{1}{2}}\widehat{\td{h}_1}\big\|_{L^2(1,T)}^2\Big)\\
&\leq C\Big(\big|\widehat{\td{h}}(1,k,\eta,l)\big|^2+\big\|e^{a\nu^{1/3}t}\widehat{\td{f}}\big\|_{L^{\infty}(1,T)}^2
\big\|l(k^2+(\eta-kt)^2+l^2)^{-\frac{3}{2}}\big\|_{L^2(\R)}^2|k|(k^2+l^2)^{-\frac{1}{2}}\\
&\quad+\nu^{-1}\big\|e^{a\nu^{1/3}t}(k^2+l^2)^{-\frac{1}{2}}\widehat{\td{h}_1}\big\|_{L^2(1,T)}^2\Big)\\
&\leq C\Big(\big|\widehat{\td{h}}(1,k,\eta,l)\big|^2
+\nu^{-1}\big\|e^{a\nu^{1/3}t}(k^2+(\eta-kt)^2+l^2)^{\frac{1}{2}}\widehat{\td{f}_1}\big\|_{L^2(1,T)}^2
(k^2+l^2)^{-2}\\&\quad
+\big|\widehat{\td{f}}(1,k,\eta,l)\big|^2(k^2+l^2)^{-2}
+\nu^{-1}(k^2+l^2)^{-1}\big\|e^{a\nu^{1/3}t}\widehat{\td{h}_1}\big\|_{L^2(1,T)}^2\Big),
\end{align*}
here we used the fact that
\begin{align*}
\left\|l(k^2+(\eta-kt)^2+l^2)^{-\frac{3}{2}} \right\|_{L^2(\R)}^2|k|&=\int_{\R}\frac{l^2|k|dt}{(k^2+(\eta-kt)^2+l^2)^3}=\int_{\R}\frac{l^2dt}{(k^2+t^2+l^2)^3}\\&
=Cl^2(k^2+l^2)^{-\frac{5}{2}}\leq C(k^2+l^2)^{-\frac{3}{2}}.
\end{align*}

Let $h_2=(\partial_x^2+\partial_z^2)h,\ \td{h}_2(t,\overline{x},y,z)=h_2(t,{x},y,z)$. Then $\td{h}_2=(\partial_{\overline{x}}^2+\partial_z^2)\td{h},\ \widehat{\td{h}_2}=-(2\pi)^2(k^2+l^2)\widehat{\td{h}}.$
Then we conclude that
 \begin{align*}
&\|(\partial_x^2+\partial_z^2)h\|_{X_a}^2=\|h_2\|_{X_a}^2=\|e^{a\nu^{1/3}t}\td{h}_2\|_{L^{\infty}L^{2}}^2+
\|e^{a\nu^{1/3}t}\nabla_L\Delta_L^{-1}\partial_{\overline{x}}\td{h}_2\|_{L^{2}L^{2}}^2
\\&\quad+\nu^{\frac{1}{3}}\|e^{a\nu^{1/3}t}\td{h}_2\|_{L^{2}L^{2}}^2+\nu\|e^{a\nu^{1/3}t}\nabla_L\td{h}_2\|_{L^{2}L^{2}}^2\\
&\leq\sum_{k\in\mathbb{Z}}\int_{\eta\in\R}\sum_{l\in\mathbb{Z}}(2\pi)^4\bigg(
\big\|e^{a\nu^{1/3}t}\widehat{\td{h}}\big\|_{L^{\infty}(1,T)}^2+\big\|e^{a\nu^{1/3}t} k(k^2+(\eta-kt)^2+l^2)^{-\frac{1}{2}}\widehat{\td{h}}\big\|_{L^{2}(1,T)}^2
\\&\quad+\nu^{\frac{1}{3}}\big\|e^{a\nu^{1/3}t}\widehat{\td{h}}\big\|_{L^{2}(1,T)}^2
+\nu\big\|e^{a\nu^{1/3}t}(2\pi)(k^2+(\eta-kt)^2+l^2)^{\frac{1}{2}}\widehat{\td{h}}\big\|_{L^{2}(1,T)}^2
\bigg)(k^2+l^2)^2d\eta\\
&\leq C\sum_{k\in\mathbb{Z}}\int_{\eta\in\R}\sum_{l\in\mathbb{Z}}\bigg(\big|\widehat{\td{f}}(1,k,\eta,l)\big|^2
+\nu^{-1}\big\|e^{a\nu^{1/3}t}(2\pi)(k^2+(\eta-kt)^2+l^2)^{\frac{1}{2}} \widehat{\td{f}_1}\big\|_{L^2(1,T)}^2\\&
\quad+\big|\widehat{\td{h}}(1,k,\eta,l)\big|^2(2\pi)^4(k^2+l^2)^2
+\nu^{-1}\big\|e^{a\nu^{1/3}t}\widehat{\td{h}_1}\big\|_{L^2(1,T)}^2(2\pi)^2(k^2+l^2)\bigg)d\eta\\
&=C\big(\|\td{f}(1)\|_{L^{2}}^2
+\nu^{-1}\|e^{a\nu^{1/3}t}\nabla_L\td{f}_1\|_{L^{2}L^{2}}^2+
\|(\partial_{\overline{x}}^2+\partial_z^2)\td{h}(1)\|_{L^{2}}^2\\&
\quad+\nu^{-1}\|e^{a\nu^{1/3}t}(\partial_{\overline{x}},\partial_z)\td{h}_1\|_{L^{2}L^{2}}^2\big)\\
&=C\big(\|{f}(1)\|_{L^{2}}^2
+\nu^{-1}\|e^{a\nu^{1/3}t}\nabla{f}_1\|_{L^{2}L^{2}}^2+
\|(\partial_{{x}}^2+\partial_z^2){h}(1)\|_{L^{2}}^2\\&
\quad+\nu^{-1}\|e^{a\nu^{1/3}t}(\partial_{{x}},\partial_z){h}_1\|_{L^{2}L^{2}}^2\big).
\end{align*}
This gives our result.
\end{proof}

\subsection{Decay estimates of the linearized equation $\cL f=g$}

\begin{Proposition}\label{prop:decay-L}
Let $f$ solve the linear equation
\beno
\cL f=\partial_xf_1+f_2+\text{div} f_3
\eeno
for $t\in [1,T]$. Assume that
\ben\label{ass:u1-smallc1}
\|\bu^1\|_{H^4}+\|\partial_t\bu^1\|_{H^2}/\nu<c_{1}
\een
for some small constant $c_1$ independent of $\nu$ and $T$. If $P_0 f=P_0 f_1=P_0 f_2=P_0 f_3=0,$  then for $a\in[0,4]$, it holds that \begin{align*}
&\|f\|_{X_a}^2\leq C\Big(\|f(1)\|_{L^2}^2+\|e^{a\nu^{1/3}t}\nabla f_1\|_{L^2L^2}^2+\nu^{-\frac{1}{3}}\|e^{a\nu^{1/3}t} f_2\|_{L^2L^2}^2+\nu^{-1}\|e^{a\nu^{1/3}t} f_3\|_{L^2L^2}^2\Big).
\end{align*}
\end{Proposition}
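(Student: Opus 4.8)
The plan is to prove Proposition \ref{prop:decay-L} by running \emph{the same} weighted energy estimates that yield Proposition \ref{prop:decay-L0}, but directly on $\cL=\cL_0+\bu^1\partial_x$, rather than by demoting $\bu^1\partial_x f$ to a forcing term. The latter, tempting reduction fails: writing $\bu^1\partial_x f=\partial_x(\bu^1 f)$ (legitimate since $\partial_x\bu^1=0$) and feeding it into the $\partial_x f_1$ slot of Proposition \ref{prop:decay-L0} costs $\|e^{a\nu^{1/3}t}\nabla(\bu^1 f)\|_{L^2L^2}^2$, and its piece $\bu^1\nabla f$ is controlled only through the $\nu$-weighted dissipation term of $X_a$, giving a bound $c_1^2\nu^{-1}\|f\|_{X_a}^2$. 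Absorbing this would require $c_1\lesssim\nu^{1/2}$, incompatible with a $\nu$-independent constant. First I would pass to the shear variables $(\overline{x},y,z)=(x-ty,y,z)$, under which $\cL$ becomes $\partial_t-\nu\Delta_L+\bu^1\partial_{\overline{x}}$, and record the decisive structural fact: since $\partial_{\overline{x}}\bu^1=0$, the transport is skew-symmetric, $\text{Re}\langle \bu^1\partial_{\overline{x}}\td f,\td f\rangle=-\tfrac12\int(\partial_{\overline{x}}\bu^1)|\td f|^2=0$.

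Next I would reproduce the four components of the $X_a$-norm one at a time. The basic $L^2$ identity, from pairing the equation with $e^{2a\nu^{1/3}t}\overline{\td f}$ and integrating, sees no contribution from the transport by the skew-symmetry above, and delivers the $\|f\|_{L^\infty L^2}$ and dissipation $\nu\|\nabla f\|_{L^2L^2}$ pieces exactly as in the constant-coefficient case, the genuine sources $f_1,f_2,f_3$ being estimated verbatim as in Lemma \ref{lem3b}. For the enhanced dissipation $\nu^{1/3}\|f\|_{L^2L^2}^2$ and the inviscid damping $\|\nabla\Delta^{-1}\partial_x f\|_{L^2L^2}^2$ I would insert the same shear-adapted multipliers as in Lemma \ref{lem3b} (those encoding the growth of $\nabla_L$) and commute them through $\cL$. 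The commutator with the $y\partial_x$ part of $V\partial_x$ is \emph{not} an error: it is precisely the constant-coefficient mixing mechanism, and it reproduces the $\nu^{1/3}\|f\|^2$ and $\|\nabla\Delta^{-1}\partial_x f\|^2$ bounds together with the $\gamma(t)$-weighted control of the sources.

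The new terms come from commuting the multipliers with $\bu^1\partial_x$ and from the time-dependence of the adapted multiplier; schematically they have the form $\nabla\bu^1\cdot(\text{first derivatives of }f)$ and $(\partial_t\bu^1)(\cdots)$. The crucial point, and the reason the argument closes with a $\nu$-independent $c_1$, is that each such term is produced \emph{inside} the estimate at the order where it already carries the dissipation weight $\nu$ or is matched by an $X_a$ quantity; for instance the typical error $\nu\langle(\partial_y\bu^1)\partial_x f,\partial_y f\rangle$, integrated in time, is bounded by $C\|\partial_y\bu^1\|_{L^\infty}\big(\nu\|e^{a\nu^{1/3}t}\nabla f\|_{L^2L^2}^2\big)\le Cc_1\|f\|_{X_a}^2$. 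Using $\|\bu^1\|_{H^4}<c_1$ (hence $\|\nabla\bu^1\|_{L^\infty}\le Cc_1$ by Sobolev embedding) and $\|\partial_t\bu^1\|_{H^2}/\nu<c_1$ from \eqref{ass:u1-smallc1}, every error term is bounded by $Cc_1\|f\|_{X_a}^2$ and absorbed into the left-hand side once $c_1$ is small.

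I expect the main obstacle to be the inviscid-damping component, which carries no $\nu$-weight in $X_a$ and is therefore the least forgiving: there the commutator and time-dependence errors must be shown to carry a non-negative power of $\nu$, and it is exactly this bookkeeping that forces the smallness of $\|\partial_t\bu^1\|_{H^2}/\nu$ and not merely of $\|\bu^1\|_{H^4}$. Concretely, it amounts to checking that the Orr bound $\int_0^t k^2/\gamma\,d\tau\lesssim 1$ underlying \eqref{kL2} survives replacing the exact shear $\eta-kt$ by its $\bu^1$-perturbation, with the discrepancy absorbed by smallness; once this is secured the remaining pieces follow the template of Lemma \ref{lem3b}.
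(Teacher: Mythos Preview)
Your approach is genuinely different from the paper's, and the proposal has a real gap in the enhanced-dissipation and inviscid-damping steps. The paper does \emph{not} keep the shear coordinates $(\overline{x},y,z)$ and treat $\bu^1\partial_x$ as a perturbation of the multiplier machinery; instead it introduces a further change of variables $Y=V(t,y,z)=y+\bu^1$, under which the full transport $V\partial_x$ becomes \emph{exactly} the Couette transport $Y\partial_X$, at the price of perturbing the Laplacian: $\cL f=\cL_0 F-\nu\partial_Y(G\partial_Y F)-2\nu\partial_Z(\psi_z\partial_Y F)+(\psi_t+\nu H)\partial_Y F$ (equation \eqref{LL0}). The point is that these perturbations, being either dissipative (hence carrying an explicit factor $\nu$) or driven by $\partial_t\bu^1$ (controlled by $\nu c_1$), feed into the $\text{div}\,f_3$ and $f_2$ slots of Proposition \ref{prop:decay-L0} and produce errors of size $Cc_1^2\|F\|_{X_a}^2$ with \emph{no negative power of $\nu$}. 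The norm equivalence $\|F\|_{X_a}\sim\|f\|_{X_a}$ (Lemma \ref{lemHs2}) then closes the argument. This sidesteps entirely the issue you correctly flag, that putting $\bu^1\partial_x f$ into a forcing slot of Proposition \ref{prop:decay-L0} loses $\nu^{-1}$.

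The gap in your plan is that Lemma \ref{lem3b} is \emph{not} a multiplier argument: the $L^\infty_t$ and $\nu^{1/3}\|f\|_{L^2_t}^2$ bounds come from the explicit Duhamel formula $\widehat{\td f}(t)=e^{-\nu\gamma_1(t)}\widehat{\td f}(1)+\cdots$, using the pointwise-in-$(k,\eta,l)$ decay $e^{-\nu\gamma_1(t)}\le Ce^{-(a+1)\nu^{1/3}t}$, and the inviscid-damping bound \eqref{f1e2} is then the Orr integral \eqref{kL2} applied frequency by frequency. Once you add $\bu^1(t,y,z)\partial_{\overline x}$, you can no longer take the Fourier transform in $y$ and reduce to an ODE, so there is no Duhamel formula and no $\gamma(t)$; your phrase ``replacing the exact shear $\eta-kt$ by its $\bu^1$-perturbation'' does not parse, since $\bu^1$ lives in $(y,z)$-physical space, not in $\eta$. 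You could try to recover enhanced dissipation by a genuine hypocoercivity functional and inviscid damping by commuting the nonlocal operator $\nabla_L\Delta_L^{-1}\partial_{\overline x}$ through $\bu^1$, but these commutators are \emph{not} of the benign type $\nu\langle(\partial_y\bu^1)\partial_x f,\partial_y f\rangle$ you display: they are pseudodifferential, time-dependent (through $\nabla_L$), and do not obviously carry the needed $\nu$-weight. The paper's change of variables avoids all of this by moving the variable coefficient from the transport, where it is dangerous, to the dissipation, where it is automatically small.
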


We define the map $(Id +g): \Omega\to  \R^3$ by
\beno
(x,y,z)\mapsto(X,Y,Z)=(x,V(t,y,z),z)\quad V=y+\bu^1.
\eeno
The following lemma {is} classical.

\begin{Lemma}\label{lem:com-inv}
Let $g:\T\times\R\times\T\to \R^3$ be a $C^1$ map such that $\|\nabla g\|_{L^{\infty}}<1/2$. Then  it holds that
\begin{itemize}

\item[1.] $\|f\circ(Id +g)\|_{L^p}\sim\|f\|_{L^p},\ \|\nabla(f\circ(Id +g))\|_{L^p}\sim\|\nabla f\|_{L^p}$ for every $1\leq p\leq+\infty$ and $f\in W^{1,p}.$ Here $A\sim B$ means $C^{-1}A\leq B\leq CA$ for some absolute constant $C.$

\item[2.] there exists a unique $C^1$ solution $h$ to $ h(x,y,z)=g\big((x,y,z)+h(x,y,z)\big)$ satisfying $\|\nabla h\|_{L^{\infty}}\leq C\|\nabla g\|_{L^{\infty}}. $
\end{itemize}
\end{Lemma}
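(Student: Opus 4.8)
The plan is to establish part 2 first, because the inverse map it produces is exactly what turns the change-of-variables argument for part 1 into a rigorous one. Throughout I lift everything to the universal cover $\R^3$, on which $g$ is a $C^1$ map that is periodic in the first and third variables; since $\|\nabla g\|_{L^\infty}<1/2$, the fundamental theorem of calculus along segments gives the global Lipschitz bound $|g(\xi_1)-g(\xi_2)|\le\frac12|\xi_1-\xi_2|$.

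For part 2, I would solve, for each fixed $w\in\R^3$, the equation $\xi=w+g(\xi)$ by Banach's fixed point theorem: the map $\xi\mapsto w+g(\xi)$ is a contraction with factor $\le\frac12$, so it has a unique fixed point $\xi(w)$. Setting $h(w)=\xi(w)-w=g(w+h(w))$ produces the unique solution of $h=g\circ(Id+h)$, and $w\mapsto\xi(w)$ is precisely the inverse of $Id-g$. Since $Id-g$ has everywhere-invertible Jacobian $I-Dg$, the inverse function theorem shows $\xi(\cdot)$, hence $h$, is $C^1$; this is what legitimizes differentiating the fixed-point relation to get $Dh=(Dg)\circ(Id+h)\,(I+Dh)$, i.e. $Dh=\big(I-(Dg)\circ(Id+h)\big)^{-1}(Dg)\circ(Id+h)$. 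The Neumann bound $\|(I-Dg)^{-1}\|\le(1-\|\nabla g\|_{L^\infty})^{-1}\le2$ then yields $\|\nabla h\|_{L^\infty}\le2\|\nabla g\|_{L^\infty}$, which is the asserted bound with $C=2$.

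For part 1, I would run the same fixed-point argument on $\xi=w-g(\xi)$ to invert $\Phi:=Id+g$, showing that $\Phi$ is a $C^1$ diffeomorphism of $\T\times\R\times\T$ onto itself (injectivity is immediate from the Lipschitz bound, surjectivity from the fixed point). Because every singular value of $I+Dg$ lies in $(1/2,3/2)$, the Jacobian $J=\det(I+Dg)$ satisfies $(1/2)^3\le J\le(3/2)^3$, and the change of variables $w=\Phi(\xi)$ gives
\[
\|f\circ\Phi\|_{L^p}^p=\int|f(\Phi(\xi))|^p\,d\xi=\int|f(w)|^p\,\big(J^{-1}\circ\Phi^{-1}\big)(w)\,dw,
\]
so the two-sided bound on $J$ gives $\|f\circ\Phi\|_{L^p}\sim\|f\|_{L^p}$ for $1\le p<\infty$, the case $p=\infty$ being immediate from bijectivity. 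For the gradient, the chain rule $\nabla(f\circ\Phi)=\big((\nabla f)\circ\Phi\big)(I+Dg)$ together with $\|I+Dg\|\le3/2$ and $\|(I+Dg)^{-1}\|\le2$ gives $|\nabla(f\circ\Phi)|\sim|(\nabla f)\circ\Phi|$ pointwise; applying the $L^p$ equivalence already proved, now with $\nabla f$ in place of $f$, yields $\|\nabla(f\circ\Phi)\|_{L^p}\sim\|\nabla f\|_{L^p}$.

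The main obstacle is not any single estimate but the regularity bookkeeping: the contraction lives naturally in the continuous (or Lipschitz) category, whereas both the gradient bound in part 2 and the Jacobian computation in part 1 require $h$ and $\Phi^{-1}$ to be genuinely $C^1$ with the differentiated identities valid. Transferring regularity cleanly—via the inverse function theorem applied to the everywhere-nondegenerate local diffeomorphisms $Id\pm g$—is the step that demands the most care; once it is in place, everything reduces to the Neumann-series bound on $(I\pm Dg)^{-1}$ and a single change of variables.
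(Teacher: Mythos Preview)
Your argument is correct and is exactly the classical proof one has in mind for this lemma. Note that the paper does not actually supply a proof here: the lemma is introduced with ``The following lemma is classical'' and is then used without further justification, so there is no paper proof to compare against beyond the fact that your Banach fixed-point/inverse-function-theorem approach is precisely the standard one being alluded to.
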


If $\|\nabla \bu^1\|_{L^{\infty}}<1/2,$ let $F(t,X,Y,Z)$ be such that $F(t,x,V(t,y,z),z)=f(t,x,y,z)$. By Lemma \ref{lem:com-inv}, $F(t,X,Y,Z)$ is well defined, and as $V$ is $C^1$ in $t,$ if $f$ is $C^1$ in $t,$ then so does $F$. Let $\psi_t(t,Y,Z),\ \psi_y(t,Y,Z),\ \psi_z(t,Y,Z)$ be such that
\begin{align*}
&\psi_t\big(t,V(t,y,z),z\big)=\partial_t\bu^1(t,y,z)=\partial_tV(t,y,z),\\
&\psi_y\big(t,V(t,y,z),z\big)=\partial_y\bu^1(t,y,z)=\partial_yV(t,y,z)-1,\\
&\psi_z\big(t,V(t,y,z),z\big)=\partial_z\bu^1(t,y,z)=\partial_zV(t,y,z).\end{align*}
It is easy to check that
\begin{align*}&\partial_t f=(\partial_t+\psi_t\partial_Y)F, \quad  \partial_x f=\partial_XF,\\ &\partial_y f=(1+\psi_y)\partial_YF,\quad \partial_z f=(\partial_Z+\psi_z\partial_Y)F.
\end{align*}
We introduce the notations
\beno
&&\partial_Y^t=(1+\psi_y)\partial_Y,\quad \partial_Z^t=\partial_Z+\psi_z\partial_Y, \\
&&G=(1+\psi_y)^2+\psi_z^2-1, \quad H\big(t,V(t,y,z),z\big)=\Delta \bu^1(t,y,z).
\eeno
Then $V\partial_x f=Y\partial_X F$ and
\begin{align*}
\Delta f=\Delta_t F=\big((\partial_X)^2+(\partial_Y^t)^2+(\partial_Z^t)^2\big)F=\Delta F+G\partial_Y^2F+2\psi_z\partial_Z\partial_YF+H\partial_YF.
\end{align*}
Thus, the operator $\cL$ \underline{is} transformed into
\begin{align*}
\cL f=&\partial_t f-\nu\Delta f+V\partial_x f=(\partial_t+\psi_t\partial_Y)F-\nu\big(\Delta F+G\partial_Y^2F+2\psi_z\partial_Z\partial_YF+H\partial_YF\big)\\&+Y\partial_X F=\cL_0 F-\nu\big(G\partial_Y^2F+2\psi_z\partial_Z\partial_YF\big)+(\psi_t-\nu H)\partial_YF.
\end{align*}
Notice that
\beno
&&\partial_YF=\partial_y f/\partial_y V,\quad \partial_ZF=\partial_z f-\partial_z V\partial_y f/\partial_y V,\\
&&G(t,V(t,y,z),z)=(1+\partial_y\bu^1(t,y,z))^2+\partial_z\bu^1(t,y,z)^2-1=(\partial_y V)^2+(\partial_z V)^2-1,
\eeno
then we have
\begin{align*}
&\partial_YG+2\partial_Z\psi_z=\partial_y((\partial_y V)^2+(\partial_z V)^2-1)/\partial_y V+2(\partial_z-(\partial_z V/\partial_y V)\partial_y)\partial_z V\\
&=2\partial_y^2 V+2\partial_z V\partial_y\partial_z V/\partial_y V+2\partial_z^2 V-2(\partial_z V/\partial_y V)\partial_y\partial_z V=2\Delta V=2\Delta \bu^1=2H.
\end{align*}
Thus, we can write $\cL f $ in the divergence form
\begin{align}\label{LL0}
\cL f=\cL_0 F-\nu\partial_Y(G\partial_YF)-2\nu\partial_Z(\psi_z\partial_YF)+(\psi_t+\nu H)\partial_YF.
\end{align}
Now $ \cL$ could be viewed as a small perturbation of $ \cL_0.$

By Lemma \ref{lem:com-inv}, we have
\beno
\|F\|_{L^2}\sim \|f\|_{L^2},\quad \|\nabla F\|_{L^2}\sim \|\nabla f\|_{L^2},
\eeno
We also need the equivalence of $H^{-1}$ norm.

\begin{Lemma}\label{lemHs2}
If $\|\nabla \bu^1\|_{L^{\infty}}<1/2,\ \| \bu^1\|_{H^{4}}<1,\ P_0 f=0$, then
$\|\nabla\Delta^{-1} F\|_{L^2}\sim \|\nabla\Delta^{-1} f\|_{L^2},$ thus $ \|F\|_{X_a}\sim \|f\|_{X_a}.$
\end{Lemma}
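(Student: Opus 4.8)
The plan is to establish the homogeneous $H^{-1}$ equivalence $\|\nabla\Delta^{-1}F\|_{L^2}\sim\|\nabla\Delta^{-1}f\|_{L^2}$ first, and then read off the $X_a$ equivalence from it. Indeed, among the four pieces of $\|\cdot\|_{X_a}$, the factors built from $\|f\|_{L^2}$ and $\|\nabla f\|_{L^2}$ are already matched pointwise in $t$ by Lemma \ref{lem:com-inv}, so only the inviscid-damping term $\|e^{a\nu^{1/3}t}\nabla\Delta^{-1}\partial_x f\|_{L^2L^2}$ remains. Since $\partial_x f=\partial_X F$ and the first coordinate is unchanged ($X=x$), the functions $g:=\partial_x f$ and $G:=\partial_X F$ are related by exactly the same change of variables as $f$ and $F$, and $P_0 g=0$ automatically because $g$ is an $x$-derivative. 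Applying the $H^{-1}$ equivalence to the pair $(g,G)$ then gives $\|\nabla\Delta^{-1}\partial_x f\|_{L^2}\sim\|\nabla\Delta^{-1}\partial_X F\|_{L^2}$ for each $t$, which closes the reduction.

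For the $H^{-1}$ equivalence I would argue by duality. For $P_0 f=0$ one has $\|\nabla\Delta^{-1}f\|_{L^2}=\sup\{\,|\langle f,\psi\rangle| : \|\nabla\psi\|_{L^2}\le 1,\ P_0\psi=0\,\}$, the restriction $P_0\psi=0$ being harmless since $\langle f,\psi\rangle=\langle f,\psi_{\neq}\rangle$. Given such a test function $\Psi$ in the $(X,Y,Z)$ variables, I set $\psi=\Psi\circ(Id+g)$, that is $\psi(x,y,z)=\Psi(x,V,z)$. The Jacobian of $(x,y,z)\mapsto(x,V,z)$ is $J=\partial_yV=1+\partial_y\bu^1\in(\tfrac12,\tfrac32)$, so the change of variables produces the identity $\langle F,\Psi\rangle=\langle f,J\psi\rangle$. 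Using $P_0 f=0$ once more, this equals $\langle f,(J\psi)_{\neq}\rangle$, whence $|\langle F,\Psi\rangle|\le\|\nabla\Delta^{-1}f\|_{L^2}\,\|\nabla(J\psi)\|_{L^2}$.

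The heart of the matter is therefore the bound $\|\nabla(J\psi)\|_{L^2}\lesssim\|\nabla\Psi\|_{L^2}$. Writing $\nabla(J\psi)=J\nabla\psi+(\nabla J)\psi$, the first term is controlled by $\|J\|_{L^\infty}\|\nabla\psi\|_{L^2}\lesssim\|\nabla\psi\|_{L^2}\sim\|\nabla\Psi\|_{L^2}$ via Lemma \ref{lem:com-inv}. The delicate term is $(\nabla J)\psi$, where $\nabla J=\nabla\partial_y\bu^1$ carries three derivatives of $\bu^1$ and so cannot be placed in $L^\infty$ under an $H^4$ hypothesis alone. This is the main obstacle, and I would resolve it by the H\"older split $\|(\nabla J)\psi\|_{L^2}\le\|\nabla J\|_{L^3}\|\psi\|_{L^6}$: the three-derivative factor lies in $L^3$ by interpolation between $L^2$ and $L^6$ (using $H^1\hookrightarrow L^6$ applied to $D^3\bu^1\in H^1$), giving $\|\nabla J\|_{L^3}\lesssim\|\bu^1\|_{H^4}<1$, while $\|\psi\|_{L^6}\lesssim\|\psi\|_{H^1}\lesssim\|\nabla\psi\|_{L^2}\sim\|\nabla\Psi\|_{L^2}$ by Sobolev embedding together with the Poincar\'e inequality $\|\psi\|_{L^2}\lesssim\|\partial_x\psi\|_{L^2}$, valid because $P_0\psi=0$. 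Taking the supremum over $\Psi$ yields $\|\nabla\Delta^{-1}F\|_{L^2}\lesssim\|\nabla\Delta^{-1}f\|_{L^2}$; note that this is precisely where the exponent $4$ in the hypothesis $\|\bu^1\|_{H^4}<1$ enters.

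The reverse inequality follows from the symmetric argument based on the inverse map $Id+h$ furnished by Lemma \ref{lem:com-inv}(2), which satisfies $\|\nabla h\|_{L^\infty}\le C\|\nabla\bu^1\|_{L^\infty}<1/2$; the relevant Jacobian $\mathcal{J}^{-1}=(1+\psi_y)^{-1}$ and its gradient obey the same bounds, the estimate $\|\nabla\psi_y\|_{L^3}\lesssim\|\nabla\partial_y\bu^1\|_{L^3}\lesssim\|\bu^1\|_{H^4}$ following from the $L^3$ case of Lemma \ref{lem:com-inv}(1). One also checks $P_0 f=0\Leftrightarrow P_0 F=0$, since along $X=x$ the $x$-average is preserved by the change of variables, so the duality characterization is available for $F$ as well. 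Combining the two inequalities gives $\|\nabla\Delta^{-1}F\|_{L^2}\sim\|\nabla\Delta^{-1}f\|_{L^2}$, and hence $\|F\|_{X_a}\sim\|f\|_{X_a}$ as explained in the first paragraph.
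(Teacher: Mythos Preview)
Your proof is correct and follows essentially the same duality route as the paper: both arguments convert the pairing $\langle F,\Psi\rangle$ into $\langle f,J\psi\rangle$ via the change of variables with Jacobian $J=\partial_y V$, and then estimate $\|\nabla(J\psi)\|_{L^2}$. The paper simply specializes to $\Psi=\Delta^{-1}F$ rather than taking the supremum.

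One small correction that also simplifies the argument: $\nabla J=\nabla\partial_y\bu^1$ carries only \emph{two} derivatives of $\bu^1$, not three. Since $H^2\hookrightarrow L^\infty$ in three dimensions, the hypothesis $\|\bu^1\|_{H^4}<1$ already gives $\|\nabla J\|_{L^\infty}\le C\|\bu^1\|_{H^4}\le C$, so the $L^3\times L^6$ H\"older split is unnecessary; the paper just uses $\|(\nabla J)\psi\|_{L^2}\le\|\nabla^2 V\|_{L^\infty}\|\psi\|_{L^2}$ together with Poincar\'e. The same simplification applies to the reverse inequality.
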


\begin{proof}
We still have $P_0 F=0.$ As $\|\bu^1\|_{H^{4}}<1,\ V=y+\bu^1,$ we have
\beno
\|\nabla V\|_{L^{\infty}}+ \|\nabla^2 V\|_{L^{\infty}}\leq 1+\|\nabla \bu^1\|_{L^{\infty}}+ \|\nabla^2 \bu^1\|_{L^{\infty}}\leq 1+C\|\bu^1\|_{H^{4}}\leq C.
\eeno
Now we introduce
\beno
&&F_1=\Delta^{-1} F,\quad f_1(t,x,y,z)=F_1\big(t,x,V(t,y,z),z\big),\\
&&f_2=\Delta^{-1} f,\quad f_2(t,x,y,z)=F_2\big(t,x,V(t,y,z),z\big).
\eeno
By Lemma \ref{lem:com-inv},  $\|\nabla F_j\|_{L^2}\sim \|\nabla f_j\|_{L^2}.$ On the one hand, we have
\begin{align*}
\|\nabla\Delta^{-1} F\|_{L^2}^2=&\|\nabla F_1\|_{L^2}^2=-\langle F,F_1\rangle=-\langle f,\partial_y Vf_1\rangle=\langle \nabla f_2,\nabla(\partial_y Vf_1)\rangle\\
\leq&\|\nabla f_2\|_{L^2}\|\nabla (\partial_y Vf_1)\|_{L^2}\leq \|\nabla f_2\|_{L^2}(\|\nabla V\|_{L^{\infty}}\|\nabla f_1\|_{L^2}+ \|\nabla^2 V\|_{L^{\infty}}\| f_1\|_{L^2})\\
\leq&C\|\nabla f_2\|_{L^2}\big(\|\nabla f_1\|_{L^2}+ \| f_1\|_{L^2}\big)\leq C\|\nabla f_2\|_{L^2}\|\nabla f_1\|_{L^2}\leq C\|\nabla f_2\|_{L^2}\|\nabla F_1\|_{L^2},
\end{align*}
which gives $\|\nabla F_1\|_{L^2}\leq C\|\nabla f_2\|_{L^2}. $ On the other hand, we have
\begin{align*}
\|\nabla\Delta^{-1} f\|_{L^2}^2=&\|\nabla f_2\|_{L^2}^2=-\langle f,f_2\rangle=-\langle F,F_2/\psi_y\rangle=\langle \nabla F_1,\nabla(F_2/\psi_y)\rangle\\
\leq&\|\nabla F_1\|_{L^2}\|\nabla (F_2/\psi_y)\|_{L^2}.
\end{align*}
Since $\psi_y\big(t,V(t,y,z),z\big)=1-\partial_y \bu^1(t,y,z)$ and  $\|\nabla \bu^1\|_{L^{\infty}}<1/2,$ we have $1/2<\psi_y<3/2, $ and by Lemma \ref{lem:com-inv} we have $\|\nabla \psi_y\|_{L^{\infty}}\leq C\|\nabla \partial_y \bu^1\|_{L^{\infty}}\leq C, $ thus $\|\nabla (1/\psi_y)\|_{L^{\infty}}\leq C,$ and
$$\|\nabla (F_2/\psi_y)\|_{L^2}\leq C(\|\nabla F_2\|_{L^2}+ \| F_2\|_{L^2})\leq C\|\nabla F_2\|_{L^2}\leq C\|\nabla f_2\|_{L^2}.$$
Thus we obtain
\beno
\|\nabla f_2\|_{L^2}^2\leq\|\nabla F_1\|_{L^2}\|\nabla (F_2/\psi_y)\|_{L^2}\leq C\|\nabla F_1\|_{L^2}\|\nabla f_2\|_{L^2},
\eeno
which gives $\|\nabla f_2\|_{L^2}\leq C\|\nabla F_1\|_{L^2}.$

Thus, $\|\nabla\Delta^{-1} F\|_{L^2}=\|\nabla F_1\|_{L^2}\sim \|\nabla f_2\|_{L^2}=\|\nabla\Delta^{-1} f\|_{L^2}$.
\end{proof}\smallskip

Now we are in a position to prove Proposition \ref{prop:decay-L}.

\begin{proof}
First of all, we choose $c_1$ small enough so that  $\|\nabla \bu^1\|_{L^{\infty}}\leq C\| \bu^1\|_{H^{4}}<1/2$.

Let $F(t,x,V(t,y,z),z)=f(t,x,y,z),\ F_j(t,x,V(t,y,z),z)=f_j(t,x,y,z)$ for $j=1,2,$ and $F_3(t,x,V(t,y,z),z)=\text{div}f_3(t,x,y,z)$. By \eqref{LL0}, we have\begin{align*}
\cL_0 F-\nu\partial_Y(G\partial_YF)-2\nu\partial_Z(\psi_z\partial_YF)+(\psi_t+\nu H)\partial_YF=\pa_XF_1+F_2+F_3.
\end{align*}
It follows from Proposition \ref{prop:decay-L0} that
\begin{align*}
\|F\|_{X_a}^2\leq& C\Big(\|F(1)\|_{L^2}^2+\|e^{a\nu^{1/3}t}\nabla F_1\|_{L^2L^2}^2+\nu^{-\frac{1}{3}}\|e^{a\nu^{1/3}t} F_2\|_{L^2L^2}^2\\&+\nu^{-1}\|e^{a\nu^{1/3}t}\nabla\Delta^{-1} F_3\|_{L^2L^2}^2+\nu\|e^{a\nu^{1/3}t} (G,2\psi_z)\partial_YF\|_{L^2L^2}^2\\&+\nu^{-\frac{1}{3}}\|e^{a\nu^{1/3}t} (\psi_t+\nu H)\partial_YF\|_{L^2L^2}^2\Big).
\end{align*}
By Lemma \ref{lem:com-inv} and Lemma \ref{lemHs2}, we have
\beno
&&\|e^{a\nu^{1/3}t}\nabla F_1\|_{L^2L^2}^2\leq C\|e^{a\nu^{1/3}t}\nabla f_1\|_{L^2L^2}^2,\\
&&\|e^{a\nu^{1/3}t} F_2\|_{L^2L^2}^2\leq  C\|e^{a\nu^{1/3}t} f_2\|_{L^2L^2}^2,\\
&&\|e^{a\nu^{1/3}t}\nabla\Delta^{-1} F_3\|_{L^2L^2}^2\leq C\|e^{a\nu^{1/3}t}f_3\|_{L^2L^2}^2,
\eeno
and by the assumption {\eqref{ass:u1-smallc1}},
\beno
{C}(\nu\|e^{a\nu^{1/3}t} (G,2\psi_z)\partial_YF\|_{L^2L^2}^2+\nu^{-\frac{1}{3}}\|e^{a\nu^{1/3}t} (\psi_t+\nu H)\partial_YF\|_{L^2L^2}^2){\le Cc_1^2\|F\|_{X_a}^2}\le 1/2\|F\|_{X_a}^2.
\eeno
This gives our result by using Lemma \ref{lemHs2} again.
\end{proof}

The following proposition gives the decay estimates of the solution for good derivatives $\pa_x$ and $\pa_z-\kappa\pa_y$,
which have good commutation relations with $\cL$.

\begin{Proposition}\label{prop:decay-L-1}
Let $f$ solve the linear equation
\beno
\cL f=f_1+f_2+f_3
\eeno
for $t\in [1,T]$. Assume that
\ben\label{ass:u1-smallc2}
\|\bu^1\|_{H^4}+\|\partial_t\bu^1\|_{H^2}/\nu<c_{2}.
\een
for some small constant $c_2$ independent of $\nu$ and $T$.
If $P_0 f=P_0 f_1=P_0 f_2=P_0 f_3=0,$  then for $a\in[0,4]$, it holds that \begin{align*}
&\|\partial_x^2f\|_{X_a}^2+\|\partial_x(\partial_z-\kappa\partial_y)f\|_{X_a}^2\leq C\Big(\|f(1)\|_{H^2}^2+\|e^{a\nu^{1/3}t}\Delta f_1\|_{L^2L^2}^2\\
&\quad+\nu^{-\frac{1}{3}}\|e^{a\nu^{1/3}t} \partial_x^2 f_2\|_{L^2L^2}^2+\nu^{-\frac{1}{3}}\|e^{a\nu^{1/3}t} \partial_x(\partial_z-\kappa\partial_y) f_2\|_{L^2L^2}^2+\nu^{-1}\|e^{a\nu^{1/3}t} \partial_xf_3\|_{L^2L^2}^2\Big).
\end{align*}
\end{Proposition}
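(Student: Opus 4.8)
The plan is to exploit the two commutation identities that make $\partial_x$ and the adapted vector field $D:=\partial_z-\kappa\partial_y$ into \emph{good} directions for $\cL$, to reduce the quantities $\partial_x^2 f$ and $\partial_x D f$ to equations of the type handled by Proposition \ref{prop:decay-L}, and then to absorb the resulting error terms using the smallness \eqref{ass:u1-smallc2}. Throughout I use that $P_0f=0$ forces the frequency $|k|\ge 1$, so that for nonzero modes one has the gains $\|\nabla f\|_{L^2}\le C\|\Delta f\|_{L^2}$ and $\|\partial_x\partial_y f\|_{L^2}\le C\|\partial_x^2\partial_y f\|_{L^2}$, and that any expression of the form $\partial_x(\cdot)$ automatically has vanishing $P_0$.

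First I would treat $\partial_x^2 f$. Since $V=V(t,y,z)$ is independent of $x$, $\partial_x$ commutes with $\cL$, so $\cL(\partial_x^2 f)=\partial_x^2 f_1+\partial_x^2 f_2+\partial_x^2 f_3$. Writing $\partial_x^2 f_1=\partial_x(\partial_x f_1)$ and $\partial_x^2 f_3=\text{div}(\partial_x f_3,0,0)$, and keeping $\partial_x^2 f_2$ as the zeroth-order source, Proposition \ref{prop:decay-L} bounds $\|\partial_x^2 f\|_{X_a}^2$ by $C\big(\|f(1)\|_{H^2}^2+\|e^{a\nu^{1/3}t}\nabla\partial_x f_1\|_{L^2L^2}^2+\nu^{-1/3}\|e^{a\nu^{1/3}t}\partial_x^2 f_2\|_{L^2L^2}^2+\nu^{-1}\|e^{a\nu^{1/3}t}\partial_x f_3\|_{L^2L^2}^2\big)$; using $\|\nabla\partial_x f_1\|_{L^2}\le\|\nabla^2 f_1\|_{L^2}=\|\Delta f_1\|_{L^2}$ gives exactly the $\partial_x^2 f$ part of the claim.

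The core of the argument is the computation of $[\cL,D]$. The whole point of the choice $\kappa=\partial_zV/\partial_yV$ is the cancellation $[V\partial_x,D]=(-\partial_zV+\kappa\partial_yV)\partial_x=0$, which kills the only inviscid (unweighted) commutator. Together with $[\partial_t,D]f=-\partial_t\kappa\,\partial_y f$ and $[\Delta,D]f=-(\Delta\kappa)\partial_y f-2\nabla\kappa\cdot\nabla\partial_y f$, this leaves $[\cL,D]f=-(\partial_t\kappa-\nu\Delta\kappa)\partial_y f+2\nu\nabla\kappa\cdot\nabla\partial_y f$. Since $\partial_x$ commutes with both $\cL$ and $\kappa$, one gets $\cL(\partial_x D f)=\partial_x D(f_1+f_2+f_3)+\partial_x[\cL,D]f$, which I would again feed into Proposition \ref{prop:decay-L}: $\partial_x D f_1$ goes into the $\partial_x(\cdot)$ slot with $g_1=Df_1$, whose gradient is $\le C\|\Delta f_1\|_{L^2}$; $\partial_x D f_2$ is the zeroth-order source; and $\partial_x D f_3$ is rewritten as $\partial_z(\partial_x f_3)-\partial_y(\kappa\,\partial_x f_3)+(\partial_y\kappa)\partial_x f_3$, whose first two pieces land in the divergence slot with cost $\le C\nu^{-1}\|e^{a\nu^{1/3}t}\partial_x f_3\|_{L^2L^2}^2$ and whose last piece is lower order.

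The main obstacle is the commutator source $\partial_x[\cL,D]f=-(\partial_t\kappa-\nu\Delta\kappa)\partial_x\partial_y f+2\nu\nabla\kappa\cdot\nabla\partial_x\partial_y f$, which features the \emph{bad} derivative $\partial_x\partial_y f$ not directly controlled by the left-hand side. Two facts resolve this. The cancellation above guarantees that every surviving term carries either a factor $\nu$ or the factor $\partial_t\kappa-\nu\Delta\kappa$, and the smallness \eqref{ass:u1-smallc2} yields $\nabla\kappa=O(c_2)$ and $\partial_t\kappa,\nu\Delta\kappa=O(c_2\nu)$; meanwhile the nonzero-mode gain gives $\|e^{a\nu^{1/3}t}\partial_x\partial_y f\|_{L^2L^2}\le C\|e^{a\nu^{1/3}t}\partial_x^2\partial_y f\|_{L^2L^2}\le C\nu^{-1/2}\|\partial_x^2 f\|_{X_a}$ through the $\nu\|\nabla\cdot\|_{L^2L^2}^2$ component of $X_a$. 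Placing the divergence piece $2\nu\,\text{div}(\nabla\kappa\,\partial_x\partial_y f)$ in the $\nu^{-1}$ slot and the remaining coefficient-times-$\partial_x\partial_y f$ pieces in the $\nu^{-1/3}$ slot, each commutator contribution is bounded by $Cc_2^2\|\partial_x^2 f\|_{X_a}^2$, hence absorbable into the left-hand side. Choosing $c_2$ small enough (in particular $c_2\le c_1$ so that Proposition \ref{prop:decay-L} is applicable) and combining with the $\partial_x^2 f$ estimate closes the bound.
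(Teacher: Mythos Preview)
Your strategy is exactly the paper's: bound $\partial_x^2 f$ by commuting $\partial_x^2$ through $\cL$, then bound $\partial_x D f$ using the commutator identity $[\cL,D]f=-(\partial_t\kappa-\nu\Delta\kappa)\partial_yf+2\nu\nabla\kappa\cdot\nabla\partial_yf$ and absorb the errors via the already-controlled $\|\partial_x^2 f\|_{X_a}^2$. The treatment of the source terms $f_1,f_2,f_3$ is also the same as the paper's.

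There is, however, a genuine gap in the handling of the commutator remainder. After pulling out the divergence $2\nu\,\text{div}(\nabla\kappa\,\partial_x\partial_yf)$, what remains is $-(\partial_t\kappa+\nu\Delta\kappa)\partial_x\partial_yf$. You propose to put this in the $\nu^{-1/3}$ slot and claim it contributes $Cc_2^2\|\partial_x^2 f\|_{X_a}^2$. For that you would need $\|(\partial_t\kappa+\nu\Delta\kappa)\partial_x\partial_yf\|_{L^2}\le Cc_2\nu\|\partial_x\partial_yf\|_{L^2}$, i.e.\ $L^\infty$ control of $\partial_t\kappa+\nu\Delta\kappa$. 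But the hypothesis \eqref{ass:u1-smallc2} only yields $\|\partial_t\kappa\|_{H^1}+\nu\|\Delta\kappa\|_{H^1}\le Cc_2\nu$ (Lemma~\ref{lem5}), and since $\kappa=\kappa(t,y,z)$ lives in two spatial dimensions, $H^1$ does not embed into $L^\infty$. If instead you try a bilinear estimate such as Lemma~\ref{lem9a}, the price is an extra $\partial_z$ on $\partial_x\partial_yf$, producing three-derivative quantities like $\partial_x\partial_y^2 f$ that are \emph{not} controlled by $\|\partial_x^2 f\|_{X_a}+\|\partial_xDf\|_{X_a}$.

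The paper's fix is to pass to $H^{-1}$: define $h_1$ by $\Delta h_1=(\partial_t\kappa+\nu\Delta\kappa)\partial_x\partial_yf$ with $P_0h_1=0$, write $\Delta h_1=\text{div}\nabla h_1$, and place it in the $\nu^{-1}$ slot. The point is that $\|\nabla h_1\|_{L^2}$ can be estimated by duality,
\[
\|\nabla h_1\|_{L^2}^2=-\langle\Delta h_1,h_1\rangle\le \|\partial_t\kappa+\nu\Delta\kappa\|_{L^4}\,\|\partial_x\partial_yf\|_{L^2}\,\|h_1\|_{L^4}\le C\nu\,\|\nabla\partial_x^2 f\|_{L^2}\,\|\nabla h_1\|_{L^2},
\]
using only $H^1\hookrightarrow L^4$ on the coefficient; this gives $\nu^{-1}\|e^{a\nu^{1/3}t}\nabla h_1\|_{L^2L^2}^2\le C\|\partial_x^2 f\|_{X_a}^2$, which then feeds back into the final bound. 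This $\Delta^{-1}$/duality step is the missing ingredient in your argument.
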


We need the following lemma, whose proof is a simple exercise.

\begin{Lemma}\label{lem5}
Assume that   $\|\bu^1\|_{H^4}<c_3$ for some small constant $c_3$ independent of $\nu$ and $T$. Let $\kappa(t,y,z)=\f {\partial_z\bu^1} {(1+\partial_y\bu^1)}$ and $(\rho_1,\rho_2)$ be given by \eqref{def:rho12}. Then it holds that
\beno
&&\|\partial_y\bu^1\|_{L^{\infty}}< 1/2,\quad \|\kappa\|_{H^3}\leq C\|\bu^1\|_{H^4},\quad \|\partial_t\kappa\|_{H^1}\leq C\|\partial_t\bu^1\|_{H^2},\quad \|\kappa\|_{H^1}\leq C\|\bu^1\|_{H^2},\\
&& \|\rho_1\|_{H^2}+\|\rho_2\|_{H^2}\leq C\|\bu^1\|_{H^4},\quad \|\partial_t\rho_1\|_{L^2}\leq C\|\partial_t\bu^1\|_{H^2}.
\eeno
\end{Lemma}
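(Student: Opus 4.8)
The plan is to regard every quantity in the statement as a rational (and, after one composition step, smooth) function of the spatial derivatives of $\bu^1$, and to exploit that all functions here depend only on $(y,z)\in\R\times\T$, a two-dimensional domain. On $\R\times\T$ one has the Sobolev embeddings $H^2\hookrightarrow L^\infty$, $H^3\hookrightarrow W^{1,\infty}$ and $H^4\hookrightarrow W^{2,\infty}$, and $H^s$ is a Banach algebra for every $s>1$; these are the only analytic inputs. First I would record that, by $H^3\hookrightarrow W^{1,\infty}$, $\|\partial_y\bu^1\|_{L^\infty}\le C\|\bu^1\|_{H^4}\le Cc_3<1/2$ once $c_3$ is small, which gives the first claim and simultaneously shows that the denominators $\partial_yV=1+\partial_y\bu^1$ and $1+\kappa^2$ are bounded below by a positive constant, so all reciprocals below act as harmless smooth multipliers.

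For the high-regularity bounds I would write $\frac{1}{1+\partial_y\bu^1}=1+\psi(\partial_y\bu^1)$ with $\psi(w)=-w/(1+w)$ smooth on $\{|w|<1/2\}$ and $\psi(0)=0$. A standard Moser/composition estimate gives $\|\psi(\partial_y\bu^1)\|_{H^3}\le C\|\partial_y\bu^1\|_{H^3}\le C\|\bu^1\|_{H^4}$, and since $\kappa=\partial_z\bu^1\,\bigl(1+\psi(\partial_y\bu^1)\bigr)$, the algebra property of $H^3$ yields $\|\kappa\|_{H^3}\le C\|\bu^1\|_{H^4}(1+C\|\bu^1\|_{H^4})\le C\|\bu^1\|_{H^4}$. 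The bounds on $\rho_1,\rho_2$ follow the same way at the $H^2$ level: the numerators $\partial_y\kappa+\kappa\partial_z\kappa$ and $\partial_z\kappa-\kappa\partial_y\kappa$ lie in $H^2$ with norm $\le C\|\kappa\|_{H^3}\le C\|\bu^1\|_{H^4}$ by the $H^2$ algebra, while $\frac{1}{(1+\partial_y\bu^1)(1+\kappa^2)}$ and $\frac{1}{1+\kappa^2}$ are of the form $1+(\text{small in }H^2)$ by the same composition argument; multiplying out gives $\|\rho_1\|_{H^2}+\|\rho_2\|_{H^2}\le C\|\bu^1\|_{H^4}$.

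The remaining estimates land in $H^1$ or $L^2$, and here the one real subtlety is that $H^1(\R\times\T)$ is \emph{not} an algebra (it is the critical exponent in two dimensions), so the norms cannot simply be distributed by the algebra rule. The strategy is instead to differentiate by hand and, in each resulting product, to put the factor built from spatial derivatives of $\bu^1$ into $L^\infty$—which is affordable since $\bu^1\in H^4$ supplies $W^{2,\infty}$ control, with a small factor $\le Cc_3$—while keeping the low-regularity factor in $L^2$. For $\|\kappa\|_{H^1}\le C\|\bu^1\|_{H^2}$, expand $\nabla\kappa=\frac{\nabla\partial_z\bu^1}{1+\partial_y\bu^1}-\frac{\partial_z\bu^1\,\nabla\partial_y\bu^1}{(1+\partial_y\bu^1)^2}$: the first term is $\le C\|\bu^1\|_{H^2}$ directly, and in the second one places $\partial_z\bu^1\in L^\infty$ (from $H^3$) against $\nabla\partial_y\bu^1\in L^2$. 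For $\|\partial_t\kappa\|_{H^1}\le C\|\partial_t\bu^1\|_{H^2}$, writing $\dot w=\partial_t\bu^1\in H^2$, in every term of $\nabla\partial_t\kappa$ the $\bu^1$-derivatives (carrying the $H^4$, hence $W^{2,\infty}$, budget) go into $L^\infty$ and the $\dot w$-derivatives (which only reach $H^1\not\hookrightarrow L^\infty$) go into $L^2$; the worst term $\partial_z\bu^1\,\nabla\partial_y\dot w/(1+\partial_y\bu^1)^2$ is then $\le C\|\dot w\|_{H^2}$. Finally $\|\partial_t\rho_1\|_{L^2}\le C\|\partial_t\bu^1\|_{H^2}$ follows by the chain rule, since $\partial_t\rho_1$ is a sum of terms each consisting of an $L^\infty$ coefficient (a smooth function of $\bu^1,\kappa$ and their first spatial derivatives) times a single $L^2$ factor of the type $\partial_t\partial_y\kappa,\ \partial_t\partial_z\kappa,\ \partial_t\kappa$ or $\partial_t\nabla\bu^1$, the first three being controlled in $L^2$ by the already-proved $\|\partial_t\kappa\|_{H^1}\le C\|\partial_t\bu^1\|_{H^2}$. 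This bookkeeping—always assigning the $L^\infty$ slot to the spatially-regular $\bu^1$-factor and the $L^2$ slot to the temporally-differentiated factor—is the only point requiring care, and it is precisely what compensates for the missing $H^1$ algebra property.
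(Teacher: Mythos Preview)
Your proposal is correct and is exactly the standard calculation the paper has in mind: the paper itself omits the proof entirely, stating only that it ``is a simple exercise.'' Your observation that the $H^1$ bounds cannot use the algebra property (since $H^1$ is critical in the two-dimensional $(y,z)$ domain) and must instead be handled by placing the $\bu^1$-factors into $L^\infty$ via the $H^4$ assumption is precisely the point that justifies calling it an exercise rather than a triviality.
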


Now we prove Proposition \ref{prop:decay-L-1} (take $c_2=\min(c_1,c_3)$).

\begin{proof}
As $\cL\partial_x^2f=\partial_x^2\cL f=\partial_x^2f_1+\partial_x^2f_2+ \partial_x^2f_3,$ we infer from Proposition \ref{prop:decay-L} that
\begin{align}\label{f1Xa}
\|\partial_x^2f\|_{X_a}^2\leq& C\Big(\|\partial_x^2f(1)\|_{L^2}^2+\|e^{a\nu^{1/3}t}\nabla\partial_x f_1\|_{L^2L^2}^2+\nu^{-\frac{1}{3}}\|e^{a\nu^{1/3}t} \partial_x^2 f_2\|_{L^2L^2}^2\\ \nonumber&\quad+\nu^{-1}\|e^{a\nu^{1/3}t} \partial_xf_3\|_{L^2L^2}^2\Big).
\end{align}

Thanks to $(\partial_z-\kappa\partial_y)V=0$, we have
\begin{align*}
(\partial_z-\kappa\partial_y)\cL f-\cL((\partial_z-\kappa\partial_y)f)&=(\partial_t \kappa-\nu\Delta \kappa)\partial_yf-2\nu\nabla\kappa\cdot\nabla \partial_yf\\
&=(\partial_t \kappa+\nu\Delta \kappa)\partial_yf-2\nu\text{div}(\partial_yf\nabla\kappa).
\end{align*}
which gives
\beno
\partial_x((\partial_z-\kappa\partial_y)\cL f-\cL((\partial_z-\kappa\partial_y)f))=\Delta h_1-2\nu\text{div}(\partial_x\partial_yf\nabla\kappa ),
\eeno
where $\Delta h_1=(\partial_t \kappa+\nu\Delta \kappa)\partial_x\partial_yf $ and $P_0 h_1=0$. Hence,
\begin{align*}
&\cL(\partial_x(\partial_z-\kappa\partial_y)f)=\partial_x(\partial_z-\kappa\partial_y)\big(f_1+f_2+f_3\big)-\Delta h_1+2\nu\text{div}(\partial_x\partial_yf\nabla\kappa ).
\end{align*}
Then it follows from Proposition \ref{prop:decay-L} that
\begin{align*}
&\|\partial_x(\partial_z-\kappa\partial_y)f\|_{X_a}^2\leq C\Big(\|\partial_x(\partial_z-\kappa\partial_y)f(1)\|_{L^2}^2+\|e^{a\nu^{1/3}t}\nabla(\partial_z-\kappa\partial_y) f_1\|_{L^2L^2}^2\\&\quad+\nu^{-\frac{1}{3}}\|e^{a\nu^{1/3}t} \partial_x(\partial_z-\kappa\partial_y) f_2\|_{L^2L^2}^2+\nu^{-1}\|e^{a\nu^{1/3}t}(\partial_y\kappa f_3, -\kappa\partial_xf_3,\partial_xf_3)\|_{L^2L^2}^2\\&\quad+\nu^{-1}\|e^{a\nu^{1/3}t}\nabla h_1\|_{L^2L^2}^2+\nu\|e^{a\nu^{1/3}t}\partial_x\partial_yf\nabla\kappa\|_{L^2L^2}^2\Big).
\end{align*}
Here we used $\partial_x(\partial_z-\kappa\partial_y)f_3=\partial_x(\partial_y\kappa f_3) -\partial_y(\kappa\partial_xf_3)+\partial_z\partial_xf_3.$

By Lemma \ref{lem5}, we have
\beno
\|\kappa\|_{L^{\infty}}+\|\nabla\kappa\|_{L^{\infty}}+\|\kappa\|_{H^2}\leq C\|\kappa\|_{H^3}\leq C\|\bu^1\|_{H^4}\leq C.
\eeno
Then we infer that
\begin{align*}
&\|\partial_x(\partial_z-\kappa\partial_y)f(1)\|_{L^2}\leq (1+\|\kappa\|_{L^{\infty}})\|\nabla^2f(1)\|_{L^2}\leq C\|f(1)\|_{H^2},
\end{align*}
and $\|\nabla(\partial_z-\kappa\partial_y) f_1\|_{L^2}\leq C(1+\|\kappa\|_{H^{2}})\|\nabla f_1\|_{H^1}\leq C\|\Delta f_1\|_{L^2}$, hence,
\begin{align*}
&\|e^{a\nu^{1/3}t}\nabla(\partial_z-\kappa\partial_y) f_1\|_{L^2L^2}^2\leq C\|e^{a\nu^{1/3}t}\Delta f_1\|_{L^2L^2}^2.
\end{align*}
Thanks to $\|(\partial_y\kappa f_3, -\kappa\partial_xf_3,\partial_xf_3)\|_{L^2}\leq \|\nabla\kappa\|_{L^{\infty}} \| f_3\|_{L^2}+ (1+\|\kappa\|_{L^{\infty}})\| \partial_xf_3\|_{L^2}\leq C\| \partial_xf_3\|_{L^2},
$ we have
\begin{align*}
&\|e^{a\nu^{1/3}t}(\partial_y\kappa f_3, -\kappa\partial_xf_3,\partial_xf_3)\|_{L^2L^2}^2\leq C\|e^{a\nu^{1/3}t} \partial_xf_3\|_{L^2L^2}^2.
\end{align*}
Thanks to $\|\partial_x\partial_yf\nabla\kappa\|_{L^2}\leq \|\partial_x\nabla f\|_{L^2}\|\nabla\kappa\|_{L^{\infty}} \leq C\|\nabla \partial_x^2f\|_{L^2},
$ we have\begin{align*}
&\nu\|e^{a\nu^{1/3}t}\partial_x\partial_yf\nabla\kappa\|_{L^2L^2}^2\leq C\nu\|e^{a\nu^{1/3}t} \nabla \partial_x^2f\|_{L^2L^2}^2\leq C\|\partial_x^2f\|_{X_a}^2.
\end{align*}
By Lemma \ref{lem5}, we have
\beno
\|\partial_t \kappa+\nu\Delta \kappa\|_{L^{4}}\leq \|\partial_t \kappa+\nu\Delta \kappa\|_{H^{1}}\leq \|\partial_t \kappa\|_{H^{1}}+\nu\|\kappa\|_{H^3}\leq C(\|\partial_t\bu^1\|_{H^2}+\nu\|\bu^1\|_{H^4})\leq C\nu,
\eeno
which gives
\begin{align*}
&\|\nabla h_1\|_{L^2}^2=-\langle\Delta h_1,h_1\rangle=-\langle(\partial_t \kappa+\nu\Delta \kappa)\partial_x\partial_yf,h_1\rangle\\ &\leq \|\partial_t \kappa+\nu\Delta \kappa\|_{L^{4}}\|\partial_x\partial_yf\|_{L^{2}}\|h_1\|_{L^{4}}\leq C\nu\|\partial_x\nabla f\|_{L^{2}}\|h_1\|_{H^{1}}\leq C\nu\|\nabla\partial_x^2 f\|_{L^{2}}\|\nabla h_1\|_{L^2}.
\end{align*}
Thus, $\|\nabla h_1\|_{L^2}\leq C\nu\|\nabla\partial_x^2 f\|_{L^{2}}$ and\begin{align*}
&\nu^{-1}\|e^{a\nu^{1/3}t}\nabla h_1\|_{L^2L^2}^2\leq C\nu\|e^{a\nu^{1/3}t} \nabla \partial_x^2f\|_{L^2L^2}^2\leq C\|\partial_x^2f\|_{X_a}^2.
\end{align*}
Summing up, we conclude that
\begin{align}\label{f1zXa}
&\|\partial_x(\partial_z-\kappa\partial_y)f\|_{X_a}^2\leq C\Big(\|f(1)\|_{H^2}^2+\|e^{a\nu^{1/3}t}\Delta f_1\|_{L^2L^2}^2\\ \nonumber&+\nu^{-\frac{1}{3}}\|e^{a\nu^{1/3}t} \partial_x(\partial_z-\kappa\partial_y) f_2\|_{L^2L^2}^2+\nu^{-1}\|e^{a\nu^{1/3}t}\partial_xf_3\|_{L^2L^2}^2+\|\partial_x^2f\|_{X_a}^2\Big).
\end{align}
Now the proposition follows from \eqref{f1Xa} and \eqref{f1zXa} and the fact that $\|e^{a\nu^{1/3}t}\nabla\partial_x f_1\|_{L^2L^2} \leq \|e^{a\nu^{1/3}t}\Delta f_1\|_{L^2L^2}$.
\end{proof}

\subsection{Decay estimates of the linearized equation $\cL_1f=g$}

Notice that $ \Delta\cL f=\cL \Delta f+\Delta V\partial_x f+2\nabla V\cdot\nabla\partial_xf$ and
\begin{align*}
&\Delta\cL_1 f=\cL \Delta f+\Delta V\partial_x f+2\nabla V\cdot\nabla\partial_xf\\
&\quad-2(\partial_y+\kappa\partial_z) (\partial_yV\partial_xf)-2(\Delta\kappa)\partial_z \Delta^{-1}(\partial_yV\partial_xf)-4\nabla\kappa\cdot\nabla\partial_z \Delta^{-1}(\partial_yV\partial_xf).
\end{align*}
Using the facts that $\partial_xV=0$ and $\partial_zV=\kappa\partial_yV$, we have
\begin{align*}
&2\nabla V\cdot\nabla\partial_xf-2(\partial_y+\kappa\partial_z) (\partial_yV\partial_xf)\\&=2\partial_yV(\partial_y+\kappa\partial_z)\partial_x f-2(\partial_y+\kappa\partial_z)(\partial_yV\partial_xf)\\
&=-2(\partial_y^2V+\kappa\partial_z\partial_yV)\partial_xf.
\end{align*}
Then we deduce that
\begin{align*}
&\Delta\cL_1 f=\cL \Delta f+(\Delta V-2\partial_y^2V-2\kappa\partial_z\partial_yV)\partial_x f\\&\quad-2(\Delta\kappa)\partial_z \Delta^{-1}(\partial_yV\partial_xf)-4\nabla\kappa\cdot\nabla\partial_z \Delta^{-1}(\partial_yV\partial_xf).
\end{align*}
The following proposition shows that $\Delta\cL_1-\cL \Delta$ is good.

\begin{Proposition}\label{prop:decay-L1}
Let $f$ solve $\cL_1 f=f_1$ for $t\in [1,T]$. Assume that
\ben\label{ass:u1-smallc4}
\|\bu^1\|_{H^4}+\|\partial_t\bu^1\|_{H^2}/\nu<c_{4}
\een
for some small constant ${c_4\in (0,c_2)}$ independent of $\nu$ and $T$. If $P_0 f=P_0f_1=0,$  then for $a\in[0,4]$, it holds that
\begin{align*}
&\|\Delta f\|_{X_a}^2\leq C\big(\|\Delta f(1)\|_{L^2}^2+\nu^{-1}\|e^{a\nu^{1/3}t}\nabla f_1\|_{L^2L^2}^2\big).
\end{align*}
\end{Proposition}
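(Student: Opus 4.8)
The plan is to reduce everything to Proposition \ref{prop:decay-L} applied to the function $\Delta f$. The identity established just before the statement reads
\[\Delta\cL_1 f=\cL \Delta f+c\,\partial_x f-2(\Delta\kappa)\partial_z \Delta^{-1}(\partial_yV\partial_xf)-4\nabla\kappa\cdot\nabla\partial_z \Delta^{-1}(\partial_yV\partial_xf),\]
where $c=\Delta V-2\partial_y^2V-2\kappa\partial_z\partial_yV$. Since $\cL_1 f=f_1$ gives $\Delta\cL_1 f=\Delta f_1$, I would rearrange this into an equation for $q:=\Delta f$, namely $\cL q=\Delta f_1-c\,\partial_x f+2(\Delta\kappa)\partial_z\Delta^{-1}(\partial_yV\partial_xf)+4\nabla\kappa\cdot\nabla\partial_z\Delta^{-1}(\partial_yV\partial_xf)$, and then feed it to Proposition \ref{prop:decay-L} (applicable since $c_4<c_2\le c_1$).

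The decisive structural observation is that every coefficient in the three ``error'' terms ($c$, $\Delta\kappa$, $\nabla\kappa$ and $\partial_yV$) is independent of $x$, while each error term carries a factor $\partial_x f$. Using $\partial_x(\partial_yV)=0$ I can write $\partial_yV\,\partial_xf=\partial_x(\partial_yV\,f)$ and commute $\partial_x$ through the $x$-independent multipliers $\partial_z\Delta^{-1}$, $\nabla\partial_z\Delta^{-1}$ and through multiplication by the $x$-independent coefficients. This lets me place all three error terms in the $\partial_x g_1$ slot of Proposition \ref{prop:decay-L} (the only slot carrying no negative power of $\nu$), with
\[g_1=-cf+2(\Delta\kappa)\partial_z\Delta^{-1}(\partial_yV f)+4\nabla\kappa\cdot\nabla\partial_z\Delta^{-1}(\partial_yV f),\]
and to place $\Delta f_1=\mathrm{div}(\nabla f_1)$ in the divergence slot with $g_3=\nabla f_1$ (no $g_2$ slot is used). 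Checking $P_0g_1=P_0g_3=0$ is immediate from $P_0f=P_0f_1=0$ and the $x$-independence of the coefficients, recalling that $\Delta^{-1}$ annihilates the $k=0$ mode. Proposition \ref{prop:decay-L} then yields
\[\|\Delta f\|_{X_a}^2\le C\big(\|\Delta f(1)\|_{L^2}^2+\|e^{a\nu^{1/3}t}\nabla g_1\|_{L^2L^2}^2+\nu^{-1}\|e^{a\nu^{1/3}t}\nabla f_1\|_{L^2L^2}^2\big),\]
whose first and last terms are exactly the target right-hand side once the middle term is absorbed.

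The main work, and the step I expect to be the real obstacle, is the bound $\|e^{a\nu^{1/3}t}\nabla g_1\|_{L^2L^2}\le Cc_4\|\Delta f\|_{X_a}$. Here I would use Lemma \ref{lem5} to control the coefficients (so that $\|c\|_{H^2}+\|\kappa\|_{H^3}\le C\|\bu^1\|_{H^4}\le Cc_4$ and $\|\partial_yV\|_{L^\infty}\le C$), the $L^2$-boundedness of the order-$0$ operator $\nabla\partial_z\Delta^{-1}$ (and order $-1$ of $\partial_z\Delta^{-1}$), and the Poincar\'e inequality in $x$ for mean-zero functions to recover $\|f\|_{L^2},\|\nabla f\|_{L^2}\le C\|\nabla\partial_xf\|_{L^2}$. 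The point is that $\|e^{a\nu^{1/3}t}\nabla\partial_x f\|_{L^2L^2}$ is precisely the inviscid-damping component of $\|\Delta f\|_{X_a}$, since $\nabla\Delta^{-1}\partial_x\Delta f=\nabla\partial_x f$; hence, after the product and commutator estimates, each term in $\nabla g_1$ has the form (small coefficient)$\times$(quantity controlled by $\|\Delta f\|_{X_a}$). The delicate bookkeeping is to verify that differentiating $g_1$ never produces more than one net derivative on $f$ beyond the $\partial_x$ already present: for instance, when $\nabla$ falls on $\nabla\partial_z\Delta^{-1}(\partial_yVf)$ it yields an order-$1$ operator acting on $\partial_yV f$, i.e.\ a $\nabla f$-level quantity, while all extra derivatives land on the $H^3$-regular $\kappa$ and are absorbed through Lemma \ref{lem5}. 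Finally, since all norms are taken over the finite interval $[1,T]$ where $\|\Delta f\|_{X_a}$ is finite, I choose $c_4$ small enough that $C(Cc_4)^2\le\tfrac12$ and absorb $\tfrac12\|\Delta f\|_{X_a}^2$ into the left-hand side to conclude.
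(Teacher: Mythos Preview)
Your proposal is correct and follows essentially the same approach as the paper: the paper introduces $h_1=c$, $h_2=\Delta^{-1}(\partial_yV f)$, $h_3=2(\Delta\kappa)\partial_z h_2+4\nabla\kappa\cdot\nabla\partial_z h_2$, so that your $g_1$ is precisely $-(h_1 f-h_3)$, and then applies Proposition~\ref{prop:decay-L} with $\partial_x(h_1 f-h_3)$ in the $\partial_x f_1$ slot and $\Delta f_1=\mathrm{div}\nabla f_1$ in the divergence slot, exactly as you propose. The estimate $\|\nabla(h_1f-h_3)\|_{L^2}\le Cc_4\|\nabla f\|_{L^2}\le Cc_4\|\nabla\partial_x f\|_{L^2}$ and the identification of $\|e^{a\nu^{1/3}t}\nabla\partial_x f\|_{L^2L^2}$ with the inviscid-damping part of $\|\Delta f\|_{X_a}$ are carried out just as you outline, followed by the same absorption by smallness of $c_4$.
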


\begin{proof}
We introduce
\beno
&&h_1(t,y,z)=\Delta V-2\partial_y^2V-2\kappa\partial_z\partial_yV=\Delta \bu^1-2\partial_y^2\bu^1-2\kappa\partial_z\partial_y\bu^1,\\
&&h_2=\Delta^{-1}(\partial_yVf),\quad h_3=2(\Delta\kappa)\partial_z h_2+4\nabla\kappa\cdot\nabla\partial_z h_2.
\eeno
Then we find that
\begin{align*}
\Delta f_1&=\Delta\cL_1 f=\cL \Delta f+h_1\partial_x f-2(\Delta\kappa)\partial_z\partial_x h_2-4\nabla\kappa\cdot\nabla\partial_z \partial_xh_2\\&=\cL \Delta f+\partial_x(h_1 f)-\partial_xh_3.
\end{align*}
It follows from Proposition \ref{prop:decay-L} that
\begin{align}\label{fXa}
\|\Delta f\|_{X_a}^2\leq C\big(\|\Delta f(1)\|_{L^2}^2+\nu^{-1}\|e^{a\nu^{1/3}t}\nabla f_1\|_{L^2L^2}^2+\|e^{a\nu^{1/3}t}\nabla (h_1 f-h_3)\|_{L^2L^2}^2\big).
\end{align}
By Lemma \ref{lem5}, we have
$\|\kappa\|_{H^3}\leq C\|\bu^1\|_{H^4}\leq Cc_4.$
Then we get
\begin{align*}
\|h_1\|_{H^2}&\leq\|\Delta \bu^1-2\partial_y^2\bu^1\|_{H^2}+2\|\kappa\partial_z\partial_y\bu^1\|_{H^2}\leq C\|\bu^1\|_{H^4}+C\|\kappa\|_{H^2}\|\partial_z\partial_y\bu^1\|_{H^2}\\&\leq C(1+\|\kappa\|_{H^2})\|\bu^1\|_{H^4}\leq C\|\bu^1\|_{H^4}\leq Cc_4,
\end{align*}
which gives
\begin{align*}
\|\nabla (h_1 f)\|_{L^2}\leq \|h_1 f\|_{H^1}\leq C\|h_1\|_{H^2}\| f\|_{H^1}\leq Cc_{4}\|\nabla  f\|_{L^2}.
\end{align*}
As $h_3=2(\Delta\kappa)\partial_z h_2+4\nabla\kappa\cdot\nabla\partial_z h_2, $ we have
\begin{align*}
\|\nabla h_3\|_{L^2}&\leq \|h_3\|_{H^1}\leq C\|\Delta\kappa\|_{H^1}\| \partial_z h_2\|_{H^2}+C\|\nabla\kappa\|_{H^2}\| \nabla\partial_z h_2\|_{H^1}\leq C\|\kappa\|_{H^3}\| h_2\|_{H^3}\\
&\leq Cc_{4}\|\Delta h_2\|_{H^1}=Cc_{4}\| \partial_yVf\|_{H^1}\leq Cc_{4}\|f\|_{H^1}\leq Cc_{4}\|\nabla  f\|_{L^2},
\end{align*}
here we used
\begin{align*}
&\| \partial_yVf\|_{H^1}=\|(1+ \partial_y\bu^1)f\|_{H^1}\leq C(1+\|\partial_y\bu^1\|_{H^2})\|f\|_{H^1}\leq C\|f\|_{H^1}.
\end{align*}
Thus, $\|\nabla (h_1 f-h_3)\|_{L^2}\leq  Cc_{4}\|\nabla  f\|_{L^2}\leq  Cc_{4}\|\nabla \partial_x f\|_{L^2},$ and  then
\begin{align*}
\|e^{a\nu^{1/3}t}\nabla (h_1 f-h_3)\|_{L^2L^2}^2\leq Cc_{4}^2\|e^{a\nu^{1/3}t}\nabla\partial_x f\|_{L^2L^2}^2\leq Cc_{4}^2\|\Delta f\|_{X_a}^2,
\end{align*}
which along with \eqref{fXa} gives
\begin{align*}
&\|\Delta f\|_{X_a}^2\leq C\big(\|\Delta f(1)\|_{L^2}^2+\nu^{-1}\|e^{a\nu^{1/3}t}\nabla f_1\|_{L^2L^2}^2+c_{4}^2\|\Delta f\|_{X_a}^2\big),
\end{align*}
which gives the desired result by taking $ c_4$ so that $Cc_{4}^2<1/2.$
\end{proof}

\section{Basic analysis tools}

In this section, we introduce some basic inequalities and  anisotropic bilinear estimates in Sobolev space, and the velocity estimates in terms of
two components $(u^2, u^3)$, which play an important role for nonlinear interaction of next section.

\subsection{Some basic inequalities}
Recall that for $s > 3/2 , H^s(\Omega)$ is an algebra. Hence,
\beno
\|f_1f_2\|_{H^s}\leq C\|f_1\|_{H^s}\|f_2\|_{H^s},\quad \|f\|_{L^{\infty}}\leq C\|f\|_{H^s}.
\eeno
The following inequalities are classical:
\begin{align*}
&\|f_1f_2\|_{L^2}\leq \|f_1\|_{L^4}\|f_2\|_{L^4}\leq C\|f_1\|_{H^1}\|f_2\|_{H^1},\quad  \|f_1f_2\|_{H^1}\leq C\|f_1\|_{H^1}\|f_2\|_{H^2},\\
&\|f_1f_2\|_{H^2}\leq C\|f_1\|_{H^2}\|f_2\|_{H^2},\quad \|f_1f_2\|_{H^3}\leq C\big(\|f_1\|_{H^3}\|f_2\|_{H^2}+\|f_1\|_{H^2}\|f_2\|_{H^3}\big).
\end{align*}
We will use interpolation inequalities such as $\|f\|_{H^s}\leq\|f\|_{H^{s_1}}^{\theta}\|f\|_{H^{s_2}}^{1-\theta} $ for $s=s_1\theta+s_2(1-\theta),\ \theta\in [0,1],$ and $\|\partial_i\partial_jf\|_{L^2}\leq\|\partial_i^2f\|_{L^2}^{\frac{1}{2}}\|\partial_j^2f\|_{L^{2}}^{\frac{1}{2}} .$  Then we have
$$ \|f_1\|_{H^2}\|f_2\|_{H^3}\leq \|f_1\|_{H^1}^{\frac{1}{2}}\|f_1\|_{H^3}^{\frac{1}{2}}\|f_2\|_{H^2}^{\frac{1}{2}}\|f_2\|_{H^4}^{\frac{1}{2}}
\leq\|f_1\|_{H^3}\|f_2\|_{H^2}+\|f_1\|_{H^1}\|f_2\|_{H^4},$$
which gives
\beno
\|f_1f_2\|_{H^3}\leq C\big(\|f_1\|_{H^3}\|f_2\|_{H^2}+\|f_1\|_{H^1}\|f_2\|_{H^4}\big).
\eeno

\subsection{Anisotropic bilinear estimates}

\begin{Lemma}\label{lem9a1}
If $\partial_x f_1=\partial_x f_2=0,$ then it holds that
\beno
&&\|f_1\|_{L^{\infty}}\leq C\big(\|f_1\|_{H^{1}}+\|\partial_zf_1\|_{H^{1}}\big),\\
&&\|f_1f_2\|_{L^{2}}\leq C\big(\|f_1\|_{H^{1}}+\|\partial_zf_1\|_{H^{1}}\big)\|f_2\|_{L^{2}},\\
&&\|f_1f_2\|_{L^{2}}\leq C\big(\|f_1\|_{L^{2}}+\|\partial_zf_1\|_{L^{2}}\big)\|f_2\|_{H^{1}}.
\eeno
\end{Lemma}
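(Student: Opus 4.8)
The plan is to reduce everything to the two-dimensional domain $\R_y\times\T_z$: since $f_1,f_2$ are independent of $x$ and $\T$ has unit length, each norm appearing in the statement equals the corresponding norm over $\R_y\times\T_z$. The lemma is then an expression of the anisotropy of this domain, and I will exploit two one-dimensional Sobolev inequalities: on the unbounded factor $\R_y$ the Agmon inequality $\|g\|_{L^\infty(\R)}^2\le 2\|g\|_{L^2(\R)}\|g'\|_{L^2(\R)}$, and on the compact factor $\T_z$ the embedding $H^1(\T)\hookrightarrow L^\infty(\T)$ together with the Fourier expansion in $z$.

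For the first inequality I would expand $f_1(y,z)=\sum_{l\in\Z}g_l(y)e^{2\pi ilz}$, so that $\|f_1\|_{L^\infty}\le\sum_l\|g_l\|_{L^\infty(\R_y)}\le C\sum_l\|g_l\|_{H^1(\R_y)}$ by applying the $\R$-Sobolev inequality mode by mode. I then close the sum by Cauchy--Schwarz against the summable weight $\langle l\rangle^{-2}$:
\[
\sum_l\|g_l\|_{H^1_y}\le\Big(\sum_l\langle l\rangle^{-2}\Big)^{1/2}\Big(\sum_l\langle l\rangle^2\|g_l\|_{H^1_y}^2\Big)^{1/2}.
\]
The key point is that, by Plancherel in $z$, the weighted sum $\sum_l\langle l\rangle^2\|g_l\|_{H^1_y}^2$ is bounded by $\|f_1\|_{H^1}^2+\|\partial_z f_1\|_{H^1}^2$ (the unweighted part contributing $\|f_1\|_{H^1}^2$ and the $l^2$-weighted part $\|\partial_z f_1\|_{H^1}^2$), which yields the first inequality. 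The second inequality is then immediate from the first together with H\"older's inequality $\|f_1f_2\|_{L^2}\le\|f_1\|_{L^\infty}\|f_2\|_{L^2}$.

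The third inequality is the one requiring care, since here $f_2$ carries only a full $H^1$ bound, which does not embed into $L^\infty$ in two dimensions, so I cannot take an $L^\infty$ norm of either factor directly. Instead I would slice in $z$ and integrate in $y$ first: for fixed $z$ the $\R$-Sobolev inequality gives $\|f_2(\cdot,z)\|_{L^\infty_y}^2\le 2\|f_2(\cdot,z)\|_{L^2_y}\|\partial_y f_2(\cdot,z)\|_{L^2_y}$, whence, writing $G(z)=\|f_1(\cdot,z)\|_{L^2_y}^2$,
\[
\int_\R|f_1|^2|f_2|^2\,dy\le 2\,G(z)\,\|f_2(\cdot,z)\|_{L^2_y}\|\partial_y f_2(\cdot,z)\|_{L^2_y}.
\]
Now $G$ is a function on $\T$, and $\|G\|_{L^\infty(\T)}\le\|G\|_{L^1(\T)}+\|\partial_z G\|_{L^1(\T)}$; since $\|G\|_{L^1}=\|f_1\|_{L^2}^2$ and the bound $|\partial_z G(z)|\le 2\|f_1(\cdot,z)\|_{L^2_y}\|\partial_z f_1(\cdot,z)\|_{L^2_y}$ gives $\|\partial_z G\|_{L^1}\le 2\|f_1\|_{L^2}\|\partial_z f_1\|_{L^2}$, I obtain $\|G\|_{L^\infty(\T)}\le C(\|f_1\|_{L^2}+\|\partial_z f_1\|_{L^2})^2$. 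Integrating the displayed bound over $z\in\T$ and applying Cauchy--Schwarz in $z$ to the two $f_2$-factors, which produces $\|f_2\|_{L^2}\|\partial_y f_2\|_{L^2}\le\|f_2\|_{H^1}^2$, then yields the third inequality. The main obstacle is exactly this last argument: recognizing $G(z)$ as the right scalar quantity on which to run the torus embedding, and distributing the single $z$-derivative onto $f_1$ while keeping the $y$-derivative on $f_2$; the remaining steps are routine one-dimensional Sobolev bookkeeping.
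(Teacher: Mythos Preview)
Your proof is correct. The first two inequalities follow exactly the paper's argument: Fourier-expand $f_1$ in $z$, apply the one-dimensional Sobolev embedding $H^1(\R_y)\hookrightarrow L^\infty(\R_y)$ mode by mode, and close the $\ell^1$ sum over the $z$-modes by Cauchy--Schwarz against the summable weight $\langle l\rangle^{-2}$.

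For the third inequality you take a genuinely different route. The paper stays on the Fourier side: it expands \emph{both} $f_1=\sum_m f_{1,m}(y)e^{2\pi imz}$ and $f_2=\sum_k f_{2,k}(y)e^{2\pi ikz}$, bounds $\|f_{1,m}f_{2,k}\|_{L^2_y}\le\|f_{1,m}\|_{L^2_y}\|f_{2,k}\|_{L^\infty_y}\le C\|f_{1,m}\|_{L^2_y}\|f_{2,k}\|_{H^1_y}$, sums over $k$ to get $\|f_2\|_{H^1}$, and then sums $\|f_{1,m}\|_{L^2_y}$ in $\ell^1$ via the same Cauchy--Schwarz trick, producing the factor $\|f_1\|_{L^2}+\|\partial_z f_1\|_{L^2}$. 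Your argument is purely physical-space: slice in $z$, put the $L^\infty_y$ on $f_2$ via Agmon, and then recognize that $G(z)=\|f_1(\cdot,z)\|_{L^2_y}^2$ is a scalar function on $\T$ whose sup is controlled by $W^{1,1}(\T)\hookrightarrow L^\infty(\T)$, which distributes the single $z$-derivative onto $f_1$. Both arguments place the $y$-regularity on $f_2$ and the $z$-regularity on $f_1$; yours avoids expanding $f_2$ in Fourier and in fact yields the slightly sharper intermediate bound $\|f_1f_2\|_{L^2}^2\le C(\|f_1\|_{L^2}+\|\partial_z f_1\|_{L^2})^2\|f_2\|_{L^2}\|\partial_y f_2\|_{L^2}$, while the paper's mode-by-mode approach makes the Fourier bookkeeping more transparent and parallels the structure used repeatedly in the subsequent lemmas.
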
\begin{proof}
As $\partial_x f_1=\partial_x f_2=0$, we may write
$f_l(x,y,z)=\sum\limits_{k\in\mathbb{Z}}e^{2\pi ikz}f_{l,k}(y)$ for $l=1,2.$ Then we have
$$
\|f_1\|_{H^1}^2+\|\partial_zf_1\|_{H^1}^2=\sum\limits_{k\in\mathbb{Z}}(1+(2\pi k)^2)(\|f_{1,k}\|_{H^1}^2+(2\pi k)^2\|f_{1,k}\|_{L^2}^2).
$$
Therefore,
\begin{align*}
\Big(\sum\limits_{k\in\mathbb{Z}}\|f_{1,k}\|_{H^1}\Big)^2\leq \sum\limits_{k\in\mathbb{Z}}(1+k^2)\|f_{1,k}\|_{H^1}^2\sum\limits_{k\in\mathbb{Z}}\dfrac{1}{1+k^2}\leq C\big(\|{f_1}\|_{H^1}^2+\|\partial_zf_1\|_{H^1}^2\big),
\end{align*}
which gives
\beno
\|f_1\|_{L^{\infty}}\leq\sum\limits_{k\in\mathbb{Z}}\|f_{1,k}\|_{L^{\infty}}\leq C\sum\limits_{k\in\mathbb{Z}}\|f_{1,k}\|_{H^1}\leq C\big(\|f_1\|_{H^{1}}+\|\partial_zf_1\|_{H^{1}}\big).
\eeno
which is the first inequality. Similarly, we have
\begin{align}\label{f2}\Big(\sum\limits_{k\in\mathbb{Z}}\|f_{1,k}\|_{L^2}\Big)^2\leq \sum\limits_{k\in\mathbb{Z}}(1+k^2)\|f_{1,k}\|_{L^2}^2\sum\limits_{k\in\mathbb{Z}}\dfrac{1}{1+k^2}\leq C\big(\|{f_1}\|_{L^2}^2+\|\partial_zf_1\|_{L^2}^2\big).
\end{align}

The second inequality follows from
\beno
\|f_1f_2\|_{L^{2}}\leq\|f_1\|_{L^{\infty}}\|f_2\|_{L^{2}}\leq C\big(\|f_1\|_{H^{1}}+\|\partial_zf_1\|_{H^{1}}\big)\|f_2\|_{L^{2}}.
\eeno

We write $e^{2\pi imz}f_{1,m}(y)f_2(x,y,z)=\sum\limits_{k\in\mathbb{Z}}e^{2\pi i(k+m)z}f_{1,m}(y)f_{2,k}(y)$. Then we have
\begin{align*}
\|e^{2\pi imz}f_{1,m}(y)f_2\|_{L^2}^2&=\sum\limits_{k\in\mathbb{Z}}\|f_{1,m}f_{2,k}\|_{L^2}^2\leq
\sum\limits_{k\in\mathbb{Z}}\|f_{1,m}\|_{L^2}^2\|f_{2,k}\|_{L^{\infty}}^2\\ &\leq C\sum\limits_{k\in\mathbb{Z}}\|f_{1,m}\|_{L^2}^2\|f_{2,k}\|_{H^{1}}^2\leq C\|f_{1,m}\|_{L^2}^2\| f_2\|_{H^1}^2,
\end{align*}
from which and \eqref{f2}, we infer that
\begin{align*}
\|f_1f_2\|_{L^2}=&\big\|\sum\limits_{m\in\mathbb{Z}}e^{2\pi imz}f_{1,m}(y)f_2\big\|_{L^2}\leq \sum\limits_{m\in\mathbb{Z}}\|e^{2\pi imz}f_{1,m}(y)f_2\|_{L^2}\\
\leq& C\sum\limits_{m\in\mathbb{Z}}\|f_{1,m}\|_{L^2}\| f_2\|_{H^1}\leq C\big(\|f_1\|_{L^{2}}+\|\partial_zf_1\|_{L^{2}}\big)\|f_2\|_{H^{1}},
\end{align*}
which gives the third inequality.
\end{proof}

\begin{Lemma}\label{lem9a}
If $\partial_x f_1=0,$ then it holds that
  \begin{align*}
  &\|f_1 f_2\|_{L^2}\leq C\| f_1\|_{H^1}\big(\|f_2\|_{L^2}+\|\partial_zf_2\|_{L^2}\big),\\
&\|\nabla(f_1 f_2)\|_{L^2}\leq C\big(\|f_1\|_{H^1}+\|\partial_zf_1\|_{H^1}\big)\| f_2\|_{H^1},\\
 &\|f_1 f_2\|_{H^2}\leq C\| f_1\|_{H^1}\big(\|f_2\|_{H^2}+\|\partial_zf_2\|_{H^2}\big)+C\| f_1\|_{H^3}\big(\|f_2\|_{L^2}+\|\partial_zf_2\|_{L^2}\big),\\
 &\|f_1 f_2\|_{H^3}\leq C\|f_1\|_{H^1}\big(\|f_2\|_{H^3}+\|\partial_zf_2\|_{H^3}\big)+C\| f_1\|_{H^3}\big(\|f_2\|_{H^1}+\|\partial_zf_2\|_{H^1}\big).
\end{align*}
\end{Lemma}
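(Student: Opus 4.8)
The plan is to deduce all four bounds from the two–dimensional inequalities of Lemma~\ref{lem9a1} by freezing the $x$ variable. For each fixed $x\in\T$ the slice $g:=f_2(x,\cdot,\cdot)$ is a function of $(y,z)\in\R\times\T$ with $\partial_x g=0$, and $f_1$ is already independent of $x$, so Lemma~\ref{lem9a1} applies verbatim to the planar product $f_1 g$. After selecting the appropriate one of its three sub-estimates I will square the resulting slice bound and integrate over $x\in\T$, using $\int_\T\|f_2(x,\cdot,\cdot)\|_{H^s(\R\times\T)}^2\,dx\le\|f_2\|_{H^s}^2$ for every $s\ge 0$. Since $\partial_x f_1=0$, every factor $\partial^\beta f_1$ appearing below carries only $y,z$ derivatives; the whole game is to route each term to a sub-estimate whose right-hand side stays within the prescribed derivative budget.

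For the first bound I apply the third estimate of Lemma~\ref{lem9a1} with the slice $g$ in the role of its \emph{first} factor, obtaining $\|f_1 g\|_{L^2}\le C(\|g\|_{L^2}+\|\partial_z g\|_{L^2})\|f_1\|_{H^1}$; integration in $x$ gives exactly $C\|f_1\|_{H^1}(\|f_2\|_{L^2}+\|\partial_z f_2\|_{L^2})$. For the second bound I expand $\nabla(f_1 f_2)$ by Leibniz. Each term $f_1\,\partial_i f_2$ is handled by the second estimate of Lemma~\ref{lem9a1} (with $f_1$ as first factor), producing $(\|f_1\|_{H^1}+\|\partial_z f_1\|_{H^1})\|f_2\|_{H^1}$; the term $(\partial_z f_1)f_2$ is handled as in the first bound, producing $\|\partial_z f_1\|_{H^1}\|f_2\|_{H^1}$. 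The one delicate term is $(\partial_y f_1)f_2$: here I must use the third estimate with $\partial_y f_1$ as first factor, so that only $\|\partial_y f_1\|_{L^2}+\|\partial_z\partial_y f_1\|_{L^2}\le\|f_1\|_{H^1}+\|\partial_z f_1\|_{H^1}$ is needed and no $\partial_y^2 f_1$ appears, which the right-hand side does not control.

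For the $H^2$ and $H^3$ bounds I expand $\partial^\alpha(f_1 f_2)$ ($|\alpha|\le2$, resp. $\le3$) by Leibniz and estimate each term $(\partial^\beta f_1)(\partial^{\alpha-\beta}f_2)$ slice-wise, the routing depending on the split. When $f_1$ carries no derivative I use the third estimate with the $f_2$-slice as first factor, landing on the first stated term $\|f_1\|_{H^1}(\|f_2\|_{H^{|\alpha|}}+\|\partial_z f_2\|_{H^{|\alpha|}})$. When $f_1$ carries the maximal number of derivatives I route so that $f_1$ reaches exactly $\|f_1\|_{H^3}$ while $f_2$ stays at its lowest norm: for the $H^2$ bound this is again the third estimate with the $f_2$-slice as first factor (so $\|\partial^\beta f_1\|_{H^1}\le\|f_1\|_{H^3}$), and for the $H^3$ bound — where that same choice would cost the uncontrolled $\|f_1\|_{H^4}$ — it is instead the first ($L^\infty$) estimate of Lemma~\ref{lem9a1} applied to the $f_2$-slice, with $\partial^\beta f_1$ kept in $L^2$. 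For the intermediate splittings I route each term through the third estimate so that $f_1$ lands at $\|f_1\|_{H^2}$ and $f_2$ at a genuinely intermediate anisotropic norm. Writing $A=\|f_1\|_{H^1}$, $B=\|f_1\|_{H^3}$ and $P,Q$ for the high and low anisotropic $f_2$-norms appearing on the right, the interpolation inequalities of the previous subsection give $\|f_1\|_{H^2}\le A^{1/2}B^{1/2}$ and bound the intermediate $f_2$-norm by $P^{1/2}Q^{1/2}$, so each such term is $\le(AP)^{1/2}(BQ)^{1/2}\le\tfrac12(AP+BQ)$ by Young's inequality and is absorbed into the two stated terms.

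The main obstacle is precisely this bookkeeping of the derivative budget: each Leibniz term must be sent to the sub-estimate of Lemma~\ref{lem9a1} whose right-hand side respects the asymmetry of the target — the extra $\partial_z$ must land on the correct factor, $f_1$ must never be differentiated in $y$ beyond what $\|f_1\|_{H^1},\|\partial_z f_1\|_{H^1}$ (or $\|f_1\|_{H^3}$) allow, and the intermediate terms must be arranged so that $f_1$ at order two pairs with $f_2$ at intermediate order, never at top order. The representative difficulty is the term $(\partial_y f_1)f_2$ in the second bound, where only the third estimate of Lemma~\ref{lem9a1} — which demands $\partial_z$, not $\partial_y$, on its first factor — keeps the bound within budget. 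Once the routing is fixed, the remaining steps are the routine integration in $x$ and the interpolation clean-up.
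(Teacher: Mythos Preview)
Your proposal is correct and follows the same essential idea as the paper: reduce to the two–dimensional estimates of Lemma~\ref{lem9a1} by exploiting $\partial_x f_1=0$, then reassemble in $x$ and interpolate. Your physical-space slicing in $x$ is equivalent to the paper's Fourier expansion $f_2=\sum_k e^{2\pi ikx}f_{2,k}(y,z)$; both give $\|f_1f_2\|_{L^2}^2=\int_{\T}\|f_1f_2(x,\cdot)\|_{L^2_{y,z}}^2\,dx=\sum_k\|f_1f_{2,k}\|_{L^2_{y,z}}^2$.

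The organizational difference worth noting is that for the $H^2$ and $H^3$ bounds the paper does \emph{not} carry out the full Leibniz expansion you describe. Instead it first isolates a single auxiliary bound (labelled (4.7) there),
\[
\|\nabla(h_1h_2)\|_{L^2}\le C\|h_1\|_{H^1}\big(\|h_2\|_{H^1}+\|\partial_z h_2\|_{H^1}\big)\qquad(\partial_xh_1=0),
\]
and then writes $\|f_1f_2\|_{H^k}\le C\|f_1f_2\|_{L^2}+C\sum_{j}\sum_{l=0}^{k-1}\|\nabla(\partial_j^{l}f_1\,\partial_j^{k-1-l}f_2)\|_{L^2}$, applying (4.7) to each summand. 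This collapses all your separate ``routings'' to a single case: every term lands at $\|f_1\|_{H^{l+1}}(\|f_2\|_{H^{k-l}}+\|\partial_zf_2\|_{H^{k-l}})$, after which the interpolation step is uniform in $l$. This avoids the case analysis you outline (in particular the asymmetric handling of the $|\beta|=2$ term in the $H^3$ bound, where your description ``$f_1$ lands at $\|f_1\|_{H^2}$'' is imprecise---that routing actually produces $\|f_1\|_{H^3}(\|f_2\|_{H^1}+\|\partial_zf_2\|_{H^1})$ directly, which is already one of the target terms). Your approach works, but the paper's consolidation via (4.7) is what keeps the bookkeeping short.
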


\begin{proof}
As $\partial_x f_1=0$, we may write $f_1(x,y,z)=f_1(y,z),\ f_2(x,y,z)=\sum\limits_{k\in\mathbb{Z}}e^{2\pi ikx}f_{2,k}(y,z)$. Then we have\begin{align*}
\|f_1f_2\|_{L^2}^2=&\sum\limits_{k\in\mathbb{Z}}\|f_1f_{2,k}\|_{L^2}^2,\ \ \|f_2\|_{L^2}^2=\sum\limits_{k\in\mathbb{Z}}\|f_{2,k}\|_{L^2}^2,\ \ \|\partial_zf_2\|_{L^2}^2=\sum\limits_{k\in\mathbb{Z}}\|\partial_zf_{2,k}\|_{L^2}^2,
\end{align*}
and then
\begin{align*}
\|f_1f_2\|_{L^2}^2=&\sum\limits_{k\in\mathbb{Z}}\|f_1f_{2,k}\|_{L^2}^2\leq \sum\limits_{k\in\mathbb{Z}}C\|f_1\|_{H^{1}}^2\big(\|f_{2,k}\|_{L^2}^2+\|\partial_zf_{2,k}\|_{L^2}^2\big)
\\=&C\|f_1\|_{H^{1}}^2\big(\|f_{2}\|_{L^2}^2+\|\partial_zf_{2}\|_{L^2}^2\big),
\end{align*}
which gives
\begin{align}\label{f4}
\|f_1f_2\|_{L^{2}}\leq C\|f_1\|_{H^{1}}\big(\|f_{2}\|_{L^2}+\|\partial_zf_{2}\|_{L^2}\big).
\end{align}
We also have
\begin{align}\label{f3}
\|f_1f_2\|_{L^{2}}\leq\|f_1\|_{L^{\infty}}\|f_2\|_{L^{2}}\leq C\big(\|f_1\|_{H^{1}}+\|\partial_zf_1\|_{H^{1}}\big)\|f_2\|_{L^{2}},
\end{align}

Notice that\begin{align*}
\|\partial_z^jf_2\|_{H^1}^2=\sum\limits_{k\in\mathbb{Z}}\big(\|\partial_z^jf_{2,k}\|_{H^1}^2+(2\pi k)^2\|\partial_z^jf_{2,k}\|_{L^2}^2\big),\quad j=0,1.
\end{align*}
Then we infer from Lemma \ref{lem9a1}  that
\begin{align*}
\|f_1f_2\|_{L^2}^2=&\sum\limits_{k\in\mathbb{Z}}\|f_1f_{2,k}\|_{L^2}^2\leq \sum\limits_{k\in\mathbb{Z}}C\|f_1\|_{L^2}^2(\|f_{2,k}\|_{H^{1}}^2+\|\partial_zf_{2,k}\|_{H^{1}}^2)
\\ \leq&C\|f_1\|_{L^2}^2\big(\|f_{2}\|_{H^{1}}^2+\|\partial_zf_{2}\|_{H^{1}}^2\big),
\end{align*}
and
\begin{align*}
\|f_1f_2\|_{L^2}^2=&\sum\limits_{k\in\mathbb{Z}}\|f_1f_{2,k}\|_{L^2}^2\leq \sum\limits_{k\in\mathbb{Z}}C\big(\|f_1\|_{L^2}^2+\|\partial_zf_1\|_{L^2}^2\big)\|f_{2,k}\|_{H^{1}}^2
\\
\leq&C(\|f_1\|_{L^2}^2+\|\partial_zf_1\|_{L^2}^2)\|f_{2}\|_{H^{1}}^2,
\end{align*}
which gives\begin{align}\label{f5}
\|f_1f_2\|_{L^{2}}\leq C\|f_1\|_{L^2}\big(\|f_{2}\|_{H^{1}}+\|\partial_zf_{2}\|_{H^{1}}\big),\\ \label{f6}\|f_1f_2\|_{L^{2}}\leq C\big(\|f_1\|_{L^2}+\|\partial_zf_{1}\|_{L^2}\big)\|f_{2}\|_{H^{1}} .
\end{align}

By \eqref{f6}, we have
\begin{align*}
&\|(\nabla f_1) f_2\|_{L^2}\leq C\big(\|\nabla f_1\|_{L^2}+\|\partial_z\nabla f_1\|_{L^2}\big)\| f_2\|_{H^1}\leq C\big(\|f_1\|_{H^1}+\|\partial_zf_1\|_{H^1}\big)\| f_2\|_{H^1},
\end{align*}
and by \eqref{f3},
\begin{align*}
&\|f_1\nabla f_2\|_{L^2}\leq C\big(\|f_1\|_{H^{1}}+\|\partial_zf_1\|_{H^{1}}\big)\|{\nabla f_2}\|_{L^{2}},
\end{align*}
which gives the second inequality. Similarly, by \eqref{f5} and \eqref{f4}, we have
\begin{align}\label{f7}
\|\nabla(f_1 f_2)\|_{L^2}\leq C\|f_1\|_{H^{1}}\big(\|f_{2}\|_{H^{1}}+\|\partial_zf_{2}\|_{H^{1}}\big).
\end{align}

Notice that for $k=2,3$,
\begin{align*}
\|f_1 f_2\|_{H^k}&\leq C\|f_1 f_2\|_{L^2}+C\sum_{j=1}^3\|\nabla \partial_j^{k-1}(f_1 f_2)\|_{L^2}\\&\leq C\|f_1 f_2\|_{L^2}+C\sum_{j=1}^3\sum_{l=0}^{k-1}\|\nabla (\partial_j^{l}f_1 \partial_j^{k-1-l}f_2)\|_{L^2}.
\end{align*}
By \eqref{f7} and interpolation, we deduce that for $0\leq l\leq k-1\leq2,$\begin{align*}
\|\nabla (\partial_j^{l}f_1 \partial_j^{k-1-l}f_2)\|_{L^2}\leq& C\|\partial_j^{l}f_1\|_{H^{1}}\big(\|\partial_j^{k-1-l}f_{2}\|_{H^{1}}+\|\partial_j^{k-1-l}\partial_zf_{2}\|_{H^{1}}\big)\\
\leq & C\|f_1\|_{H^{l+1}}\big(\|f_{2}\|_{H^{k-l}}+\|\partial_zf_{2}\|_{H^{k-l}}\big)\\
\leq & C\|f_1\|_{H^{1}}^{1-\frac{l}{2}}\|f_1\|_{H^{3}}^{\frac{l}{2}}\big(\|f_{2}\|_{H^{k}}^{1-\frac{l}{2}}\|f_{2}\|_{H^{k-2}}^{\frac{l}{2}}
+\|\partial_zf_{2}\|_{H^{k}}^{1-\frac{l}{2}}\|\partial_zf_{2}\|_{H^{k-2}}^{\frac{l}{2}}\big)\\
\leq & C\|f_1\|_{H^{1}}\big(\|f_{2}\|_{H^{k}}
+\|\partial_zf_{2}\|_{H^{k}}\big)+C\|f_1\|_{H^{3}}\big(\|f_{2}\|_{H^{k-2}}
+\|\partial_zf_{2}\|_{H^{k-2}}\big),
\end{align*}
and by \eqref{f4},
\begin{align*}
\|f_1 f_2\|_{L^2}\leq C\| f_1\|_{H^1}(\|f_2\|_{L^2}+\|\partial_zf_2\|_{L^2})\leq C\| f_1\|_{H^1}\big(\|f_2\|_{H^{k}}+\|\partial_zf_2\|_{H^{k}}\big).
\end{align*}
Summing up, we conclude the third and fourth inequality.
\end{proof}

\begin{Lemma}\label{lem9b}
It holds that for $j\in\{1,3\}$
\begin{align*}
&\|f_1\partial_jf_2\|_{L^2}\leq C\big(\|\partial_jf_1\|_{L^2}+\|f_1\|_{L^2}\big)\|\Delta f_2\|_{L^2},\\ &\|(f_1f_2)\|_{L^2}+\|\partial_j(f_1f_2)\|_{L^2}\leq C\big(\|\partial_jf_1\|_{L^2}+\|f_1\|_{L^2}\big)\| f_2\|_{H^2}.
\end{align*}
\end{Lemma}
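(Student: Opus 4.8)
The plan is to reduce the whole lemma to its first inequality and to prove that one by exploiting that the differentiated direction $x_j$ with $j\in\{1,3\}$ is periodic. By the symmetry $x\leftrightarrow z$ of the domain $\T\times\R\times\T$ (the line variable $y$ playing no distinguished role) it suffices to treat $j=1$; the case $j=3$ is obtained verbatim after interchanging the two periodic variables. For the second inequality I would expand $\partial_j(f_1f_2)=(\partial_jf_1)f_2+f_1\partial_jf_2$ and estimate the three products $f_1f_2$, $(\partial_jf_1)f_2$, $f_1\partial_jf_2$ separately. The first two are immediate from H\"older together with the three–dimensional Sobolev embedding $\|f_2\|_{L^\infty}\le C\|f_2\|_{H^2}$ recalled at the beginning of this section, giving $\|f_1f_2\|_{L^2}+\|(\partial_jf_1)f_2\|_{L^2}\le C(\|f_1\|_{L^2}+\|\partial_jf_1\|_{L^2})\|f_2\|_{H^2}$; and $\|f_1\partial_jf_2\|_{L^2}\le C(\|f_1\|_{L^2}+\|\partial_jf_1\|_{L^2})\|\Delta f_2\|_{L^2}\le C(\,\cdots)\|f_2\|_{H^2}$ is precisely the first inequality. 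Thus everything rests on the first estimate.

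For the first estimate (with $j=1$) I would expand $f_1$ in its $x$–Fourier series, $f_1=\sum_m e^{2\pi imx}a_m(y,z)$, and use Minkowski's inequality to get $\|f_1\partial_xf_2\|_{L^2}\le\sum_m\|a_m\partial_xf_2\|_{L^2}$. Since each $a_m$ is independent of $x$, I expand $f_2=\sum_k e^{2\pi ikx}b_k(y,z)$ and apply Parseval in $x$ to the product $a_m\,\partial_xf_2=\sum_k 2\pi ik\,e^{2\pi ikx}a_mb_k$, which gives
\beno
\|a_m\partial_xf_2\|_{L^2}^2=\sum_k 4\pi^2k^2\|a_mb_k\|_{L^2_{yz}}^2\le\|a_m\|_{L^2_{yz}}^2\sum_k 4\pi^2k^2\|b_k\|_{L^\infty_{yz}}^2 .
\eeno
The whole matter is now to convert the $L^\infty_{yz}$–norms of $b_k$, against the $x$–frequency weight $k^2$, into $\|\Delta f_2\|_{L^2}$.

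The key point, and the main obstacle, is the following weighted Agmon–type inequality on $\R_y\times\T_z$, which I would prove by hand:
\beno
|k|\,\|b_k\|_{L^\infty_{yz}}\le C\big\|(\Delta_{yz}-4\pi^2k^2)b_k\big\|_{L^2_{yz}}\qquad(k\in\Z,\ k\neq0),
\eeno
with $C$ independent of $k$. The proof is Fourier analytic: $\|b_k\|_{L^\infty}\le\|\widehat{b_k}\|_{L^1}$, and since $\widehat{(\Delta_{yz}-4\pi^2k^2)b_k}=-4\pi^2(k^2+\eta^2+l^2)\widehat{b_k}$, Cauchy--Schwarz yields $\|\widehat{b_k}\|_{L^1}\le\big\|(\Delta_{yz}-4\pi^2k^2)b_k\big\|_{L^2}\big(\sum_{l}\int_\R(4\pi^2(k^2+\eta^2+l^2))^{-2}d\eta\big)^{1/2}$, and the last factor is $\le C|k|^{-1}$ because $\sum_l(k^2+l^2)^{-3/2}\le C|k|^{-2}$. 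This is exactly the place where the integrality $|k|\ge1$ supplies the ``mass'' that rescues the otherwise logarithmically divergent two–dimensional $L^\infty$ bound (the naive embedding $H^1_{yz}\hookrightarrow L^\infty_{yz}$ is false); the factor $|k|$ on the left is what makes the estimate land on exactly two derivatives of $f_2$ rather than three.

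Granting this, and using Parseval in $x$ once more in the form $\Delta f_2=\sum_k e^{2\pi ikx}(\Delta_{yz}-4\pi^2k^2)b_k$, I get $\sum_k 4\pi^2k^2\|b_k\|_{L^\infty_{yz}}^2\le C\sum_k\|(\Delta_{yz}-4\pi^2k^2)b_k\|_{L^2_{yz}}^2=C\|\Delta f_2\|_{L^2}^2$, hence $\|a_m\partial_xf_2\|_{L^2}\le C\|a_m\|_{L^2_{yz}}\|\Delta f_2\|_{L^2}$. Summing over $m$ and applying Cauchy--Schwarz with the weight $(1+m^2)^{-1}$ gives $\sum_m\|a_m\|_{L^2_{yz}}\le C\big(\sum_m(1+m^2)\|a_m\|_{L^2_{yz}}^2\big)^{1/2}\le C(\|f_1\|_{L^2}+\|\partial_xf_1\|_{L^2})$, which is the first inequality. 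I expect the only genuine difficulty to be isolating and proving the weighted Agmon inequality above; once it is in hand, the two Parseval identities and the weighted summation are routine.
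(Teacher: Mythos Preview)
Your proposal is correct and follows essentially the same route as the paper: Fourier decomposition in the periodic variable, the mode-wise estimate $k^2\|b_k\|_{L^\infty_{yz}}^2\le C\|(\Delta_{yz}-4\pi^2k^2)b_k\|_{L^2}^2$ summed to $\|\Delta f_2\|_{L^2}^2$, then the weighted Cauchy--Schwarz sum over $m$, the symmetry $x\leftrightarrow z$ for $j=3$, and the product-rule reduction for the second inequality. The only technical variation is that the paper obtains the pointwise mode bound from the 2D Gagliardo--Nirenberg inequality $\|b_k\|_{L^\infty}^2\le C\|b_k\|_{L^2}\|b_k\|_{H^2}$ followed by Young's inequality, whereas you derive it directly via Fourier (your ``weighted Agmon'' step); both yield the same conclusion.
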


\begin{proof}Let us consider the case of $j=1$. We write
$f_l(x,y,z)=\sum\limits_{k\in\mathbb{Z}}e^{2\pi ikx}f_{l,k}(y,z)$ for $l=1,2.$ Then we have
\beno
&&\|f_1\|_{L^2}^2+\|\partial_xf_1\|_{L^2}^2=\sum\limits_{k\in\mathbb{Z}}(1+(2\pi k)^2)\|f_{1,k}\|_{L^2}^2,\\
&&\|\Delta f_2\|_{L^2}^2=\sum\limits_{k\in\mathbb{Z}}\big((2\pi k)^4\|f_{2,k}\|_{L^2}^2+2(2\pi k)^2\|\nabla f_{2,k}\|_{L^2}^2+\|\Delta f_{2,k}\|_{L^2}^2\big).
\eeno
Therefore,
\begin{align}\label{f8}\Big(\sum\limits_{k\in\mathbb{Z}}\|f_{1,k}\|_{L^2}\Big)^2\leq \sum\limits_{k\in\mathbb{Z}}(1+k^2)\|f_{1,k}\|_{L^2}^2\sum\limits_{k\in\mathbb{Z}}\dfrac{1}{1+k^2}\leq C\big(\|{f_1}\|_{L^2}^2+\|\partial_xf_1\|_{L^2}^2\big).
\end{align}
By 2-D Gagliardo-Nirenberg inequality, we have
\begin{align}\label{f9}k^2\|f_{2,k}\|_{L^{\infty}}^2\leq Ck^2\|f_{2,k}\|_{L^{2}}\|f_{2,k}\|_{H^{2}}\leq C\big(k^4\|f_{2,k}\|_{L^2}^2+\|\Delta f_{2,k}\|_{L^2}^2\big). \end{align}
We write $e^{2\pi imx}f_{1,m}(y,z)\partial_xf_2(x,y,z)=\sum\limits_{k\in\mathbb{Z}}2\pi ike^{2\pi i(k+m)x}f_{1,m}(y,z)f_{2,k}(y,z)$. Then by \eqref{f9}, we get
\begin{align*}
&\|e^{2\pi imx}f_{1,m}(y,z)\partial_jf_2\|_{L^2}^2=\sum\limits_{k\in\mathbb{Z}}\|2\pi ikf_{1,m}f_{2,k}\|_{L^2}^2\leq
\sum\limits_{k\in\mathbb{Z}}(2\pi k)^2\|f_{1,m}\|_{L^2}^2\|f_{2,k}\|_{L^{\infty}}^2\\ &\leq C\sum\limits_{k\in\mathbb{Z}}\|f_{1,m}\|_{L^2}^2(k^4\|f_{2,k}\|_{L^2}^2+\|\Delta f_{2,k}\|_{L^2}^2)\leq C\|f_{1,m}\|_{L^2}^2\|\Delta f_2\|_{L^2}^2,
\end{align*}
from which and \eqref{f8}, we infer that
\begin{align*}
\|f_1\partial_xf_2\|_{L^2}=&\big\|\sum\limits_{m\in\mathbb{Z}}e^{2\pi imx}f_{1,m}(y,z)\partial_xf_2\big\|_{L^2}\leq \sum\limits_{m\in\mathbb{Z}}\|e^{2\pi imx}f_{1,m}(y,z)\partial_jf_2\|_{L^2}\\
\leq& \sum\limits_{l\in\mathbb{Z}} C\|f_{1,m}\|_{L^2}\|\Delta f_2\|_{L^2}\leq C\big(\|\partial_xf_1\|_{L^2}+\|f_1\|_{L^2}\big)\|\Delta f_2\|_{L^2}.
\end{align*}
This proves the first inequality for $j=1$. For the case of $j=3, $ let $ \widetilde{f}_l(x,y,z)={f}_l(z,y,x)$ for $l=1,2$. Then we have
\begin{align*}
\|f_1\partial_zf_2\|_{L^2}&=\|\widetilde{f}_1\partial_x\widetilde{f}_2\|_{L^2}\leq C\big(\|\partial_x\widetilde{f}_1\|_{L^2}+\|\widetilde{f}_1\|_{L^2}\big)\|\Delta \widetilde{f}_2\|_{L^2}\\&=C\big(\|\partial_zf_1\|_{L^2}+\|f_1\|_{L^2}\big)\|\Delta f_2\|_{L^2}.
\end{align*}

Using the first inequality of the lemma, we get
 \begin{align*}
 \|(f_1f_2)\|_{L^2}+\|\partial_j(f_1f_2)\|_{L^2}\leq& \|(f_1f_2)\|_{L^2}+\|\partial_jf_1f_2\|_{L^2}+\|f_1\partial_jf_2\|_{L^2}\\&\leq \big(\|\partial_jf_1\|_{L^2}+\|f_1\|_{L^2}\big)\| f_2\|_{L^{\infty}}+ C\big(\|\partial_jf_1\|_{L^2}+\|f_1\|_{L^2}\big)\|\Delta f_2\|_{L^2}\\&\leq C\big(\|\partial_jf_1\|_{L^2}+\|f_1\|_{L^2}\big)\| f_2\|_{H^2},
\end{align*}
which gives the second inequality.
\end{proof}

\begin{Lemma}\label{lem9c}
It holds that for $j\in\{1,3\}$
\begin{align*}
&\|f_1f_2\|_{L^2}\leq C\big(\|\partial_jf_1\|_{H^1}+\|f_1\|_{H^1}\big)\| f_2\|_{L^2}+C\big(\|\partial_jf_1\|_{L^2}+\|f_1\|_{L^2}\big)\| f_2\|_{H^1},\\ &\|\partial_j(f_1f_2)\|_{L^2}\leq C\big(\|(\partial_jf_1,f_1)\|_{H^1}\|(\partial_jf_2,f_2)\|_{L^2}+\|(\partial_jf_1,f_1)\|_{L^2}\|(\partial_jf_2,f_2)\|_{H^1}).
\end{align*}
If $P_0 f_1=0$, then for $k=1,2,3,$
\begin{align*}
&\|f_1f_2\|_{H^k}\leq C\|\partial_xf_1\|_{H^{k+1}}\| f_2\|_{L^2}+C\|\partial_xf_1\|_{L^2}\| f_2\|_{H^{k+1}}.
\end{align*}

\end{Lemma}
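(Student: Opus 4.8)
The plan is to reduce everything to the case $j=1$ and work in the $x$-Fourier series $f_l(x,y,z)=\sum_{m\in\Z}e^{2\pi imx}f_{l,m}(y,z)$; the case $j=3$ follows by the substitution $\widetilde f_l(x,y,z)=f_l(z,y,x)$, exactly as at the end of the proof of Lemma~\ref{lem9b}. The engine for the first inequality is a single per-mode estimate: for any $g=g(y,z)$,
\[
\|g\,f_2\|_{L^2(\Omega)}\le C\big(\|g\|_{H^1}\|f_2\|_{L^2}+\|g\|_{L^2}\|f_2\|_{H^1}\big).
\]
To prove it I freeze $x$, use H\"older $\|g\,f_2(x)\|_{L^2(yz)}\le\|g\|_{L^4}\|f_2(x)\|_{L^4}$ together with the $2$-D Gagliardo--Nirenberg bound $\|h\|_{L^4(\R\times\T)}\le C\|h\|_{L^2}^{1/2}\|h\|_{H^1}^{1/2}$, integrate in $x$ with Cauchy--Schwarz, and close with $ab\le\tfrac12(a^2+b^2)$. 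Writing $f_1f_2=\sum_m e^{2\pi imx}f_{1,m}f_2$, the triangle inequality and this estimate give $\|f_1f_2\|_{L^2}\le C\|f_2\|_{L^2}\sum_m\|f_{1,m}\|_{H^1}+C\|f_2\|_{H^1}\sum_m\|f_{1,m}\|_{L^2}$. Finally, exactly as in \eqref{f8}, the convergence of $\sum_m(1+m^2)^{-1}$ yields $\sum_m\|f_{1,m}\|_{L^2}\le C(\|f_1\|_{L^2}+\|\partial_xf_1\|_{L^2})$ and $\sum_m\|f_{1,m}\|_{H^1}\le C(\|f_1\|_{H^1}+\|\partial_xf_1\|_{H^1})$, which is the first inequality.

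For the second inequality I write $\partial_x(f_1f_2)=(\partial_xf_1)f_2+f_1(\partial_xf_2)$ and apply the first inequality to each term, choosing the slots so that the factor already carrying a derivative sits in the slot that only needs $L^2$/$H^1$ control. For $(\partial_xf_1)f_2$ I take $f_2$ as the ``$f_1$''-factor and $\partial_xf_1$ as the ``$f_2$''-factor; for $f_1(\partial_xf_2)$ I keep $f_1$ first and $\partial_xf_2$ second. After bounding $\|\partial_xf_i\|_{H^s}\le\|(\partial_xf_i,f_i)\|_{H^s}$, both applications produce precisely the two products $\|(\partial_xf_1,f_1)\|_{H^1}\|(\partial_xf_2,f_2)\|_{L^2}$ and $\|(\partial_xf_1,f_1)\|_{L^2}\|(\partial_xf_2,f_2)\|_{H^1}$, giving the claim. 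The essential point is that the derivative is never allowed to land in the high-regularity slot, which would cost a second derivative.

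For the third inequality I first note that $P_0f_1=0$ forces all $x$-frequencies of $f_1$ to be nonzero, whence the Poincar\'e-type bound $\|f_1\|_{H^s}\le C\|\partial_xf_1\|_{H^s}$ holds for every $s$. I expand $\|f_1f_2\|_{H^k}^2\asymp\sum_{|\alpha|\le k}\|\partial^\alpha(f_1f_2)\|_{L^2}^2$ by Leibniz; a typical term is $\partial^\beta f_1\,\partial^{\alpha-\beta}f_2$ with $|\beta|=l\le k$. Applying the first inequality with the $f_1$-factor in the first (two-derivative) slot and using Poincar\'e gives
\[
\|\partial^\beta f_1\,\partial^{\alpha-\beta}f_2\|_{L^2}\le C\|\partial_xf_1\|_{H^{l+1}}\|f_2\|_{H^{k-l}}+C\|\partial_xf_1\|_{H^{l}}\|f_2\|_{H^{k-l+1}}.
\]
Each of the two products has total regularity $k+2$ split across the two factors, so it interpolates between the endpoints $E_1=\|\partial_xf_1\|_{H^{k+1}}\|f_2\|_{L^2}$ and $E_2=\|\partial_xf_1\|_{L^2}\|f_2\|_{H^{k+1}}$: using $\|f\|_{H^s}\le\|f\|_{H^{s_1}}^\theta\|f\|_{H^{s_2}}^{1-\theta}$, the first product equals the weighted geometric mean $E_1^{\theta}E_2^{1-\theta}$ with $\theta=(l+1)/(k+1)$ and the second with $\theta=l/(k+1)$, each $\le E_1+E_2$ by Young's inequality. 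Summing the finitely many Leibniz terms (the $|\alpha|=0$ term being covered directly by the first inequality) yields the third inequality.

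The hard part is the placement of derivatives in the third inequality. One must put the $f_1$-factor, not $f_2$, in the slot that absorbs two derivatives, and then check that \emph{every} intermediate Leibniz term distributes its $k+2$ total derivatives so as to lie on the segment between $E_1$ and $E_2$, so that interpolation plus Young closes the estimate; the fact that the $f_2$-side endpoint is $H^{k+1}$ rather than $H^{k+2}$ is exactly what dictates this choice. The other delicate point is the per-mode Gagliardo--Nirenberg estimate of the first paragraph, which is what makes the asymmetric $L^2$--$H^1$ split (rather than the cruder $H^1$--$H^1$ bound) survive the summation over $m$.
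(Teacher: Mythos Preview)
Your proof is correct and follows essentially the same approach as the paper's: Fourier expansion in the $j$-direction, the 2-D Gagliardo--Nirenberg $L^4$ bound on each mode, the $\ell^1$ summation via Cauchy--Schwarz as in \eqref{f8}, Leibniz for the second and third inequalities, and interpolation plus Young between the two endpoints for the $H^k$ estimate. The only cosmetic differences are that the paper also Fourier-expands $f_2$ and pairs modes (your $x$-slice argument is an equivalent way to get the same per-mode bound), and the paper controls $\|f_1f_2\|_{H^k}$ via the equivalent norm $\|f_1f_2\|_{L^2}+\sum_j\|\partial_j^k(f_1f_2)\|_{L^2}$ with one-dimensional Leibniz rather than your full multi-index expansion.
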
\begin{proof}Let us consider the case of $j=1$. We write
$f_l(x,y,z)=\sum\limits_{k\in\mathbb{Z}}e^{2\pi ikx}f_{l,k}(y,z)$ for $l=1,2.$ Then we have
\beno
&&\|f_l\|_{L^2}^2+\|\partial_xf_l\|_{L^2}^2=\sum\limits_{k\in\mathbb{Z}}(1+(2\pi k)^2)\|f_{l,k}\|_{L^2}^2,\\
&&\|f_l\|_{H^1}^2+\|\partial_xf_l\|_{H^1}^2=\sum\limits_{k\in\mathbb{Z}}(1+(2\pi k)^2)\big(\|f_{l,k}\|_{H^1}^2+(2\pi k)^2\|f_{l,k}\|_{L^2}^2\big).\\
\eeno
Therefore, for $s=0,1$ we have
\begin{align*}\Big(\sum\limits_{k\in\mathbb{Z}}\|f_{1,k}\|_{H^s}\Big)^2\leq \sum\limits_{k\in\mathbb{Z}}(1+k^2)\|f_{1,k}\|_{H^s}^2\sum\limits_{k\in\mathbb{Z}}\dfrac{1}{1+k^2}\leq C\big(\|{f_1}\|_{H^s}^2+\|\partial_xf_1\|_{H^s}^2\big).
\end{align*}
By 2-D Gagliardo-Nirenberg inequality, we have $\|f_{l,k}\|_{L^{4}}^2\leq C\|f_{l,k}\|_{L^{2}}\|f_{l,k}\|_{H^{1}}$. Then
\begin{align}\label{f11}
\Big(\sum\limits_{k\in\mathbb{Z}}\|f_{1,k}\|_{L^4}\Big)^2&\leq C\Big(\sum\limits_{k\in\mathbb{Z}}\|f_{1,k}\|_{L^2}\Big)\Big(\sum\limits_{k\in\mathbb{Z}}\|f_{1,k}\|_{H^1}\Big)\\ \nonumber&\leq C\big(\|{f_1}\|_{L^2}+\|\partial_xf_1\|_{L^2}\big)\big(\|{f_1}\|_{H^1}+\|\partial_xf_1\|_{H^1}\big), \end{align}
and
\begin{align}\label{f12}
&\sum\limits_{k\in\mathbb{Z}}\|f_{2,k}\|_{L^4}^2\leq C\Big(\sum\limits_{k\in\mathbb{Z}}\|f_{1,k}\|_{L^2}^2\Big)^{\frac{1}{2}}
\Big(\sum\limits_{k\in\mathbb{Z}}\|f_{1,k}\|_{H^1}^2\Big)^{\frac{1}{2}}\leq C\|{f_2}\|_{L^2}\|{f_2}\|_{H^1},
 \end{align}
We write $e^{2\pi imx}f_{1,m}(y,z)f_2(x,y,z)=\sum\limits_{k\in\mathbb{Z}}e^{2\pi i(k+m)x}f_{1,m}(y,z)f_{2,k}(y,z)$. Then by \eqref{f12}, we get
\begin{align*}
\|e^{2\pi imx}f_{1,m}(y,z)f_2\|_{L^2}^2&=\sum\limits_{k\in\mathbb{Z}}\|f_{1,m}f_{2,k}\|_{L^2}^2\leq
\sum\limits_{k\in\mathbb{Z}}\|f_{1,m}\|_{L^4}^2\|f_{2,k}\|_{L^{4}}^2\\ &\leq C\|f_{1,m}\|_{L^4}^2\|{f_2}\|_{L^2}\|{f_2}\|_{H^1},
\end{align*}
from which and \eqref{f11}, we infer that
\begin{align*}
\|f_1f_2\|_{L^2}=&\big\|\sum\limits_{m\in\mathbb{Z}}e^{2\pi imx}f_{1,m}(y,z)f_2\big\|_{L^2}\leq \sum\limits_{m\in\mathbb{Z}}\|e^{2\pi imx}f_{1,m}(y,z)f_2\|_{L^2}\\
\leq& \sum\limits_{l\in\mathbb{Z}} C\|f_{1,m}\|_{L^4}(\|{f_2}\|_{L^2}\|{f_2}\|_{H^1})^{\frac{1}{2}}\\ \leq & C\big(\|\partial_xf_1\|_{L^2}+\|f_1\|_{L^2}\big)^{\frac{1}{2}}
\big(\|{f_1}\|_{H^1}+\|\partial_xf_1\|_{H^1}\big)^{\frac{1}{2}}(\|{f_2}\|_{L^2}\|{f_2}\|_{H^1})^{\frac{1}{2}}\\ \leq & C\big(\|\partial_xf_1\|_{L^2}+\|f_1\|_{L^2}\big)\|{f_2}\|_{H^1}+
\big(\|{f_1}\|_{H^1}+\|\partial_xf_1\|_{H^1}\big)\|{f_2}\|_{L^2}.
\end{align*}
This proves the first inequality for $j=1$. The case of $j=3$ is similar.  Thus,
\begin{align*}
 &\|\partial_j(f_1f_2)\|_{L^2}\leq \|\partial_jf_1f_2\|_{L^2}+\|f_1\partial_jf_2\|_{L^2}\leq C\|\partial_jf_1 \|_{H^{1}}\|(\partial_jf_2,f_2)\|_{L^2}\\&\quad+C\|\partial_jf_1 \|_{L^2}\|(\partial_jf_2,f_2)\|_{H^1}+ C\|(\partial_jf_1,f_1)\|_{L^2}\|\partial_jf_2 \|_{H^{1}}+C\|(\partial_jf_1,f_1)\|_{H^{1}}\|\partial_jf_2 \|_{L^{2}}\\&\leq C\|(\partial_jf_1,f_1) \|_{H^{1}}\|(\partial_jf_2,f_2)\|_{L^2}+C\|(\partial_jf_1,f_1)\|_{L^2}\|(\partial_jf_2,f_2)\|_{H^1},
\end{align*}
which gives the second inequality.

If $ P_0 f_1=0$ then the first inequality with $j=1$ becomes
\begin{align}\label{f13}
&\|f_1f_2\|_{L^2}\leq C\|\partial_xf_1\|_{H^1}\| f_2\|_{L^2}+C\|\partial_xf_1\|_{L^2}\| f_2\|_{H^1}.
\end{align}
Notice that for $k=1,2,3$
\begin{align*}&\|f_1 f_2\|_{H^k}\leq C\|f_1 f_2\|_{L^2}+C\sum_{j=1}^3\sum_{l=0}^{k}\|\partial_j^{l}f_1 \partial_j^{k-l}f_2\|_{L^2}
\end{align*}
By \eqref{f13} and interpolation, we deduce that for $0\leq l\leq k,$\begin{align*}
\|\partial_j^{l}f_1 \partial_j^{k-l}f_2\|_{L^2}\leq& C\|\partial_x\partial_j^{l}f_1\|_{H^1}\| \partial_j^{k-l}f_2\|_{L^2}+C\|\partial_x\partial_j^{l}f_1\|_{L^2}\|\partial_j^{k-l} f_2\|_{H^1}\\ \leq & C\|\partial_xf_1\|_{H^{l+1}}\|f_{2}\|_{H^{k-l}}+C\|\partial_xf_1\|_{H^{l}}\| f_2\|_{H^{k-l+1}}\\ \leq & C\sum_{l'=l}^{l+1}\|\partial_xf_1\|_{L^{2}}^{\frac{k-l'+1}{k+1}}\|\partial_xf_1\|_{H^{k+1}}^{\frac{l'}{k+1}}
\|f_2\|_{L^{2}}^{\frac{l'}{k+1}}\|f_2\|_{H^{k+1}}^{\frac{k-l'+1}{k+1}}
\\ \leq & C\|\partial_xf_1\|_{H^{k+1}}\|f_2\|_{L^{2}}+C\|\partial_xf_1\|_{L^{2}}\|f_2\|_{H^{k+1}},
\end{align*}
and
\begin{align*}&\|f_1 f_2\|_{L^2}\leq C\|\partial_xf_1\|_{H^{k+1}}\|f_2\|_{L^{2}}+C\|\partial_xf_1\|_{L^{2}}\|f_2\|_{H^{k+1}},
\end{align*}
Summing up, we conclude the third inequality.
\end{proof}

 \begin{Lemma}\label{lem9}
 Assume that $\|\bu^1\|_{H^4}<c_3$ with $c_3$ as in Lemma \ref{lem5}.   If $\partial_x f_1=0,\ P_0 f_2=0,$ then it holds that
\begin{align*}
 &\|f_1f_2\|_{L^2}\leq C\|f_1\|_{H^1}\big(\| f_2\|_{L^2}+\|(\partial_z-\kappa\partial_y) f_2\|_{L^2}\big), \\
&\|\nabla\Delta^{-1}(f_1 f_2)\|_{L^2}\leq C\|f_1\|_{L^2}\big(\| f_2\|_{L^2}+\|(\partial_z-\kappa\partial_y) f_2\|_{L^2}\big),\\
 &\|\nabla(f_1 f_2)\|_{L^2}\leq C\|f_1\|_{H^1}\big(\| f_2\|_{H^1}+\|(\partial_z-\kappa\partial_y)f_2\|_{H^1}\big),
 \end{align*}
 and for $j=2,3$,
 \begin{align*}
 \big\|[\partial_j\Delta^{-1},f_1]\Delta f_2\big\|_{H^1}\leq C\|f_1\|_{H^2}\big(\|\nabla f_2\|_{L^2}+\|(\partial_z-\kappa\partial_y)\nabla f_2\|_{L^2}\big).
\end{align*}
\end{Lemma}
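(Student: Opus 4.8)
The plan is to straighten the level sets of $V=y+\bu^1$ by the change of variables $(x,y,z)\mapsto(X,Y,Z)=(x,V(t,y,z),z)$ already used in Section 3, writing $F,F_1,F_2$ for the functions obtained from $f,f_1,f_2$ via $F(t,x,V,z)=f(t,x,y,z)$. The decisive point is that the good derivative becomes a flat one: since $\psi_z(t,V,z)=\partial_zV$, $(1+\psi_y)(t,V,z)=\partial_yV$ and $\kappa\,\partial_yV=\partial_zV$, the identities $\partial_zf=(\partial_Z+\psi_z\partial_Y)F$ and $\partial_yf=(1+\psi_y)\partial_YF$ collapse to
\[
(\partial_z-\kappa\partial_y)f=(\partial_ZF)\circ\big(Id+(0,\bu^1,0)\big).
\]
Moreover $\partial_XF_1=0$, $P_0F_2=0$, and $(f_1f_2)=(F_1F_2)\circ(Id+(0,\bu^1,0))$. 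Under $\|\bu^1\|_{H^4}<c_3$ we have $\|\nabla\bu^1\|_{L^\infty}<1/2$ (Lemma \ref{lem5}), so Lemma \ref{lem:com-inv} gives $\|f\|_{L^2}\sim\|F\|_{L^2}$, $\|f\|_{H^1}\sim\|F\|_{H^1}$, and, applied to the function $\partial_ZF_2$, yields $\|(\partial_z-\kappa\partial_y)f_2\|_{H^s}\sim\|\partial_ZF_2\|_{H^s}$ for $s=0,1$. For the second inequality I would also use Lemma \ref{lemHs2} to get $\|\nabla\Delta^{-1}(f_1f_2)\|_{L^2}\sim\|\nabla\Delta^{-1}(F_1F_2)\|_{L^2}$, both products having zero $x$-average.

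After these reductions, inequalities one and three are \emph{exactly} the flat estimates already proved in Lemma \ref{lem9a}: its first inequality gives $\|F_1F_2\|_{L^2}\le C\|F_1\|_{H^1}(\|F_2\|_{L^2}+\|\partial_ZF_2\|_{L^2})$ and the bound \eqref{f7} gives $\|\nabla(F_1F_2)\|_{L^2}\le C\|F_1\|_{H^1}(\|F_2\|_{H^1}+\|\partial_ZF_2\|_{H^1})$; translating back through the equivalences above closes them. The remaining flat input is
\[
\|\nabla\Delta^{-1}(F_1F_2)\|_{L^2}\le C\|F_1\|_{L^2}\big(\|F_2\|_{L^2}+\|\partial_ZF_2\|_{L^2}\big),\quad \partial_XF_1=0,\ P_0F_2=0,
\]
which I would prove by the Fourier decomposition $F_2=\sum_{k\neq0}e^{2\pi ikX}F_{2,k}(Y,Z)$. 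Since $\nabla\Delta^{-1}$ is diagonal in the $X$-frequency, $\|\nabla\Delta^{-1}(F_1F_2)\|_{L^2}^2\sim\sum_{k\neq0}\|(k^2+|D_{Y,Z}|^2)^{-1/2}(F_1F_{2,k})\|_{L^2(Y,Z)}^2$, and for $|k|\ge1$ one has $\|(k^2+|D_{Y,Z}|^2)^{1/2}\psi\|_{L^2}\ge\|\psi\|_{H^1(Y,Z)}$, so by duality each block is at most $\sup_\psi|\langle F_1F_{2,k},\psi\rangle|/\|\psi\|_{H^1(Y,Z)}$. Estimating the pairing by Hölder, using the anisotropic embeddings $\|g\|_{L^2_YL^\infty_Z}\le C(\|g\|_{L^2}+\|\partial_Zg\|_{L^2})$ (1D Sobolev in $Z$) and $\|\psi\|_{L^\infty_YL^2_Z}\le C\|\psi\|_{H^1}$ (1D Sobolev in $Y$), bounds each block by $C\|F_1\|_{L^2}(\|F_{2,k}\|_{L^2}+\|\partial_ZF_{2,k}\|_{L^2})$; summing in $k$ gives the claim.

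The commutator estimate I would treat algebraically, without changing variables. From $\Delta(f_1f_2)=f_1\Delta f_2+2\nabla f_1\cdot\nabla f_2+(\Delta f_1)f_2$ and the fact that $\Delta^{-1}\Delta$ is the identity on functions of zero $x$-average,
\[
[\partial_j\Delta^{-1},f_1]\Delta f_2=(\partial_jf_1)f_2-2\partial_j\Delta^{-1}(\nabla f_1\cdot\nabla f_2)-\partial_j\Delta^{-1}((\Delta f_1)f_2),
\]
and rewriting $(\Delta f_1)f_2=\text{div}((\nabla f_1)f_2)-\nabla f_1\cdot\nabla f_2$ turns the last two terms into $\partial_j\Delta^{-1}\text{div}((\nabla f_1)f_2)-\partial_j\Delta^{-1}(\nabla f_1\cdot\nabla f_2)$. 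Each product here has zero $x$-average (as $\partial_xf_1=0$, $P_0f_2=0$), so its frequencies satisfy $|\xi|\ge1$ and therefore $\partial_j\Delta^{-1}$ and $\partial_j\Delta^{-1}\text{div}$ act boundedly on $H^1$; it then remains to bound $\|(\nabla f_1)f_2\|_{H^1}$ and $\|\nabla f_1\cdot\nabla f_2\|_{L^2}$. These follow from inequalities one and three applied with the $x$-independent factor $\nabla f_1\in H^1$, after replacing $\|f_2\|_{L^2}$ by $\|\nabla f_2\|_{L^2}$ via the Poincaré inequality in $x$ (legitimate since $P_0f_2=0$) and absorbing the commutator $[\nabla,\kappa\partial_y]f_2=(\nabla\kappa)\partial_yf_2$, controlled by $\|\nabla\kappa\|_{L^\infty}\|\nabla f_2\|_{L^2}\le C\|\nabla f_2\|_{L^2}$ by Lemma \ref{lem5}.

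I expect the flat $H^{-1}$ bound of the second paragraph to be the main obstacle. One cannot place either factor in $L^\infty$ without spending a $\partial_Y$ derivative that is absent from the right-hand side, so the negative derivative must be recovered frequency-by-frequency in $X$, and the only affordable regularity is the anisotropic $L^2_YL^\infty_Z$ control. Closing the duality with exactly $\|F_1\|_{L^2}$ rather than $\|F_1\|_{H^1}$ is the delicate step, and it is precisely what rules out a one-line appeal to Lemma \ref{lem9a} and forces the $X$-frequency localization.
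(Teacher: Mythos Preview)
Your proof is correct. Inequalities one and three are handled exactly as in the paper, via the change of variables $(x,y,z)\mapsto(x,V,z)$ that converts $\partial_z-\kappa\partial_y$ into $\partial_Z$ and reduces matters to \eqref{f4} and \eqref{f7}.

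For the second inequality you take a genuinely different route. The paper never changes variables here: it sets $f_3=\Delta^{-1}(f_1f_2)$ and writes $\|\nabla f_3\|_{L^2}^2=-\langle f_1,P_0(f_2f_3)\rangle$, then decomposes $P_0(f_2f_3)=\sum_k f_{2,k}f_{3,-k}$ in the $x$-frequency and feeds each block back into the \emph{first} inequality of this very lemma (already proved), closing with Cauchy--Schwarz in $k$. Your approach instead invokes Lemma~\ref{lemHs2} to transport the $H^{-1}$ norm to the $(X,Y,Z)$ coordinates and then proves the flat $H^{-1}$ estimate by a separate duality against $H^1_{Y,Z}$, using the anisotropic embeddings $L^2_YL^\infty_Z$ and $L^\infty_YL^2_Z$. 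Both work; the paper's argument is slightly more economical (it bootstraps to inequality one and needs no Lemma~\ref{lemHs2}), while yours has the advantage of proving a clean flat estimate that is of independent interest.

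For the commutator you and the paper start from the same expansion $[\partial_j\Delta^{-1},f_1]\Delta f_2=(\partial_jf_1)f_2-2\partial_j\Delta^{-1}(\nabla f_1\!\cdot\!\nabla f_2)-\partial_j\Delta^{-1}((\Delta f_1)f_2)$ and both rewrite $(\Delta f_1)f_2$ in divergence form. The paper goes one step further and also rewrites $(\partial_jf_1)f_2=\mathrm{div}\,\Delta^{-1}\nabla((\partial_jf_1)f_2)$, so that the terms containing $\nabla\partial_jf_1$ cancel and only products of \emph{one} derivative of each factor survive; it then needs only the first inequality at the $L^2$ level. Your version instead estimates $\|(\partial_jf_1)f_2\|_{H^1}$ and $\|(\nabla f_1)f_2\|_{H^1}$ via the third inequality and then downgrades $\|f_2\|_{H^1}$ to $\|\nabla f_2\|_{L^2}$ by Poincar\'e in $x$ and absorbs $[\nabla,\kappa\partial_y]f_2$ via $\|\nabla\kappa\|_{L^\infty}$. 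This is valid but slightly longer; the paper's extra cancellation avoids ever seeing an undifferentiated $f_2$.
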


\begin{proof}
Take $F_l(X,Y,Z)$ so that $F_l(x,V(t,y,z),z)=f_l(x,y,z)$.  Using the facts that $(\partial_z-\kappa\partial_y) f_2(x,y,z)=\partial_Z F_2(x,V(t,y,z),z) $ and that for $k=0,1$
\beno
\|F_l\|_{H^k}\sim \|f_l\|_{H^k},\quad \|\na (F_1 F_2)\|_{L^2}\sim \|\na (f_1f_2)\|_{L^2},
\quad \|\pa_ZF_2\|_{H^k}\sim \|(\pa_z-\kappa\pa_y)f_2\|_{H^k},
\eeno
we can deduce the first and third inequalities from \eqref{f4} and \eqref{f7} in Lemma \ref{lem9a}.

For the second inequality, we use the dual method and assume that $f_1,f_2$ are real valued. Let $f_3=\Delta^{-1}(f_1 f_2)$, then $P_0 f_3=0,$ using $\partial_x f_1=0 $, we have
\begin{align*}
\|\nabla f_3\|_{L^2}^2=&-\langle f_3,\Delta f_3\rangle=-\langle \Delta^{-1}(f_1 f_2),\Delta f_3\rangle=-\langle (f_1 f_2), f_3\rangle\\=&-\langle f_1 ,f_2f_3\rangle=-\langle f_1 ,P_0(f_2f_3)\rangle\leq \|f_1\|_{L^2}\|P_0(f_2f_3)\|_{L^2}.
\end{align*}
We write
$f_l(x,y,z)=\sum\limits_{k\in\mathbb{Z}}e^{2\pi ikx}f_{l,k}(y,z)$ for $l=2,3.$ Then we have $f_{l,0}=0$ and
\begin{align*} &\|f_2\|_{L^2}^2=\sum\limits_{k\in\mathbb{Z}}\|f_{2,k}\|_{L^2}^2,\ \ \|(\partial_z-\kappa\partial_y)f_2\|_{L^2}^2=\sum\limits_{k\in\mathbb{Z}}\|(\partial_z-\kappa\partial_y)f_{2,k}\|_{L^2}^2\\ &\|\nabla f_3\|_{L^2}^2=\sum\limits_{k\in\mathbb{Z}}(\|\nabla f_{3,k}\|_{L^2}^2+(2\pi k)^2\|f_{3,k}\|_{L^2}^2)\geq\sum\limits_{k\in\mathbb{Z}}\| f_{3,k}\|_{H^1}^2,\\ 
&P_0(f_2f_3)=\sum\limits_{k\in\mathbb{Z}}f_{2,k}(y,z)f_{3,-k}(y,z).
\end{align*}
By the first inequality, we have 
$$\|f_{2,k}f_{3,-k}\|_{L^2}\leq C\|f_{3,-k}\|_{H^1}\big(\| f_{2,k}\|_{L^2}+\|(\partial_z-\kappa\partial_y) f_{2,k}\|_{L^2}\big),$$ 
which gives
\begin{align*} \|P_0(f_2f_3)\|_{L^2}&\leq\sum\limits_{k\in\mathbb{Z}}\|f_{2,k}f_{3,-k}\|_{L^2}
\leq\sum\limits_{k\in\mathbb{Z}}C\|f_{3,-k}\|_{H^1}\big(\| f_{2,k}\|_{L^2}+\|(\partial_z-\kappa\partial_y) f_{2,k}\|_{L^2}\big)\\ &\leq C\Big(\sum\limits_{k\in\mathbb{Z}}\|f_{3,-k}\|_{H^1}^2\Big)^{\frac{1}{2}}
\Big(\sum\limits_{k\in\mathbb{Z}}\| f_{2,k}\|_{L^2}^2+\|(\partial_z-\kappa\partial_y) f_{2,k}\|_{L^2}^2\Big)^{\frac{1}{2}}\\ &\leq C\|\nabla f_3\|_{L^2}\big(\| f_2\|_{L^2}+\|(\partial_z-\kappa\partial_y) f_2\|_{L^2}\big).
\end{align*}
Then we get
\begin{align*}
&\|\nabla f_3\|_{L^2}^2\leq \|f_1\|_{L^2}\|P_0(f_2f_3)\|_{L^2}\leq C\|f_1\|_{L^2}\|\nabla f_3\|_{L^2}\big(\| f_2\|_{L^2}+\|(\partial_z-\kappa\partial_y) f_2\|_{L^2}\big),
\end{align*}
which implies the second inequality.

Let $f_4=[\partial_j\Delta^{-1},f_1]\Delta f_2$. Then we have $P_0 f_4=0$ and
\begin{align*}
f_4&=\pa_jf_1f_2-2\pa_j\Delta^{-1}\big(\na f_1\cdot\na f_2\big)-\pa_j\Delta^{-1}\big(\Delta f_1f_2\big)\\
&=\pa_jf_1f_2-\pa_j\Delta^{-1}\big(\na f_1\cdot\na f_2\big)-\pa_j\text{div}\Delta^{-1}\big(\na f_1f_2\big)\\
&=\text{div}\Delta^{-1}\nabla(\pa_jf_1f_2)-\pa_j\Delta^{-1}\big(\na f_1\cdot\na f_2\big)-\text{div}\Delta^{-1}\big((\na \pa_jf_1)f_2\big)-\text{div}\Delta^{-1}\big(\na f_1\pa_jf_2\big)\\
&=\text{div}\Delta^{-1}(\pa_jf_1\nabla f_2)-\pa_j\Delta^{-1}\big(\na f_1\cdot\na f_2\big)-\text{div}\Delta^{-1}\big(\na f_1\pa_jf_2\big).\end{align*}
from which and the first inequality, we infer that 
\begin{align*}
\| f_4\|_{H^1}\leq &C\|\nabla f_4\|_{L^2}\leq C\big(\|\nabla f_1\cdot\nabla f_2\|_{L^2}+\|\nabla f_1\partial_j f_2\|_{L^2}+\|\partial_jf_1\nabla f_2\|_{L^2}\big)\\ \leq& C\|\nabla f_1\|_{H^{1}}\big(\|\nabla f_2\|_{L^2}+\|(\partial_z-\kappa\partial_y)\nabla f_2\|_{L^2}\big)\\ \leq& C\| f_1\|_{H^{2}}\big(\|\nabla f_2\|_{L^2}+\|(\partial_z-\kappa\partial_y)\nabla f_2\|_{L^2}\big),
\end{align*}
which gives the fourth inequality.
\end{proof}

\subsection{The velocity estimates in terms of $(u^2, u^3)$}

For the nonzero mode, we only need to estimate $u_{\neq}^2$ and $u_{\neq}^3,$ since $u_{\neq}^1$ can be estimated by using the incompressible condition $\text{div}u_{\neq}=\partial_xu_{\neq}^1+\partial_yu_{\neq}^2+\partial_zu_{\neq}^3=0.$

\begin{Lemma}\label{lemuM3}
It holds that
\beno
&&\|\nabla^k(\partial_x,\partial_z)\partial_xu_{\neq}\|_{L^{2}}\leq C\big(\| \nabla^k(\partial_x^2+\partial_z^2)u_{\neq}^3\|_{L^2}+\| \nabla^k\Delta u_{\neq}^2\|_{L^2}\big)\quad\text{for}\,\, k\ge 0,\\
&&\| \Delta\partial_xu\|_{L^2}+\| \Delta\partial_zu_{\neq}^3\|_{L^2}\leq C\big(\nu^{-\frac{1}{3}}\| \nabla(\partial_x^2+\partial_z^2)u_{\neq}^3\|_{L^2}+\nu^{\frac{1}{3}}\| \nabla\Delta u_{\neq}^3\|_{L^2}+\| \nabla\Delta u_{\neq}^2\|_{L^2}\big),\\
&& \| e^{\frac{5}{2}\nu^{1/3}t}\partial_xu_{\neq}\|_{Y_0}^2+\| e^{\frac{5}{2}\nu^{1/3}t}\partial_zu_{\neq}^3\|_{Y_0}^2\leq CE_3E_5.
\eeno
\end{Lemma}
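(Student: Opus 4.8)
The plan is to establish the two $Y_0$-norms term by term, splitting each $Y_0$-norm into its $L^\infty L^2$ piece and its viscous $\nu\,\|\nabla\cdot\|_{L^2L^2}^2$ piece, and in every case exploiting the factorization of the (squared) weight
\[
e^{5\nu^{1/3}t}=e^{2\nu^{1/3}t}\cdot e^{3\nu^{1/3}t},
\]
which is exactly the product of the weight $e^{2\nu^{1/3}t}$ carried by $E_3$ (through the $X_2$-norms of $\Delta u_{\neq}^2$ and $(\partial_x^2+\partial_z^2)u_{\neq}^3$) and the weight $e^{3\nu^{1/3}t}$ carried by $E_5$ (through the $X_3$-norms of $\partial_x^2u^2,\partial_x^2u^3$). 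The whole argument is run on the Fourier side, where the only structural input is that the modes are nonzero, i.e. $|k|\ge1$. First I would reduce to first-order derivatives of $u_{\neq}^2$ and $u_{\neq}^3$ only: by the divergence-free relation $\partial_xu_{\neq}^1=-\partial_yu_{\neq}^2-\partial_zu_{\neq}^3$, every scalar component of $\partial_xu_{\neq}$ and of $\partial_zu_{\neq}^3$ is one of $\partial_xu_{\neq}^2,\ \partial_yu_{\neq}^2,\ \partial_xu_{\neq}^3,\ \partial_zu_{\neq}^3$, so the $u^1$-component never needs to be estimated directly.

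The key elementary estimate is a frequency interpolation. On $k\neq0$ we have $|k|\ge1$, so for $\xi_j\in\{k,\eta,l\}$ one has $\xi_j^2\le k^2+\eta^2+l^2\le(k^2+\eta^2+l^2)k^2$, and Cauchy--Schwarz over $(k,\eta,l)$ gives
\[
\|\partial_ju_{\neq}^2\|_{L^2}^2\le\sum (k^2+\eta^2+l^2)k^2|\widehat{u^2}|^2\le C\|\Delta u_{\neq}^2\|_{L^2}\|\partial_x^2u_{\neq}^2\|_{L^2};
\]
likewise, for $\xi_j\in\{k,l\}$ the bound $\xi_j^2\le k^2+l^2$ yields $\|\partial_ju_{\neq}^3\|_{L^2}^2\le C\|(\partial_x^2+\partial_z^2)u_{\neq}^3\|_{L^2}\|\partial_x^2u_{\neq}^3\|_{L^2}$. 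Summing the four relevant components gives $\|\partial_xu_{\neq}\|_{L^2}^2\le C\,A_3A_5$ with $A_3=\|\Delta u_{\neq}^2\|_{L^2}+\|(\partial_x^2+\partial_z^2)u_{\neq}^3\|_{L^2}$ and $A_5=\|\partial_x^2u_{\neq}^2\|_{L^2}+\|\partial_x^2u_{\neq}^3\|_{L^2}$, and the same for $\|\partial_zu_{\neq}^3\|_{L^2}^2$. For the $L^\infty L^2$ part I multiply by $e^{5\nu^{1/3}t}$, split the weight as above, take $\sup_t$, and bound $\sup_t e^{2\nu^{1/3}t}A_3$ by the $L^\infty L^2$ pieces of $\|\Delta u_{\neq}^2\|_{X_2}+\|(\partial_x^2+\partial_z^2)u_{\neq}^3\|_{X_2}\le E_3$ and $\sup_t e^{3\nu^{1/3}t}A_5$ by those of $\|\partial_x^2u_{\neq}^2\|_{X_3}+\|\partial_x^2u_{\neq}^3\|_{X_3}\le E_5$, which produces $\le CE_3E_5$.

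For the viscous gradient part the symbols gain one degree, and I would interpolate on the integrand itself. For a component $\partial_i\partial_ju_{\neq}^2$ I bound, pointwise in $(t,k,\eta,l)$,
\[
\nu e^{5\nu^{1/3}t}\xi_i^2\xi_j^2|\widehat{u^2}|^2\le\big[\nu^{1/2}e^{2\nu^{1/3}t}(k^2+\eta^2+l^2)^{3/2}|\widehat{u^2}|\big]\cdot\big[\nu^{1/2}e^{3\nu^{1/3}t}(k^2+\eta^2+l^2)^{1/2}k^2|\widehat{u^2}|\big],
\]
valid since $\xi_i^2\xi_j^2\le(k^2+\eta^2+l^2)^2\le(k^2+\eta^2+l^2)^2k^2$. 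Applying Cauchy--Schwarz over the product measure $dt\,d(k,\eta,l)$ turns the two bracketed factors into $\big(\nu\|e^{2\nu^{1/3}t}\nabla\Delta u_{\neq}^2\|_{L^2L^2}^2\big)^{1/2}\le E_3$ and $\big(\nu\|e^{3\nu^{1/3}t}\nabla\partial_x^2u_{\neq}^2\|_{L^2L^2}^2\big)^{1/2}\le E_5$, which are precisely the $\nu\,\|e^{a\nu^{1/3}t}\nabla\cdot\|_{L^2L^2}^2$ pieces of the $X_2$ and $X_3$ norms. The $u^3$-components are identical, with $(\partial_x^2+\partial_z^2)u_{\neq}^3$ replacing $\Delta u_{\neq}^2$ and the symbol bound $\xi_i^2\xi_j^2\le(k^2+\eta^2+l^2)(k^2+l^2)k^2$ (for $\xi_j\in\{k,l\}$). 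Adding the four components gives $\nu\|e^{\frac52\nu^{1/3}t}\nabla\partial_xu_{\neq}\|_{L^2L^2}^2+\nu\|e^{\frac52\nu^{1/3}t}\nabla\partial_zu_{\neq}^3\|_{L^2L^2}^2\le CE_3E_5$, completing the $Y_0$ estimate.

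The step I expect to be the main obstacle is the $\partial_yu_{\neq}^2$ term (and its gradient $\partial_i\partial_yu_{\neq}^2$), which is forced upon us by the incompressibility substitution for $u^1$. Since $E_5$ controls \emph{only} $x$-derivatives, it is not obvious that a $\partial_y$-derivative can be charged to the fast weight $e^{3\nu^{1/3}t}$. The resolution, which is really the crux of the lemma, is that all of the $\eta$-growth is supplied by the \emph{full} Laplacian symbol inside the $E_3$ quantity $\Delta u_{\neq}^2$, so that $E_5$ is only required to contribute the innocuous factor $k^2$; the geometric mean then dominates $\eta^2$ (respectively $\xi_i^2\eta^2$) precisely because $|k|\ge1$ on the nonzero modes. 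This is the single place where the restriction to $P_{\neq}$ is used.
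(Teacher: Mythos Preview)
Your proof of the third inequality is correct and follows essentially the same route as the paper: reduce via $\partial_x u_{\neq}^1=-\partial_y u_{\neq}^2-\partial_z u_{\neq}^3$ to the four scalars $\partial_x u_{\neq}^2,\partial_y u_{\neq}^2,\partial_x u_{\neq}^3,\partial_z u_{\neq}^3$, then interpolate each between an $E_3$-controlled quantity carrying the weight $e^{2\nu^{1/3}t}$ and an $E_5$-controlled quantity carrying $e^{3\nu^{1/3}t}$. The only cosmetic difference is that the paper writes the interpolation in physical space (e.g.\ $\|\nabla u_{\neq}^2\|_{L^2}^2\le\|\Delta u_{\neq}^2\|_{L^2}\|u_{\neq}^2\|_{L^2}$ and then $\|u_{\neq}^2\|_{X_3}\le\|\partial_x^2 u_{\neq}^2\|_{X_3}$), whereas you insert $k^2\ge1$ at the symbol level first and apply Cauchy--Schwarz directly; the two are equivalent. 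Your closing remark slightly overstates the difficulty of the $\partial_y u_{\neq}^2$ term: it is handled by the very same bound $\xi_j^2\le(k^2+\eta^2+l^2)$ as every other $u^2$-component, and the paper treats it without singling it out.
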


\begin{proof}
Thanks to $\text{div}u_{\neq}=\partial_xu_{\neq}^1+\partial_yu_{\neq}^2+\partial_zu_{\neq}^3=0,$ we have
\begin{align*}
\|\nabla^k(\partial_x,\partial_z)\partial_xu_{\neq}\|_{L^{2}}\leq&\|\nabla^k(\partial_x,\partial_z)\partial_xu_{\neq}^1\|_{L^{2}}
+\|\nabla^k(\partial_x,\partial_z)\partial_xu_{\neq}^2\|_{L^{2}}+\|\nabla^k(\partial_x,\partial_z)\partial_xu_{\neq}^3\|_{L^{2}}\\
\leq&\|\nabla^k(\partial_x,\partial_z)\partial_yu_{\neq}^2\|_{L^{2}}+\|\nabla^k(\partial_x,\partial_z)\partial_zu_{\neq}^3\|_{L^{2}}
\\&+\|\nabla^k(\partial_x,\partial_z)\partial_xu_{\neq}^2\|_{L^{2}}+\|\nabla^k(\partial_x,\partial_z)\partial_xu_{\neq}^3\|_{L^{2}}\\ \leq&
C\big(\|\nabla^k\nabla^2u_{\neq}^2\|_{L^{2}}+\|\nabla^k\partial_x^2u_{\neq}^3\|_{L^{2}}
+\|\nabla^k\partial_x\partial_zu_{\neq}^3\|_{L^{2}}+\|\nabla^k\partial_z^2u_{\neq}^3\|_{L^{2}}\big)\\ \leq&C\big(\| \nabla^k(\partial_x^2+\partial_z^2)u_{\neq}^3\|_{L^2}+\| \nabla^k\Delta u_{\neq}^2\|_{L^2}\big),
\end{align*}
which gives the first inequality. \smallskip

Using the fact that $\partial_xu=\partial_xu_{\neq} $ and $\text{div}u_{\neq}=0,$ we deduce that
\begin{align*}
&\| \Delta\partial_xu\|_{L^2}+\| \Delta\partial_zu_{\neq}^3\|_{L^2}\leq\| \Delta\partial_xu_{\neq}^1\|_{L^2}+\| \Delta\partial_xu_{\neq}^2\|_{L^2}+\| \Delta\partial_xu_{\neq}^3\|_{L^2}+\| \Delta\partial_zu_{\neq}^3\|_{L^2}\\
&\leq\| \Delta\partial_yu_{\neq}^2\|_{L^2}+\| \Delta\partial_zu_{\neq}^3\|_{L^2}+\| \Delta\partial_xu_{\neq}^2\|_{L^2}+\| \Delta\partial_xu_{\neq}^3\|_{L^2}+\| \Delta\partial_zu_{\neq}^3\|_{L^2}
\\ &\leq C\| \nabla\Delta u_{\neq}^2\|_{L^2}+C\|\Delta (\partial_x,\partial_z)u_{\neq}^3\|_{L^{2}}\\ &\leq
C(\| \nabla\Delta u_{\neq}^2\|_{L^2}+\|\nabla (\partial_x^2+\partial_z^2)u_{\neq}^3\|_{L^{2}}^{\frac{1}{2}}\|\nabla \Delta u_{\neq}^3\|_{L^{2}}^{\frac{1}{2}})\\ &\leq C\big(\nu^{-\frac{1}{3}}\| \nabla(\partial_x^2+\partial_z^2)u_{\neq}^3\|_{L^2}+\nu^{\frac{1}{3}}\| \nabla\Delta u_{\neq}^3\|_{L^2}+\| \nabla\Delta u_{\neq}^2\|_{L^2}\big),
\end{align*}
which gives the second inequality. \smallskip

Similarly, we have
\begin{align*}
&\| \partial_xu_{\neq}\|_{L^2}+\| \partial_zu_{\neq}^3\|_{L^2}\leq
C\big(\| \Delta u_{\neq}^2\|_{L^2}^{\frac{1}{2}}\|  u_{\neq}^2\|_{L^2}^{\frac{1}{2}}+\| (\partial_x^2+\partial_z^2)u_{\neq}^3\|_{L^{2}}^{\frac{1}{2}}\| u_{\neq}^3\|_{L^{2}}^{\frac{1}{2}}\big),
\end{align*}
which gives
\begin{align*}
&\| e^{\frac{5}{2}\nu^{1/3}t}\partial_xu_{\neq}\|_{L^{\infty}L^2}^2+\| e^{\frac{5}{2}\nu^{1/3}t}\partial_zu_{\neq}^3\|_{L^{\infty}L^2}^2\\
&\leq
C\big(\| e^{2\nu^{1/3}t}\Delta u_{\neq}^2\|_{L^{\infty}L^2}\| e^{3\nu^{1/3}t} u_{\neq}^2\|_{L^{\infty}L^2}+\| e^{2\nu^{1/3}t}(\partial_x^2+\partial_z^2)u_{\neq}^3\|_{L^{\infty}L^2}\|e^{3\nu^{1/3}t} u_{\neq}^3\|_{L^{\infty}L^2}\big)\\
&\leq
C\big(\|\Delta u_{\neq}^2\|_{X_2}\|  u_{\neq}^2\|_{X_3}+\| (\partial_x^2+\partial_z^2)u_{\neq}^3\|_{X_2}\| u_{\neq}^3\|_{X_3}\big)\\ &\leq
C\big(E_3\|\partial_x^2  u_{\neq}^2\|_{X_3}+E_3\|\partial_x^2 u_{\neq}^3\|_{X_3}\big)\leq CE_3E_5.
\end{align*}
In the same way, we get
\begin{align*}
\| \nabla\partial_xu_{\neq}\|_{L^2}+\| \nabla\partial_zu_{\neq}^3\|_{L^2}\leq& C\|\nabla^2 u_{\neq}^2\|_{L^2}+C\|\nabla (\partial_x,\partial_z)u_{\neq}^3\|_{L^{2}}\\ \leq&
C\big(\| \nabla\Delta u_{\neq}^2\|_{L^2}^{\frac{1}{2}}\| \nabla u_{\neq}^2\|_{L^2}^{\frac{1}{2}}+\|\nabla (\partial_x^2+\partial_z^2)u_{\neq}^3\|_{L^{2}}^{\frac{1}{2}}\|\nabla u_{\neq}^3\|_{L^{2}}^{\frac{1}{2}}\big),
\end{align*}
which gives
\begin{align*}
&\| e^{\frac{5}{2}\nu^{1/3}t}\nabla\partial_xu_{\neq}\|_{L^{2}L^2}^2+\| e^{\frac{5}{2}\nu^{1/3}t}\nabla\partial_zu_{\neq}^3\|_{L^{2}L^2}^2\leq
C\big(\| e^{2\nu^{1/3}t}\nabla\Delta u_{\neq}^2\|_{L^{2}L^2}\| e^{3\nu^{1/3}t}\nabla u_{\neq}^2\|_{L^{2}L^2}\\&\quad+\| e^{2\nu^{1/3}t}\nabla(\partial_x^2+\partial_z^2)u_{\neq}^3\|_{L^{2}L^2}\|e^{3\nu^{1/3}t}\nabla u_{\neq}^3\|_{L^{2}L^2}\big)\\ &\leq
C\big(\nu^{-\frac{1}{2}}\|\Delta u_{\neq}^2\|_{X_2}\nu^{-\frac{1}{2}}\|  u_{\neq}^2\|_{X_3}+\nu^{-\frac{1}{2}}\| (\partial_x^2+\partial_z^2)u_{\neq}^3\|_{X_2}\nu^{-\frac{1}{2}}\| u_{\neq}^3\|_{X_3}\big)\leq C\nu^{-1}E_3E_5.
\end{align*}
This proves the third inequality.\end{proof}
\smallskip

The following lemma gives the estimates of  the zero mode $(\bu^2,\bu^3)$  in terms of $E_2=\|\Delta \bu^2\|_{Y_0}+\|\bu^3\|_{Y_0}+\|\nabla \bu^3\|_{Y_0}+\|\min(\nu^{\frac{2}{3}}+\nu t,1)^{\frac{1}{2}}\Delta \bu^3\|_{Y_0}$.

\begin{Lemma}\label{lemu2u3M2}
It holds that
\beno
&&\|\bu^2\|_{H^{2}}+\|\nabla \bu^2\|_{H^{1}}+\| \bu^3\|_{H^{1}}+\|\partial_z \bu^3\|_{H^{1}}\leq CE_2,\\
&&\|\bu^j\|_{L^{\infty}L^{\infty}}+\nu^{\frac{1}{2}}\|\nabla \bu^j\|_{L^{2}L^{\infty}}\leq CE_2\quad j\in\{2,3\},\\
&&\|\nabla(\bu^jf)\|_{L^{2}L^{2}}^2+\|\bu^j\nabla f\|_{L^{2}L^{2}}^2\leq C\nu^{-1}E_2^2\|f\|_{Y_0}^2\quad j\in\{2,3\},\\
&&\|\nabla(f\nabla\bu^3)\|_{L^2L^{2}}^2\leq C\nu^{-1}E_2^2\| f\|_{L^{\infty}H^{1}}^2.
\eeno
\end{Lemma}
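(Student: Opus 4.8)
The plan is to reduce everything to two structural facts about the zero modes, which depend only on $(y,z)$. First, applying $P_0$ to $\mathrm{div}\,u=0$ and using $P_0\partial_x u^1=0$ gives the divergence-free relation $\partial_y\bu^2+\partial_z\bu^3=0$. Second, integrating this in $z\in\T$ yields $\partial_y\big(\int_\T\bu^2\,dz\big)=0$, so $\int_\T\bu^2\,dz$ is constant in $y$; being in $L^2(\R)$ it must vanish. Hence $\widehat{\bu^2}(\eta,l)=0$ for $l=0$, so its frequencies satisfy $\eta^2+l^2\ge l^2\ge 1$, and on the Fourier side $\||\nabla|^s\bu^2\|_{L^2}\le\||\nabla|^{s'}\bu^2\|_{L^2}$ for every $s\le s'$. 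In particular $\|\bu^2\|_{H^2}\le C\|\Delta\bu^2\|_{L^2}$ and $\|\bu^2\|_{H^3}\le C\|\nabla\Delta\bu^2\|_{L^2}$. No such spectral gap is available for $\bu^3$, whose $z$-average need not vanish; for $\bu^3$ I keep the quantities $\|\bu^3\|_{Y_0}$ and $\|\nabla\bu^3\|_{Y_0}$ that sit inside $E_2$, and I trade every $z$-derivative of $\bu^3$ for a $y$-derivative of $\bu^2$ through $\partial_z\bu^3=-\partial_y\bu^2$.

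For the first inequality (a pointwise-in-$t$ estimate, whose right side is then $\le CE_2$ after taking $\sup_t$), the $\bu^2$ part follows at once from the spectral gap: $\|\bu^2\|_{H^2}+\|\nabla\bu^2\|_{H^1}\le C\|\Delta\bu^2\|_{L^2}\le C\|\Delta\bu^2\|_{Y_0}$. For $\bu^3$, the pieces $\|\bu^3\|_{H^1}$ and $\|\partial_z\bu^3\|_{L^2}$ are contained in $\|\bu^3\|_{Y_0}+\|\nabla\bu^3\|_{Y_0}$, while the remaining piece $\|\nabla\partial_z\bu^3\|_{L^2}$ I rewrite as $\|\nabla\partial_y\bu^2\|_{L^2}\le C\|\Delta\bu^2\|_{L^2}$ via the divergence-free relation and the gap.

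The $L^\infty_x$ estimates are the engine of the lemma, and I would obtain them from Lemma \ref{lem9a1}, valid for $x$-independent functions, in the form $\|g\|_{L^\infty}\le C(\|g\|_{H^1}+\|\partial_z g\|_{H^1})$. For $g=\bu^j$ this gives $\|\bu^j\|_{L^\infty}\le CE_2$ once the right side is bounded as in the first inequality. For the gradient, applying the same lemma to $g=\nabla\bu^j$ reduces $\|\nabla\bu^j\|_{L^\infty}$ to $\|\nabla^2\bu^3\|_{L^2}$, $\|\nabla^2\bu^2\|_{L^2}$ and $\|\nabla\Delta\bu^2\|_{L^2}$ (the $z$-derivatives being converted by the divergence-free relation, and lower derivatives of $\bu^2$ absorbed into higher ones through the gap). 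Each of these appears in $L^2_t$ against the weight $\nu^{1/2}$, and is controlled because $\|f\|_{Y_0}^2$ contains $\nu\|\nabla f\|_{L^2L^2}^2$: thus $\nu^{1/2}\|\nabla^2\bu^3\|_{L^2L^2}\le\|\nabla\bu^3\|_{Y_0}$, $\nu^{1/2}\|\nabla\Delta\bu^2\|_{L^2L^2}\le\|\Delta\bu^2\|_{Y_0}$ and $\nu^{1/2}\|\nabla\bu^3\|_{L^2L^2}\le\|\bu^3\|_{Y_0}$, all $\le E_2$. This yields $\nu^{1/2}\|\nabla\bu^j\|_{L^2L^\infty}\le CE_2$.

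The last two inequalities then drop out by Hölder in space together with the anisotropic bilinear estimates. For $\|\bu^j\nabla f\|_{L^2L^2}$ I place $\bu^j$ in $L^\infty_{t,x}$ and use $\|\nabla f\|_{L^2L^2}\le\nu^{-1/2}\|f\|_{Y_0}$; for $\|(\nabla\bu^j)f\|_{L^2L^2}$ I place $\nabla\bu^j$ in $L^2_tL^\infty_x$ and $f$ in $L^\infty_tL^2_x$, giving the third inequality. For the fourth, I write $\nabla(f\nabla\bu^3)=\nabla f\otimes\nabla\bu^3+f\nabla^2\bu^3$; the first term is handled exactly as above, and for $\|f\nabla^2\bu^3\|_{L^2L^2}$ I invoke the third estimate of Lemma \ref{lem9a1} with the $x$-independent factor $\nabla^2\bu^3$ to get $\|f\nabla^2\bu^3\|_{L^2}\le C(\|\nabla^2\bu^3\|_{L^2}+\|\partial_z\nabla^2\bu^3\|_{L^2})\|f\|_{H^1}$, then replace $\partial_z\nabla^2\bu^3$ by $\nabla^2\partial_y\bu^2$ and bound $\|\nabla^2\bu^3\|_{L^2L^2}+\|\nabla\Delta\bu^2\|_{L^2L^2}\le C\nu^{-1/2}E_2$. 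The main obstacle throughout is precisely that $E_2$ controls only $\Delta\bu^2$ and a weighted $\Delta\bu^3$, neither $\bu^2$ nor its first derivatives, nor $\nabla^2\bu^3$ uniformly; the resolution, and the conceptual heart of the lemma, is the pairing of the identity $\partial_y\bu^2=-\partial_z\bu^3$, which trades $z$-derivatives of $\bu^3$ for $y$-derivatives of $\bu^2$, with the vanishing $z$-average of $\bu^2$, which supplies the spectral gap letting $\|\Delta\bu^2\|_{L^2}$ dominate all lower-order norms of $\bu^2$.
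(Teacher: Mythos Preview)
Your proof is essentially the paper's, resting on the same two ingredients: the divergence relation $\partial_y\bu^2+\partial_z\bu^3=0$ (giving the spectral gap for $\bu^2$ and the swap $\partial_z\bu^3=-\partial_y\bu^2$) and the anisotropic $L^\infty$ bound from Lemma~\ref{lem9a1}. The first three inequalities match the paper's argument step for step.

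One small slip in the fourth inequality: Lemma~\ref{lem9a1} requires \emph{both} factors to be $x$-independent, so you cannot invoke its third estimate for $f\nabla^2\bu^3$ when $f$ may depend on $x$. The estimate you need, $\|f_1f_2\|_{L^2}\le C(\|f_1\|_{L^2}+\|\partial_zf_1\|_{L^2})\|f_2\|_{H^1}$ with only $\partial_xf_1=0$, is true (it appears as \eqref{f6} inside the proof of Lemma~\ref{lem9a}), but it is not what Lemma~\ref{lem9a1} says. The paper sidesteps the split entirely and applies the second inequality of Lemma~\ref{lem9a} directly to the product: $\|\nabla(f\,\nabla\bu^3)\|_{L^2}\le C(\|\nabla\bu^3\|_{H^1}+\|\partial_z\nabla\bu^3\|_{H^1})\|f\|_{H^1}$, which needs only $\partial_x(\nabla\bu^3)=0$. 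Either fix works; your overall strategy is sound.
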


\begin{proof}
Due to $\partial_y\bu^2+\partial_z \bu^3=0$, $\bu^2 $ has  nonzero frequency in $z$, and hence,
\begin{align*}
&\|\nabla \bu^2\|_{H^{1}}\leq\|\bu^2\|_{H^{2}}\leq C\| \Delta \bu^2\|_{L^2}\leq C\| \Delta \bu^2\|_{Y_0}\leq CE_2,\\
&\|\bu^3\|_{H^{1}}\leq\|\bu^3\|_{L^{2}}+\|\nabla \bu^3\|_{L^{2}}\leq\| \bu^3\|_{Y_0}+\|\nabla \bu^3\|_{Y_0}\leq  CE_2,\\
&\|\partial_z \bu^3\|_{H^{1}}=\|\partial_y \bu^2\|_{H^{1}}\leq\|\bu^2\|_{H^{2}}\leq  CE_2,
\end{align*}
which show the first inequality.

First of all, we have
\beno
\|\bu^2\|_{L^{\infty}}\leq C\|\bu^2\|_{H^{2}}\leq  CE_2,
\eeno
and by Lemma \ref{lem9a}, we have
\beno
\|\bu^3\|_{L^{\infty}}\leq C\big(\|\bu^3\|_{H^{1}}+\|\partial_z \bu^3\|_{H^{1}}\big)\leq CE_2.
\eeno
Similarly, we have
\beno
 &&\|\nabla \bu^2\|_{L^{\infty}}\leq C\|\nabla \bu^2\|_{H^{2}}\leq  C\|\nabla\Delta \bu^2\|_{L^{2}},\\
 &&\|\nabla \bu^3\|_{L^{\infty}}\leq C (\|\nabla \bu^3\|_{H^1}+\|\partial_z\nabla \bu^3\|_{H^1}),
 \eeno and 
 \begin{align} \nonumber \nu\big(\|\nabla \bu^3\|_{L^2H^1}^2+\|\partial_z\nabla \bu^3\|_{L^2H^1}^2\big)&\leq C\nu\big(\|\nabla \bu^3\|_{L^2L^2}^2+\|\nabla^2 \bu^3\|_{L^2L^2}^2+\|\partial_z\Delta \bu^3\|_{L^2L^2}^2\big)\\ \label{u3L2H1} &\leq C\big(\| \bu^3\|_{Y_0}^2+\|\nabla \bu^3\|_{Y_0}^2+\nu\|\partial_y\Delta \bu^2\|_{L^2L^2}^2\big)\\
 &\leq C\big(E_2^2+\|\Delta \bu^2\|_{Y_0}^2\big)\leq CE_2^2,
\end{align}
from which, we infer that
 \begin{align*}
 \nu\big(\|\nabla \bu^2\|_{L^{2}L^{\infty}}^2+\|\nabla \bu^3\|_{L^{2}L^{\infty}}^2)&\leq C\nu\big(\|\nabla \bu^3\|_{L^2H^1}^2+\|\partial_z\nabla \bu^3\|_{L^2H^1}^2+\|\nabla\Delta \bu^2\|_{L^2L^2}^2\big)
 \\ &\leq C(E_2^2+\|\Delta u_0^2\|_{Y_0}^2)\leq CE_2^2,
\end{align*}
which gives the second inequality.

For $j\in\{2,3\},$ we have
\begin{align*}
\|\nabla(\bu^jf)\|_{L^{2}L^{2}}^2+\|\bu^j\nabla f\|_{L^{2}L^{2}}^2&\leq C \big(\|\nabla \bu^j\|_{L^2L^{\infty}}^2\|f\|_{L^{\infty}L^{2}}^2+\|\bu^j\|_{L^{\infty}L^{\infty}}^2\|\nabla f\|_{L^{2}L^{2}}^2\big)\\
&\leq C\big(\nu^{-1}E_2^2\|f\|_{L^{\infty}L^{2}}^2+E_2^2\|\nabla f\|_{L^{2}L^{2}}^2\big)=C\nu^{-1}E_2^2\|f\|_{Y_0}^2,
\end{align*}
which gives the third inequality. By Lemma \ref{lem9a} and \eqref{u3L2H1} , we have\begin{align*}
\nu \|\nabla(f\nabla \bu^3)\|_{L^{2}L^{2}}^2\leq C\nu\big(\|\nabla \bu^3\|_{L^2H^1}^2+\|\partial_z\nabla \bu^3\|_{L^2H^1}^2\big)\| f\|_{L^{\infty}H^{1}}^2\leq CE_2^2\| f\|_{L^{\infty}H^{1}}^2,
\end{align*}
which gives the fourth inequality.
\end{proof}

\begin{Lemma}\label{lem5a}
It holds that
\beno
\|\bu(t)\|_{H^2}\leq E_1\min(\nu t,1).
\eeno
If $E_1< {c_{3}},$ then   $\|\kappa\nabla \bu^3\|_{H^1}\leq CE_2.$
\end{Lemma}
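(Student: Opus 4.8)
The plan is to treat the two assertions of Lemma \ref{lem5a} separately: the bound on $\|\bu(t)\|_{H^2}$ is a direct consequence of the definition of $E_1$, while the estimate for $\kappa\nabla\bu^3$ reduces to product estimates once the control on $\kappa$ from Lemma \ref{lem5} is available.

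For the bound $\|\bu(t)\|_{H^2}\le E_1\min(\nu t,1)$ I would play off the two competing pieces of information contained in $E_1$. On one hand, $\|\bu(t)\|_{H^2}\le\|\bu\|_{L^\infty H^2}\le\|\bu\|_{L^\infty H^4}\le E_1$, which supplies the factor $1$. On the other hand, writing $\bu(t)=\bu(1)+\int_1^t\partial_t\bu(s)\,ds$ and taking $H^2$ norms gives
\[
\|\bu(t)\|_{H^2}\le\|\bu(1)\|_{H^2}+(t-1)\|\partial_t\bu\|_{L^\infty H^2}\le E_1\nu+(t-1)E_1\nu=E_1\nu t,
\]
where both terms are controlled because $E_1$ contains $(\|\partial_t\bu\|_{L^\infty H^2}+\|\bu(1)\|_{H^2})/\nu$. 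Taking the smaller of the two bounds yields the claim; this part is routine.

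For the second assertion, note first that $E_1<c_3$ forces $\|\bu^1\|_{H^4}\le\|\bu\|_{L^\infty H^4}\le E_1<c_3$, so Lemma \ref{lem5} applies and gives $\|\kappa\|_{H^3}\le C\|\bu^1\|_{H^4}\le C$, hence $\|\kappa\|_{L^\infty}+\|\nabla\kappa\|_{L^\infty}+\|\kappa\|_{H^2}\le C$. Since $\bu^3$ is a zero mode, $\nabla\bu^3=(0,\partial_y\bu^3,\partial_z\bu^3)$, and I would expand $\|\kappa\nabla\bu^3\|_{H^1}$ by the Leibniz rule. Every resulting term other than $\kappa\,\partial_y^2\bu^3$ is bounded by one of $\|\nabla\kappa\|_{L^\infty}\|\nabla\bu^3\|_{L^2}$, $\|\kappa\|_{L^\infty}\|\nabla\bu^3\|_{L^2}$, or $\|\kappa\|_{L^\infty}\|\partial_y\partial_z\bu^3\|_{L^2}\le C\|\partial_z\bu^3\|_{H^1}$, all of which are $\le CE_2$ by Lemma \ref{lemu2u3M2}, which supplies $\|\bu^3\|_{H^1}+\|\partial_z\bu^3\|_{H^1}\le CE_2$. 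The identity $\partial_z\bu^3=-\partial_y\bu^2$ coming from $\text{div}\,\bu=0$ is precisely what renders every second derivative of $\bu^3$ that carries a $\partial_z$ harmless.

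The remaining term $\kappa\,\partial_y^2\bu^3$ is the crux. Using $\partial_x\bu^3=0$ I would write $\partial_y^2\bu^3=\Delta\bu^3-\partial_z^2\bu^3$; the piece $\kappa\,\partial_z^2\bu^3=\kappa\,\partial_z(\partial_z\bu^3)$ is again controlled by $\|\partial_z\bu^3\|_{H^1}\le CE_2$, so everything collapses to estimating $\kappa\,\Delta\bu^3$. This is the genuine obstacle: $\Delta\bu^3$ is not controlled pointwise in time, but only through the degenerate weight in $E_2$, so the usable bound is
\[
\|\min(\nu^{2/3}+\nu t,1)^{\frac12}\kappa\,\Delta\bu^3\|\le\|\kappa\|_{L^\infty}\|\min(\nu^{2/3}+\nu t,1)^{\frac12}\Delta\bu^3\|_{Y_0}\le CE_2 .
\]
Thus the main difficulty is bookkeeping rather than a new inequality: the top-order ($\partial_y^2\bu^3$-type) contribution to $\kappa\nabla\bu^3$ must be measured with the same factor $\min(\nu^{2/3}+\nu t,1)^{\frac12}$ that appears in the definition of $E_2$, after which the anisotropic product bounds of Lemmas \ref{lem9a} and \ref{lemu2u3M2} close the estimate.
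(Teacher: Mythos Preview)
Your treatment of the first assertion is fine and matches the paper's argument exactly.

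For the second assertion, however, there is a genuine gap. You bound $\|\kappa\|_{L^\infty}+\|\nabla\kappa\|_{L^\infty}+\|\kappa\|_{H^2}\le C$ and then, when you reach the term $\kappa\,\Delta\bu^3$, you can only obtain a \emph{weighted} bound $\|\min(\nu^{2/3}+\nu t,1)^{1/2}\kappa\Delta\bu^3\|_{L^2}\le CE_2$. But the lemma requires the \emph{unweighted} estimate $\|\kappa\nabla\bu^3\|_{H^1}\le CE_2$ pointwise in $t$; your closing remark that the top-order piece ``must be measured with the same factor'' does not recover this---it amounts to proving a weaker statement than the one asserted.

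The missing idea is that $\kappa$ itself carries the compensating time-dependent smallness. This is in fact the reason part~1 of the lemma is proved first. From Lemma~\ref{lem5} one has $\|\kappa\|_{H^1}\le C\|\bu^1\|_{H^2}$, and combining with part~1 gives $\|\kappa(t)\|_{H^1}\le CE_1\min(\nu t,1)$; together with $\|\kappa\|_{H^3}\le CE_1$ and interpolation this yields
\[
\|\kappa(t)\|_{H^2}\le\|\kappa(t)\|_{H^1}^{1/2}\|\kappa(t)\|_{H^3}^{1/2}\le CE_1\min(\nu t,1)^{1/2}.
\]
Now the product estimate $\|\kappa\nabla\bu^3\|_{H^1}\le C\|\kappa\|_{H^2}\|\nabla\bu^3\|_{H^1}\le C\|\kappa\|_{H^2}(\|\nabla\bu^3\|_{L^2}+\|\Delta\bu^3\|_{L^2})$ closes, because the factor $\min(\nu t,1)^{1/2}$ coming from $\kappa$ absorbs the inverse weight on $\|\Delta\bu^3\|_{L^2}$: since $\min(\nu t,1)\le\min(\nu^{2/3}+\nu t,1)$, one obtains $\|\kappa\nabla\bu^3\|_{H^1}\le CE_1E_2\le CE_2$. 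Your Leibniz expansion is not needed; the whole point is to exploit the smallness of $\kappa$ in $H^2$, not merely its boundedness.
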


\begin{proof} Obviously, $\|\bu^1(t)\|_{H^2}\leq \|u_0^1(t)\|_{H^4}\leq E_1$
and
\beno
\|\bu^1(t)\|_{H^2}\leq \|\bu^1(1)\|_{H^2}+\int_1^t\|\partial_t\bu^1(s)\|_{H^2}ds\leq E_1\nu+\int_1^tE_1\nu ds=E_1\nu t.
\eeno
This proves the first inequality.

If $E_1<{c_{3}},$ then $\|\bu^1\|_{H^4}\leq E_1<{c_{3}}$. Then by Lemma \ref{lem5}, we have $\|\kappa\|_{H^1}\leq C\|\bu^1\|_{H^2}\leq CE_1\min(\nu t,1)$ and $\|\kappa\|_{H^3}\leq C\|\bu^1\|_{H^4}\leq CE_1$. Thus, $\|\kappa(t)\|_{H^2}\leq\|\kappa(t)\|_{H^1}^{\frac{1}{2}}\|\kappa(t)\|_{H^3}^{\frac{1}{2}}\leq CE_1\min(\nu t,1)^{\frac{1}{2}}.$
Then we infer that
\begin{align*}
\|\kappa\nabla \bu^3\|_{H^1}&\leq C\|\kappa\|_{H^2}\|\nabla \bu^3\|_{H^1}\leq CE_1\min(\nu t,1)^{\frac{1}{2}}(\|\nabla \bu^3\|_{L^2}+\|\Delta \bu^3\|_{L^2})\\&\leq C{E_1}(\|\nabla \bu^3\|_{L^2}+\|\min(\nu^{\frac{2}{3}}+\nu t,1)^{\frac{1}{2}}\Delta \bu^3\|_{L^2})\leq CE_1E_2\leq CE_2,
\end{align*}
which gives the second inequality.
\end{proof}

\section{Nonlinear interactions}

In this section, we study nonlinear interactions between different modes. Recall that the nonlinear terms $g_j(j=2,3)$ and ${G_2}$ are given by
\beno
g_j=(\bu^2\partial_y+\bu^3\partial_z) u^j+u_{\neq}\cdot\nabla u^j+\partial_j\big(p^{(2)}+p^{(3)}+p^{(4)}\big),\quad G_2=(g_{2}+\kappa g_3)_{\neq},
\eeno
where
\begin{align*}
\Delta p^{(2)}=-\partial_i\bu^j\partial_j\bu^i,\quad
\Delta p^{(3)}=-2\partial_{\al}\bu^{\beta}\partial_{\beta}u_{\neq}^{\al},\quad\Delta p^{(4)}=-\partial_iu_{\neq}^j\partial_ju_{\neq}^i.
\end{align*}
We write $g_j=\sum\limits_{k=1}^6g_{j,k}$ and $G_2=\sum\limits_{j=1}^6{G}_{2,j}$, where $G_{2,j}=(g_{2,j}+\kappa g_{3,j})_{\neq}$ and
\beno
 &&g_{j,1}=(\bu^2\partial_y+\bu^3\partial_z) u^j,\quad g_{j,2}=u_{\neq}\cdot\nabla \bu^j,\ g_{j,3}=u_{\neq}\cdot\nabla u_{\neq}^j,\\
 &&g_{j,k+2}=\partial_jp^{(k)}\quad \text{for}\,\, k\in\{2,3,4\}.
 \eeno

As $p_{\neq}^{(2)}=0, $  $(g_{j,4})_{\neq}=\partial_jp_{\neq}^{(2)}=0. $ As $W^2=u_{\neq}^2+\kappa u_{\neq}^3$,  we have
\ben\label{eq:G1}
{G_{2,1}}=(\bu^2\partial_y+\bu^3\partial_z){W^2}-u_{\neq}^3(\bu^2\partial_y+\bu^3\partial_z)\kappa.
\een

We define
\begin{align*}
&E_6=\sum_{j=2}^3\big(\|\partial_x^2u_{\neq}^j\|_{X_3}^2+\|\partial_x(\partial_z-\kappa\partial_y)u_{\neq}^j\|_{X_3}^2\big)
+\|\partial_x\nabla W^2\|_{X_3}^2,\\
&\|f\|_{Y_0^k}^2=\|f\|_{L^{\infty}H^k}^2+\nu\|\nabla f\|_{L^{2}H^k}
^2.
\end{align*}
We will use the following simple fact.
\begin{Lemma}\label{lemY02} $ \|f_1f_2\|_{Y_0^2}\leq C\|f_1\|_{Y_0^2}\|f_2\|_{Y_0^2}.$
\end{Lemma}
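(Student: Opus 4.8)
The plan is to reduce everything to the algebra property of $H^2$ recorded among the basic inequalities, namely $\|g_1g_2\|_{H^2}\le C\|g_1\|_{H^2}\|g_2\|_{H^2}$, combined with the elementary device of estimating one factor in the $L^\infty$-in-time norm while the other carries the weighted gradient in the $L^2$-in-time norm. First I would unpack the definition
$$\|f_1f_2\|_{Y_0^2}^2=\|f_1f_2\|_{L^\infty H^2}^2+\nu\|\nabla(f_1f_2)\|_{L^2H^2}^2$$
and treat the two summands separately.

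For the first summand, applying the $H^2$ algebra bound at each fixed time $t\in[1,T]$ gives $\|f_1(t)f_2(t)\|_{H^2}\le C\|f_1(t)\|_{H^2}\|f_2(t)\|_{H^2}$, so taking the supremum in $t$ yields
$$\|f_1f_2\|_{L^\infty H^2}\le C\|f_1\|_{L^\infty H^2}\|f_2\|_{L^\infty H^2}\le C\|f_1\|_{Y_0^2}\|f_2\|_{Y_0^2}.$$

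For the second summand, I would use the Leibniz rule $\nabla(f_1f_2)=(\nabla f_1)f_2+f_1\nabla f_2$ and the $H^2$ algebra property once more to bound, pointwise in $t$,
$$\|\nabla(f_1f_2)\|_{H^2}\le C\big(\|\nabla f_1\|_{H^2}\|f_2\|_{H^2}+\|f_1\|_{H^2}\|\nabla f_2\|_{H^2}\big).$$
Squaring, integrating in $t$, and bounding the non-differentiated factor by its $L^\infty H^2$ norm leaves precisely the weighted gradient integrals:
$$\nu\|\nabla(f_1f_2)\|_{L^2H^2}^2\le C\Big(\|f_2\|_{L^\infty H^2}^2\,\nu\|\nabla f_1\|_{L^2H^2}^2+\|f_1\|_{L^\infty H^2}^2\,\nu\|\nabla f_2\|_{L^2H^2}^2\Big)\le C\|f_1\|_{Y_0^2}^2\|f_2\|_{Y_0^2}^2.$$
Combining the two estimates gives $\|f_1f_2\|_{Y_0^2}^2\le C\|f_1\|_{Y_0^2}^2\|f_2\|_{Y_0^2}^2$, which is the claim.

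As the description of the lemma as a \emph{simple fact} suggests, there is no serious obstacle; the only point requiring a little care is that the weight $\nu$ and the time integration be distributed so that each product term retains one complete $Y_0^2$ factor. This is achieved by always estimating the non-differentiated factor in $L^\infty H^2$ and the differentiated factor in the $\nu$-weighted $L^2H^2$ gradient norm. No endpoint or anisotropic subtlety enters because the $H^2$ algebra property is uniform in both $t$ and $\nu$.
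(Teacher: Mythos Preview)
Your proof is correct and follows essentially the same approach as the paper: both use the $H^2$ algebra property for the $L^\infty H^2$ piece and the Leibniz-plus-algebra bound $\|\nabla(f_1f_2)\|_{H^2}\le C(\|\nabla f_1\|_{H^2}\|f_2\|_{H^2}+\|f_1\|_{H^2}\|\nabla f_2\|_{H^2})$ for the $\nu$-weighted $L^2H^2$ gradient piece, then distribute the time norms exactly as you describe.
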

\begin{proof}
Using the fact that $\|f_1f_2\|_{H^2}\leq C\|f_1\|_{H^2}\|f_2\|_{H^2},$ and
\beno
\|\nabla(f_1f_2)\|_{H^2}\leq C\|\nabla f_1\|_{H^2}\|f_2\|_{H^2}+C\|f_1\|_{H^2}\|\nabla f_2\|_{H^2},
\eeno
 we deduce that
 \begin{align*}\|f_1f_2\|_{Y_0^2}^2=&\|f_1f_2\|_{L^{\infty}H^2}^2+\nu\|\nabla (f_1f_2)\|_{L^{2}H^2}^2\leq C\|f_1\|_{L^{\infty}H^2}^2\|f_2\|_{L^{\infty}H^2}^2\\&+C\nu\|\nabla f_1\|_{L^{2}H^2}^2\|f_2\|_{L^{\infty}H^2}^2+C\nu\|f_1\|_{L^{\infty}H^2}^2\|\nabla f_2\|_{L^{2}H^2}^2\\ \leq& C(\|f_1\|_{L^{\infty}H^2}^2+\nu\|\nabla f_1\|_{L^{2}H^2}^2)(\|f_2\|_{L^{\infty}H^2}^2+\|\nabla f_2\|_{L^{2}H^2}^2)=C\|f_1\|_{Y_0^2}^2\|f_2\|_{Y_0^2}^2.
\end{align*}
This completes the proof.
\end{proof}

In the sequel, we always assume that
\beno
E_1\le \ve_0, \quad E_2\le \ve_0\nu,\quad  E_3\le \ve_0\nu,
\eeno
where ${\ve_0\in(0,c_4)}$ is a sufficiently small constant independent of $\nu$ and $T$.

\subsection{Interaction between zero mode and nonzero mode}

\begin{Lemma}\label{lemg2g3M6}
It holds that
\begin{align*}
&\|e^{3\nu^{1/3}t} \nabla{G}_{2,1}\|_{L^2L^2}^2+\|e^{3\nu^{1/3}t}  \partial_xg_{3,1}\|_{L^2L^2}^2+\|e^{3\nu^{1/3}t} \Delta p^{(3)}\|_{L^2L^2}^2+\|e^{3\nu^{1/3}t} \nabla g_{2,2}\|_{L^2L^2}^2\\&\quad+\|e^{3\nu^{1/3}t} \nabla g_{3,2}\|_{L^2L^2}^2\leq C\nu^{-1}E_2^2E_6,\\
&\|e^{3\nu^{1/3}t}  \nabla(g_{3,1})_{\neq}\|_{L^2L^2}^2
\leq C\nu^{-\frac{5}{3}}E_2^2\big(E_6+\nu^{\frac{4}{3}}\|\Delta u_{\neq}^3\|_{X_3}^2\big).
\end{align*}
\end{Lemma}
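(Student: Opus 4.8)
The plan is to read every term on the left as a product of a \emph{zero mode} (which supplies an $E_2$ factor, with each spatial derivative falling on it costing $\nu^{-1/2}$ through $\|\nabla\bu^j\|_{L^2L^\infty}\le C\nu^{-1/2}E_2$) and a \emph{nonzero mode} (which carries the exponential weight and supplies an $E_6$ factor). Since $e^{3\nu^{1/3}t}$ is constant in the space variables, I first push it onto the nonzero factor and invoke Lemma \ref{lemu2u3M2}: its third and fourth inequalities give $\|\nabla(\bu^j f)\|_{L^2L^2}^2+\|\bu^j\nabla f\|_{L^2L^2}^2\le C\nu^{-1}E_2^2\|f\|_{Y_0}^2$ and $\|\nabla(f\nabla\bu^3)\|_{L^2L^2}^2\le C\nu^{-1}E_2^2\|f\|_{L^\infty H^1}^2$, reducing each estimate to a bound on a $Y_0$- or $L^\infty H^1$-norm of the nonzero factor. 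For the latter I will use repeatedly the Poincaré-type gain $\|h\|_{L^2}\le C\|\partial_x h\|_{L^2}$, valid for any $h$ with $P_0h=0$ (because then $|k|\ge1$), which upgrades a bare nonzero factor into one carrying a $\partial_x$ and matches it against the $\partial_x$-weighted building blocks $\partial_x^2u_{\neq}^j$, $\partial_x(\partial_z-\kappa\partial_y)u_{\neq}^j$ and $\partial_x\nabla W^2$ of $E_6$; the extra gradient allowed by the $X_3$-norm then accommodates the remaining $\nabla$.

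For the first inequality I would dispatch the terms as follows. In $\Delta p^{(3)}=-2\partial_\alpha\bu^\beta\partial_\beta u_{\neq}^\alpha$ I put $\partial_\alpha\bu^\beta$ in $L^2L^\infty$ (bounded by $\nu^{-1/2}E_2$ via the second inequality of Lemma \ref{lemu2u3M2}) and reduce $\partial_\beta u_{\neq}^\alpha$ to $E_6$ by the Poincaré gain and the relation $W^2=u_{\neq}^2+\kappa u_{\neq}^3$; the terms $g_{j,2}=u_{\neq}\cdot\nabla\bu^j$ ($j=2,3$) are handled by the fourth inequality of Lemma \ref{lemu2u3M2} (using that the $\Delta\bu^3$-content of $\nabla\bu^3$ is controlled by $E_2$) together with Lemma \ref{lem9a}, the nonzero factor again being reduced to $E_6$. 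The decisive term is $G_{2,1}$: writing $G_{2,1}=(\bu^2\partial_y+\bu^3\partial_z)u_{\neq}^2+\kappa(\bu^2\partial_y+\bu^3\partial_z)u_{\neq}^3$ and using the identity $\partial_yu_{\neq}^2+\kappa\partial_yu_{\neq}^3=\partial_yW^2-u_{\neq}^3\partial_y\kappa$, the dangerous $\partial_y$-contribution collapses into $\bu^2\partial_yW^2$, which is controlled by $\partial_x\nabla W^2\in E_6$ through the Poincaré gain, leaving only lower-order products carrying the small factors $\kappa,\nabla\kappa$ estimated by Lemmas \ref{lem5} and \ref{lem5a}. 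This is precisely the null structure that keeps $G_{2,1}$ clean. Likewise $\partial_xg_{3,1}=(\bu^2\partial_y+\bu^3\partial_z)\partial_xu_{\neq}^3$ is treated by Lemma \ref{lemu2u3M2} with $f=\partial_xu_{\neq}^3$, whose weighted $Y_0$-norm is bounded by $\|\partial_x^2u_{\neq}^3\|_{X_3}\le E_6^{1/2}$ (Poincaré for the $L^\infty L^2$ part, the $X_3$-gradient term for the $\nu\|\nabla\cdot\|$ part); the extra $\partial_x$ absorbs any $\partial_y$ landing on $u_{\neq}^3$.

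The obstacle, and the reason the second estimate degrades, is $\nabla(g_{3,1})_{\neq}=\nabla\big((\bu^2\partial_y+\bu^3\partial_z)u_{\neq}^3\big)$, where $u_{\neq}^3$ appears \emph{without} the protective $W^2$/$\kappa$ packaging and \emph{without} an extra $\partial_x$. Applying Lemma \ref{lemu2u3M2} with $f=\partial_\alpha u_{\neq}^3$ forces control of $\|e^{3\nu^{1/3}t}\partial_y u_{\neq}^3\|_{Y_0}$, and $\partial_y$ is neither $\partial_x$ nor $\partial_z-\kappa\partial_y$, so it cannot be recovered from $E_6$ alone (only $\kappa\partial_y u_{\neq}^3$ is, and $\kappa$ may degenerate). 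The plan is therefore to interpolate, through $\|\partial_i\partial_j h\|_{L^2}\le\|\partial_i^2h\|_{L^2}^{1/2}\|\partial_j^2h\|_{L^2}^{1/2}$ and the Poincaré gain, between $\partial_x^2u_{\neq}^3$ (an $E_6$ object enjoying the enhanced-dissipation gain) and $\Delta u_{\neq}^3$ (only available at the scale $\nu^{2/3}\Delta u_{\neq}^3$, as in $E_3$), e.g. $\|\partial_y u_{\neq}^3\|_{L^2}\le C\|\partial_x^2u_{\neq}^3\|_{L^2}^{1/2}\|\Delta u_{\neq}^3\|_{L^2}^{1/2}$; a weighted Young inequality with parameter $\nu^{2/3}$ then gives $\|e^{3\nu^{1/3}t}\partial_y u_{\neq}^3\|_{Y_0}^2\le C\nu^{-2/3}\big(E_6+\nu^{4/3}\|\Delta u_{\neq}^3\|_{X_3}^2\big)$, and multiplication by the $\nu^{-1}E_2^2$ of Lemma \ref{lemu2u3M2} yields exactly $C\nu^{-5/3}E_2^2\big(E_6+\nu^{4/3}\|\Delta u_{\neq}^3\|_{X_3}^2\big)$. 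Making the $\nu$-powers land precisely — i.e. choosing the interpolation split so that the $\nu^{1/3}$-gain of $\partial_x^2u_{\neq}^3$ is balanced against the $\nu^{2/3}$-deficit of $\Delta u_{\neq}^3$, consistently in both the $L^\infty L^2$ and the $\nu\|\nabla\cdot\|_{L^2L^2}$ components of $Y_0$ — is the delicate bookkeeping I expect to be the crux.
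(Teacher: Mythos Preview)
Your overall strategy, the null-structure rewriting for $G_{2,1}$, the treatment of $\partial_x g_{3,1}$, and the interpolation argument for the second inequality are all correct and match the paper. The gaps are in $\Delta p^{(3)}$ and $\nabla g_{j,2}$.

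For $\Delta p^{(3)}=-2\partial_\alpha\bu^\beta\partial_\beta u_{\neq}^\alpha$, the sum contains $\partial_z\bu^2\cdot\partial_y u_{\neq}^3$. Your $L^2_tL^\infty_x\times L^\infty_tL^2_x$ split would need $\|e^{3\nu^{1/3}t}\partial_y u_{\neq}^3\|_{L^\infty_t L^2}$, and neither the Poincar\'e gain nor the relation $W^2=u_{\neq}^2+\kappa u_{\neq}^3$ extracts a bare $\partial_y u_{\neq}^3$ from the $L^\infty L^2$ pool of $E_6$ (which holds only $\partial_x^2u_{\neq}^j$, $\partial_x(\partial_z-\kappa\partial_y)u_{\neq}^j$, $\partial_x\nabla W^2$). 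The paper instead uses $\partial_y\bu^2+\partial_z\bu^3=0$ to write $\Delta p^{(3)}=-2(\partial_y g_{2,2}+\partial_z g_{3,2})$, so $\|\Delta p^{(3)}\|_{L^2}\le C(\|\nabla g_{2,2}\|_{L^2}+\|\nabla g_{3,2}\|_{L^2})$ and everything reduces to the $\nabla g_{j,2}$ bounds.

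For $\nabla g_{j,2}$, your route via the fourth inequality of Lemma~\ref{lemu2u3M2} places the nonzero factor $u_{\neq}^k$ in $L^\infty_t H^1$, which $E_6$ does not supply (its gradient pieces live in $L^2_t$). The correct allocation is the reverse: use Lemma~\ref{lem9} (the $\kappa$-anisotropic version, not Lemma~\ref{lem9a}) with $f_1=\partial_k\bu^j$, $f_2=u_{\neq}^k$, so that the zero-mode $\|f_1\|_{H^1}$ sits in $L^\infty_t$ and the nonzero mode's $\|u_{\neq}^k\|_{H^1}+\|(\partial_z-\kappa\partial_y)u_{\neq}^k\|_{H^1}\le C(\|\nabla\partial_x^2u_{\neq}^k\|_{L^2}+\|\nabla\partial_x(\partial_z-\kappa\partial_y)u_{\neq}^k\|_{L^2})$ sits in $L^2_t$, matching $\nu^{-1/2}E_6^{1/2}$. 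This works for $j=2$ since $\|\partial_k\bu^2\|_{H^1}\le CE_2$. For $j=3$ it fails as stated: $\|\partial_y\bu^3\|_{H^1}$ involves $\|\Delta\bu^3\|_{L^2}$, which is \emph{not} uniformly $\le CE_2$ (your parenthetical is wrong here; $E_2$ carries only $\min(\nu^{2/3}+\nu t,1)^{1/2}\Delta\bu^3$). The paper therefore repeats your $G_{2,1}$ trick on the zero-mode side, writing $g_{3,2}=W^2\partial_y\bu^3+u_{\neq}^3(\partial_z-\kappa\partial_y)\bu^3$: the first piece now does go through the fourth inequality of Lemma~\ref{lemu2u3M2} because $\|e^{3\nu^{1/3}t}W^2\|_{L^\infty H^1}\le\|\partial_x\nabla W^2\|_{X_3}\le E_6^{1/2}$ is genuinely available, and the second piece goes through Lemma~\ref{lem9} since $\|(\partial_z-\kappa\partial_y)\bu^3\|_{H^1}\le CE_2$ by $\partial_z\bu^3=-\partial_y\bu^2$ together with Lemma~\ref{lem5a}.
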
\begin{proof}
By \eqref{eq:G1} and  Lemma \ref{lemu2u3M2}, we have
\begin{align*}
\|e^{3\nu^{1/3}t}  \nabla {G}_{2,1}\|_{L^2L^2}^2\leq C\nu^{-1}E_2^2\big(\|e^{3\nu^{1/3}t}\nabla {W}^2\|_{Y_0}^2+\|e^{3\nu^{1/3}t}u_{\neq}^3 \nabla \kappa\|_{Y_0}^2\big),
\end{align*}
We  have
\beno
\|e^{3\nu^{1/3}t}\nabla {W}^2\|_{Y_0}^2\leq \|e^{3\nu^{1/3}t}\partial_x\nabla{W}^2\|_{Y_0}^2\leq \|\partial_x\nabla {W}^2\|_{X_3}^2\leq E_6.
\eeno
By Lemma \ref{lem5}, we have $ \|\nabla \kappa\|_{L^{\infty}}\leq C\| \kappa\|_{H^{3}}\leq C\|\bu^1\|_{H^4}\leq C$. Then we have
\beno
&&\|u_{\neq}^3 \nabla \kappa\|_{L^2}\leq\|u_{\neq}^3 \|_{L^2}\|\nabla \kappa\|_{L^{\infty}}\leq C\|\partial_x^2u_{\neq}^3 \|_{L^2},\\
&&\|\nabla(u_{\neq}^3 \nabla \kappa)\|_{L^2}\leq\|u_{\neq}^3 \nabla \kappa\|_{H^1}\leq C\|u_{\neq}^3 \|_{H^1}\|\nabla \kappa\|_{H^{2}}\leq C\|\nabla\partial_x^2u_{\neq}^3 \|_{L^2},
\eeno
therefore,
\begin{align*}
\|e^{3\nu^{1/3}t}u_{\neq}^3 \nabla \kappa\|_{Y_0}^2\leq C\big(\|e^{3\nu^{1/3}t}\partial_x^2u_{\neq}^3 \|_{L^{\infty}L^2}^2+\nu\|e^{3\nu^{1/3}t}\nabla\partial_x^2u_{\neq}^3 \|_{L^{\infty}L^2}^2\big)\leq C\|\partial_x^2u_{\neq}^3 \|_{X_3}^2\leq CE_6.
\end{align*}
This gives
\begin{align*}
\|e^{3\nu^{1/3}t}  \nabla {G}_{2,1}\|_{L^2L^2}^2\leq C\nu^{-1}E_2^2E_6.
\end{align*}

As $\partial_xg_{3,1}=\big(\bu^2\partial_y+\bu^3\partial_z\big)\partial_x u^3,$ by Lemma \ref{lemu2u3M2}, we have
\begin{align*}
\|e^{3\nu^{1/3}t}  \partial_xg_{3,1}\|_{L^2L^2}^2\leq& C\nu^{-1}E_2^2\|e^{3\nu^{1/3}t}\partial_x u^3\|_{Y_0}^2\leq C\nu^{-1}E_2^2\|\partial_x u^3\|_{X_3}^2\\\leq& C\nu^{-1}E_2^2\|\partial_x^2 u_{\neq}^3\|_{X_3}^2\leq C\nu^{-1}E_2^2E_6.
\end{align*}
Similarly, we have
\begin{align*}
\|e^{3\nu^{1/3}t}  \nabla(g_{3,1})_{\neq}\|_{L^2L^2}^2&\leq C\nu^{-1}E_2^2\|e^{3\nu^{1/3}t}\nabla u_{\neq}^3\|_{Y_0}^2\leq C\nu^{-1}E_2^2\|\nabla u_{\neq}^3\|_{X_3}^2\\
&\leq \nu^{-\frac{5}{3}}E_2^2\big(\|\partial_x^2 u_{\neq}^3\|_{X_3}^2+\nu^{\frac{4}{3}}\|\Delta u_{\neq}^3\|_{X_3}^2\big)\\
&\leq \nu^{-\frac{5}{3}}E_2^2\big(E_6+\nu^{\frac{4}{3}}\|\Delta u_{\neq}^3\|_{X_3}^2\big),
\end{align*}
which gives the second inequality. Here we used the fact that
\beno
\|\nabla f_{\neq}\|_{L^2}^2 \leq \| f_{\neq}\|_{L^2}\|\Delta f_{\neq}\|_{L^2}\leq \| \partial_x^2f_{\neq}\|_{L^2}\|\Delta f_{\neq}\|_{L^2}\leq \nu^{-\frac{2}{3}}\| \partial_x^2f_{\neq}\|_{L^2}^2+\nu^{\frac{2}{3}}\|\Delta f_{\neq}\|_{L^2}^2.
\eeno

For $k\in\{2,3\}$, by Lemma \ref{lem9} and Lemma \ref{lemu2u3M2}, we get
\begin{align*}
\|\nabla(u_{\neq}^{k}\partial_{k} \bu^2)\|_{L^2}&\leq C\|\nabla \bu^2\|_{H^1}(\| u_{\neq}^{k}\|_{H^1}+\|(\partial_z-\kappa\partial_y)u_{\neq}^{k}\|_{H^1})\\&\leq CE_2\big(\| \nabla \partial_x^2u_{\neq}^{k}\|_{L^2}+\|\nabla\partial_x(\partial_z-\kappa\partial_y) u_{\neq}^{k}\|_{L^2}\big),
\end{align*}
which gives
\begin{align*}
\|e^{3\nu^{1/3}t}  \nabla(u_{\neq}^{k}\partial_{k}\bu^2)\|_{L^2L^2}^2\leq& CE_2^2\big(\|e^{3\nu^{1/3}t} \nabla \partial_x^2u_{\neq}^{k}\|_{L^2L^2}^2+\|e^{3\nu^{1/3}t}\nabla\partial_x(\partial_z-\kappa\partial_y) u_{\neq}^{k}\|_{L^2L^2}^2)\\
\leq& C\nu^{-1}E_2^2\big(\| \partial_x^2u_{\neq}^{k}\|_{X_3}^2+\|\partial_x(\partial_z-\kappa\partial_y\big) u_{\neq}^{k}\|_{X_3}^2)\leq C\nu^{-1}E_2^2E_6.
\end{align*}
This shows that
\begin{align*}
\|e^{3\nu^{1/3}t}  \nabla{g}_{2,2}\|_{L^2L^2}^2\leq C\nu^{-1}E_2^2E_6.
\end{align*}

We write
\beno
g_{3,2}=u_{\neq}\cdot\nabla \bu^3=(u_{\neq}^{2}\partial_{y}+u_{\neq}^{3}\partial_{z}) \bu^3={W}^{2}\partial_{y} \bu^3+u_{\neq}^{3}\big(\partial_{z}\bu^3-\kappa\partial_{y}\bu^3\big).
\eeno
By Lemma \ref{lemu2u3M2}, we have
\begin{align*}
\|e^{3\nu^{1/3}t} \nabla({W}^{2}\partial_{y} \bu^3)\|_{L^2L^2}^2&\leq C\nu^{-1}E_2^2\|e^{3\nu^{1/3}t}{W}^{2}\|_{L^{\infty}H^1}^2\leq C\nu^{-1}E_2^2\|e^{3\nu^{1/3}t} \nabla W^{2}\|_{L^{\infty}L^2}^2\\&\leq C\nu^{-1}E_2^2\|e^{3\nu^{1/3}t} \nabla \partial_xW^{2}\|_{L^{\infty}L^2}^2\leq C\nu^{-1}E_2^2\|\nabla \partial_xW^{2}\|_{X_3}^2\leq C\nu^{-1}E_2^2E_6,
\end{align*}
and by Lemma \ref{lem9}, Lemma \ref{lemu2u3M2} and Lemma \ref{lem5a}, we have
\begin{align*}
\|\nabla(u_{\neq}^{3}(\partial_{z}\bu^3-\kappa\partial_{y}\bu^3)\|_{L^2}&\leq C\big(\|\partial_{z}\bu^3-\kappa\partial_{y}\bu^3\|_{H^1}\big)\big(\| u_{\neq}^{3}\|_{H^1}+\|(\partial_z-\kappa\partial_y) u_{\neq}^{3}\|_{H^1}\big)\\&\leq C{E_2}(\| \nabla \partial_x^2u_{\neq}^{3}\|_{L^2}+\|\nabla\partial_x(\partial_z-\kappa\partial_y)u_{\neq}^{3}\|_{L^2}).
\end{align*}
Similar to the estimate of $g_{2,2}$, we have
\begin{align*}
&\|e^{3\nu^{1/3}t}  \nabla(u_{\neq}^{3}(\partial_{z}\bu^3-\kappa\partial_{y}\bu^3)\|_{L^2L^2}^2\leq  C\nu^{-1}E_2^2E_6.
\end{align*}
This shows that
\begin{align*}
\|e^{3\nu^{1/3}t}  \nabla{g}_{3,2}\|_{L^2L^2}^2\leq C\nu^{-1}E_2^2E_6.
\end{align*}

Using $\partial_y\bu^2+\partial_z\bu^3=0$, we may write
\beno
\Delta p^{(3)}=-2\partial_{\al}\bu^{\beta}\partial_{\beta}u_{\neq}^{\al}=-2\partial_{\beta}(\partial_{\al}\bu^{\beta}u_{\neq}^{\al})
=-2(\partial_yg_{2,2}+\partial_zg_{3,2}),
\eeno
therefore,
\begin{align*}&\|e^{3\nu^{1/3}t} \Delta p^{(3)}\|_{L^2L^2}^2\leq C\|e^{3\nu^{1/3}t}  \nabla{g}_{2,2}\|_{L^2L^2}^2+C\|e^{3\nu^{1/3}t}  \nabla{g}_{3,2}\|_{L^2L^2}^2\leq C\nu^{-1}E_2^2E_6.
\end{align*}

Summing up, we conclude the first inequality.
\end{proof}

As $g_{j,5}=\partial_jp^{(3)},$ we also have
\begin{align}\label{gj5}&\|e^{3\nu^{1/3}t}  \nabla{g}_{j,5}\|_{L^2L^2}^2\leq C\|e^{3\nu^{1/3}t} \Delta p^{(3)}\|_{L^2L^2}^2\leq C\nu^{-1}E_2^2E_6, \ \ j\in\{2,3\}.
\end{align}

\begin{Lemma}\label{lemM3M5}
It holds that
\begin{align*}
&\|e^{2\nu^{1/3}t} \nabla (g_{2,1})_{\neq}\|_{L^2L^2}^2+\|e^{2\nu^{1/3}t} \partial_z (g_{3,1})_{\neq}\|_{L^2L^2}^2\leq C\nu^{-1}E_2^2E_3^2,\\  &\|e^{2\nu^{1/3}t} \nabla (\bu^1\partial_xu_{\neq}^2)\|_{L^2L^2}^2+\|e^{2\nu^{1/3}t} \Delta p^{(1)}\|_{L^2L^2}^2+\|e^{2\nu^{1/3}t} (\partial_x,\partial_z) (\bu^1\partial_xu_{\neq}^3)\|_{L^2L^2}^2\leq C\nu E_1^2E_3E_5.
\end{align*}
\end{Lemma}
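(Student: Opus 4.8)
The plan is to treat both inequalities as estimates of $0\cdot\neq\to\neq$ interactions, decomposing every product into a zero-mode factor (a component of $\bu$, carrying no enhanced-dissipation decay, controlled by $E_1$ or $E_2$) times a nonzero-mode factor (controlled by $E_3$, or by $E_3$ and $E_5$ jointly). In every term the weight $e^{2\nu^{1/3}t}$ is assigned to the nonzero-mode factor, since only the nonzero modes decay; the zero modes are taken in time-$L^\infty$ norms. Throughout I would use the velocity estimates of Lemma \ref{lemu2u3M2} and Lemma \ref{lemuM3}, the anisotropic bilinear estimates of Lemma \ref{lem9a1}–\ref{lem9a}, and the streak smallness of Lemma \ref{lem5a}.

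For the first inequality I start from $(g_{2,1})_{\neq}=(\bu^2\partial_y+\bu^3\partial_z)u_{\neq}^2$ and $(g_{3,1})_{\neq}=(\bu^2\partial_y+\bu^3\partial_z)u_{\neq}^3$. The $g_{2,1}$ piece is benign: the product estimate $\|\nabla(\bu^j f)\|_{L^2L^2}^2\le C\nu^{-1}E_2^2\|f\|_{Y_0}^2$ from Lemma \ref{lemu2u3M2}, applied with $f=\partial_\alpha u_{\neq}^2$, reduces it to $\|e^{2\nu^{1/3}t}\partial_\alpha u_{\neq}^2\|_{Y_0}\le CE_3$, which holds because $\Delta u_{\neq}^2\in X_2$ controls $\nabla u_{\neq}^2$ with no $\nu$-penalty. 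For $\partial_z(g_{3,1})_{\neq}$ the dangerous terms after the Leibniz expansion are those containing $\partial_y u_{\neq}^3$ and $\partial_y\partial_z u_{\neq}^3$, namely $\partial_z\bu^2\,\partial_y u_{\neq}^3$ and $\bu^2\,\partial_y\partial_z u_{\neq}^3$, since the $y$-derivative of $u_{\neq}^3$ is seen by $E_3$ only through $\Delta u_{\neq}^3$ with a $\nu^{2/3}$ loss. The key is to use Lemma \ref{lem9a1}: keep the zero-mode factor $\partial_z\bu^2$ in $L^\infty_tH^1$, so it costs only $CE_2$ and avoids the $\nu^{-1/2}$ that $L^2_tL^\infty$ would incur, and place $\partial_y u_{\neq}^3,\ \partial_y\partial_z u_{\neq}^3$ in $L^2_tL^2$, each bounded by $C\nu^{-1/2}E_3$ via enhanced dissipation and the interpolation $\|\partial_y u_{\neq}^3\|_{L^2}^2\le\|(\partial_x^2+\partial_z^2)u_{\neq}^3\|_{L^2}\|\Delta u_{\neq}^3\|_{L^2}$. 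The resulting product $C\nu^{-1/2}E_2E_3$ squares to the asserted $C\nu^{-1}E_2^2E_3^2$; the terms carrying $\partial_z u_{\neq}^3$ or $\partial_z^2 u_{\neq}^3$ are treated identically but more easily, as those derivatives already sit inside $(\partial_x^2+\partial_z^2)u_{\neq}^3$.

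For the second inequality the three quantities $\bu^1\partial_x u_{\neq}^2$, $\Delta p^{(1)}=-2(\partial_y\bu^1\partial_x u_{\neq}^2+\partial_z\bu^1\partial_x u_{\neq}^3)$ and $\bu^1\partial_x u_{\neq}^3$ share the structure (streak or its gradient)$\times(\partial_x u_{\neq})$. The nonzero factor $\partial_x u_{\neq}$ is controlled by the two-component estimate $\|e^{\frac{5}{2}\nu^{1/3}t}\partial_x u_{\neq}\|_{Y_0}^2\le CE_3E_5$ of Lemma \ref{lemuM3}, together with inviscid damping, so that $\nabla\partial_x u_{\neq}^2$ is controlled at rate $2$ by $\Delta u_{\neq}^2\in X_2$. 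The feature that yields the factor $+\nu$ on the right, rather than the naive $\nu^{-1}$ one gets from bounding $\bu^1$ by $E_1$ alone, is the slow lift-up growth of the streak: by Lemma \ref{lem5a}, $\|\bu^1(t)\|_{H^2}\le E_1\min(\nu t,1)$. Pairing this $\min(\nu t,1)$ factor with the spare decay $e^{-\frac{1}{2}\nu^{1/3}t}$ left over from the rate-$\frac{5}{2}$ control of $\partial_x u_{\neq}$, the time integral $\int_1^\infty\min(\nu t,1)^2e^{-\nu^{1/3}t}\,dt\sim\nu$ supplies the gain, bounding each piece by $C\nu^{1/2}E_1\sqrt{E_3E_5}$.

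The hard part is precisely this $\nu$-bookkeeping in the second inequality: the bound must come out with $+\nu$ (exactly the power the bootstrap needs for $E_5\lesssim\nu$ to close), whereas a careless pairing produces the far weaker $\nu^{-1}$. Extracting the gain requires simultaneously exploiting the anisotropic bilinear estimates (to keep $\bu^1$ in $H^2$, where it carries the full $\min(\nu t,1)$ rather than the $\min(\nu t,1)^{1/2}$ of $\nabla\bu^1$), the $E_3E_5$ two-component control of $\partial_x u_{\neq}$ from Lemma \ref{lemuM3}, inviscid damping, and the streak smallness of Lemma \ref{lem5a}, and then balancing the exponential weights so that the residual time integral contributes the decisive factor of $\nu$. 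The first inequality, by contrast, is routine once the anisotropic estimate is used to avoid the $L^2_tL^\infty$ loss on the streak factors.
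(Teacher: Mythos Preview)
Your treatment of the first inequality is correct and close to the paper's, though you take an unnecessary detour: after applying Lemma~\ref{lem9a} to $\partial_z\bu^{\alpha}\,\partial_{\alpha}u_{\neq}^3$, the paper does \emph{not} interpolate with $\Delta u_{\neq}^3$; it simply uses the Fourier bound $\|\nabla u_{\neq}^3\|_{L^2}+\|\nabla\partial_z u_{\neq}^3\|_{L^2}\le C\|\nabla(\partial_x^2+\partial_z^2)u_{\neq}^3\|_{L^2}$ (valid because $|l|\le k^2+l^2$ for $k\neq 0$) and then the $\nu\|\nabla\cdot\|_{L^2L^2}^2$ part of $X_2$. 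Your interpolation route works too, but it drags in $\Delta u_{\neq}^3\in X_3$ when the $X_2$ control of $(\partial_x^2+\partial_z^2)u_{\neq}^3$ already suffices.

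For the second inequality there is a genuine gap. Your ``time integral $\int_1^\infty\min(\nu t,1)^2e^{-\nu^{1/3}t}\,dt\sim\nu$'' argument requires the nonzero factor in $L^\infty_tL^2$ at rate $\tfrac52$, but after estimating $\|\nabla(\bu^1\partial_x u_{\neq}^2)\|_{L^2}\le C\|\bu^1\|_{H^2}\|\nabla\partial_x u_{\neq}^2\|_{L^2}$ you need $\|e^{\frac52\nu^{1/3}t}\nabla\partial_x u_{\neq}^2\|_{L^\infty L^2}$. Lemma~\ref{lemuM3}'s $Y_0$ bound does \emph{not} give this: it controls $\partial_x u_{\neq}$ in $L^\infty_tL^2$ and $\nabla\partial_x u_{\neq}$ only in $L^2_tL^2$; using the latter you pick up $\sup_t[\min(\nu t,1)^2e^{-\nu^{1/3}t}]\sim\nu^{4/3}$ instead of the time integral, and the outcome is $C\nu^{1/3}E_1^2E_3E_5$, short by $\nu^{2/3}$. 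Likewise ``inviscid damping at rate $2$'' for $\nabla\partial_x u_{\neq}^2$ in $L^2_tL^2$ yields only $CE_1^2E_3^2$, which cannot be absorbed in the bootstrap for $E_3$.

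The missing ingredient is the pointwise interpolation
\[
\|\nabla\partial_x u_{\neq}^2\|_{L^2}^2\le\|\partial_x^2 u_{\neq}^2\|_{L^2}\|\Delta u_{\neq}^2\|_{L^2},\qquad
\|(\partial_x,\partial_z)\partial_x u_{\neq}^3\|_{L^2}^2\le\|\partial_x^2 u_{\neq}^3\|_{L^2}\|(\partial_x^2+\partial_z^2)u_{\neq}^3\|_{L^2},
\]
which splits the nonzero factor into a rate-$3$ piece ($E_5$) and a rate-$2$ piece ($E_3$). This is exactly what the paper does; it then pairs the full $(\nu t)^2$ with the rate-$3$ factor via $(\nu t)^2\le C\nu^{4/3}e^{\nu^{1/3}t}$ and places both factors in $L^2_t$ through the $\nu^{1/3}\|\cdot\|_{L^2L^2}^2$ part of $X_2,X_3$, obtaining $C\nu E_1^2E_3E_5$. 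With the same interpolation your $L^\infty_t$/time-integral route would also close, but as written your proposal does not contain this step.
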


\begin{proof}
As $ (g_{2,1})_{\neq}=\big(\bu^2\partial_y+\bu^3\partial_z\big)u_{\neq}^2,$ by Lemma \ref{lemu2u3M2}, we have
\begin{align*}\|e^{2\nu^{1/3}t}  \nabla(g_{2,1})_{\neq}\|_{L^2L^2}^2&\leq C\nu^{-1}E_2^2\|e^{2\nu^{1/3}t}\nabla u_{\neq}^2\|_{Y_0}^2\leq C\nu^{-1}M_2^2\|\nabla u_{\neq}^2\|_{X_2}^2\\&\leq C\nu^{-1}E_2^2\|\Delta u_{\neq}^2\|_{X_2}^2\leq C\nu^{-1}E_2^2E_3^2.
\end{align*}

As $ (g_{3,1})_{\neq}=\bu^{\al}\partial_{\al}u_{\neq}^3,$ we have $ \partial_z(g_{3,1})_{\neq}=\partial_z\bu^{\al}\partial_{\al}u_{\neq}^3+\bu^{\al}\partial_{\al}\partial_zu_{\neq}^3$. Then by Lemma \ref{lem9a} and Lemma \ref{lemu2u3M2}, we have
\begin{align*}
\|\partial_z(g_{3,1})_{\neq}\|_{L^2}\leq& C\big(\|\partial_z\bu^{2}\|_{H^1}+\|\partial_z\bu^{3}\|_{H^1}\big)\big(\|\nabla u_{\neq}^3\|_{L^2}+\|\nabla \partial_zu_{\neq}^{3}\|_{L^2}\big)\\&+\big(\|\bu^{2}\|_{L^{\infty}}+\|\bu^{3}\|_{L^{\infty}}\big)\|\nabla \partial_zu_{\neq}^{3}\|_{L^2}\\ \leq & CE_2\big(\|\nabla u_{\neq}^3\|_{L^2}+\|\nabla \partial_zu_{\neq}^{3}\|_{L^2}\big)\leq CE_2\|\nabla (\partial_x^2+\partial_z^2)u_{\neq}^3\|_{L^2},
\end{align*}
which gives
\begin{align*}
\|e^{2\nu^{1/3}t} \partial_z (g_{3,1})_{\neq}\|_{L^2L^2}^2&\leq CE_2^2\|e^{2\nu^{1/3}t}\nabla (\partial_x^2+\partial_z^2)u_{\neq}^3\|_{L^2L^2}^2\\&\leq CE_2^2\nu^{-1}\| (\partial_x^2+\partial_z^2)u_{\neq}^3\|_{X_2}^2\leq C\nu^{-1}E_2^2E_3^2.
\end{align*}
Summing up, we conclude the first inequality.

By Lemma \ref{lem5a}, we have
\begin{align*} \|\nabla (\bu^1\partial_xu_{\neq}^2)\|_{L^2}&\leq \|(\bu^1\partial_xu_{\neq}^2)\|_{H^1}\leq C\|\bu^1\|_{H^2}\|\partial_xu_{\neq}^2\|_{H^1}\leq C\nu tE_1\|\nabla\partial_xu_{\neq}^2\|_{L^2}\\ &\leq C\nu t{E_1}\|\partial_x^2 u_{\neq}^2\|_{L^2}^{\frac{1}{2}}\|\Delta u_{\neq}^2\|_{L^2}^{\frac{1}{2}}.
\end{align*}
By Lemma \ref{lem9b} and Lemma \ref{lem5a}, we have
\begin{align*}
\|(\partial_x,\partial_z) (\bu^1\partial_xu_{\neq}^3)\|_{L^2}&\leq  C\|\bu^1\|_{H^2}\big(\|\partial_xu_{\neq}^3\|_{L^2}+\|\partial_x^2u_{\neq}^3\|_{L^2}+\|\partial_z\partial_xu_{\neq}^3\|_{L^2}\big)\\
&\leq C\nu tE_1\|(\partial_x,\partial_z)\partial_xu_{\neq}^3\|_{L^2}\leq C\nu tE_1\|\partial_x^2 u_{\neq}^3\|_{L^2}^{\frac{1}{2}}\|(\partial_x^2+\partial_z^2) u_{\neq}^3\|_{L^2}^{\frac{1}{2}}.
\end{align*}
As $\Delta p^{(1)}=-2\big(\partial_y\bu^1\partial_xu_{\neq}^2+\partial_z\bu^1\partial_xu_{\neq}^3\big),$ by Lemma \ref{lem9b} and Lemma \ref{lem5a}, we have
\begin{align*}
\|\Delta p^{(1)}\|_{L^2}&\leq  C\|\partial_y\bu^1\|_{H^1}\|\partial_xu_{\neq}^2\|_{H^1}+C\|\bu^1\|_{H^2}\big(\|\partial_xu_{\neq}^3\|_{L^2}
+\|\partial_z\partial_xu_{\neq}^3\|_{L^2}\big)\\
&\leq  C\|\bu^1\|_{H^2}\big(\|\nabla\partial_xu_{\neq}^2\|_{L^2}
+\|(\partial_x,\partial_z)\partial_xu_{\neq}^3\|_{L^2}\big)\\
&\leq  C\nu tE_1\big(\|\partial_x^2 u_{\neq}^2\|_{L^2}^{\frac{1}{2}}\|\Delta u_{\neq}^2\|_{L^2}^{\frac{1}{2}}+\|\partial_x^2 u_{\neq}^3\|_{L^2}^{\frac{1}{2}}\|(\partial_x^2+\partial_z^2) u_{\neq}^3\|_{L^2}^{\frac{1}{2}}\big).
\end{align*}
Therefore, using $(\nu t)^2\leq C\nu^{\frac{4}{3}}e^{\nu^{1/3}t}$, we deduce  that
\begin{align*}
&\|e^{2\nu^{1/3}t} \nabla (\bu^1\partial_xu_{\neq}^2)\|_{L^2L^2}^2+\|e^{2\nu^{1/3}t} \Delta p^{(1)}\|_{L^2L^2}^2+\|e^{2\nu^{1/3}t} (\partial_x,\partial_z) (\bu^1\partial_xu_{\neq}^3)\|_{L^2L^2}^2\\
&\leq C E_1^2\big(\|(\nu t)^2e^{2\nu^{1/3}t} \partial_x^2 u_{\neq}^2\|_{L^2L^2}\|e^{2\nu^{1/3}t} \Delta u_{\neq}^2\|_{L^2L^2}\\&\quad+\|(\nu t)^2e^{2\nu^{1/3}t} \partial_x^2 u_{\neq}^3\|_{L^2L^2}\|e^{2\nu^{1/3}t} (\partial_x^2+\partial_z^2) u_{\neq}^3\|_{L^2L^2}\big)\\
&\leq C E_1^2\big(\|\nu^{\frac{4}{3}}e^{3\nu^{1/3}t} \partial_x^2 u_{\neq}^2\|_{L^2L^2}\nu^{-\frac{1}{6}}\|\Delta u_{\neq}^2\|_{X_2}\\&\quad+\|\nu^{\frac{4}{3}}e^{3\nu^{1/3}t} \partial_x^2 u_{\neq}^3\|_{L^2L^2}\nu^{-\frac{1}{6}}\| (\partial_x^2+\partial_z^2) u_{\neq}^3\|_{X_2}\big)\\ &\leq C E_1^2\big(\nu^{-\frac{1}{6}}\|\nu^{\frac{4}{3}} \partial_x^2 u_{\neq}^2\|_{X_3}\nu^{-\frac{1}{6}}M_3+\nu^{-\frac{1}{6}}\|\nu^{\frac{4}{3}} \partial_x^2 u_{\neq}^3\|_{X_3}\nu^{-\frac{1}{6}}E_3\big)\\
&\leq C E_1^2\nu^{-\frac{1}{6}}(\nu^{\frac{4}{3}}E_5)\nu^{-\frac{1}{6}}E_3=C\nu E_1^2E_3E_5,
\end{align*}
which gives the second inequality.
\end{proof}

\begin{Lemma}\label{lemM4M3M1}
It holds that \begin{align*}
&\|e^{2\nu^{1/3}t}  (u_{\neq}\cdot\nabla\bu)\|_{Y_0^2}+\|e^{2\nu^{1/3}t}  (\bu\cdot\nabla u_{\neq})\|_{Y_0^2}\\&\qquad\leq C\big(E_4E_1\nu+E_4E_2+E_1E_3+E_1E_5+E_3E_2/\nu\big).
\end{align*}
\end{Lemma}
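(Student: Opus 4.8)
The plan is to expand each term as a sum of products of a zero mode and a nonzero mode, exploiting $\pa_x\bu=0$. Writing $u=\bu+u_{\neq}$, one has $u_{\neq}\cdot\nabla\bu=u_{\neq}^{\al}\pa_{\al}\bu$ (sum over $\al\in\{2,3\}$) and $\bu\cdot\nabla u_{\neq}=\bu^1\pa_xu_{\neq}+\bu^{\al}\pa_{\al}u_{\neq}$, so in every product exactly one factor is a zero mode and one is a nonzero mode. I would split further according to whether the zero-mode factor involves the large component $\bu^1$ (controlled only by $E_1$, and small in low norm by Lemma \ref{lem5a} through $\|\bu^1\|_{H^2}\le E_1\min(\nu t,1)$) or the small components $\bu^2,\bu^3$ (controlled by $E_2$ via Lemma \ref{lemu2u3M2}). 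Since $\|f\|_{Y_0^2}^2=\|f\|_{L^{\infty}H^2}^2+\nu\|\nabla f\|_{L^2H^2}^2$ is the $H^2$-analogue of the $Y_0$ norm, the estimates needed are $H^2$- and $H^3$-level product bounds, and the time integration is always arranged by placing an $L^\infty_t$ space norm on the zero mode and a weighted ($e^{2\nu^{1/3}t}$ or higher) $L^\infty_t$/$L^2_t$ norm on the nonzero mode.

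For the $\bu^{2,3}$ pieces I would use Lemma \ref{lemY02} together with Lemma \ref{lemu2u3M2}. The key point is that $E_2$ controls $\bu^2$ at the $Y_0^2$-level, i.e. $\|\bu^2\|_{Y_0^2}\le CE_2$, and $\bu^3,\nabla\bu^3$ at the $Y_0$-level, while the nonzero factor carries the decay: $\|e^{2\nu^{1/3}t}u_{\neq}\|_{Y_0^2}\le CE_3$ (via $\|\Delta u_{\neq}^2\|_{X_2}$ and the div-free relation $\|u_{\neq}\|_{L^2}\le\|\pa_xu_{\neq}\|_{L^2}$) when the nonzero mode is undifferentiated, and $\|e^{2\nu^{1/3}t}\nabla u_{\neq}\|_{Y_0^2}\le CE_4$ when a derivative falls on it. This yields the contributions $E_2E_4$ (differentiated nonzero mode) and $E_2E_3\le E_3E_2/\nu$ (undifferentiated nonzero mode). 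Because $\bu^3$ is only controlled through $\bu^3,\pa_z\bu^3\in H^1$ and not in full $H^2$, the products involving $\bu^3$ must be estimated anisotropically with Lemma \ref{lem9a} (and Lemma \ref{lem9b} for the $\pa_x,\pa_z$ combinations), using the good derivative $\pa_z-\kappa\pa_y$ and $\|\kappa\|_{H^3}\le C\|\bu^1\|_{H^4}$ from Lemma \ref{lem5}.

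For the $\bu^1$ pieces ($u_{\neq}^{\al}\pa_{\al}\bu^1$ and $\bu^1\pa_xu_{\neq}$) the mechanism is different: I would use Lemma \ref{lem5a} and interpolation $\|\bu^1\|_{H^3}\le\|\bu^1\|_{H^2}^{1/2}\|\bu^1\|_{H^4}^{1/2}\le CE_1\min(\nu t,1)^{1/2}$ to extract a factor $\min(\nu t,1)^{1/2}\le(\nu t)^{1/2}$, and then the elementary inequality $\nu t\le C\nu^{2/3}e^{\nu^{1/3}t}$ (as already used in the proof of Lemma \ref{lemM3M5} in the form $(\nu t)^2\le C\nu^{4/3}e^{\nu^{1/3}t}$) to simultaneously produce the missing powers of $\nu$ and to upgrade the exponential weight from $e^{2\nu^{1/3}t}$ to $e^{(5/2)\nu^{1/3}t}$ or $e^{3\nu^{1/3}t}$. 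At the higher weight the low-regularity part of the nonzero mode is absorbed by Lemma \ref{lemuM3} (which bounds $\|e^{(5/2)\nu^{1/3}t}\pa_xu_{\neq}\|_{Y_0}$ and $\|e^{(5/2)\nu^{1/3}t}\pa_zu_{\neq}^3\|_{Y_0}$) and the high-regularity part by the $X_3$-norms in $E_5$, so that $\nu^{1/3}\sqrt{E_3E_5}\le\tfrac12(E_3+E_5)$ turns these into $E_1E_3+E_1E_5$, while the pieces where the full $E_4$ is used give $E_4E_1\nu$.

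The main obstacle is the bookkeeping of the powers of $\nu$ so that every term closes at size $O(\nu)$ after the bootstrap. The two delicate points are: (i) the small-mode interaction, where $\bu^2,\bu^3$ are not available in $H^3$, so I must transfer the extra derivative onto the nonzero factor and pay for it through the precise structure of the $X_a$-norm (the gains $\|e^{a\nu^{1/3}t}f\|_{L^2L^2}\le C\nu^{-1/6}\|f\|_{X_a}$ and $\|e^{a\nu^{1/3}t}\nabla f\|_{L^2L^2}\le C\nu^{-1/2}\|f\|_{X_a}$), which is exactly what forces the $E_3E_2/\nu$ rather than a cleaner product; and (ii) the large-mode interaction, where the exponential weights of the two factors must be matched, the $\min(\nu t,1)$ gain being the only thing that makes the weight upgrade and the $\nu$-counting consistent. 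Once these two balances are secured, summing the finitely many products gives the stated bound.
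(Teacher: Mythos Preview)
Your overall architecture is right, and the treatment of $\bu^1\partial_xu_{\neq}$, $\bu^2\partial_yu_{\neq}$, and $\bu^3\partial_zu_{\neq}$ matches the paper's (Lemma \ref{lemY02} for the first two, interpolation for the third, with the $\min(\nu t,1)$ gain from Lemma \ref{lem5a} and the weight upgrade $1+\nu t^3\le Ce^{\frac12\nu^{1/3}t}$). However, your handling of $u_{\neq}\cdot\nabla\bu$ has a concrete gap.

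You propose to split $u_{\neq}^{\al}\partial_{\al}\bu$ according to whether the component of $\bu$ is $\bu^1$ or $\bu^{2,3}$, and to control the $\bu^{2,3}$ pieces by $E_2$. This does not close: to apply Lemma \ref{lemY02} to $u_{\neq}^{\al}\partial_{\al}\bu^{j}$ you need $\|\partial_{\al}\bu^{j}\|_{Y_0^2}$, i.e.\ $\bu^{j}$ at the $Y_0^3$ level, and $E_2$ only delivers $\bu^2\in Y_0^2$ and $\bu^3\in Y_0^1$. Relatedly, your claim $\|e^{2\nu^{1/3}t}u_{\neq}\|_{Y_0^2}\le CE_3$ is true only for the component $u_{\neq}^2$; for $u_{\neq}^3$ (and hence $u_{\neq}^1$ via divergence-free) $E_3$ gives only $(\partial_x^2+\partial_z^2)u_{\neq}^3\in L^2$, not the full Laplacian, so $u_{\neq}^3$ is not in $Y_0^2$ at cost $E_3$. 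Thus neither factor in $u_{\neq}^3\partial_z\bu^{2,3}$ sits in $Y_0^2$ under your bookkeeping.

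The paper resolves this by splitting $u_{\neq}\cdot\nabla\bu$ by the \emph{nonzero-mode} component instead, and by never invoking $E_2$ on this side: $u_{\neq}^2\partial_y\bu$ is estimated in one stroke by Lemma \ref{lemY02} as $\|e^{2\nu^{1/3}t}u_{\neq}^2\|_{Y_0^2}\|\partial_y\bu\|_{Y_0^2}\le CE_3\cdot\|\bu\|_{Y_0^4}\le CE_3E_1$, and $u_{\neq}^3\partial_z\bu$ is treated via the anisotropic $H^2$/$H^3$ product estimates of Lemma \ref{lem9a} (with the plain $\partial_z$, not $\partial_z-\kappa\partial_y$), using $\|\partial_z\bu\|_{H^1}\le E_1\nu t$, $\|\partial_z\bu\|_{H^3}\le E_1$ and the weight upgrade to land on $E_1\nu E_4+E_1(E_3+E_5)$ through Lemma \ref{lemuM3}. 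The point is that the high-derivative burden on $\bu$ is always paid by $E_1$, not $E_2$; the $E_2$ energy enters only through $\bu^{\al}\partial_{\al}u_{\neq}$. Once you make that correction, the rest of your outline goes through and matches the paper's proof.
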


\begin{proof}
By Lemma \ref{lemY02}, we have
\begin{align}\label{u2Y02}
\|e^{2\nu^{1/3}t} u_{\neq}^2\partial_y\bu\|_{Y_0^2}  \leq& C\|e^{2\nu^{1/3}t} u_{\neq}^2\|_{Y_0^2}\|\partial_y\bu\|_{Y_0^2}\leq C\|e^{2\nu^{1/3}t}\Delta u_{\neq}^2\|_{Y_0}\|\bu\|_{Y_0^4}\\ \nonumber \leq&C\| \Delta u_{\neq}^2\|_{X_2}\|\bu\|_{Y_0^4}\leq CE_3E_1.
\end{align}
Using Lemma \ref{lem9a}, Lemma \ref{lem5a} and $\|f\|_{H^2}\leq \|f\|_{H^3}^{\frac{2}{3}}\|f\|_{L^2}^{\frac{1}{3}}$,
we get
 \begin{align*}\| u_{\neq}^3\partial_z\bu\|_{H^2}&\leq C\|\partial_z\bu\|_{H^1}\|(\partial_x,\partial_z) u_{\neq}^3\|_{H^2}+C\|\partial_z\bu\|_{H^3}\|(\partial_x,\partial_z) u_{\neq}^3\|_{L^2}\\&\leq CE_1\nu t\|(\partial_x,\partial_z) u_{\neq}^3\|_{H^2}+CE_1\|(\partial_x,\partial_z) u_{\neq}^3\|_{L^2}\\&\leq CE_1\nu\|(\partial_x,\partial_z) u_{\neq}^3\|_{H^3}+CE_1(1+\nu t^3)\|(\partial_x,\partial_z) u_{\neq}^3\|_{L^2},
\end{align*}
and
\begin{align*}\|\nabla( u_{\neq}^3\partial_z\bu)\|_{H^2}&\leq C\|\partial_z\bu\|_{H^1}\|(\partial_x,\partial_z) u_{\neq}^3\|_{H^3}+C\|\partial_z\bu\|_{H^3}\|(\partial_x,\partial_z) u_{\neq}^3\|_{H^1}\\&\leq C{E_1}\nu\|(\partial_x,\partial_z) u_{\neq}^3\|_{H^4}+C{E_1}(1+\nu t^3)\|(\partial_x,\partial_z) u_{\neq}^3\|_{H^1},
\end{align*}
from which, Lemma \ref{lemuM3} and $1+\nu t^3\leq Ce^{\frac{1}{2}\nu^{1/3}t}$, we infer that
\begin{align*}
\|e^{2\nu^{1/3}t} u_{\neq}^3\partial_y\bu\|_{Y_0^2}^2  \leq& C(E_1\nu)^2\big(\|e^{2\nu^{1/3}t}(\partial_x,\partial_z)u_{\neq}^3\|_{L^{\infty}H^3}^2+\nu\|e^{2\nu^{1/3}t} \nabla(\partial_x,\partial_z)u_{\neq}^3\|_{L^2H^3}^2\big)\\&+CE_1^2\|e^{2\nu^{1/3}t}(1+\nu t^3)(\partial_x,\partial_z) u_{\neq}^3\|_{Y_0}^2\\
\leq& C(E_1\nu)^2E_4^2+CE_1^2\|e^{\frac{5}{2}\nu^{1/3}t}(\partial_x,\partial_z) u_{\neq}^3\|_{Y_0}^2\\
\leq& C(E_1\nu)^2E_4^2+CE_1^2E_3E_5,
\end{align*}
which gives
\begin{align}\label{u3Y02}
&\|e^{2\nu^{1/3}t} u_{\neq}^3\partial_z\bu\|_{Y_0^2}\leq C\nu E_1 E_4+CE_1(E_3+E_5).
\end{align}

By Lemma \ref{lem5a}, we have
\begin{align*}
\|\bu^1\partial_xu_{\neq}\|_{H^2}&\leq C\|\bu^1\|_{H^2}\|\partial_xu_{\neq}\|_{H^2}\leq CE_1\nu t\|\partial_xu_{\neq}\|_{H^2}\\
&\leq CE_1\nu\|\partial_x u_{\neq}\|_{H^3}+CE_1\nu t^3\|\partial_x u_{\neq}^3\|_{L^2},
\end{align*}
and
\begin{align*}\|\nabla(\bu^1\partial_xu_{\neq})\|_{H^2}&\leq\|\bu^1\partial_xu_{\neq}\|_{H^3} \leq C\|\bu^1\|_{H^4}\|\partial_xu_{\neq}\|_{H^1}+C\|\bu^1\|_{H^2}\|\partial_xu_{\neq}\|_{H^3}\\&\leq CE_1\|\nabla\partial_xu_{\neq}\|_{L^2}+CE_1\nu t\|\nabla\partial_xu_{\neq}\|_{H^2}\\&\leq CE_1(1+\nu t^3)\|\nabla\partial_xu_{\neq}\|_{L^2}+CE_1\nu\|\nabla\partial_x u_{\neq}\|_{H^3},
\end{align*}
from which,  Lemma \ref{lemuM3} and $(1+\nu t^3)\leq Ce^{\frac{1}{2}\nu^{1/3}t}$, we infer that
\begin{align*}
\| e^{2\nu^{1/3}t}\bu^1\partial_xu_{\neq}\|_{Y_0^2}^2  \leq& C(E_1\nu)^2\big(\|e^{2\nu^{1/3}t}\partial_xu_{\neq}\|_{L^{\infty}H^3}^2+\nu\|e^{2\nu^{1/3}t} \nabla\partial_xu_{\neq}\|_{L^2H^3}^2\big)\\&+CE_1^2\|e^{2\nu^{1/3}t}(1+\nu t^3)\partial_x u_{\neq}\|_{Y_0}^2\\
\leq& C(E_1\nu)^2E_4^2+CE_1^2E_3E_5,
\end{align*}
which gives
\begin{align}\label{u01Y02}
&\|e^{2\nu^{1/3}t}\bu^1\partial_xu_{\neq}\|_{Y_0^2}\leq C\nu E_1 E_4+CE_1(E_3+E_5).
\end{align}

As $\bu^2 $ has nonzero frequency in $z$,
we get by Lemma \ref{lemY02}  that
\begin{align}\label{u02Y02}
\|e^{2\nu^{1/3}t}\bu^2\partial_yu_{\neq}\|_{Y_0^2}  &\leq C\|\bu^2\|_{Y_0^2}\|e^{2\nu^{1/3}t}\partial_yu_{\neq}\|_{Y_0^2}\leq C\|\Delta \bu^2\|_{Y_0}\|e^{2\nu^{1/3}t}u_{\neq}\|_{Y_0^3}\\ \nonumber &\leq CE_2\|e^{2\nu^{1/3}t}\partial_xu_{\neq}\|_{Y_0^3}\leq CE_2E_4.
\end{align}
Using the facts that
\beno
&&\|\bu^3\|_{Y_0^2}\leq C\big(\|\bu^3\|_{Y_0}+\|\Delta \bu^3\|_{Y_0}\big)\leq C\big(E_2+\nu^{-\frac{1}{3}}\|\min(\nu^{\frac{2}{3}}+\nu t,1)^{\frac{1}{2}}\Delta \bu^3\|_{Y_0}\big)\leq CE_2\nu^{-\frac{1}{3}}, \\
&&\|f\|_{Y_0^2}\leq \|f\|_{Y_0^3}^{\frac{2}{3}}\|f\|_{Y_0}^{\frac{1}{3}},\\
&&\|e^{2\nu^{1/3}t}\partial_zu_{\neq}\|_{Y_0}\leq \|e^{2\nu^{1/3}t}\partial_z\partial_xu_{\neq}\|_{Y_0}\leq CE_3 \quad \text{(by Lemma \ref{lemuM3}) },
\eeno
and Lemma \ref{lemY02}, we deduce that

\begin{align}\label{u03Y02}\|e^{2\nu^{1/3}t}\bu^3\partial_zu_{\neq}\|_{Y_0^2}  &\leq C\|\bu^3\|_{Y_0^2}\|e^{2\nu^{1/3}t}\partial_zu_{\neq}\|_{Y_0^2}\\ \nonumber
&\leq CE_2\nu^{-\frac{1}{3}}\|e^{2\nu^{1/3}t}\partial_zu_{\neq}\|_{Y_0}^{\frac{1}{3}}\|e^{2\nu^{1/3}t}\partial_zu_{\neq}\|_{Y_0^3}^{\frac{2}{3}}\\ \nonumber &\leq CE_2\nu^{-\frac{1}{3}}E_3^{\frac{1}{3}}E_4^{\frac{2}{3}}\leq CE_2(\nu^{-1}E_3+E_4).
\end{align}
As $u_{\neq}\cdot\nabla \bu=u_{\neq}^2\partial_y\bu+u_{\neq}^3\partial_z\bu$ and $\bu\cdot\nabla u_{\neq}=\bu^1\partial_xu_{\neq}+\bu^2\partial_yu_{\neq}+\bu^3\partial_zu_{\neq},$ the lemma follows from
 \eqref{u2Y02}-\eqref{u03Y02}.
\end{proof}

\subsection{Interaction between nonzero modes}

\begin{Lemma}\label{lemg2g3M3}
It holds that
\begin{align*}&\|e^{4\nu^{1/3}t} \Delta p^{(4)}\|_{L^2L^2}^2+\|e^{4\nu^{1/3}t} \nabla g_{2,3}\|_{L^2L^2}^2+\nu^{\frac{2}{3}}\|e^{4\nu^{1/3}t} \nabla g_{3,3}\|_{L^2L^2}^2+\|e^{4\nu^{1/3}t} |u_{\neq}|^2\|_{L^2L^2}^2\\&+\|e^{4\nu^{1/3}t}  u_{\neq}\cdot\nabla u_{\neq}\|_{L^2L^2}^2+\|e^{4\nu^{1/3}t} (\partial_x,\partial_z)g_{3,3}\|_{L^2L^2}^2\leq C\nu^{-1}E_3^4.
\end{align*}
\end{Lemma}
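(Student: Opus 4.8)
The plan is to treat all six quantities as quadratic expressions in the nonzero mode $u_{\neq}$ and to exploit the enhanced--dissipation weight by factoring $e^{4\nu^{1/3}t}=e^{2\nu^{1/3}t}\cdot e^{2\nu^{1/3}t}$, so that each of the two factors of $u_{\neq}$ carries the weight $e^{2\nu^{1/3}t}$ matching the norm $X_2$. Since every factor is a nonzero mode, I may use the Poincar\'e--type bound $\|f_{\neq}\|_{L^2}\le C\|\partial_x f_{\neq}\|_{L^2}$ together with the velocity--recovery estimates of Lemma~\ref{lemuM3} to reduce all spatial norms of $u_{\neq}$ and its derivatives to the three quantities $\Delta u_{\neq}^2$, $(\partial_x^2+\partial_z^2)u_{\neq}^3$ and $\Delta u_{\neq}^3$ that are controlled by $E_3$. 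For a generic bilinear piece $fg$ with $P_0f=0$ I would first apply Lemma~\ref{lem9c} to extract a factor $\partial_x$ from $f$, writing $\|fg\|_{H^k}\le C\|\partial_x f\|_{H^{k+1}}\|g\|_{L^2}+C\|\partial_x f\|_{L^2}\|g\|_{H^{k+1}}$; this converts ordinary derivatives into the $\partial_x$--derivatives governed by enhanced dissipation.

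The time integration is then carried out by H\"older's inequality: in each product I place the lower--order factor in $L^\infty_tL^2_x$, bounded by the $L^\infty L^2$ part of $X_2$ and hence by $\lesssim E_3$, and the higher--derivative factor in $L^2_t$, bounded through the dissipation term $\nu\|e^{2\nu^{1/3}t}\nabla f\|_{L^2L^2}^2\lesssim E_3^2$ applied to the relevant factor $f$. The latter produces the single factor $\nu^{-1/2}$, so that each unsquared norm on the left is $\lesssim\nu^{-1/2}E_3^2$ and each squared norm is $\lesssim\nu^{-1}E_3^4$, as claimed. For the lowest--order pieces $|u_{\neq}|^2$, $u_{\neq}\cdot\nabla u_{\neq}$ and $\Delta p^{(4)}=-\partial_iu_{\neq}^j\partial_ju_{\neq}^i$, together with $\nabla g_{2,3}=\nabla(u_{\neq}\cdot\nabla u_{\neq}^2)$, this scheme closes directly, since the full Laplacian $\Delta u_{\neq}^2$ is controlled at level $X_2$ without any loss in $\nu$; indeed $|u_{\neq}|^2$ has room to spare, because there the $L^2_t$ factor is estimated through the milder term $\nu^{1/3}\|e^{2\nu^{1/3}t}u_{\neq}\|_{L^2L^2}^2\lesssim E_3^2$, giving only $\nu^{-1/3}E_3^4$.

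The main obstacle is the handling of the $u_{\neq}^3$ terms, because the vertical control of $u_{\neq}^3$ is weaker: only $(\partial_x^2+\partial_z^2)u_{\neq}^3$ is controlled at level $X_2$, while the full $\Delta u_{\neq}^3$ (carrying $\partial_y^2u_{\neq}^3$) enters $E_3$ only at level $X_3$ and with the weight $\nu^{2/3}$. This is precisely why $\nabla g_{3,3}=\nabla(u_{\neq}\cdot\nabla u_{\neq}^3)$ appears on the left with the prefactor $\nu^{2/3}$: its gradient produces $\nabla^2u_{\neq}^3$, including the bad derivative $\partial_y^2u_{\neq}^3$, which forces the use of $\nu^{2/3}\|\Delta u_{\neq}^3\|_{X_3}\le E_3$, the prefactor $\nu^{2/3}$ compensating the loss. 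By contrast, for $(\partial_x,\partial_z)g_{3,3}$ only the good horizontal derivatives $\partial_x,\partial_z$ act, and these are tied to the enhanced--dissipation quantity $(\partial_x^2+\partial_z^2)u_{\neq}^3$ and to $\Delta\partial_zu_{\neq}^3$, both controlled via Lemma~\ref{lemuM3}; hence this term stays at level $X_2$ with no $\nu^{2/3}$ loss. Organising the six terms so that the vertical derivative of $u_{\neq}^3$ appears only where the compensating $\nu^{2/3}$ is available --- and checking in each case that the two factors split into one $L^\infty_tL^2_x$ factor and one dissipation factor --- is the delicate bookkeeping that yields the uniform bound $C\nu^{-1}E_3^4$.
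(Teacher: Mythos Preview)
Your proposal is correct and follows essentially the same route as the paper. The paper organizes the argument by introducing the auxiliary quantities $A_0=\|(\partial_x^2+\partial_z^2)u_{\neq}^3\|_{L^2}+\|\Delta u_{\neq}^2\|_{L^2}$ and $A_1=\|\nabla(\partial_x^2+\partial_z^2)u_{\neq}^3\|_{L^2}+\|\nabla\Delta u_{\neq}^2\|_{L^2}$, shows that all of $|u_{\neq}|^2$, $u_{\neq}\cdot\nabla u_{\neq}$, $(\partial_x,\partial_z)g_{3,3}$, $\nabla g_{2,3}$ and $\Delta p^{(4)}$ are pointwise bounded by $CA_0A_1$, and then applies exactly your $L^\infty_t\times L^2_t$ splitting of $e^{4\nu^{1/3}t}$; for the bilinear step it distinguishes the cases $k=1,3$ (Lemma~\ref{lem9c}) from $k=2$ (Lemma~\ref{lem9b}), placing the $H^2$ norm on $u_{\neq}^2$ in the latter so that only $\Delta u_{\neq}^2$ enters. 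For $\nabla g_{3,3}$ the paper proceeds precisely as you anticipate, invoking the interpolated bound of Lemma~\ref{lemuM3} (second inequality) on $\|\Delta\partial_x u\|_{L^2}+\|\Delta\partial_z u_{\neq}^3\|_{L^2}$ so that the loss is exactly $\nu^{-2/3}$, absorbed by the prefactor $\nu^{2/3}$.
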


\begin{proof}We introduce
\beno
A_0=\| (\partial_x^2+\partial_z^2)u_{\neq}^3\|_{L^2}+\| \Delta u_{\neq}^2\|_{L^2},\quad A_1=\| \nabla(\partial_x^2+\partial_z^2)u_{\neq}^3\|_{L^2}+\| \nabla\Delta u_{\neq}^2\|_{L^2},
\eeno
 and let $A=A_0A_1$.

 By Lemma \ref{lem9c}, we have
 \begin{align*}&\| |u_{\neq}|^2\|_{L^2}\leq C(\| \partial_xu_{\neq}\|_{H^1}+\| u_{\neq}\|_{H^1})(\| \partial_xu_{\neq}\|_{L^2}+\| u_{\neq}\|_{L^2})\leq C\| \nabla\partial_x^2u_{\neq}\|_{L^2}\| \partial_x^2u_{\neq}\|_{L^2},
\end{align*}
and for $k\in\{1,3\},$
\begin{align*}&\| u_{\neq}^k\partial_k u_{\neq}\|_{L^2}+\| \partial_x(u_{\neq}^k\partial_k u_{\neq})\|_{L^2}\leq C\| (\partial_xu_{\neq}^k,u_{\neq}^k)\|_{H^1}\| (\partial_x\partial_ku_{\neq},\partial_ku_{\neq})\|_{L^2}\\&\quad+C\| (\partial_xu_{\neq}^k,u_{\neq}^k)\|_{L^2}\| (\partial_x\partial_ku_{\neq},\partial_ku_{\neq})\|_{H^1}\leq C\| \nabla(\partial_x,\partial_z)\partial_xu_{\neq}\|_{L^2}\| (\partial_x,\partial_z)\partial_xu_{\neq}\|_{L^2},
\end{align*}
and
\begin{align*}
&\| \partial_z(u_{\neq}^k\partial_k u_{\neq}^3)\|_{L^2}\leq C\| (\partial_zu_{\neq}^k,u_{\neq}^k)\|_{H^1}\| (\partial_z\partial_ku_{\neq}^3,\partial_ku_{\neq}^3)\|_{L^2}\\&\quad+C\| (\partial_zu_{\neq}^k,u_{\neq}^k)\|_{L^2}\| (\partial_z\partial_ku_{\neq}^3,\partial_ku_{\neq}^3)\|_{H^1}\\& \leq C\| \nabla(\partial_x,\partial_z)\partial_xu_{\neq}\|_{L^2}\| (\partial_x^2+\partial_z^2)u_{\neq}^3\|_{L^2}+C\| (\partial_x,\partial_z)\partial_xu_{\neq}\|_{L^2}\| \nabla(\partial_x^2+\partial_z^2)u_{\neq}^3\|_{L^2}.
\end{align*}
For $k=2,$ by Lemma \ref{lem9b}, we have
\begin{align*}
\| u_{\neq}^k\partial_k u_{\neq}\|_{L^2}+\| (\partial_x,\partial_z)(u_{\neq}^k\partial_k u_{\neq})\|_{L^2}&\leq C\| (\partial_x\partial_ku_{\neq},\partial_z\partial_ku_{\neq},\partial_ku_{\neq})\|_{L^2}\| u_{\neq}^k\|_{H^2}\\&\leq C\| \nabla(\partial_x,\partial_z)u_{\neq}\|_{L^2}\| \Delta u_{\neq}^k\|_{L^2}\\&\leq C\| \nabla(\partial_x,\partial_z)\partial_xu_{\neq}\|_{L^2}\| \Delta u_{\neq}^2\|_{L^2}.
\end{align*}
Summing up, we get by Lemma \ref{lemuM3}  that
\begin{align*}
&\| |u_{\neq}|^2\|_{L^2}+\| u_{\neq}\cdot\nabla u_{\neq}\|_{L^2}+\| \partial_x(u_{\neq}\cdot\nabla u_{\neq})\|_{L^2}+\| \partial_z(u_{\neq}\cdot\nabla u_{\neq}^3)\|_{L^2}\\ &\leq C\| \nabla(\partial_x,\partial_z)\partial_xu_{\neq}\|_{L^2}(\| (\partial_x,\partial_z)\partial_xu_{\neq}\|_{L^2}+\| (\partial_x^2+\partial_z^2)u_{\neq}^3\|_{L^2}+\| \Delta u_{\neq}^2\|_{L^2})\\&\quad+C\| (\partial_x,\partial_z)\partial_xu_{\neq}\|_{L^2}\| \nabla(\partial_x^2+\partial_z^2)u_{\neq}^3\|_{L^2}\\ &\leq C\big(\| \nabla(\partial_x^2+\partial_z^2)u_{\neq}^3\|_{L^2}+\| \nabla\Delta u_{\neq}^2\|_{L^2}\big)\big(\| (\partial_x^2+\partial_z^2)u_{\neq}^3\|_{L^2}+\| \Delta u_{\neq}^2\|_{L^2}\big)=CA.
\end{align*}

For $k\in\{1,3\},$ by Lemma \ref{lem9b}, we have
\begin{align*}
\| \nabla(u_{\neq}^k\partial_k u_{\neq}^2)\|_{L^2}\leq& \| \nabla u_{\neq}^k\partial_k u_{\neq}^2\|_{L^2}+\|  u_{\neq}^k\partial_k\nabla u_{\neq}^2\|_{L^2}\\ \leq& C\| \nabla (\partial_ku_{\neq}^k,u_{\neq}^k)\|_{L^2}\| \Delta u_{\neq}^2\|_{L^2}+C\| (\partial_ku_{\neq}^k,u_{\neq}^k)\|_{L^2}\| \nabla\Delta u_{\neq}^2\|_{L^2},
\end{align*}
and for $k=2,$
\begin{align*}&\|\nabla( u_{\neq}^k\partial_k u_{\neq}^2)\|_{L^2}\leq \|( u_{\neq}^k\partial_k u_{\neq}^2)\|_{H^1}\leq C\| u_{\neq}^k\|_{H^2}\| \nabla u_{\neq}^2\|_{H^2}\leq C\|\Delta u_{\neq}^2\|_{L^2}\| \nabla \Delta u_{\neq}^2\|_{L^2}.
\end{align*}
Using Lemma \ref{lemuM3} and $\|f_{\neq}\|_{L^2}\leq \|\partial_xf_{\neq}\|_{L^2}$, we infer that
\begin{align*}\| \nabla(u_{\neq}\cdot\nabla u_{\neq}^2)\|_{L^2}\leq& C(\| (\partial_x,\partial_z)u_{\neq}\|_{L^2}+\|\Delta u_{\neq}^2\|_{L^2})\| \nabla \Delta u_{\neq}^2\|_{L^2}\\&+\| \nabla(\partial_x,\partial_z)u_{\neq}\|_{L^2}\|\Delta u_{\neq}^2\|_{L^2}\leq CA.
\end{align*}
This shows that
\begin{align}\label{uu}
&\| |u_{\neq}|^2\|_{L^2}+\| u_{\neq}\cdot\nabla u_{\neq}\|_{L^2}+\| \partial_x(u_{\neq}\cdot\nabla u_{\neq})\|_{L^2}+\| \partial_z(u_{\neq}\cdot\nabla u_{\neq}^3)\|_{L^2}\\ \nonumber&\qquad+\| \nabla(u_{\neq}\cdot\nabla u_{\neq}^2)\|_{L^2}\leq CA.
\end{align}

Notice that $\Delta p^{(4)}=-\partial_iu_{\neq}^j\partial_ju_{\neq}^i=-\partial_{j}(\partial_iu_{\neq}^ju_{\neq}^i)
=-\partial_{j}(u_{\neq}\cdot\nabla u_{\neq}^j) $ due to $\text{div}u_{\neq}=0.$
We know from \eqref{uu} that $\| \partial_k(u_{\neq}\cdot\nabla u_{\neq}^k)\|_{L^2}\leq CA $ for $k\in\{1,2,3\}$, thus $\| \Delta p^{(4)}\|_{L^2}\leq CA. $ Recall that $g_{2,3}=u_{\neq}\cdot\nabla u_{\neq}^2$ and $g_{3,3}=u_{\neq}\cdot\nabla u_{\neq}^3$. Then  by \eqref{uu}, we obtain \begin{align*}
\| |u_{\neq}|^2\|_{L^2}+\| u_{\neq}\cdot\nabla u_{\neq}\|_{L^2}+\| \Delta p^{(4)}\|_{L^2}+\| (\partial_x,\partial_z)g_{3,3}\|_{L^2}+\| \nabla g_{2,3}\|_{L^2}\leq CA.
\end{align*}
Therefore, we deduce that
 \begin{align*}
 &\|e^{4\nu^{1/3}t} \Delta p^{(4)}\|_{L^2L^2}^2+\|e^{4\nu^{1/3}t} \nabla g_{2,3}\|_{L^2L^2}^2+\|e^{4\nu^{1/3}t} |u_{\neq}|^2\|_{L^2L^2}^2\\&\quad+\|e^{4\nu^{1/3}t}  u_{\neq}\cdot\nabla u_{\neq}\|_{L^2L^2}^2+\|e^{4\nu^{1/3}t} (\partial_x,\partial_z)g_{3,3}\|_{L^2L^2}^2\\ &\leq C\|e^{4\nu^{1/3}t}A\|_{L^2(1,T)}^2\leq C\|e^{2\nu^{1/3}t}A_0\|_{L^{\infty}(1,T)}^2\|e^{2\nu^{1/3}t}A_1\|_{L^2(1,T)}^2 \leq C\nu^{-1}E_3^4,
\end{align*}
here we used the fact that
\begin{align*}\|e^{2\nu^{1/3}t}A_0\|_{L^{\infty}(1,T)}&\leq \|e^{2\nu^{1/3}t}(\partial_x^2+\partial_z^2)u_{\neq}^3\|_{L^{\infty}L^2}+\|\Delta u_{\neq}^2\|_{L^{\infty}L^2}\\&\leq \|(\partial_x^2+\partial_z^2)u_{\neq}^3\|_{X_2}+\|\Delta u_{\neq}^2\|_{X_2}\leq E_3,
\end{align*}
and
\begin{align*}\|e^{2\nu^{1/3}t}A_1\|_{L^2(1,T)}&\leq \|e^{2\nu^{1/3}t}\nabla(\partial_x^2+\partial_z^2)u_{\neq}^3\|_{L^{2}L^2}+\|\nabla\Delta u_{\neq}^2\|_{L^{2}L^2}\\&\leq \nu^{-\frac{1}{2}}\|(\partial_x^2+\partial_z^2)u_{\neq}^3\|_{X_2}+\nu^{-\frac{1}{2}}\|\Delta u_{\neq}^2\|_{X_2}\leq \nu^{-\frac{1}{2}}E_3.
\end{align*}

It remains to estimate $\nabla g_{3,3}$($g_{3,3}=u_{\neq}\cdot\nabla u_{\neq}^3=u_{\neq}^j\partial_ju_{\neq}^3$). For $k\in\{1,3\},$ by Lemma \ref{lem9c}, we get
\begin{align*}
\|u_{\neq}^k\partial_k u_{\neq}^3\|_{H^1}\leq& C\| \partial_xu_{\neq}^k\|_{H^2}\|\partial_ku_{\neq}^3\|_{L^2}+C\| \partial_xu_{\neq}^k\|_{L^2}\|\partial_ku_{\neq}^3\|_{H^2}\\ \leq &C(\| \Delta\partial_xu\|_{L^2}+\| \Delta\partial_zu_{\neq}^3\|_{L^2})(\|\partial_x^2u_{\neq}\|_{L^2}+\|(\partial_x^2+\partial_z^2)u_{\neq}^3\|_{L^2}),
\end{align*}
and for  $k=2,$
\begin{align*}&\|u_{\neq}^k\partial_k u_{\neq}^3\|_{H^1}\leq C\| u_{\neq}^k\|_{H^2}\|\partial_ku_{\neq}^3\|_{H^1}\leq C\|\Delta u_{\neq}^2\|_{L^2}\|\Delta u_{\neq}^3\|_{L^2},
\end{align*}
from which and Lemma \ref{lemuM3}, we infer that
\begin{align*}
\| \nabla g_{3,3}\|_{L^2}\leq&\|  u_{\neq}^1\partial_1 u_{\neq}^3\|_{H^1}+\|  u_{\neq}^2\partial_2 u_{\neq}^3\|_{H^1}+\|  u_{\neq}^3\partial_3 u_{\neq}^3\|_{H^1}\\ \leq &C\big(\| \Delta\partial_xu\|_{L^2}+\| \Delta\partial_zu_{\neq}^3\|_{L^2}\big)\big(\|\partial_x^2u_{\neq}\|_{L^2}+\|(\partial_x^2+\partial_z^2)u_{\neq}^3\|_{L^2}+\|\Delta u_{\neq}^2\|_{L^2}\big)\\
\leq &C\big(\nu^{-\frac{1}{3}}\| \nabla(\partial_x^2+\partial_z^2)u_{\neq}^3\|_{L^2}+\nu^{\frac{1}{3}}\| \nabla\Delta u_{\neq}^3\|_{L^2}+\| \nabla\Delta u_{\neq}^2\|_{L^2}\big)A_0.
\end{align*}
Therefore, we obtain
\begin{align*}
\|e^{4\nu^{1/3}t} \nabla g_{3,3}\|_{L^2L^2}^2 \leq& C\Big(\nu^{-\frac{2}{3}}\|e^{2\nu^{1/3}t} \nabla(\partial_x^2+\partial_z^2)u_{\neq}^3\|_{L^2L^2}^2+\nu^{\frac{2}{3}}\|e^{2\nu^{1/3}t} \nabla\Delta u_{\neq}^3\|_{L^2L^2}^2\\&\quad+\|e^{2\nu^{1/3}t} \nabla\Delta u_{\neq}^2\|_{L^2L^2}^2\Big)\|e^{2\nu^{1/3}t}A_0\|_{L^{\infty}(1,T)}^2\\ \leq& C\nu^{-1}\big(\nu^{-\frac{2}{3}}\|(\partial_x^2+\partial_z^2)u_{\neq}^3\|_{X_2}^2+\nu^{\frac{2}{3}}\| \Delta u_{\neq}^3\|_{X_3}^2+\|\Delta u_{\neq}^2\|_{X_2}^2\big)E_3^2\leq C\nu^{-1}\nu^{-\frac{2}{3}}E_3^4,
\end{align*}
which gives\begin{align*}&\nu^{\frac{2}{3}}\|e^{4\nu^{1/3}t} \nabla g_{3,3}\|_{L^2L^2}^2 \leq C\nu^{-1}E_3^4.
\end{align*}This completes the proof.
\end{proof}
As $g_{j,6}=\partial_jp^{(4)},$ we also have
\begin{align}\label{gj6}&\|e^{4\nu^{1/3}t}  \nabla{g}_{j,6}\|_{L^2L^2}^2\leq C\|e^{4\nu^{1/3}t} \Delta p^{(4)}\|_{L^2L^2}^2\leq C\nu^{-1}E_3^4, \ \ j\in\{2,3\}.
\end{align}

\begin{Lemma}\label{lem:HH-Y0}
It holds that
\begin{align*}
\|e^{2\nu^{1/3}t}(u_{\neq}\cdot\nabla u_{\neq})\|_{Y_0^2}\leq CE_4E_3.
\end{align*}
\end{Lemma}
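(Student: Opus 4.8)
The plan is to expand $u_{\neq}\cdot\nabla u_{\neq}=\sum_{j=1}^3 u_{\neq}^j\partial_j u_{\neq}$ and to bound each term as a product of one \emph{high-regularity} factor measured by $E_4$ and one \emph{low-regularity} factor measured by $E_3$. The crucial preliminary observation is that $E_4$ controls not only $(\partial_x,\partial_z)u_{\neq}$ but in fact $u_{\neq}$ and its full gradient $\nabla u_{\neq}$ in the weighted norm $Y_0^2$: since every nonzero mode has a spectral gap in $x$ (so $\|f_{\neq}\|_{H^s}\le C\|\partial_x f_{\neq}\|_{H^s}$), one gets $\|e^{2\nu^{1/3}t}u_{\neq}\|_{Y_0^2}+\|e^{2\nu^{1/3}t}\nabla u_{\neq}\|_{Y_0^2}\le CE_4$, the troublesome $\partial_y$-derivative being recovered from $\|\partial_y u_{\neq}\|_{H^2}\le\|u_{\neq}\|_{H^3}\le\|\partial_x u_{\neq}\|_{H^3}$. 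On the other side, $E_3$ controls the low-order quantities $\|\partial_x^2 u_{\neq}\|_{L^{\infty}L^2}$ and $\|\partial_x\partial_z u_{\neq}\|_{L^{\infty}L^2}$ through Lemma \ref{lemuM3}, and $\|u_{\neq}^2\|_{Y_0^2}\le C\|\Delta u_{\neq}^2\|_{Y_0}\le CE_3$ through the same elliptic gain for nonzero modes.

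For the term $j=2$ I would simply invoke the algebra property of $Y_0^2$ (Lemma \ref{lemY02}) with $f_1=u_{\neq}^2$ and $f_2=e^{2\nu^{1/3}t}\partial_y u_{\neq}$, obtaining $\|e^{2\nu^{1/3}t}u_{\neq}^2\partial_y u_{\neq}\|_{Y_0^2}\le C\|u_{\neq}^2\|_{Y_0^2}\|e^{2\nu^{1/3}t}\partial_y u_{\neq}\|_{Y_0^2}\le CE_3E_4$. The point here is that it is the undifferentiated coefficient $u_{\neq}^2$ (not $\partial_y u_{\neq}^2$) that has to be estimated by $E_3$, and this is exactly what $\|\Delta u_{\neq}^2\|_{X_2}$ provides.

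The terms $j=1$ and $j=3$ are the delicate ones, since neither $u_{\neq}^1$ nor $u_{\neq}^3$ is controlled at $Y_0^2$ regularity by $E_3$ (indeed $u_{\neq}^1$ does not appear in $E_3$ at all, and bounding $u_{\neq}^3$ through $\Delta u_{\neq}^3$ would cost a factor $\nu^{-2/3}$). The remedy is the anisotropic bilinear estimate Lemma \ref{lem9c}: applying its third inequality (valid since $P_0 u_{\neq}^j=0$) with $k\in\{2,3\}$ peels one $\partial_x$ off the coefficient, giving $\|u_{\neq}^j\partial_j u_{\neq}\|_{H^k}\le C\|\partial_x u_{\neq}^j\|_{H^{k+1}}\|\partial_j u_{\neq}\|_{L^2}+C\|\partial_x u_{\neq}^j\|_{L^2}\|\partial_j u_{\neq}\|_{H^{k+1}}$. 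The factors $\|\partial_x u_{\neq}^j\|_{L^2}$ and $\|\partial_j u_{\neq}\|_{L^2}$ are now \emph{good derivatives} of nonzero modes, hence bounded in $L^{\infty}L^2$ by $CE_3$ via Lemma \ref{lemuM3}, while the high-regularity factors $\|\partial_x u_{\neq}^j\|_{H^{k+1}}$ and $\|\partial_j u_{\neq}\|_{H^{k+1}}$ (carrying the weight $e^{2\nu^{1/3}t}$) fall under $E_4$; for $k=3$ one further use of the spectral gap, $\|g_{\neq}\|_{H^4}\le C\|\nabla g_{\neq}\|_{H^3}$, turns $H^4$-control into the gradient part $\nu^{1/2}\|e^{2\nu^{1/3}t}\nabla(\partial_x,\partial_z)u_{\neq}\|_{L^2H^3}\le E_4$. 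Taking $L^{\infty}_t$ at $k=2$ and $\nu^{1/2}L^2_t$ at $k=3$ and summing the two pieces gives $\|e^{2\nu^{1/3}t}u_{\neq}^j\partial_j u_{\neq}\|_{Y_0^2}\le CE_3E_4$ for $j=1,3$.

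The main obstacle is precisely the regularity mismatch between the two energies: $Y_0^2$ demands $H^2/H^3$ control of a quadratic expression, yet $E_3$ only lives at $L^2/H^1$ regularity and does not even see $u_{\neq}^1$. The whole argument hinges on systematically routing every high derivative onto the $E_4$ factor, using the $x$-spectral gap both to upgrade $(\partial_x,\partial_z)$-control to full control in $E_4$ and to trade a $\partial_x$ off the coefficient (via Lemma \ref{lem9c}) so that $E_3$ is only ever asked for low-order good derivatives; this is what keeps the final estimate free of any negative power of $\nu$.
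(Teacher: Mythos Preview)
Your proposal is correct and follows essentially the same route as the paper: for $j=1,3$ both you and the paper invoke the anisotropic bilinear Lemma \ref{lem9c} (at the $H^2$ and $H^3$ levels) to trade a $\partial_x$ off the coefficient so that the low-regularity factor becomes a good derivative controlled by $E_3$ via Lemma \ref{lemuM3}, while the high-regularity factor lands in $E_4$; for $j=2$ the paper does the $H^2$ and gradient-$H^2$ product estimates by hand, whereas you package them via the $Y_0^2$ algebra Lemma \ref{lemY02}, which is an equivalent bookkeeping choice.
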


\begin{proof}
For $k\in\{1,3\},$ by Lemma \ref{lem9c}, we have
\begin{align*}\| u_{\neq}^k\partial_k u_{\neq}\|_{H^2}&\leq C\| \partial_xu_{\neq}^k\|_{H^3}\| \partial_ku_{\neq}\|_{L^2}+C\| \partial_xu_{\neq}^k\|_{L^2}\| \partial_ku_{\neq}\|_{H^3}\\&\leq C\| (\partial_x,\partial_z)u_{\neq}\|_{H^3}\| (\partial_x,\partial_z)u_{\neq}\|_{L^2},
\end{align*}
and for $k=2,$
\begin{align*}\| u_{\neq}^k\partial_k u_{\neq}\|_{H^2}&\leq C\| u_{\neq}^k\|_{H^2}\| \partial_ku_{\neq}\|_{H^2}\leq C\| \Delta u_{\neq}^2\|_{L^2}\|u_{\neq}\|_{H^3},
\end{align*}
from which and Lemma \ref{lemuM3}, we infer that
\begin{align*}
\| u_{\neq}\cdot\nabla u_{\neq}\|_{H^2}\leq& C\| (\partial_x,\partial_z)u_{\neq}\|_{H^3}\big(\| \partial_x(\partial_x,\partial_z)u_{\neq}\|_{L^2}+\| \Delta u_{\neq}^2\|_{L^2}\big)\\
\leq &C\| (\partial_x,\partial_z)u_{\neq}\|_{H^3}\big(\|(\partial_x^2+\partial_z^2)u_{\neq}^3\|_{L^2}+\| \Delta u_{\neq}^2\|_{L^2}\big).
\end{align*}
Similarly, for $k\in\{1,3\},$ by Lemma \ref{lem9c}, we have
\begin{align*}\| \nabla(u_{\neq}^k\partial_k u_{\neq})\|_{H^2}\leq\|u_{\neq}^k\partial_k u_{\neq}\|_{H^3}\leq C\| (\partial_x,\partial_z)u_{\neq}\|_{H^4}\| (\partial_x,\partial_z)u_{\neq}\|_{L^2},
\end{align*}
and for $k=2,$
\begin{align*}
\|\nabla( u_{\neq}^k\partial_k u_{\neq})\|_{H^2}&\leq \|(\nabla u_{\neq}^k)\partial_k u_{\neq}\|_{H^2}+\| u_{\neq}^k\partial_k\nabla u_{\neq}\|_{H^2}\\ &\leq C\|\nabla u_{\neq}^k\|_{H^2}\| \partial_ku_{\neq}\|_{H^2}+C\| u_{\neq}^k\|_{H^2}\| \partial_k\nabla u_{\neq}\|_{H^2}\\& \leq C\| \nabla\Delta u_{\neq}^2\|_{L^2}\|u_{\neq}\|_{H^3}+C\| \Delta u_{\neq}^2\|_{L^2}\| u_{\neq}\|_{H^4},
\end{align*}
from which and  Lemma \ref{lemuM3}, we infer that
\begin{align*}&\|\nabla( u_{\neq}\cdot\nabla u_{\neq})\|_{H^2}\leq C\| (\partial_x,\partial_z)u_{\neq}\|_{H^4}\big(\| \partial_x(\partial_x,\partial_z)u_{\neq}\|_{L^2}+\| \Delta u_{\neq}^2\|_{L^2}\big)\\
&\quad+C\| \nabla\Delta u_{\neq}^2\|_{L^2}\|u_{\neq}\|_{H^3}\\
&\leq C\| \nabla(\partial_x,\partial_z)u_{\neq}\|_{H^3}\big(\| (\partial_x^2+\partial_z^2)u_{\neq}^3\|_{L^2}+\| \Delta u_{\neq}^2\|_{L^2}\big)+C\| \nabla\Delta u_{\neq}^2\|_{L^2}\|\partial_xu_{\neq}\|_{H^3}.
\end{align*}
Then we conclude that
\begin{align*}
&\|e^{2\nu^{1/3}t} (u_{\neq}\cdot\nabla u_{\neq})\|_{Y_0^2}^2=\|e^{2\nu^{1/3}t} (u_{\neq}\cdot\nabla u_{\neq})\|_{L^{\infty}H^2}^2+\nu\| e^{2\nu^{1/3}t}\nabla (u_{\neq}\cdot\nabla u_{\neq})\|_{L^{2}H^k}^2\\  &\leq C\big(\|e^{2\nu^{1/3}t}(\partial_x,\partial_z)u_{\neq}\|_{L^{\infty}H^3}^2+\nu\|e^{2\nu^{1/3}t} \nabla(\partial_x,\partial_z)u_{\neq}\|_{L^2H^3}^2\big)\big(\| (\partial_x^2+\partial_z^2)u_{\neq}^3\|_{L^{\infty}L^2}^2\\&\quad+\| \Delta u_{\neq}^2\|_{L^{\infty}L^2}^2\big)+C\nu\| \nabla\Delta u_{\neq}^2\|_{L^2L^2}^2\|e^{2\nu^{1/3}t}\partial_xu_{\neq}\|_{L^{\infty}H^3}^2\\  &\leq CE_4^2\big(\|(\partial_x^2+\partial_z^2)u_{\neq}^3\|_{X_2}^2+\| \Delta u_{\neq}^2\|_{X_2}^2\big)+C\| \Delta u_{\neq}^2\|_{X_2}^2E_4^2\leq CE_4^2E_3^2.
\end{align*}
This proves the lemma.
\end{proof}

\subsection{Interaction between zero modes}

\begin{Lemma}\label{lemM1M2}
It holds that
 \begin{align*}
 &\|\bu\cdot\nabla\bu\|_{Y_0^2}\leq CE_1E_2 .
\end{align*}
\end{Lemma}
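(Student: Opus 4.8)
The plan is to expand the zero-mode advection using $\pa_x\bu=0$ as $\bu\cdot\nabla\bu=\bu^2\pa_y\bu+\bu^3\pa_z\bu$, and to estimate the six scalar products $\bu^\al\pa_\al\bu^\be$ ($\al\in\{2,3\},\ \be\in\{1,2,3\}$) one at a time. The guiding principle is to spend $E_1$ on whichever factor must carry high regularity (recall $\|\bu\|_{Y_0^4}\le E_1$, so \emph{every} component of $\bu$ is available at $H^4$ in $L^\infty_t$, together with the dissipative control $\nu^{1/2}\|\nabla\bu\|_{L^2H^4}\le E_1$), and to spend $E_2$ on whichever factor must be small, via $\|\bu^2\|_{Y_0^2}\le C\|\Delta\bu^2\|_{Y_0}\le CE_2$ (valid since $\bu^2$ has nonzero $z$-frequency), $\|\bu^3\|_{Y_0^1}\le CE_2$, and the identity $\pa_z\bu^3=-\pa_y\bu^2$ from Lemma \ref{lemu2u3M2}.

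First I would dispose of the whole batch $\bu^2\pa_y\bu$ in one line: since $\|\pa_y\bu\|_{Y_0^2}\le\|\bu\|_{Y_0^3}\le\|\bu\|_{Y_0^4}\le E_1$, Lemma \ref{lemY02} gives $\|\bu^2\pa_y\bu\|_{Y_0^2}\le C\|\bu^2\|_{Y_0^2}\|\pa_y\bu\|_{Y_0^2}\le CE_1E_2$. For $\bu^3\pa_z\bu^2$ and $\bu^3\pa_z\bu^3$ I would use that $\pa_x\bu^3=0$ and apply the anisotropic product estimates of Lemma \ref{lem9a} with $f_1=\bu^3$: the low-regularity group pairs $\|\bu^3\|_{H^1}\le CE_2$ with higher-order norms of $\pa_z\bu^2,\pa_z\bu^3$ that are all controlled by $\|\bu\|_{Y_0^4}\le E_1$ (using $\pa_z\bu^3=-\pa_y\bu^2$), while the high-regularity group pairs $\|\bu^3\|_{H^3}\le E_1$ with low-order norms of $\pa_z\bu^2,\pa_z\bu^3$ that reduce to $\|\bu^2\|_{H^2}\le CE_2$; in both groups the dissipative factor $\nu^{1/2}$ is placed on a gradient, giving $\le CE_1E_2$.

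The main obstacle is the one remaining term $\bu^3\pa_z\bu^1$, where the differentiated factor $\bu^1$ is controlled only through $E_1$, not $E_2$; the naive assignment ($\bu^3$ at $H^3$ via $E_1$, $\bu^1$ at low regularity) produces an $E_1^2$-term, which is $\gg E_1E_2$ and therefore useless. The fix is to keep $\bu^3$ as the $E_2$-factor and to match its weighted second derivatives against the \emph{time decay} of $\bu^1$. Concretely, $\|\min(\nu^{2/3}+\nu t,1)^{1/2}\Delta\bu^3\|_{Y_0}\le E_2$ yields the pointwise bound $\|\bu^3(t)\|_{H^2}\le C\min(\nu^{2/3}+\nu t,1)^{-1/2}E_2$, while Lemma \ref{lem5a} and interpolation give $\|\bu^1(t)\|_{H^3}\le\|\bu^1\|_{H^2}^{1/2}\|\bu^1\|_{H^4}^{1/2}\le E_1\min(\nu t,1)^{1/2}$. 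For the $L^\infty_tH^2$ part I would bound $\|\bu^3\pa_z\bu^1\|_{H^2}\le C\|\bu^3\|_{H^2}\|\bu^1\|_{H^3}$ by the $H^2$-algebra and cancel the two time-weights using the elementary inequality $\min(\nu t,1)\le\min(\nu^{2/3}+\nu t,1)$ (valid for $t\ge1,\ \nu\le1$), which gives $\le CE_1E_2$. For the dissipative part $\nu^{1/2}\|\nabla(\bu^3\pa_z\bu^1)\|_{L^2H^2}$ I would expand by the $H^3$ product rule into a term $\|\bu^3\|_{H^3}\|\bu^1\|_{H^3}$ and a term $\|\bu^3\|_{H^1}\|\pa_z\bu^1\|_{H^4}$; the latter is clean ($\|\bu^3\|_{L^\infty H^1}\le CE_2$ against $\nu^{1/2}\|\nabla\bu^1\|_{L^2H^4}\le E_1$), and in the former I would keep the weight $\min(\nu^{2/3}+\nu t,1)^{1/2}$ inside the time integral on the weighted-dissipation norm of $\bu^3$ and again absorb $\min(\nu t,1)^{1/2}$ from $\|\bu^1\|_{H^3}$ via $\min(\nu t,1)\le\min(\nu^{2/3}+\nu t,1)$, producing $E_1\cdot\nu^{1/2}\|\min(\nu^{2/3}+\nu t,1)^{1/2}\bu^3\|_{L^2H^3}\le CE_1E_2$. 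Summing the six contributions gives $\|\bu\cdot\nabla\bu\|_{Y_0^2}\le CE_1E_2$.
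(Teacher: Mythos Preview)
Your overall strategy coincides with the paper's: write $\bu\cdot\nabla\bu=\bu^2\pa_y\bu+\bu^3\pa_z\bu$, handle $\bu^2\pa_y\bu$ in one stroke via Lemma \ref{lemY02}, and for $\bu^3\pa_z\bu$ match the time weight $\min(\nu t,1)^{1/2}$ coming from $\|\bu\|_{H^3}\le E_1\min(\nu t,1)^{1/2}$ against the weighted $\Delta\bu^3$ piece of $E_2$. The paper treats all three components of $\bu^3\pa_z\bu$ together; your separate treatment of $\bu^3\pa_z\bu^2$ and $\bu^3\pa_z\bu^3$ via Lemma \ref{lem9a} also works (the low-order factors reduce to $\bu^2$-norms through $\pa_z\bu^3=-\pa_y\bu^2$, and $\bu^2$ has Poincar\'e in $z$), but is a detour.

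There is, however, a genuine gap in your dissipative estimate for $\bu^3\pa_z\bu^1$. You claim
\[
\nu^{1/2}\big\|\min(\nu^{2/3}+\nu t,1)^{1/2}\,\bu^3\big\|_{L^2H^3}\le CE_2,
\]
but $\|\bu^3\|_{H^3}$ contains $\|\bu^3\|_{L^2}$, and $E_2$ only controls $\|\bu^3\|_{L^\infty_tL^2}$, not $\|\bu^3\|_{L^2_tL^2}$. Since the weight $\min(\nu^{2/3}+\nu t,1)$ equals $1$ for $t\ge 1/\nu$, the resulting integral $\nu\int_{1/\nu}^{T}\|\bu^3\|_{L^2}^2\,dt$ is unbounded as $T\to\infty$. (Unlike $\bu^2$, the component $\bu^3$ need not have zero $z$-mean, so no Poincar\'e rescues you.)

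The fix is to split $\|\bu^3\|_{H^3}\lesssim\|\bu^3\|_{L^2}+\|\nabla\Delta\bu^3\|_{L^2}$ and treat the two pieces of $\|\bu^3\|_{H^3}\|\pa_z\bu^1\|_{H^2}$ differently: for $\|\nabla\Delta\bu^3\|_{L^2}$ your weighted-dissipation argument is correct, while for $\|\bu^3\|_{L^2}$ you should instead place the $L^2_t$ integral on the other factor and use
\[
\|\bu^3\|_{L^\infty_tL^2}\cdot\nu^{1/2}\|\pa_z\bu^1\|_{L^2_tH^2}
\;\le\; E_2\cdot\nu^{1/2}\|\nabla\bu\|_{L^2H^4}\;\le\;E_2E_1.
\]
This is exactly how the paper closes the estimate (there all three components of $\bu^3\pa_z\bu$ are handled simultaneously, with the same regrouping).
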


\begin{proof}
As $\bu^2 $ has nonzero frequency in $z$,  we get by Lemma \ref{lemY02}  that
 \begin{align}\label{u02Y1}
 \|\bu^2\partial_y\bu\|_{Y_0^2}  \leq C\|\bu^2\|_{Y_0^2}\|\partial_y\bu\|_{Y_0^2}\leq C\|\Delta \bu^2\|_{Y_0}\|\bu\|_{Y_0^4} \leq CE_2E_1.
\end{align}
By Lemma \ref{lem5a}, we have $\|\partial_z\bu\|_{H^2}\leq \|\bu\|_{H^3}\leq \|\bu\|_{H^2}^{\frac{1}{2}} \|\bu\|_{H^4}^{\frac{1}{2}}\leq C{E_1}\min(\nu t,1)^{\frac{1}{2}}$. Then
\begin{align*}
\|\bu^3\partial_z\bu\|_{H^2}&\leq C\|\bu^3\|_{H^2}\|\partial_z\bu\|_{H^2}\leq C\big(\|\bu^3\|_{L^2}+\|\Delta\bu^3\|_{L^2}\big)E_1\min(\nu t,1)^{\frac{1}{2}}\\&\leq C\big(\|\bu^3\|_{L^2}+\|\min(\nu^{\frac{2}{3}}+\nu t,1)^{\frac{1}{2}}\Delta\bu^3\|_{Y_0}\big)E_1\leq CE_2E_1,
\end{align*}
which gives
\begin{align}\label{u03Y1}
\|\bu^3\partial_z\bu\|_{L^{\infty}H^2}&\leq CE_2E_1.
\end{align}

Notice that
\begin{align*}
&\|\nabla(\bu^3\partial_z\bu)\|_{H^2}\leq \|\bu^3\partial_z\bu\|_{H^3}
\leq C\|\bu^3\|_{H^2}\|\partial_z\bu\|_{H^3}+C\|\bu^3\|_{H^3}\|\partial_z\bu\|_{H^2}\\&\leq C\big(\|\bu^3\|_{L^2}+\|\Delta\bu^3\|_{L^2}\big)\|\partial_z\bu\|_{H^3}+C\big(\|\bu^3\|_{L^2}+\|\nabla\Delta\bu^3\|_{L^2}\big)\|\partial_z\bu\|_{H^2}\\
&\leq C\big(\|\bu^3\|_{L^2}\|\partial_z\bu\|_{H^3}+\|\Delta\bu^3\|_{L^2}\|\bu\|_{H^4}+\|\nabla\Delta\bu^3\|_{L^2}\|\partial_z\bu\|_{H^2}\big)\\
&\leq C\big(E_2\|\nabla\bu\|_{H^4}+\|\Delta\bu^3\|_{L^2}E_1+\|\nabla\Delta \bu^3\|_{L^2}E_1\min(\nu t,1)^{\frac{1}{2}}\big),
\end{align*}
which gives
\begin{align}\label{u03Y2}&
\nu\|\nabla(\bu^3\partial_z\bu)\|_{L^2H^2}^2\leq C\big(E_2^2\nu\|\nabla \bu\|_{L^2H^4}^2+\nu\|\Delta\bu^3\|_{L^2L^2}^2E_1^2\\ \nonumber&\quad+\nu\|\min(\nu^{\frac{2}{3}}+\nu t,1)^{\frac{1}{2}}\nabla\Delta \bu^3\|_{L^2L^2}^2{E_1^2}\big)\\ \nonumber &\leq C\big(E_2^2E_1^2+\|\nabla \bu^3\|_{Y_0}^2E_1^2+\|\min(\nu^{\frac{2}{3}}+\nu t,1)^{\frac{1}{2}}\Delta \bu^3\|_{Y_0}^2E_1^2)\leq CE_2^2E_1^2. \end{align}

As $\bu\cdot\nabla\bu=\bu^2\partial_y\bu+\bu^3\partial_z\bu,$
the lemma follows from {\eqref{u02Y1},} \eqref{u03Y1} and \eqref{u03Y2}.
\end{proof}

\section{Growth estimates for the zero mode}

In the sequel, we always assume that
\beno
E_1\le \ve_0, \quad E_2\le \ve_0\nu,\quad  E_3\le \ve_0\nu,
\eeno
where ${\ve_0\in (0,c_4)}$ is a sufficiently small constant independent of $\nu$ and $T$.

\subsection{Estimate of $E_2$}

We write the equation of  $\bu^2, \bu^3$ as
\begin{align*}
(\partial_t-\nu\Delta)\bu^j+\overline{g}_j=0,\ j=2,3.
\end{align*}
Then $L^2$ energy estimate gives
\begin{align*}
&\partial_t\big(\|\bu^2\|_{L^2}^2+\|\bu^3\|_{L^2}^2\big)+2\nu\big(\|\nabla \bu^2\|_{L^2}^2+\|\nabla\bu^3\|_{L^2}^2\big)\\
&=-2\langle\overline{g}_{\al},\bu^{\al}\rangle=-2\sum\limits_{k=1}^6\langle \overline{g}_{\al,k},\bu^{\al}\rangle.
\end{align*}
As $\partial_y\bu^2+\partial_z\bu^3=0$ and $\overline{g}_{j,1}=(\bu^2\partial_y+\bu^3\partial_z)\bu^j,$ we have $\langle \overline{g}_{\al,1},\bu^{\al}\rangle=0. $ As $g_{j,k+2}=\partial_jp^{(k)}$ for $k\in\{2,3,4\},$ we have $-\langle \overline{g}_{\al,k+2},\bu^{\al}\rangle=-\langle\partial_{\al}\overline{p}^{(k)},\bu^{\al}\rangle
=\langle \overline{p}^{(k)},\partial_{\al}\bu^{\al}\rangle=0. $ Moreover, $\overline{g}_{j,2}=0,\ \overline{g}_{j,3}=\overline{u_{\neq}\cdot\nabla u_{\neq}^j},\ \text{div}u_{\neq}=0$. Then we can deduce that  \begin{align*}
&\partial_t\big(\|\bu^2\|_{L^2}^2+\|\bu^3\|_{L^2}^2\big)+2\nu\big(\|\nabla \bu^2\|_{L^2}^2+\|\nabla\bu^3\|_{L^2}^2\big)=-2\langle \overline{g}_{\al,3},\bu^{\al}\rangle\\
&=-2\sum\limits_{k=2}^3\langle \overline{u_{\neq}\cdot\nabla u_{\neq}^k},\bu^{k}\rangle=2\sum\limits_{k=2}^3\langle u_{\neq}\cdot\nabla \bu^k,u_{\neq}^{k}\rangle\leq 2\||u_{\neq}|^2\|_{L^2}\|(\nabla \bu^2,\nabla \bu^3)\|_{L^2},
\end{align*}
which gives
\begin{align*}
&\|\bu^2(t)\|_{L^2}^2+\|\bu^3(t)\|_{L^2}^2+\nu\int_1^t\big(\|\nabla \bu^2(s)\|_{L^2}^2+\|\nabla \bu^3(s)\|_{L^2}^2\big)ds\\ &\leq \|\bu^2(1)\|_{L^2}^2+\|\bu^3(1)\|_{L^2}^2+\nu^{-1}\int_1^t\||u_{\neq}|^2(s)\|_{L^2}^2ds,
\end{align*}
from which and Lemma \ref{lemg2g3M3}, we infer that
\begin{align}\label{u2u3L2}
\|\bu^2\|_{Y_0}^2+\|\bu^3\|_{Y_0}^2\leq C\big(\|u(1)\|_{L^2}^2+\nu^{-1}\||u_{\neq}|^2\|_{L^2L^2}^2\big)\leq C\big(\|u(1)\|_{H^2}^2+\nu^{-2}E_3^4\big).
\end{align}

For $H^1$ estimate, we introduce $\overline{\omega}^1=\partial_y\bu^3-\partial_z\bu^2.$ As $ \partial_y\bu^2+\partial_z\bu^3=0$, we have
$\|\ov\omega^1\|_{L^2}^2=\|\nabla \bu^2\|_{L^2}^2+\|\nabla\bu^3\|_{L^2}^2$  and $ \Delta \bu^2=-\partial_z\ov\omega^1,\ \Delta \bu^3=\partial_y\ov\omega^1.$ And $\ov\omega^1$ satisfies
\beno
 (\partial_t-\nu\Delta) \ov\omega^1+\partial_y \ov g_3-\partial_z \ov g_2=0.
\eeno
 As $\partial_y\bu^2+\partial_z\bu^3=0,\ \ov g_{j,1}=(\bu^2\partial_y+\bu^3\partial_z)\bu^j,$ we have ${\partial_y\ov g_{3,1}-\partial_z\ov g_{2,1}}=(\bu^2\partial_y+\bu^3\partial_z)\ov\omega^1:=h_1. $ As $g_{j,k+2}=\partial_jp^{(k)}$ for $ k=2,3,4,$ we have $\partial_y\ov g_{3,k+2}-\partial_z\ov g_{2,k+2}=\partial_y\partial_z\ov p^{(k)}-\partial_z\partial_y\ov p^{(k)}=0. $ Moreover, $\ov g_{j,2}=0$. Then we have
 \begin{align*}
 &(\partial_t-\nu\Delta)\ov\omega^1+h_1+\partial_y\ov g_{3,3}-\partial_z\ov g_{2,3}=0.
\end{align*}
Energy estimate gives
\begin{align*}
&\partial_t\|\ov\omega^1\|_{L^2}^2+2\nu\|\nabla \ov\omega^1\|_{L^2}^2=-2\langle h_1+\partial_y\ov g_{3,3}-\partial_z\ov g_{2,3},\ov\omega^1\rangle.
\end{align*}
As $\partial_y\bu^2+\partial_z\bu^3=0,\ h_1=(\bu^2\partial_y+\bu^3\partial_z)\ov\omega^1,$ we have $\langle h_1,\ov\omega^1\rangle=0. $ Thus,
\begin{align*} \partial_t\|\ov\omega^1\|_{L^2}^2+2\nu\|\nabla \ov\omega^1\|_{L^2}^2
\leq 2\|(\ov g_{3,3}, \ov g_{2,3})\|_{L^2}\|\nabla \ov\omega^1\|_{L^2}.
\end{align*}
As $\ov g_{j,3}=\ov{u_{\neq}\cdot\nabla u_{\neq}^j}, $ we have \begin{align*}
\partial_t\|\ov\omega^1\|_{L^2}^2+\nu\|\nabla \ov\omega^1\|_{L^2}^2\leq \nu^{-1}\|(\ov g_{3,3}, \ov g_{2,3})\|_{L^2}^2\leq \nu^{-1}\|u_{\neq}\cdot\nabla u_{\neq}\|_{L^2}^2,
\end{align*}
which along with Lemma \ref{lemg2g3M3}  gives
\begin{align}\label{om1L2}
&\|\ov \omega^1\|_{Y_0}^2\leq C\big(\|u(1)\|_{H^1}^2+\nu^{-1}\|u_{\neq}\cdot\nabla u_{\neq}\|_{L^2L^2}^2\big)\leq C\big(\|u(1)\|_{H^2}^2+\nu^{-2}E_3^4\big).
\end{align}
This implies that
\begin{align}\label{u3H1}
&\|\nabla \bu^3\|_{Y_0}^2\leq \|\ov\omega^1\|_{Y_0}^2\leq C\big(\|u(1)\|_{H^2}^2+\nu^{-2}E_3^4\big).
\end{align}

Due to $\Delta \bu^2=-\partial_z\ov \omega^1$, we have
\begin{align*}
&(\partial_t-\nu\Delta) \Delta \bu^2-\partial_zh_1-\partial_y\partial_z\ov g_{3,3}+\partial_z^2 \ov g_{2,3}=0.
\end{align*}
Energy estimate gives
\begin{align*}
&\|\Delta \bu^2\|_{Y_0}^2\leq C\big(\|\Delta \bu^2(1)\|_{L^2}^2+\nu^{-1}\|h_1\|_{L^2L^2}^2+\nu^{-1}\|\partial_z(g_{3,3})\|_{L^2L^2}^2
+\nu^{-1}\|\partial_z(g_{2,3})\|_{L^2L^2}^2\big).
\end{align*}
As $h_1=(\bu^2\partial_y+\bu^3\partial_z)\ov\omega^1,$ we have $\|h_1\|_{L^2}\leq \big(\|\bu^2\|_{L^{\infty}}+\|\bu^3\|_{L^{\infty}}\big)\|\nabla \ov\omega^1\|_{L^2}\leq CE_2\|\nabla \ov\omega^1\|_{L^2},$ and then $\|h_1\|_{L^2L^2}^2\leq CE_2^2\|\nabla\ov\omega^1\|_{L^2L^2}^2\leq CE_2^2\nu^{-1}\|\ov\omega^1\|_{Y_0}^2$ (see Lemma \ref{lemu2u3M2}). By Lemma \ref{lemg2g3M3}, we have
\begin{align*}
\|\partial_z(g_{3,3})\|_{L^2L^2}^2
+\|\partial_z(g_{2,3})\|_{L^2L^2}^2\leq C\nu^{-1}E_3^4.
\end{align*}
Therefore, we get
\begin{align}\label{u2H2}
&\|\Delta\bu^2\|_{Y_0}^2\leq C\big(\|u(1)\|_{H^2}^2+\nu^{-2}E_2^2\| \ov\omega^1\|_{Y_0}^2+\nu^{-2}E_3^4\big).
\end{align}

For $\Delta \bu^3=\partial_y\ov\omega^1$, we use the equation \begin{align*}
&(\partial_t-\nu\Delta) \partial_y\ov\omega^1+\partial_yh_1+\partial_y^2\ov g_{3,3}-\partial_y\partial_z\ov g_{2,3}=0.
\end{align*}
Energy estimate gives
\begin{align*}
&\partial_t\|\partial_y\ov\omega^1\|_{L^2}^2+\nu\|\nabla \partial_y\ov\omega^1\|_{L^2}^2\leq C\nu^{-1}\big(\|h_1\|_{L^2}^2+\|\partial_y(g_{3,3})\|_{L^2}^2+\|\partial_z(g_{2,3})\|_{L^2}^2\big),
\end{align*}
which implies that
\begin{align*} &\partial_t(\min(\nu^{\frac{2}{3}}+\nu t,1)\|\partial_y\ov\omega^1\|_{L^2}^2)+\nu\min(\nu^{\frac{2}{3}}+\nu t,1)\|\nabla \partial_y\ov\omega^1\|_{L^2}^2\\ &\leq C\nu^{-1}\min(\nu^{\frac{2}{3}}+\nu t,1)\big(\|h_1\|_{L^2}^2+\|\partial_y(g_{3,3})\|_{L^2}^2+\|\partial_z(g_{2,3})\|_{L^2}^2\big)+\nu\|\partial_y\ov\omega^1\|_{L^2}^2\\ &\leq C\nu^{-1}\big(\|h_1\|_{L^2}^2+\|\nabla(g_{2,3})\|_{L^2}^2\big)+C\nu^{-\frac{1}{3}}(1+\nu^{\frac{1}{3}}t)\|\nabla(g_{3,3})\|_{L^2}^2
+\nu\|\nabla\ov\omega^1\|_{L^2}^2.
\end{align*}
By Lemma \ref{lemg2g3M3}, we have
\begin{align*} &\|\min(\nu^{\frac{2}{3}}+\nu t,1)^{\frac{1}{2}}\Delta \bu^3\|_{Y_0}^2 \leq C\big(\|\Delta\bu^3(1)\|_{L^2}^2 +\nu^{-1}\|h_1\|_{L^2L^2}^2+\nu^{-1}\|\nabla(g_{2,3})\|_{L^2L^2}^2\\&
\qquad+\nu^{-\frac{1}{3}}\|(1+\nu^{\frac{1}{3}}t)^{\frac{1}{2}}\nabla(g_{3,3})\|_{L^2L^2}^2
+\nu\|\nabla\ov\omega^1\|_{L^2L^2}^2)\\
&\leq C\big(\|u(1)\|_{H^2}^2 +\nu^{-2}E_2^2\|\ov\omega^1\|_{Y_0}^2+\nu^{-1}\|e^{4\nu^{1/3}t}\nabla(g_{2,3})\|_{L^2L^2}^2\\&
\qquad+\nu^{-\frac{1}{3}}\|e^{4\nu^{1/3}t}\nabla(g_{3,3})\|_{L^2L^2}^2
+\|\ov\omega^1\|_{Y_0}^2)\\ &\leq C\big(\|u(1)\|_{H^2}^2 +\nu^{-2}E_2^2\| \ov\omega^1\|_{Y_0}^2+\nu^{-2}E_3^4
+\|\ov \omega^1\|_{Y_0}^2\big),
\end{align*}
Due to $E_2< \underline{\varepsilon_0}\nu$, we have
\begin{align}\label{u3H2} &\|\min(\nu^{\frac{2}{3}}+\nu t,1)^{\frac{1}{2}}\Delta \bu^3\|_{Y_0}^2\leq C\big(\|u(1)\|_{H^2}^2 +\nu^{-2}E_3^4+\|\ov\omega^1\|_{Y_0}^2\big).
\end{align}

It follows from \eqref{u2u3L2}-\eqref{u3H2} that
\begin{align*} &E_2^2\leq C\big(\|u(1)\|_{H^2}^2 +\nu^{-2}E_3^4+\|\ov\omega^1\|_{Y_0}^2\big)\leq C\big(\|u(1)\|_{H^2}^2 +\nu^{-2}E_3^4\big),
\end{align*}
hence,
\begin{align}\label{M2e1}
E_2\leq C\big(\|u(1)\|_{H^2} +\nu^{-1}E_3^2\big).
\end{align}

\subsection{Estimate of $E_1$}
Recall that $\bu^1$ satisfies
\begin{align*}
(\partial_t-\nu\Delta)\bu^1+\bu^2+\ov{(u\cdot\nabla u^1)}=0.
\end{align*}
Energy estimate gives
\begin{align*}
\partial_t\|\bu^1\|_{L^2}^2+2\nu\|\nabla\bu^1\|_{L^2}^2=-2\langle \bu^2,\bu^1\rangle-2\big\langle \ov{(u\cdot\nabla u^1)},\bu^1\big\rangle.
\end{align*}
As $\bu^2 $ has  nonzero frequency in $z$, we get
\begin{align*}
-2\langle\bu^2,\bu^1\rangle\leq 2\|\partial_z\bu^2\|_{L^2}\|\partial_z\bu^1\|_{L^2}\leq 2\|\nabla \Delta\bu^2\|_{L^2}\|\nabla\bu^1\|_{L^2}.
\end{align*}
Using the facts $\text{div}u_{\neq}=0$ and $\partial_y\bu^2+\partial_z\bu^3=0,$ we get
\begin{align*}
-2\big\langle \ov{(u\cdot\nabla u^1)},\bu^1\big\rangle&=-2\big\langle \bu\cdot\nabla \bu^1+\ov{(u_{\neq}\cdot\nabla u_{\neq}^1)},\bu^1\big\rangle\\&=2\langle u_{\neq}\cdot\nabla \bu^1,u_{\neq}\rangle\leq 2\||u_{\neq}|^2\|_{L^2}\|\nabla \bu^1\|_{L^2}.
\end{align*}
Then we can deduce that
\begin{align*}
\partial_t\|\bu^1\|_{L^2}^2+\nu\|\nabla\bu^1\|_{L^2}^2\leq C\nu^{-1}\big(\|\nabla \Delta\bu^2\|_{L^2}^2+\||u_{\neq}|^2\|_{L^2}^2\big),
\end{align*}
which along with Lemma \ref{lemg2g3M3}  gives
\begin{align*}
\|\bu^1\|_{Y_0}^2&\leq C\big(\|\bu^1(1)\|_{L^2}^2+\nu^{-1}\|\nabla \Delta \bu^2\|_{L^2L^2}^2+\nu^{-1}\||u_{\neq}|^2\|_{L^2L^2}^2)\\
&\leq C\big(\|u(1)\|_{H^2}^2+\nu^{-2}E_2^2+\nu^{-2}E_3^4\big).
\end{align*}
This along with \eqref{u2u3L2} gives
\begin{align}\label{u0L2}
\|\bu\|_{Y_0}^2\leq C\big(\|u(1)\|_{H^2}^2+\nu^{-2}E_2^2+\nu^{-2}E_3^4\big).
\end{align}

As $\ov{(y\partial_x u)}=0$ and  $\ov{\Delta p^{L}}=0,$ we get
\begin{align}
&\partial_t\bu-\nu\Delta\bu+\left(\begin{array}{l}\bu^2\\0\\0\end{array}\right)+\ov{\mathbb{P}(u\cdot\nabla u)}=0.\label{NS2}
\end{align}
Due to $\text{div}\bu=0,$ the energy estimate gives
\begin{align*}
\partial_t\|\Delta^2\bu\|_{L^2}^2+2\nu\|\nabla \Delta^2\bu\|_{L^2}^2&=-2\langle \Delta^2\bu^2,\Delta^2\bu^1\rangle-2\langle \Delta^2\ov{(u\cdot\nabla u)},\Delta^2\bu\rangle\\
&\leq2\|\nabla \Delta \bu^2\|_{L^2}\|\nabla \Delta^2\bu^1\|_{L^2}+2\|\nabla \Delta \ov{(u\cdot\nabla u)}\|_{L^2}\|\nabla \Delta^2\bu\|_{L^2},
\end{align*}
which implies that
\begin{align*}
\|\Delta^2\bu\|_{Y_0}^2&\leq C\big(\|\bu(1)\|_{H^4}^2+\nu^{-1}\|\nabla \Delta\bu^2\|_{L^2L^2}^2+\nu^{-1}\|\nabla\Delta\ov{(u\cdot\nabla u)}\|_{L^2L^2}^2)\\
& \leq C\big(\|\bu(1)\|_{H^4}^2+\nu^{-2}E_2^2+\nu^{-2}\|\Delta\ov{(u\cdot\nabla u)}\|_{Y_0}^2\big).
\end{align*}
This along with \eqref{u0L2}  gives
\begin{align}\label{u0H4}
\|\bu\|_{L^{\infty}H^4}^2+\nu\|\nabla\bu\|_{L^{\infty}H^4}^2\leq& C\big(\| \bu\|_{Y_0}^2+\|\Delta^2\bu\|_{Y_0}^2\big)\\ \nonumber
\leq& C\big(\|\bu(1)\|_{H^4}^2+\nu^{-2}E_2^2+\nu^{-2}E_3^4+\nu^{-2}\|\ov{(u\cdot\nabla u)}\|_{Y_0^2}^2\big).
\end{align}

Now we estimate $ \partial_t\bu$. By \eqref{NS2}, we have
\begin{align*}
\|\partial_t\bu\|_{H^2}\leq \nu\|\Delta\bu\|_{H^2}+\|\bu^2\|_{H^2}+\|\ov{\mathbb{P}(u\cdot\nabla u)}\|_{H^2}.
\end{align*}
As $\nu\|\Delta\bu\|_{H^2}\leq \nu\| \bu\|_{H^4},\ \|\ov{\mathbb{P}(u\cdot\nabla u)}\|_{H^2}\leq \|\ov{(u\cdot\nabla u)}\|_{H^2},$ and $\|\bu^2\|_{H^2}\leq C\| \Delta \bu^2\|_{L^2}\leq CE_2, $ we infer that
\begin{align}\label{u0tH2}
&\|\partial_t\bu\|_{L^{\infty}H^2}\leq \nu\|\bu\|_{L^{\infty}H^4}+CE_2+\|\ov{(u\cdot\nabla u)}\|_{L^{\infty}H^2},
\end{align}
It follows from \eqref{u0H4} and \eqref{u0tH2}  that
\begin{align*}
E_1&=\|\bu\|_{L^{\infty}H^4}+
\nu^{\frac{1}{2}}\|\nabla\bu\|_{L^{2}H^4}+\big(\|\partial_t\bu\|_{L^{\infty}H^2}+\|\bu(1)\|_{H^2}\big)/\nu\\
&\leq 2\|\bu\|_{L^{\infty}H^4}+
\nu^{\frac{1}{2}}\|\nabla \bu\|_{L^{2}H^4}+\big(\|\bu(1)\|_{H^2}+CE_2+\|\ov{(u\cdot\nabla u)}\|_{L^{\infty}H^2}\big)/\nu\\
&\leq C\big(\|\bu(1)\|_{H^4}+\|\bu(1)\|_{H^2}/\nu+E_2/\nu+E_3^2/\nu+\|\ov{(u\cdot\nabla u)}\|_{Y_0^2}/\nu\big).
\end{align*}
As $\ov{(u\cdot\nabla u)}=\bu\cdot\nabla\bu+\ov{(u_{\neq}\cdot\nabla u_{\neq})}$, by Lemma \ref{lemM4M3M1} and Lemma \ref{lemM1M2}, we have
\begin{align*}
\|\ov{(u\cdot\nabla u)}\|_{Y_0^2}\leq \|\bu\cdot\nabla \bu\|_{Y_0^2}+\| u_{\neq}\cdot\nabla u_{\neq}\|_{Y_0^2}\leq C\big(E_1E_2+E_3{E_4}\big),
\end{align*}
and then
\begin{align*}
E_1\leq C\big(\|\bu(1)\|_{H^4}+\|\bu(1)\|_{H^2}/\nu+E_2/\nu+E_3^2/\nu+E_1E_2/\nu+E_3E_4/\nu\big).
\end{align*}
Due to $E_2\le \nu\ve_0$,  taking $\ve_0$ small enough such that $ C\ve_0<1/2$, we obtain
\begin{align}
E_1\leq& C\big(\|\bu(1)\|_{H^4}+\|\bu(1)\|_{H^2}/\nu+E_2/\nu+E_3^2/\nu+E_3E_4/\nu\big)\nonumber\\
\leq& C\big(\|\bu(1)\|_{H^4}+\|\bu(1)\|_{H^2}/\nu+E_2/\nu+E_3+E_4\big).
 \end{align}

\section{Decay estimates for the nonzero mode}

In this section, we assume that
\beno
E_1\le \ve_0, \quad E_2\le \ve_0\nu,\quad  E_3\le \ve_0\nu,
\eeno
where ${\ve_0\in(0,c_4)}$ is a sufficiently small constant independent of $\nu$ and $T$.

\subsection{Estimate of $E_5$}

In this part, we need to use the formulation \eqref{eq:W2} and more subtle structure of the system.
Let us first introduce
\begin{align}\label{eq:f12}
&\cL f_1=-\nabla V\cdot\nabla u_{\neq}^3,\quad \cL f_2=-\rho_1\nabla V\cdot\nabla u_{\neq}^3,\quad f_1(1)=f_2(1)=0.
\end{align}
Then we decompose
\beno
W^{2}=W^{2,1}+\nu f_2.
\eeno
Thanks to \eqref{eq:W2} and \eqref{eq:f12},  we find that
\begin{align}
 \nonumber\cL_1W^{2,1}+G_2-(\partial_t \kappa-\nu\Delta \kappa)u_{\neq}^3&=-2\nu\nabla\kappa\cdot\nabla u_{\neq}^3-\nu\cL_1 f_2\\ \nonumber&=-2\nu(\rho_1\nabla V\cdot\nabla u_{\neq}^3+\rho_2(\partial_z-\kappa\partial_y)u_{\neq}^3)-\nu\cL_1 f_2\\
\label{L1W21}&=-2\nu\rho_2(\partial_z-\kappa\partial_y)u_{\neq}^3-\nu f_{2,1},
\end{align}
where $f_{2,1}=\cL_1 f_2+2\rho_1\nabla V\cdot\nabla u_{\neq}^3$, which could be written as
\begin{align} \nonumber
f_{2,1}&=\cL f_2-2(\partial_y+\kappa\partial_z)\Delta^{-1}(\partial_yV\partial_xf_2)+2\rho_1\nabla V\cdot\nabla u_{\neq}^3\\  \nonumber&=-2(\partial_y+\kappa\partial_z)\Delta^{-1}(\partial_yV\partial_xf_2)+\rho_1\nabla V\cdot\nabla u_{\neq}^3\\ \label{f2e0}&=-(\partial_y+\kappa\partial_z)\Delta^{-1}(\partial_yVf_{2,2})
-(\partial_y+\kappa\partial_z)\Delta^{-1}(\partial_yV\rho_1\Delta u_{\neq}^3)+\rho_1\nabla V\cdot\nabla u_{\neq}^3, \end{align}with $f_{2,2}=2\partial_xf_2-\rho_1\Delta u_{\neq}^3=2\partial_x(f_2-\rho_1f_1)-\rho_1(\Delta u_{\neq}^3-2\partial_xf_1).$\smallskip

Using the facts that
\beno
&&\cL \partial_xf_1=\partial_x\cL f_1=-\nabla V\cdot\nabla \partial_xu_{\neq}^3,\\
&&\cL\Delta u_{\neq}^3=\Delta\cL u_{\neq}^3-\Delta V\partial_x u_{\neq}^3-2\nabla V\cdot\nabla\partial_xu_{\neq}^3,
\eeno
we find that
 \begin{align}\label{u3}
\cL(\Delta u_{\neq}^3-2\partial_xf_1)+\Delta V\partial_x u_{\neq}^3&=\Delta\cL u_{\neq}^3=-\Delta(\partial_z p^{L1}+(g_3)_{\neq})\\ \nonumber &=2\partial_z(\partial_yV\partial_xW^2)-\Delta(g_3)_{\neq},
\end{align}
where $p^{L1}=p^{L(1)}+p^{L(2)}$  with
\beno
 \Delta p^{L(1)}=-2\partial_yV\partial_xW^{2,1},\quad  \Delta p^{L(2)}=-2\nu\partial_yV\partial_xf_2.
 \eeno

 To proceed, we need the following lemmas.

\begin{Lemma}\label{lem:f1}
It holds that
\begin{align*}
\|\partial_x^2f_1\|_{X_3}^2+\|\partial_x(\partial_z-\kappa\partial_y)f_1\|_{X_3}^2\leq C\nu^{-\frac{4}{3}}E_6.
\end{align*}
\end{Lemma}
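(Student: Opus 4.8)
The plan is to apply Proposition \ref{prop:decay-L-1} directly to $f_1$, viewing the source of $\cL f_1=-\nabla V\cdot\nabla u_{\neq}^3$ as the middle term ``$f_2$'' in the notation of that proposition (with its other two source pieces set to zero). First I would verify the hypotheses: since $V=y+\bu^1$ depends only on $(t,y,z)$ while every component of $\nabla u_{\neq}^3$ carries only nonzero $x$-frequencies, the product $\nabla V\cdot\nabla u_{\neq}^3$ has only nonzero $x$-frequencies, so $P_0(\nabla V\cdot\nabla u_{\neq}^3)=0$; combined with $f_1(1)=0$ this yields $P_0 f_1=0$ and a vanishing initial-data contribution. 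The smallness condition \eqref{ass:u1-smallc2} holds because $\|\bu^1\|_{H^4}\le E_1<\ve_0$. Proposition \ref{prop:decay-L-1} with $a=3$ then reduces the lemma to showing that
\[
\nu^{-\frac13}\big\|e^{3\nu^{1/3}t}\partial_x^2(\nabla V\cdot\nabla u_{\neq}^3)\big\|_{L^2L^2}^2
+\nu^{-\frac13}\big\|e^{3\nu^{1/3}t}\partial_x(\partial_z-\kappa\partial_y)(\nabla V\cdot\nabla u_{\neq}^3)\big\|_{L^2L^2}^2
\le C\nu^{-\frac43}E_6 .
\]

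The key structural fact is that $V$ is independent of $x$, so $\partial_x$ and $\partial_x^2$ commute past $\nabla V$ and land directly on $u_{\neq}^3$. For the first term I would write $\partial_x^2(\nabla V\cdot\nabla u_{\neq}^3)=\nabla V\cdot\nabla\partial_x^2 u_{\neq}^3$ and use $\|\nabla V\|_{L^\infty}\le 1+\|\nabla\bu^1\|_{L^\infty}\le C$ to obtain the pointwise bound $C\|\nabla\partial_x^2 u_{\neq}^3\|_{L^2}$. Since the fourth component of the $X_3$-norm gives $\nu\|e^{3\nu^{1/3}t}\nabla\partial_x^2 u_{\neq}^3\|_{L^2L^2}^2\le\|\partial_x^2 u_{\neq}^3\|_{X_3}^2\le E_6$, this term is $\le C\nu^{-1/3}\cdot\nu^{-1}E_6=C\nu^{-4/3}E_6$.

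For the second term I would use that $\kappa$ is $x$-independent (so $\partial_x$ and $\partial_z-\kappa\partial_y$ commute) together with the commutator identity $(\partial_z-\kappa\partial_y)\nabla=\nabla(\partial_z-\kappa\partial_y)+(\nabla\kappa)\partial_y$ to expand
\begin{align*}
(\partial_z-\kappa\partial_y)\partial_x(\nabla V\cdot\nabla u_{\neq}^3)
&=\big[(\partial_z-\kappa\partial_y)\nabla V\big]\cdot\nabla\partial_x u_{\neq}^3
+\nabla V\cdot\nabla\big[\partial_x(\partial_z-\kappa\partial_y) u_{\neq}^3\big]\\
&\quad+(\nabla V\cdot\nabla\kappa)\,\partial_y\partial_x u_{\neq}^3 .
\end{align*}
By Lemma \ref{lem5} and $\|\bu^1\|_{H^4}\le\ve_0$ the coefficients satisfy $\|(\partial_z-\kappa\partial_y)\nabla V\|_{L^\infty}\le (1+\|\kappa\|_{L^\infty})\|\nabla^2\bu^1\|_{L^\infty}\le C$, $\|\nabla V\|_{L^\infty}\le C$, and $\|\nabla V\cdot\nabla\kappa\|_{L^\infty}\le C$. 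The middle term is then dominated pointwise by $C\|\nabla\partial_x(\partial_z-\kappa\partial_y)u_{\neq}^3\|_{L^2}$, whose weighted $L^2L^2$-norm is controlled by $\nu^{-1}\|\partial_x(\partial_z-\kappa\partial_y)u_{\neq}^3\|_{X_3}^2\le\nu^{-1}E_6$; the first and third terms are dominated by $C\|\nabla\partial_x u_{\neq}^3\|_{L^2}\le C\|\nabla\partial_x^2 u_{\neq}^3\|_{L^2}$ (using $\|g_{\neq}\|_{L^2}\le\|\partial_x g_{\neq}\|_{L^2}$), again controlled by $\nu^{-1}E_6$ as in the previous paragraph. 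Each contributes $C\nu^{-4/3}E_6$, which finishes the estimate.

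The main obstacle is the bookkeeping in the second term: one must use both the $x$-independence of $V$ and $\kappa$, so that $\partial_x$, $\partial_x^2$, and the good derivative $\partial_z-\kappa\partial_y$ pass cleanly onto $u_{\neq}^3$, and the precise commutator $(\partial_z-\kappa\partial_y)\nabla=\nabla(\partial_z-\kappa\partial_y)+(\nabla\kappa)\partial_y$, so that every resulting term reduces to one of the two gradient quantities $\|\nabla\partial_x^2 u_{\neq}^3\|_{L^2}$ or $\|\nabla\partial_x(\partial_z-\kappa\partial_y)u_{\neq}^3\|_{L^2}$ that are exactly the pieces measured by $E_6$. Once the coefficient bounds from Lemma \ref{lem5} are recorded, the remaining estimates are routine applications of Hölder's inequality and the definition of the $X_3$-norm.
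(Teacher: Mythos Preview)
Your proposal is correct and follows essentially the same route as the paper: apply Proposition \ref{prop:decay-L-1} with $a=3$, the source placed in the ``$f_2$'' slot, and $f_1(1)=0$, then reduce both pointwise bounds to $\|\nabla\partial_x^2 u_{\neq}^3\|_{L^2}$ and $\|\nabla\partial_x(\partial_z-\kappa\partial_y)u_{\neq}^3\|_{L^2}$ via the commutator $(\partial_z-\kappa\partial_y)\nabla=\nabla(\partial_z-\kappa\partial_y)+(\nabla\kappa)\partial_y$. The paper organizes the second term as a two-step computation (first isolating $\nabla V\cdot\partial_x(\partial_z-\kappa\partial_y)\nabla u_{\neq}^3$, then commuting $\nabla$ past $\partial_z-\kappa\partial_y$ in a separate display), whereas you expand into three terms at once; the content is identical. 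One minor omission: the hypothesis \eqref{ass:u1-smallc2} of Proposition \ref{prop:decay-L-1} also requires $\|\partial_t\bu^1\|_{H^2}/\nu<c_2$, which follows from $E_1<\ve_0<c_4\le c_2$ but should be mentioned.
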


\begin{proof}
It follows from Proposition {\ref{prop:decay-L-1}} that
\begin{align*}
&\|\partial_x^2f_1\|_{X_3}^2+\|\partial_x(\partial_z-\kappa\partial_y)f_1\|_{X_3}^2\leq C\nu^{-\frac{1}{3}}\Big(\|e^{3\nu^{1/3}t} \partial_x^2 (\nabla V\cdot\nabla u_{\neq}^3)\|_{L^2L^2}^2\\&\qquad+\|e^{3\nu^{1/3}t} \partial_x(\partial_z-\kappa\partial_y) (\nabla V\cdot\nabla u_{\neq}^3)\|_{L^2L^2}^2\Big).
\end{align*}
First of all, we have
\beno
 \|\nabla V\|_{L^{\infty}}\leq1+\|\nabla \bu^1\|_{L^{\infty}}\leq 1+C\|\nabla \bu^1\|_{H^{3}}\leq 1+C\|\bu^1\|_{H^{4}}\leq C,
\eeno
and by Lemma \ref{lem5}, we have
\beno
\|(\partial_z-\kappa\partial_y)\nabla V\|_{L^{\infty}}\leq(1+\|\kappa\|_{L^{\infty}})\|\nabla^2 \bu^1\|_{L^{\infty}}\leq C\big(1+\|\kappa\|_{H^3}\big)\|\bu^1\|_{H^{4}}\leq   C.
\eeno
Then we deduce that
\begin{align*}
&\| \partial_x^2 (\nabla V\cdot\nabla u_{\neq}^3)\|_{L^2}=\|  \nabla V\cdot\nabla \partial_x^2u_{\neq}^3\|_{L^2}\leq \|  \nabla V\|_{L^{\infty}}\|\nabla \partial_x^2u_{\neq}^3\|_{L^2}\leq C\|\nabla \partial_x^2u_{\neq}^3\|_{L^2}, \end{align*}
and
\begin{align*}&\| \partial_x(\partial_z-\kappa\partial_y) (\nabla V\cdot\nabla u_{\neq}^3)\|_{L^2}\\&=\| (\partial_z-\kappa\partial_y) (\nabla V)\cdot(\nabla \partial_xu_{\neq}^3)+\nabla V\cdot\partial_x(\partial_z-\kappa\partial_y)\nabla u_{\neq}^3\|_{L^2}\\ &\leq \| (\partial_z-\kappa\partial_y) \nabla V\|_{L^{\infty}}\|\nabla \partial_xu_{\neq}^3\|_{L^2}+\|\nabla V\|_{L^{\infty}}\|\partial_x(\partial_z-\kappa\partial_y)\nabla u_{\neq}^3\|_{L^2}\\ &\leq C\big(\|\nabla \partial_xu_{\neq}^3\|_{L^2}+\|\partial_x(\partial_z-\kappa\partial_y)\nabla u_{\neq}^3\|_{L^2}\big).
\end{align*}
Using the facts that $\partial_x(\partial_z-\kappa\partial_y)\nabla u_{\neq}^3=\nabla \partial_x(\partial_z-\kappa\partial_y)u_{\neq}^3+(\nabla \kappa)\partial_x\partial_yu_{\neq}^3 $ and
\beno
\|(\nabla \kappa)\partial_x\partial_yu_{\neq}^3\|_{L^2}\leq \|\nabla \kappa\|_{L^{\infty}}\|\partial_x\partial_yu_{\neq}^3\|_{L^2} \leq C\| \kappa\|_{H^{3}}\|\partial_y\partial_x^2u_{\neq}^3\|_{L^2}\leq C\|\nabla\partial_x^2u_{\neq}^3\|_{L^2},
\eeno
 we infer that
 \begin{align}\label{u3xz}
 \|\partial_x(\partial_z-\kappa\partial_y)\nabla u_{\neq}^3\|_{L^2}\leq C\big(\|\nabla \partial_x(\partial_z-\kappa\partial_y)u_{\neq}^3\|_{L^2}+\|\nabla\partial_x^2u_{\neq}^3\|_{L^2}\big).
\end{align}

Summing up, we conclude the lemma.
\end{proof}

\begin{Lemma}\label{lem:f2}
It holds that
\begin{align*}
\|\partial_x^2f_2 \|_{X_3}^2\leq C\ve_{0}^2\nu^{-\frac{4}{3}}E_6,\quad
\|\partial_x\nabla f_2 \|_{X_3}^2\leq C\ve_{0}^2\nu^{-2}E_6.
\end{align*}

\end{Lemma}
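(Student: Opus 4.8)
The plan is to treat $f_2$, which solves $\cL f_2=S$ with $S=-\rho_1\nabla V\cdot\nabla u_{\neq}^3$ and $f_2(1)=0$, exactly as $f_1$ was treated in Lemma \ref{lem:f1}, with two differences: the source now carries the extra factor $\rho_1$, which is responsible for the smallness $\ve_0^2$, and for the second estimate we must also reach the \emph{bad} direction $\partial_y$. Throughout I would record the following facts: $\rho_1\nabla V$ is independent of $x$ with $\|\rho_1\|_{H^2}\le C\|\bu^1\|_{H^4}\le CE_1\le C\ve_0$ and $\|\nabla V\|_{L^\infty}+\|\kappa\|_{H^3}\le C$ (Lemma \ref{lem5}); for a nonzero mode $\|g_{\neq}\|_{L^2}\le\|\partial_x g_{\neq}\|_{L^2}$, so derivatives of $u_{\neq}^3$ may be traded for extra $\partial_x$; and $E_6$ bounds the $X_3$-norms of $\partial_x^2u_{\neq}^3$ and $\partial_x(\partial_z-\kappa\partial_y)u_{\neq}^3$, hence $\nu\|e^{3\nu^{1/3}t}\nabla\partial_x^2u_{\neq}^3\|_{L^2L^2}^2$ and the corresponding good-derivative quantity are $\le E_6$.

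For the first estimate I would apply Proposition \ref{prop:decay-L-1} to $f_2$ with $S$ placed in the middle ($f_2$) slot, so that (using $f_2(1)=0$)
\[
\|\partial_x^2f_2\|_{X_3}^2\le C\nu^{-\frac13}\big(\|e^{3\nu^{1/3}t}\partial_x^2S\|_{L^2L^2}^2+\|e^{3\nu^{1/3}t}\partial_x(\partial_z-\kappa\partial_y)S\|_{L^2L^2}^2\big).
\]
It then remains to bound each source term by $C\ve_0^2\nu^{-1}E_6$. Since $\rho_1\nabla V$ is $x$-independent, $\partial_x^2S=\rho_1\nabla V\cdot\nabla\partial_x^2u_{\neq}^3$, which is $\le C\ve_0\|\nabla\partial_x^2u_{\neq}^3\|_{L^2}$ and hence controlled after the weight. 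For the good-derivative term I would expand $\partial_x(\partial_z-\kappa\partial_y)S$ by the product rule and, crucially, never place $\nabla\rho_1$ in $L^\infty$: whenever a derivative lands on $\rho_1\nabla V$ I keep that factor in $H^1$ and use the $L^2$-based anisotropic bilinear estimate of Lemma \ref{lem9}; whenever it lands on $u_{\neq}^3$ I use the commutator identity \eqref{u3xz} to rewrite $(\partial_z-\kappa\partial_y)\nabla\partial_x u_{\neq}^3$ in terms of $\nabla\partial_x(\partial_z-\kappa\partial_y)u_{\neq}^3$ and $\nabla\partial_x^2 u_{\neq}^3$, both of which sit in $E_6$. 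This yields $\|\partial_x^2f_2\|_{X_3}^2\le C\nu^{-1/3}\cdot\ve_0^2\nu^{-1}E_6=C\ve_0^2\nu^{-4/3}E_6$, and the same computation gives $\|\partial_x(\partial_z-\kappa\partial_y)f_2\|_{X_3}^2\le C\ve_0^2\nu^{-4/3}E_6$.

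For the second estimate I would note $\partial_x\nabla f_2=(\partial_x^2f_2,\partial_x\partial_yf_2,\partial_x\partial_zf_2)$, and since $\kappa$ is $x$-independent, $\partial_x\partial_zf_2=\partial_x(\partial_z-\kappa\partial_y)f_2+\kappa\,\partial_x\partial_yf_2$; using the first estimate and bounded multiplication by $\kappa$ in $X_3$ (via $\|\kappa\|_{H^3}\le C$ and the commutator estimate of Lemma \ref{lem9}), everything reduces to bounding $\|\partial_x\partial_yf_2\|_{X_3}$. Commuting $\partial_y$ through $\cL$ gives $\cL(\partial_x\partial_yf_2)=\partial_x\partial_yS-(\partial_yV)\partial_x^2f_2$, to which I apply Proposition \ref{prop:decay-L}. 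The commutator $(\partial_yV)\partial_x^2f_2$ goes in the middle slot and costs $\nu^{-1/3}\|e^{3\nu^{1/3}t}\partial_x^2f_2\|_{L^2L^2}^2\le\nu^{-2/3}\|\partial_x^2f_2\|_{X_3}^2\le C\ve_0^2\nu^{-2}E_6$. In $\partial_x\partial_yS$ the term with a derivative on $\rho_1\nabla V$ is again handled by Lemma \ref{lem9} in the middle slot and stays within budget.

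The main obstacle is the highest-order piece $\rho_1\nabla V\cdot\nabla\partial_x\partial_yu_{\neq}^3$ of $\partial_x\partial_yS$: here the bad derivative $\partial_y$ produces a genuine third-order derivative of $u_{\neq}^3$ that is \emph{not} directly controlled by $E_6$ (placing it in the middle slot would demand $\|\Delta\partial_xu_{\neq}^3\|$, i.e.\ $E_3$-type control via Lemma \ref{lemuM3}). The fix I would use is to route it through the divergence slot: write it as $\mathrm{div}(\rho_1\nabla V\,\partial_x\partial_yu_{\neq}^3)-\mathrm{div}(\rho_1\nabla V)\,\partial_x\partial_yu_{\neq}^3$, put the divergence term in the $f_3$ slot at cost $\nu^{-1}\|\rho_1\nabla V\,\partial_x\partial_yu_{\neq}^3\|_{L^2L^2}^2\le\nu^{-1}\ve_0^2\|e^{3\nu^{1/3}t}\partial_x\partial_yu_{\neq}^3\|_{L^2L^2}^2$, and trade (nonzero mode) $\|\partial_x\partial_yu_{\neq}^3\|\le\|\nabla\partial_x^2u_{\neq}^3\|$ so that $\nu\|e^{3\nu^{1/3}t}\nabla\partial_x^2u_{\neq}^3\|_{L^2L^2}^2\le E_6$. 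This contributes $\nu^{-1}\ve_0^2\cdot\nu^{-1}E_6=C\ve_0^2\nu^{-2}E_6$, matching the claim; the low regularity of $\rho_1$ never enters because only one derivative ever falls on it and always inside an $L^2$ bilinear estimate.
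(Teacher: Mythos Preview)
Your argument is correct; both estimates go through as you describe. The route, however, is considerably more elaborate than the paper's, and it is worth seeing where the shortcut lies.

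For $\|\partial_x^2 f_2\|_{X_3}$ the paper does not invoke Proposition~\ref{prop:decay-L-1} at all: since $\cL\partial_x^2 f_2=-\rho_1\nabla V\cdot\nabla\partial_x^2u_{\neq}^3$, it simply applies Proposition~\ref{prop:decay-L} to $\partial_x^2 f_2$ with the source in the middle slot and uses $\|\rho_1\|_{L^\infty}\le C\ve_0$. Your use of Proposition~\ref{prop:decay-L-1} also works and incidentally yields the extra bound $\|\partial_x(\partial_z-\kappa\partial_y)f_2\|_{X_3}^2\le C\ve_0^2\nu^{-4/3}E_6$, but that is not needed anywhere.

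For $\|\partial_x\nabla f_2\|_{X_3}$ the key simplification you miss is that the source is \emph{already} in divergence form before any product rule: for each $j\in\{1,2,3\}$,
\[
\cL(\partial_x\partial_j f_2)=-\partial_j\big(\rho_1\nabla V\cdot\nabla\partial_x u_{\neq}^3\big)-\partial_jV\,\partial_x^2 f_2,
\]
so the paper places $-\partial_j(\rho_1\nabla V\cdot\nabla\partial_x u_{\neq}^3)$ straight into the $\text{div}\,f_3$ slot of Proposition~\ref{prop:decay-L}, at cost $\nu^{-1}\|e^{3\nu^{1/3}t}\rho_1\nabla V\cdot\nabla\partial_x u_{\neq}^3\|_{L^2L^2}^2\le C\ve_0^2\nu^{-1}\|e^{3\nu^{1/3}t}\nabla\partial_x^2 u_{\neq}^3\|_{L^2L^2}^2\le C\ve_0^2\nu^{-2}E_6$, and treats all three components $j$ uniformly in one line. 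This eliminates your good-derivative decomposition of $\partial_x\partial_z f_2$, the bounded-multiplication-by-$\kappa$ argument in $X_3$, the product-rule expansion of $\partial_x\partial_y S$, and the subsequent re-packaging $\rho_1\nabla V\cdot\nabla\partial_x\partial_y u_{\neq}^3=\text{div}(\rho_1\nabla V\,\partial_x\partial_y u_{\neq}^3)-\text{div}(\rho_1\nabla V)\,\partial_x\partial_y u_{\neq}^3$. In effect you expand a perfect derivative and then re-sum part of it; the paper never expands.
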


\begin{proof}

As $\cL \partial_x^2f_2=\partial_x^2\cL f_2=-\rho_1\nabla V\cdot\nabla \partial_x^2u_{\neq}^3,$ we infer from Proposition \ref{prop:decay-L} that \begin{align*}
&\|\partial_x^2f_2 \|_{X_3}^2\leq C\nu^{-\frac{1}{3}}\|e^{3\nu^{1/3}t} \rho_1\nabla V\cdot\nabla \partial_x^2u_{\neq}^3\|_{L^2L^2}^2.
\end{align*}
As $\|\nabla V\|_{L^{\infty}}\leq C$ and $\|\rho_1\|_{L^{\infty}}\leq\|\rho_1\|_{H^{2}}\leq C\| \bu^1\|_{H^{4}}\leq  C\ve_{0} $(by Lemma \ref{lem5}),
we get
\beno
\|\rho_1\nabla V\cdot\nabla \partial_x^2u_{\neq}^3\|_{L^2}\leq \|\rho_1\|_{L^{\infty}}\|\nabla V\|_{L^{\infty}}\|\nabla \partial_x^2u_{\neq}^3\|_{L^2}\leq   C\ve_0\|\nabla \partial_x^2u_{\neq}^3\|_{L^2},
\eeno
which implies that
\begin{align*}
\|\partial_x^2f_2 \|_{X_3}^2\leq C\ve_{0}^2\nu^{-\frac{4}{3}}\| \partial_x^2u_{\neq}^3\|_{X_3}^2\leq C\ve_{0}^2\nu^{-\frac{4}{3}}E_6.
\end{align*}

For $j\in\{1,2,3\}$, we have
\beno
\cL \partial_x\partial_jf_2=\partial_x\partial_j\cL f_2-\partial_jV\partial_x^2f_2=-\partial_j(\rho_1\nabla V\cdot\nabla \partial_xu_{\neq}^3)-\partial_jV\partial_x^2f_2,\quad \partial_x\partial_jf_2(1)=0.
\eeno
Then it follows from Proposition \ref{prop:decay-L} that
\begin{align*}
&\|\partial_x\partial_jf_2 \|_{X_3}^2\leq C\nu^{-\frac{1}{3}}\|e^{3\nu^{1/3}t}\partial_jV\partial_x^2f_2\|_{L^2L^2}^2+C\nu^{-1}\|e^{3\nu^{1/3}t} \rho_1\nabla V\cdot\nabla \partial_xu_{\neq}^3\|_{L^2L^2}^2.
\end{align*}
Using  the facts that  $\|\partial_jV\partial_x^2f_2\|_{L^2}\leq \|\partial_jV\|_{L^{\infty}}\|\partial_x^2f_2\|_{L^2}\leq   C\|\partial_x^2f_2\|_{L^2}$ and $\|\rho_1\nabla V\cdot\nabla \partial_xu_{\neq}^3\|_{L^2}\\ \leq \|\rho_1\|_{L^{\infty}}\|\nabla V\|_{L^{\infty}}\|\nabla \partial_xu_{\neq}^3\|_{L^2}\leq   C\ve_{0}\|\nabla \partial_x^2u_{\neq}^3\|_{L^2}, $
we deduce that
\begin{align*}
&\|e^{3\nu^{1/3}t}\partial_jV\partial_x^2f_2\|_{L^2L^2}^2\leq C\|e^{3\nu^{1/3}t}\partial_x^2f_2\|_{L^2L^2}^2\leq C\nu^{-\frac{1}{3}}\|\partial_x^2f_2\|_{X_3}^2,\\
&\|e^{3\nu^{1/3}t} \rho_1\nabla V\cdot\nabla \partial_xu_{\neq}^3\|_{L^2L^2}^2\leq C\ve_{0}^2\|e^{3\nu^{1/3}t} \nabla \partial_x^2u_{\neq}^3\|_{L^2L^2}^2\leq C{\ve_{0}^2}\nu^{-1}\| \partial_x^2u_{\neq}^3\|_{X_3}^2.
\end{align*}
This shows that
\begin{align*}
&\|\partial_x\nabla f_2 \|_{X_3}^2\leq C\nu^{-\frac{2}{3}}\|\partial_x^2f_2 \|_{X_3}^2+C\ve_{0}^2\nu^{-2}\| \partial_x^2u_{\neq}^3\|_{X_3}^2\leq C\ve_{0}^2\nu^{-2}E_6.
\end{align*}
\end{proof}

\begin{Lemma}\label{lem7}
It holds that
 \begin{align*}
&\|\partial_x(f_2-\rho_1f_1)\|_{X_3}^2\leq C\nu^{-\f23}\ve_0^2E_6.
\end{align*}
\end{Lemma}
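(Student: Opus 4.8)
The plan is to set $g=f_2-\rho_1 f_1$ and exploit a cancellation in the leading order of $\cL g$. Since $f_1,f_2$ vanish at $t=1$ and solve \eqref{eq:f12} with zero-mode-free right-hand sides, $g$ is a nonzero mode with $g(1)=0$. Because $\rho_1=\rho_1(t,y,z)$ is independent of $x$, the commutator $[\cL,\rho_1]$ involves only the time derivative and the dissipation, and a direct computation gives
\begin{align*}
\cL g=\cL f_2-\rho_1\cL f_1-[\cL,\rho_1]f_1=-(\partial_t\rho_1)f_1+\nu(\Delta\rho_1)f_1+2\nu\nabla\rho_1\cdot\nabla f_1,
\end{align*}
the crucial point being that the leading terms cancel, $\cL f_2-\rho_1\cL f_1=-\rho_1\nabla V\cdot\nabla u_{\neq}^3+\rho_1\nabla V\cdot\nabla u_{\neq}^3=0$. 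Thus $\cL g$ is a pure commutator carrying either a factor $\nu$ or the small factor $\partial_t\rho_1$.

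Next I would apply $\partial_x$ (which commutes with $\cL$ since $\partial_xV=\partial_x\rho_1=0$) and recast the forcing as a divergence. Using $2\nu\nabla\rho_1\cdot\nabla\partial_xf_1=2\nu\,\text{div}(\partial_xf_1\nabla\rho_1)-2\nu(\Delta\rho_1)\partial_xf_1$, one obtains
\begin{align*}
\cL\partial_x g=-(\partial_t\rho_1)\partial_xf_1-\nu(\Delta\rho_1)\partial_xf_1+2\nu\,\text{div}(\partial_xf_1\nabla\rho_1).
\end{align*}
As the first two terms are nonzero modes, I would absorb them into the divergence slot through the potential $\nabla\Delta^{-1}$, taking
\begin{align*}
f_3'=-\nabla\Delta^{-1}\big((\partial_t\rho_1)\partial_xf_1+\nu(\Delta\rho_1)\partial_xf_1\big)+2\nu\,\partial_xf_1\nabla\rho_1,
\end{align*}
so that $\cL\partial_x g=\text{div}\,f_3'$ is a pure divergence. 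Since $\|\bu^1\|_{H^4}+\|\partial_t\bu^1\|_{H^2}/\nu\le 2E_1\le2\ve_0<c_1$, Proposition \ref{prop:decay-L} applies (with vanishing $\partial_x$- and zeroth-order forcing), and using $\partial_x g(1)=0$ it yields $\|\partial_x g\|_{X_3}^2\le C\nu^{-1}\|e^{3\nu^{1/3}t}f_3'\|_{L^2L^2}^2$.

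Finally I would estimate $f_3'$ by the anisotropic bilinear bounds of Lemma \ref{lem9}, which are tailored to the $x$-independent multipliers $\partial_t\rho_1,\Delta\rho_1,\nabla\rho_1$. Writing $\mathcal{A}=\|\partial_x^2f_1\|_{L^2}+\|\partial_x(\partial_z-\kappa\partial_y)f_1\|_{L^2}$ and using that $f_1$ is a nonzero mode, the second inequality of Lemma \ref{lem9} gives $\|\nabla\Delta^{-1}((\partial_t\rho_1)\partial_xf_1)\|_{L^2}\le C\|\partial_t\rho_1\|_{L^2}\mathcal{A}$ and $\|\nabla\Delta^{-1}((\Delta\rho_1)\partial_xf_1)\|_{L^2}\le C\|\Delta\rho_1\|_{L^2}\mathcal{A}$, while the first inequality gives $\|\partial_xf_1\nabla\rho_1\|_{L^2}\le C\|\nabla\rho_1\|_{H^1}\mathcal{A}$. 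By Lemma \ref{lem5}, $\|\partial_t\rho_1\|_{L^2}\le C\|\partial_t\bu^1\|_{H^2}\le C\ve_0\nu$ and $\|\Delta\rho_1\|_{L^2}+\|\nabla\rho_1\|_{H^1}\le C\|\rho_1\|_{H^2}\le C\ve_0$, whence $\|f_3'\|_{L^2}\le C\ve_0\nu\,\mathcal{A}$. Therefore $\nu^{-1}\|e^{3\nu^{1/3}t}f_3'\|_{L^2L^2}^2\le C\ve_0^2\nu\,\|e^{3\nu^{1/3}t}\mathcal{A}\|_{L^2L^2}^2\le C\ve_0^2\nu\cdot\nu^{-1/3}\big(\|\partial_x^2f_1\|_{X_3}^2+\|\partial_x(\partial_z-\kappa\partial_y)f_1\|_{X_3}^2\big)$, and Lemma \ref{lem:f1} bounds the bracket by $C\nu^{-4/3}E_6$, yielding the claimed $C\nu^{-2/3}\ve_0^2E_6$. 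The main obstacle is exactly the low regularity of the multipliers: $\partial_t\rho_1$ and $\Delta\rho_1$ lie only in $L^2$, and $f_1$ is controlled only in the good directions $\partial_x$ and $\partial_z-\kappa\partial_y$, so naive product estimates fail. The resolution is to keep these terms in divergence form and invoke the $\nabla\Delta^{-1}$ estimate of Lemma \ref{lem9}, which demands only an $L^2$ bound on the $x$-independent factor and no transverse derivative of $f_1$.
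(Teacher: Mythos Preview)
Your proof is correct and follows essentially the same route as the paper's: both exploit the cancellation $\cL f_2=\rho_1\cL f_1$ to reduce $\cL\partial_x(f_2-\rho_1f_1)$ to a pure commutator, rewrite it in divergence form, apply Proposition~\ref{prop:decay-L}, and then estimate the forcing via the anisotropic bilinear bounds of Lemma~\ref{lem9} together with Lemma~\ref{lem5} and Lemma~\ref{lem:f1}. The only cosmetic difference is that the paper packages the non-divergence piece as $-\Delta h_1$ with $\Delta h_1=(\partial_t\rho_1+\nu\Delta\rho_1)\partial_x f_1$ and bounds $\|\nabla h_1\|_{L^2}$ directly, whereas you write the same term as $\text{div}\,\nabla\Delta^{-1}(\cdots)$ and fold it into a single $f_3'$; the resulting estimates are identical.
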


\begin{proof}
Using the facts that
\beno
\cL (\rho_1f)-\rho_1\cL f=(\partial_t \rho_1-\nu\Delta \rho_1)f-2\nu\nabla\rho_1\cdot\nabla f =(\partial_t \rho_1+\nu\Delta \rho_1)f-2\nu\text{div}(f\nabla\rho_1 )
\eeno
and $\cL f_2=\rho_1\cL f_1$, we deduce that
\begin{align*}
\cL\partial_x(f_2-\rho_1f_1)&=\partial_x\cL(f_2-\rho_1f_1)=\partial_x(\rho_1\cL f_1-\cL(\rho_1f_1))=-\partial_x\big((\partial_t \rho_1+\nu\Delta \rho_1) f_1-2\nu\text{div}(f_1\nabla\rho_1 )\big)\\&=-\Delta h_1+2\nu\text{div}(\partial_xf_1\nabla\rho_1 )
\end{align*}
where $\Delta h_1=(\partial_t \rho_1+\nu\Delta \rho_1)\partial_xf_1$.
It follows from Proposition \ref{prop:decay-L} that
\begin{align}\label{f1Xa1}
\|\partial_x(f_2-\rho_1f_1)\|_{X_3}^2\leq& C\big(\nu^{-1}\|e^{3\nu^{1/3}t}\nabla h_1\|_{L^2L^2}^2+\nu\|e^{3\nu^{1/3}t} \partial_xf_1\nabla\rho_1\|_{L^2L^2}^2\big).
\end{align}
By Lemma \ref{lem5}, we have
\beno
\|\partial_t \rho_1+\nu\Delta \rho_1\|_{L^{2}}\leq  \|\partial_t \rho_1\|_{L^{2}}+\nu\|\rho_1\|_{H^2}\leq C(\|\partial_t\bu^1\|_{H^2}+\nu\|\bu^1\|_{H^4})\leq C\nu\ve_0
\eeno
and $\|\nabla \rho_1\|_{H^{1}}\leq \|\rho_1\|_{H^2}\leq C\|\bu^1\|_{H^4}\leq C\ve_0. $ Then we infer from Lemma \ref{lem9}  that
\begin{align}\label{f1L2}
\|\partial_xf_1\nabla\rho_1\|_{L^2}&\leq C\|\nabla\rho_1\|_{H^{1}}\big(\|\partial_xf_1\|_{L^2}+\|\partial_x(\partial_z-\kappa\partial_y)f_1\|_{L^2}\big)\\ \nonumber&\leq C\ve_0\big(\|\partial_x^2f_1\|_{L^2}+\|\partial_x(\partial_z-\kappa\partial_y)f_1\|_{L^2}\big)
\end{align}
and
\begin{align}\label{h1H1}
\| \nabla h_1\|_{L^2}&\leq \|\partial_t \rho_1+\nu\Delta \rho_1\|_{L^{2}}\big(\|\partial_xf_1\|_{L^2}+\|\partial_x(\partial_z-\kappa\partial_y)f_1\|_{L^2}\big)\\  \nonumber&\leq C\nu\ve_0\big(\|\partial_x^2f_1\|_{L^2}+\|\partial_x(\partial_z-\kappa\partial_y)f_1\|_{L^2}\big).
\end{align}
By \eqref{f1Xa1}, \eqref{f1L2} and \eqref{h1H1}, we have
\begin{align*}
\|\partial_x(f_2-\rho_1f_1)\|_{X_3}^2\leq& C\nu \ve_0^2\big(\|e^{3\nu^{1/3}t}\partial_x^2f_1\|_{L^2L^2}^2+\|e^{3\nu^{1/3}t} \partial_x(\partial_z-\kappa\partial_y)f_1\|_{L^2L^2}^2\big)\\
\leq &C\nu \ve_0^2\nu^{-\frac{1}{3}}\big(\|\partial_x^2f_1\|_{X_3}^2+\| \partial_x(\partial_z-\kappa\partial_y)f_1\|_{X_3}^2\big).
\end{align*}
This along with Lemma \ref{lem:f1}  gives our result.
\end{proof}
\smallskip

\begin{Proposition}\label{prop2}
It holds that
\beno
&&E_6+\nu^{\frac{4}{3}}\|\Delta u_{\neq}^3\|_{X_3}^2\leq C\big(\|u(1)\|_{H^2}^2+\nu^{-2}E_3^4\big),\\
&&E_5\leq C\big(\|u(1)\|_{H^2}+\nu^{-1}E_3^2\big).
\eeno

\end{Proposition}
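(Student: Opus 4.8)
The plan is to estimate the two quantities $E_6$ and $\nu^{4/3}\|\Delta u_{\neq}^3\|_{X_3}^2$ \emph{simultaneously}, feeding the governing equations \eqref{L1W21} and \eqref{u3} through the three linear decay estimates of Section~3 and closing by an absorption argument. Throughout, every genuinely nonlinear source will be bounded by the interaction lemmas of Section~5, producing either the harmless data term $\|u(1)\|_{H^2}^2$ or the purely nonzero-mode contribution $\nu^{-2}E_3^4$; every feedback term that still contains $E_6$ or $\nu^{4/3}\|\Delta u_{\neq}^3\|_{X_3}^2$ will instead carry a small prefactor (a power of $\ve_0$ or of $\nu$) coming from the smallness of $\bu^1,\kappa,\rho_1,\rho_2$, and hence will be absorbable into the left-hand side once $\ve_0,\nu$ are small.

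First I would control the pieces of $E_6$. The good derivatives of $u_{\neq}^3$, namely $\|\partial_x^2 u_{\neq}^3\|_{X_3}^2+\|\partial_x(\partial_z-\kappa\partial_y)u_{\neq}^3\|_{X_3}^2$, come from Proposition~\ref{prop:decay-L-1} applied to $\cL u_{\neq}^3=-\partial_3 p^{L1}-(g_3)_{\neq}$, whose nonlinear sources are estimated via Lemma~\ref{lemg2g3M6}, Lemma~\ref{lemM3M5} and \eqref{gj5}, \eqref{gj6}. For $\|\partial_x\nabla W^2\|_{X_3}^2$ I would use $W^2=W^{2,1}+\nu f_2$: the part $\nu\partial_x\nabla f_2$ is controlled by Lemma~\ref{lem:f2}, while $\|\partial_x\nabla W^{2,1}\|_{X_3}\le C\|\Delta W^{2,1}\|_{X_3}$ (nonzero mode) and $\|\Delta W^{2,1}\|_{X_3}^2$ follows from Proposition~\ref{prop:decay-L1} applied to \eqref{L1W21}, whose right-hand side $-G_2+(\partial_t\kappa-\nu\Delta\kappa)u_{\neq}^3-2\nu\rho_2(\partial_z-\kappa\partial_y)u_{\neq}^3-\nu f_{2,1}$ must be estimated in $\nu^{-1}\|e^{3\nu^{1/3}t}\nabla(\cdot)\|_{L^2L^2}^2$ using the $\nabla G_2$-bounds of Section~5 together with the good-derivative structure of the $\rho_2$ term. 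The remaining good derivatives of $u_{\neq}^2$ are then recovered from the algebraic identity $u_{\neq}^2=W^2-\kappa u_{\neq}^3$, the $\kappa$-factor supplying a power of $\ve_0$.

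The most delicate ingredient is $\nu^{4/3}\|\Delta u_{\neq}^3\|_{X_3}^2$, which I would obtain from Proposition~\ref{prop:decay-L} applied to \eqref{u3} for the quantity $\Delta u_{\neq}^3-2\partial_x f_1$: writing $-\Delta V\,\partial_x u_{\neq}^3=\partial_x(-\Delta V\,u_{\neq}^3)$ (since $\Delta V$ is $x$-independent), $2\partial_z(\partial_yV\partial_xW^2)$ as a divergence, and $-\Delta(g_3)_{\neq}=\mathrm{div}(-\nabla(g_3)_{\neq})$, the sources of Proposition~\ref{prop:decay-L} are read off directly, and $\nu^{4/3}\|\partial_x f_1\|_{X_3}^2$ is reincorporated through Lemma~\ref{lem:f1}. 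The crucial cancellation lies in the term $\nu f_{2,1}$, equivalently the borderline term $-2\nu\nabla\kappa\cdot\nabla u_{\neq}^3$ of \eqref{eq:W2}: using the decomposition \eqref{f2e0} with $f_{2,2}=2\partial_x(f_2-\rho_1 f_1)-\rho_1(\Delta u_{\neq}^3-2\partial_x f_1)$, Lemma~\ref{lem7} bounds $\partial_x(f_2-\rho_1 f_1)$, and the factor $\rho_1$ supplies the needed power of $\ve_0$, so that the whole contribution is at most $C\ve_0^2$ (times a small power of $\nu$) times $E_6+\nu^{4/3}\|\Delta u_{\neq}^3\|_{X_3}^2$.

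The hard part is precisely this closure: the $u_{\neq}^3$-estimate feeds $\partial_x\nabla W^2$ into its right-hand side, the $W^{2,1}$-estimate feeds $\Delta u_{\neq}^3$ back (through $g_{3,1}$, the $\rho_2$ term and $f_{2,1}$), and $f_1,f_2$ themselves depend on $u_{\neq}^3$; one must verify that after invoking the bootstrap smallness $E_1\le\ve_0$, $E_2,E_3\le\ve_0\nu$ and Lemmas~\ref{lem:f1}--\ref{lem7} every feedback loop carries a coefficient that is $o(1)$ as $\ve_0,\nu\to0$. Granting this, adding the estimates and absorbing the small terms yields $E_6+\nu^{4/3}\|\Delta u_{\neq}^3\|_{X_3}^2\le C(\|u(1)\|_{H^2}^2+\nu^{-2}E_3^4)$. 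Finally, since $\partial_x^2 u^j=\partial_x^2 u_{\neq}^j$ gives $E_5\le C\,E_6^{1/2}$, the second inequality follows by taking square roots and using $(\nu^{-2}E_3^4)^{1/2}=\nu^{-1}E_3^2$.
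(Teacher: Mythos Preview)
Your proposal is essentially correct and follows the paper's own strategy: apply Proposition~\ref{prop:decay-L} to \eqref{u3} for $\Delta u_{\neq}^3-2\partial_xf_1$, apply Proposition~\ref{prop:decay-L1} to \eqref{L1W21} for $\Delta W^{2,1}$, apply Proposition~\ref{prop:decay-L-1} to \eqref{eq:u23-non} for the good derivatives of $u_{\neq}^j$, and close by absorption using Lemmas~\ref{lem:f1}--\ref{lem7}. Two small discrepancies are worth flagging.

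First, the paper does \emph{not} recover the good derivatives of $u_{\neq}^2$ from the identity $u_{\neq}^2=W^2-\kappa u_{\neq}^3$; it applies Proposition~\ref{prop:decay-L-1} directly to $\cL u_{\neq}^2=-\partial_2 p^{L1}-(g_2)_{\neq}$ in parallel with $j=3$. The key point you glossed over is that the pressure must be split as $p^{L1}=p^{L(1)}+p^{L(2)}$ with $\Delta p^{L(1)}=-2\partial_yV\partial_xW^{2,1}$ and $\Delta p^{L(2)}=-2\nu\partial_yV\partial_xf_2$: then $-\partial_j p^{L(1)}$ goes into the $f_1$ slot of Proposition~\ref{prop:decay-L-1} and is bounded by the \emph{inviscid-damping} part of $\|\Delta W^{2,1}\|_{X_3}^2$ (with no small prefactor, but no circularity either), while $-\partial_j p^{L(2)}$ goes into the $f_2$ slot and is bounded via $\nu^{4/3}\|\partial_x^2 f_2\|_{X_3}^2$ and Lemma~\ref{lem:f2}. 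Your algebraic recovery would also work, but requires checking that multiplication by $\kappa$ respects the full $X_3$ norm (including the $\nabla\Delta^{-1}\partial_x$ piece), which is an extra commutator estimate; the paper's route avoids this.

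Second, your citation of Lemma~\ref{lemM3M5} is misplaced: that lemma carries the weight $e^{2\nu^{1/3}t}$ and is used only for the $E_3$ estimate in Section~7.2. Here the weight is $e^{3\nu^{1/3}t}$, and the $\neq\!\cdot\!\neq$ interactions ($g_{j,3}$, $g_{j,6}$, $p^{(4)}$) are handled by Lemma~\ref{lemg2g3M3} and \eqref{gj6}, while the $0\!\cdot\!\neq$ interactions ($g_{j,1}$, $g_{j,2}$, $g_{j,5}$, $G_{2,k}$) are handled by Lemma~\ref{lemg2g3M6} and \eqref{gj5}. There is no $p^{(1)}$ term to estimate separately, since $p^{(1)}$ has already been absorbed into $p^{L1}$.
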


\begin{proof}

{\bf Step 1.} \,It follows from Proposition \ref{prop:decay-L}  {and \eqref{u3}} that
\begin{align*}
\|\Delta u_{\neq}^3-2\partial_xf_1\|_{X_3}^2\leq& C\Big(\|\Delta u_{\neq}^3(1)\|_{L^2}^2+\nu^{-\frac{1}{3}}\|e^{3\nu^{1/3}t}\Delta V\partial_x u_{\neq}^3\|_{L^2L^2}^2\\&+\nu^{-\frac{1}{3}}\|e^{3\nu^{1/3}t}\partial_z(\partial_yV\partial_x{W}^2)\|_{L^2L^2}^2
+\nu^{-1}\|e^{3\nu^{1/3}t}\nabla(g_3)_{\neq}\|_{L^2L^2}^2\Big).
\end{align*}
As  $\|\Delta V\|_{L^{\infty}}\leq C\|\bu^1\|_{H^{4}}\leq C$, we have \begin{align*}
&\|e^{3\nu^{1/3}t}\Delta V\partial_x u_{\neq}^3\|_{L^2L^2}^2\leq C\|e^{3\nu^{1/3}t}\partial_x^2 u_{\neq}^3\|_{L^2L^2}^2\leq C\nu^{-\frac{1}{3}}\|\partial_x^2u_{\neq}^3\|_{X_3}^2\leq C\nu^{-\frac{1}{3}}E_6,
\end{align*}
and using
\begin{align*}
\|\partial_z(\partial_yV\partial_x{W^2})\|_{L^2}\leq& {\|\partial_yV\partial_x{W^2}\|_{H^1}=}\|(1+\partial_y\bu^1)\partial_x{W^2}\|_{H^1}\\
\leq& C\big(1+\|\partial_y\bu^1\|_{H^2}\big)\|\partial_x{W^2}\|_{H^1}\leq C\big(1+\|\bu^1\|_{H^4}\big)\|\nabla\partial_x{W^2}\|_{L^2}\leq C\|\nabla\partial_x{W^2}\|_{L^2},
\end{align*}
we infer that
\begin{align*}
&\|e^{3\nu^{1/3}t}\partial_z(\partial_yV\partial_x{W^2})\|_{L^2L^2}^2\leq C\|e^{3\nu^{1/3}t}\nabla\partial_x{W^2}\|_{L^2L^2}^2\leq C\nu^{-\frac{1}{3}}\|\nabla\partial_x{W^2}\|_{X_3}^2\leq C\nu^{-\frac{1}{3}}E_6.
\end{align*}
This shows that
 \begin{align}\label{u3e1}
&\|\Delta u_{\neq}^3-2\partial_xf_1\|_{X_3}^2\leq C\big(\| u(1)\|_{H^2}^2+\nu^{-\frac{2}{3}}E_6
+\nu^{-1}\|e^{3\nu^{1/3}t}\nabla(g_3)_{\neq}\|_{L^2L^2}^2\big).
\end{align}

{\bf Step 2.} It follows from Proposition \ref{prop:decay-L1}  {and \eqref{L1W21}} that
\begin{align}\label{u2e2}
&\|\Delta W^{2,1}\|_{X_3}^2\leq C\Big(\|\Delta {W}^2(1)\|_{L^2}^2+\nu^{-1}\|e^{3\nu^{1/3}t}\nabla G_2\|_{L^2L^2}^2+\nu\|e^{3\nu^{1/3}t}\nabla f_{2,1}\|_{L^2L^2}^2\\ \nonumber&\quad+\nu\|e^{3\nu^{1/3}t}\nabla (\rho_2(\partial_z-\kappa\partial_y)u_{\neq}^3)\|_{L^2L^2}^2+\nu^{-1}\|e^{3\nu^{1/3}t}\nabla ((\partial_t \kappa-\nu\Delta \kappa)u_{\neq}^3)\|_{L^2L^2}^2\Big).
\end{align}
By Lemma \ref{lem5}, we have $\|\rho_2\|_{H^2}\leq C\|\bu^1\|_{H^{4}}\leq C\ve_{0},$ and then
\begin{align}
\nonumber \|\nabla (\rho_2(\partial_z-\kappa\partial_y)u_{\neq}^3)\|_{L^2}&\leq \| \rho_2(\partial_z-\kappa\partial_y)u_{\neq}^3\|_{H^1}\leq C\|\rho_2\|_{H^2}\| (\partial_z-\kappa\partial_y)u_{\neq}^3\|_{H^1}\\ \label{u3e5}&\leq C\ve_{0}\| \nabla(\partial_z-\kappa\partial_y)u_{\neq}^3\|_{L^2}\leq C\ve_{0}\| \nabla\partial_x(\partial_z-\kappa\partial_y)u_{\neq}^3\|_{L^2}.\end{align}
By Lemma \ref{lem5}, we have
\beno
\|\partial_t \kappa-\nu\Delta \kappa\|_{H^1}\leq \|\partial_t \kappa\|_{H^1}+\nu\| \kappa\|_{H^3}\leq C\big(\|\partial_t \bu^1\|_{H^2}+\nu\|\bu^1\|_{H^{4}})\leq C\nu\ve_{0},
\eeno
which along with Lemma \ref{lem9}  gives
\begin{align} \nonumber \|\nabla ((\partial_t \kappa-\nu\Delta \kappa)u_{\neq}^3)\|_{L^2}&\leq  C\|\partial_t \kappa-\nu\Delta \kappa\|_{H^1}(\| u_{\neq}^3\|_{H^1}+\| (\partial_z-\kappa\partial_y)u_{\neq}^3\|_{H^1})\\ \label{u3e6}&\leq  C\nu\ve_{0}\big(\| \nabla\partial_x^2 u_{\neq}^3\|_{L^2}+\| \nabla\partial_x(\partial_z-\kappa\partial_y)u_{\neq}^3\|_{L^2}\big).
\end{align}

Now we estimate $f_{2,1}.$ As $\|\rho_1\|_{L^{\infty}}\leq C\|\rho_1\|_{H^2}\leq C\|\bu^1\|_{H^4}\leq C\ve_{0}$, we have
\begin{align*}
\|f_{2,2}\|_{L^2}&\leq 2\|\partial_x(f_2-\rho_1f_1)\|_{L^2}+\|\rho_1\|_{L^{\infty}}\|\Delta u_{\neq}^3-2\partial_xf_1\|_{L^2}\\ \nonumber&\leq C\big(\|\partial_x(f_2-\rho_1f_1)\|_{L^2}+\ve_{0}\|\Delta u_{\neq}^3-2\partial_xf_1\|_{L^2}\big).
\end{align*}
As $\|\kappa\|_{H^3}\leq C\|\bu^1\|_{H^4}\leq C$ and $\|\partial_yV\|_{L^{\infty}}\leq C$, we get
 \begin{align}\label{f2e2}
 &\|(\partial_y+\kappa\partial_z)\Delta^{-1}(\partial_yVf_{2,2})\|_{H^1} \\ \nonumber&\leq C\big(\|\partial_y\Delta^{-1}(\partial_yVf_{2,2})\|_{H^1}+\|\kappa\|_{H^3}\|\partial_z\Delta^{-1}(\partial_yVf_{2,2})\|_{H^1}\big)\\ \nonumber&\leq C\|\Delta^{-1}(\partial_yVf_{2,2})\|_{H^2}\leq C\|\partial_yVf_{2,2}\|_{L^2}\leq C\|\partial_yV\|_{L^{\infty}}\|f_{2,2}\|_{L^2}\\&\leq C\|f_{2,2}\|_{L^2}\le C\big(\|\partial_x(f_2-\rho_1f_1)\|_{L^2}+\ve_{0}\|\Delta u_{\neq}^3-2\partial_xf_1\|_{L^2}\big).\nonumber
 \end{align}
 Notice that $\nabla V\cdot\nabla u_{\neq}^3=\partial_yV(\partial_y+\kappa\partial_z)u_{\neq}^3$. We write
 \begin{align*}
-(\partial_y+\kappa\partial_z)\Delta^{-1}(\partial_yV\rho_1\Delta u_{\neq}^3)+\rho_1\nabla V\cdot\nabla u_{\neq}^3&=-[(\partial_y+\kappa\partial_z)\Delta^{-1},\partial_yV\rho_1]\Delta u_{\neq}^3\\ &=-[\partial_y\Delta^{-1},\partial_yV\rho_1]\Delta u_{\neq}^3-\kappa[\partial_z\Delta^{-1},\partial_yV\rho_1]\Delta u_{\neq}^3.
\end{align*}
As $\|\kappa\|_{H^3}\leq C$ and $\|\partial_yV\rho_1\|_{H^2}\leq C(1+\|\partial_y\bu^1\|_{H^{2}})\|\rho_1\|_{H^2}\leq C\ve_{0}$, we get by Lemma \ref{lem9}  that
 \begin{align*}
&\|-(\partial_y+\kappa\partial_z)\Delta^{-1}(\partial_yV\rho_1\Delta u_{\neq}^3)+\rho_1\nabla V\cdot\nabla u_{\neq}^3\|_{H^1}\\ &\leq  \|[\partial_y\Delta^{-1},\partial_yV\rho_1]\Delta u_{\neq}^3\|_{H^1}+C\|\kappa\|_{H^3}\|[\partial_z\Delta^{-1},\partial_yV\rho_1]\Delta u_{\neq}^3\|_{H^1}\\ &\leq  C(1+\|\kappa\|_{H^3})\|\partial_yV\rho_1\|_{H^2}\big(\|\nabla u_{\neq}^3\|_{L^2}+\|(\partial_z-\kappa\partial_y)\nabla u_{\neq}^3\|_{L^2})\\ &\leq  C\ve_{0}\big(\|\nabla \partial_x^2u_{\neq}^3\|_{L^2}+\|\partial_x{(\partial_z-\kappa\partial_y)}\nabla u_{\neq}^3\|_{L^2}\big),
\end{align*}
which along with  \eqref{f2e2} {, \eqref{f2e0} and \eqref{u3xz}} gives
\begin{align}\label{f2e4}
\|\nabla f_{2,1}\|_{L^2}\leq& \|f_{2,1}\|_{H^1}\leq C\big(\|\partial_x(f_2-\rho_1f_1)\|_{L^2}+\ve_{0}\|\Delta u_{\neq}^3-2\partial_xf_1\|_{L^2}\big)\\ \nonumber &+C\ve_{0}\big(\|\nabla \partial_x(\partial_z-\kappa\partial_y)u_{\neq}^3\|_{L^2}+\|\nabla\partial_x^2u_{\neq}^3\|_{L^2}\big). \end{align}

By \eqref{f2e4}, \eqref{u3e5}, \eqref{u3e6}, \eqref{u3e1} and Lemma \ref{lem7}, we obtain
\begin{align*}
&\nu\|e^{3\nu^{1/3}t}\nabla f_{2,1}\|_{L^2L^2}^2+\nu\|e^{3\nu^{1/3}t}\nabla (\rho_2(\partial_z-\kappa\partial_y)u_{\neq}^3)\|_{L^2L^2}^2\\&\quad+\nu^{-1}\|e^{3\nu^{1/3}t}\nabla ((\partial_t \kappa-\nu\Delta \kappa)u_{\neq}^3)\|_{L^2L^2}^2\\
&\leq C\nu\big(\|e^{3\nu^{1/3}t}\partial_x(f_2-\rho_1f_1)\|_{L^2L^2}^2+{\ve_{0}^2}\|e^{3\nu^{1/3}t}(\Delta u_{\neq}^3-2\partial_xf_1)\|_{L^2L^2}^2\big)\\  &\quad+C\nu\ve_{0}^2\big(\|e^{3\nu^{1/3}t}\nabla \partial_x(\partial_z-\kappa\partial_y)u_{\neq}^3\|_{L^2L^2}^2+\|e^{3\nu^{1/3}t}\nabla\partial_x^2u_{\neq}^3\|_{L^2L^2}^2\big)\\
&\leq C\nu^{\frac{2}{3}}\big(\|\partial_x(f_2-\rho_1f_1)\|_{X_3}^2+\ve_{0}^2\|\Delta u_{\neq}^3-2\partial_xf_1\|_{X_3}^2\big)\\  &\quad+C\ve_{0}^2\big(\| \partial_x(\partial_z-\kappa\partial_y)u_{\neq}^3\|_{X_3}^2+\|\partial_x^2u_{\neq}^3\|_{X_3}^2\big)\\ &\leq C\ve_{0}^2E_6+C\nu^{\frac{2}{3}}\ve_{0}^2\big(\| u(1)\|_{H^2}^2+\nu^{-\frac{2}{3}}E_6
+\nu^{-1}\|e^{3\nu^{1/3}t}\nabla(g_3)_{\neq}\|_{L^2L^2}^2\big)\\
&\leq C\ve_{0}^2(E_6+\| u(1)\|_{H^2}^2)
+C\nu^{-\frac{1}{3}}\|e^{3\nu^{1/3}t}\nabla(g_3)_{\neq}\|_{L^2L^2}^2.
\end{align*}
Then we deduce from \eqref{u2e2} that
\begin{align}\label{u2e3}
\|\Delta W^{2,1}\|_{X_3}^2\leq& C\Big(\| u(1)\|_{H^2}^2+\nu^{-1}\|e^{3\nu^{1/3}t}\nabla{G_2}\|_{L^2L^2}^2+\ve_{0}^2E_6\\ \nonumber&\quad+\nu^{-\frac{1}{3}}\|e^{3\nu^{1/3}t}\nabla(g_3)_{\neq}\|_{L^2L^2}^2\Big).
\end{align}

{\bf Step 3.}\, Now we estimate $u_{\neq}^j\ (j=2,3).$ Recall that
\beno
\cL u_{\neq}^j+(g_j)_{\neq}=-\partial_j p^{L1}=-\partial_j p^{L(1)}-\partial_j p^{L(2)}.
\eeno
Using the facts that
\beno
\|\partial_x^2 f\|_{L^2}+\| \partial_x(\partial_z-\kappa\partial_y) f\|_{L^2}\leq C(1+\| \kappa\|_{L^{\infty}})\|\partial_x\nabla f\|_{L^2}\leq C\|\partial_x\nabla f\|_{L^2}
\eeno
and $\partial_xf_{\neq}=\partial_xf, $, we deduce from Proposition \ref{prop:decay-L-1}  that
\begin{align*}
&\|\partial_x^2u_{\neq}^j\|_{X_3}^2+\|\partial_x(\partial_z-\kappa\partial_y)u_{\neq}^j\|_{X_3}^2\leq C\Big(\|u_{\neq}^j(1)\|_{H^2}^2+\|e^{3\nu^{1/3}t}\Delta \partial_j p^{L(1)}\|_{L^2L^2}^2\\&\quad+\nu^{-\frac{1}{3}}\|e^{3\nu^{1/3}t} \partial_x\nabla \partial_j p^{L(2)}\|_{L^2L^2}^2+\nu^{-1}\|e^{3\nu^{1/3}t} \partial_xg_j\|_{L^2L^2}^2\Big).
\end{align*}

As $\|\Delta \partial_j p^{L(1)}\|_{L^2}\leq \|\Delta p^{L(1)}\|_{H^1} \leq C\big(1+\|\partial_y\bu^1\|_{H^2}\big)\|\partial_xW^{2,1}\|_{H^1}\leq C\|\nabla\partial_xW^{2,1}\|_{L^2},$ we get
\begin{align*}
&\|e^{3\nu^{1/3}t}\Delta \partial_j p^{L(1)}\|_{L^2L^2}^2\leq C\|e^{3\nu^{1/3}t}\nabla\partial_xW^{2,1} \|_{L^2L^2}^2\leq C\|\Delta W^{2,1} \|_{X_3}^2.
\end{align*}
As $\|\partial_x\nabla \partial_j p^{L(2)}\|_{L^2}\leq \|\partial_x\Delta p^{L(2)}\|_{L^2}=2\nu\|\partial_yV\partial_x^2f_2\|_{L^2}\leq C\nu\|\partial_x^2f_2\|_{L^2}, $ we get
\begin{align*}
&\|e^{3\nu^{1/3}t}\partial_x\nabla \partial_j p^{L(2)}\|_{L^2L^2}^2\leq C\nu^2\|e^{3\nu^{1/3}t}\partial_x^2f_2 \|_{L^2L^2}^2\leq C\nu^{\frac{5}{3}}\|\partial_x^2f_2 \|_{X_3}^2.
\end{align*}
Then we infer that
\begin{align*}
\|\partial_x^2u_{\neq}^j\|_{X_3}^2+\|\partial_x(\partial_z-\kappa\partial_y)u_{\neq}^j\|_{X_3}^2\leq& C\Big(\|u(1)\|_{H^2}^2+\|\Delta W^{2,1} \|_{X_3}^2\\ &+\nu^{\frac{4}{3}}\|\partial_x^2f_2 \|_{X_3}^2+\nu^{-1}\|e^{3\nu^{1/3}t} \partial_xg_j\|_{L^2L^2}^2\Big).
\end{align*}

{\bf Step 4.}  As $\|\partial_x\nabla W^2\|_{X_3}\leq \nu\|\partial_x\nabla f_2\|_{X_3}+\|\partial_x\nabla W^{2,1}\|_{X_3}\leq \nu\|\partial_x\nabla f_2\|_{X_3}+\|\Delta W^{2,1}\|_{X_3}$, we infer that
\begin{align}\label{M6}
E_6\leq& C\Big(\|u(1)\|_{H^2}^2+\|\Delta W^{2,1} \|_{X_3}^2+\nu^2\|\partial_x\nabla f_2\|_{X_3}^2\\ \nonumber&+\nu^{\frac{4}{3}}\|\partial_x^2f_2 \|_{X_3}^2+\nu^{-1}\|e^{3\nu^{1/3}t} (\partial_xg_2,\partial_xg_3)\|_{L^2L^2}^2\Big).
\end{align}
Using \eqref{M6}, \eqref{u2e3} and Lemma \ref{lem:f2}, we obtain\begin{align*}
E_6\leq& C\Big(\|u(1)\|_{H^2}^2+\nu^{-1}\|e^{3\nu^{1/3}t}\nabla G_2\|_{L^2L^2}^2+\ve_{0}^2E_6\\ &+\nu^{-\frac{1}{3}}\|e^{3\nu^{1/3}t}\nabla(g_3)_{\neq}\|_{L^2L^2}^2+\nu^{-1}\|e^{3\nu^{1/3}t} (\partial_xg_2,\partial_xg_3)\|_{L^2L^2}^2\Big).
\end{align*}
Taking $ \ve_{0}$ small enough so that $C\ve_{0}^2\leq 1/2$, we get \begin{align}\label{M6e1}
E_6\leq& C\Big(\|u(1)\|_{H^2}^2+\nu^{-1}\|e^{3\nu^{1/3}t}\nabla{G}_2\|_{L^2L^2}^2\\ \nonumber&+\nu^{-\frac{1}{3}}\|e^{3\nu^{1/3}t}\nabla(g_3)_{\neq}\|_{L^2L^2}^2+\nu^{-1}\|e^{3\nu^{1/3}t} (\partial_xg_2,\partial_xg_3)\|_{L^2L^2}^2\Big).
\end{align}
As $G_2=(g_{2}+\kappa g_3)_{\neq},$ we have
\begin{align}\label{g2e1}
&\|e^{3\nu^{1/3}t} \partial_xg_2\|_{L^2L^2}^2\leq C\big(\|e^{3\nu^{1/3}t}  \nabla {G}_2\|_{L^2L^2}^2+\|e^{3\nu^{1/3}t}\partial_xg_3\|_{L^2L^2}^2\big).
\end{align}
By \eqref{u3e1} and Lemma \ref{lem:f1}, we have
\begin{align*}
\|\Delta u_{\neq}^3\|_{X_3}^2\leq& C\big(\|\Delta u_{\neq}^3-2\partial_xf_1\|_{X_3}^2+\|\partial_xf_1\|_{X_3}^2\big)\\ \leq& C\big(\| u(1)\|_{H^2}^2+\nu^{-\frac{2}{3}}E_6
+\nu^{-1}\|e^{3\nu^{1/3}t}\nabla(g_3)_{\neq}\|_{L^2L^2}^2\big)+C\|\partial_x^2f_1\|_{X_3}^2\\ \leq& C\big(\|u(1)\|_{H^2}^2+\nu^{-\frac{4}{3}}E_6
+\nu^{-1}\|e^{3\nu^{1/3}t}\nabla(g_3)_{\neq}\|_{L^2L^2}^2\big),
\end{align*}
which along with \eqref{M6e1} and  \eqref{g2e1}  gives
\begin{align}\label{eq:E6}
E_6+\nu^{\frac{4}{3}}\|\Delta u_{\neq}^3\|_{X_3}^2\leq& C\big(\| u(1)\|_{H^2}^2+E_6
+\|e^{3\nu^{1/3}t}\nabla(g_3)_{\neq}\|_{L^2L^2}^2\big)\\ \leq& C\Big(\|u(1)\|_{H^2}^2+\nu^{-1}\|e^{3\nu^{1/3}t}  \nabla{G_2}\|_{L^2L^2}^2\nonumber\\&+\nu^{-1}\|e^{3\nu^{1/3}t}\partial_xg_3\|_{L^2L^2}^2+\nu^{-\frac{1}{3}}\|e^{3\nu^{1/3}t} \nabla (g_3)_{\neq}\|_{L^2L^2}^2\Big).\nonumber
\end{align}

{\bf Step 5.}  We denote
\begin{align*}
I_k=\|e^{3\nu^{1/3}t}  \nabla G_{2,k}\|_{L^2L^2}^2+\|e^{3\nu^{1/3}t}  \partial_xg_{3,k}\|_{L^2L^2}^2+\nu^{\frac{2}{3}}\|e^{3\nu^{1/3}t}  \nabla(g_{3,k})_{\neq}\|_{L^2L^2}^2
\end{align*}
for $k=1,\cdots 6$. We have $I_4=0.$ By Lemma \ref{lemg2g3M6}, we have
\begin{align*}
&I_1\leq C\nu^{-1}E_2^2\big(E_6+\nu^{\frac{4}{3}}\|\Delta u_{\neq}^3\|_{X_3}^2\big).
\end{align*}
Notice that
\begin{align*}
&\|\nabla G_{2,k}\|_{L^2}+\|\partial_xg_{3,k}\|_{L^2}+\nu^{\frac{2}{3}}\|  \nabla(g_{3,k})_{\neq}\|_{L^2}\\
&\leq  \|(g_{2,k})_{\neq}\|_{H^1}+C\|\kappa\|_{H^3}\|(g_{3,k})_{\neq}\|_{H^1}+C\|(g_{3,k})_{\neq}\|_{H^1}\\
&\leq C\big(\|(g_{2,k})_{\neq}\|_{H^1}+\|(g_{3,k})_{\neq}\|_{H^1}\big)\leq C\big(\|\nabla(g_{2,k})_{\neq}\|_{L^2}+\|  \nabla(g_{3,k})_{\neq}\|_{L^2}\big),
\end{align*}
which gives
\begin{align*}&I_k\leq C\big(\|e^{3\nu^{1/3}t}  \nabla(g_{2,k})_{\neq}\|_{L^2L^2}^2+\|e^{3\nu^{1/3}t}  \nabla(g_{3,k})_{\neq}\|_{L^2L^2}^2\big).
\end{align*}
Then by Lemma \ref{lemg2g3M6}, we have $ I_2\leq C\nu^{-1}E_2^2E_6,$ and by \eqref{gj5},  we have $ I_5\leq C\nu^{-1}E_2^2E_6,$ and by \eqref{gj6}, we have $I_6\leq C\nu^{-1}E_3^4$. Now we estimate $I_3.$ Notice that
\begin{align*}
\|\kappa(t)\|_{H^1}\leq& C\|\bu^1(t)\|_{H^2}\leq C\|\bu^1(1)\|_{H^2}+C\int_1^t\|\partial_t\bu^1(s)\|_{H^2}ds\\
\leq& CE_1\nu+C\int_1^tE_1\nu ds=CtE_1\nu\leq Ct\nu,
\end{align*}
and then $\|\kappa(t)\|_{H^2}\leq\|\kappa(t)\|_{H^1}^{\frac{1}{2}}\|\kappa(t)\|_{H^3}^{\frac{1}{2}}\leq C(t\nu)^{\frac{1}{2}}$. Thus,
\begin{align*}
\|\nabla{G}_{2,3}\|_{L^2}\leq& \|\nabla{g}_{2,3}\|_{L^2}+\|\kappa (g_{3,3})_{\neq}\|_{H^1}\leq \|\nabla{g}_{2,3}\|_{L^2}+C\|\kappa\|_{H^2}\| (g_{3,3})_{\neq}\|_{H^1}\\ \leq& \|\nabla{g}_{2,3}\|_{L^2}+C(t\nu)^{\frac{1}{2}}\| \nabla(g_{3,3})\|_{L^2},
\end{align*}
from which, Lemma \ref{lemg2g3M3} and the fact that $(t\nu)^{\frac{1}{2}}e^{3\nu^{1/3}t}\leq C\nu^{\frac{1}{3}}e^{4\nu^{1/3}t},$ we infer that
\begin{align*}
I_3\leq&C\big(\|e^{3\nu^{1/3}t}  \nabla{g}_{2,3}\|_{L^2L^2}^2+\|(t\nu)^{\frac{1}{2}}e^{3\nu^{1/3}t} \nabla( g_{3,3})\|_{L^2L^2}^2\big)\\&+\|e^{3\nu^{1/3}t}  \partial_xg_{3,3}\|_{L^2L^2}^2+\nu^{\frac{2}{3}}\|e^{3\nu^{1/3}t}  \nabla(g_{3,3})_{\neq}\|_{L^2L^2}^2\\ \leq & C\big(\|e^{4\nu^{1/3}t} \nabla{g}_{2,3}\|_{L^2L^2}^2+\nu^{\frac{2}{3}}\|e^{4\nu^{1/3}t} \nabla g_{3,3}\|_{L^2L^2}^2+\|e^{4\nu^{1/3}t} \partial_xg_{3,3}\|_{L^2L^2}^2\big)\leq C\nu^{-1}E_3^4.
\end{align*}

Summing up, we conclude that
\begin{align*}
&\|e^{3\nu^{1/3}t}  \nabla G_{2}\|_{L^2L^2}^2+\|e^{3\nu^{1/3}t}  \partial_xg_{3}\|_{L^2L^2}^2+\nu^{\frac{2}{3}}\|e^{3\nu^{1/3}t}  \nabla(g_{3})_{\neq}\|_{L^2L^2}^2\\&\leq C\nu^{-1}E_2^2\big(E_6+\nu^{\frac{4}{3}}\|\Delta u_{\neq}^3\|_{X_3}^2\big)+C\nu^{-1}E_3^4.
\end{align*}
from which and \eqref{eq:E6}, we infer that
\begin{align*}
E_6+\nu^{\frac{4}{3}}\|\Delta u_{\neq}^3\|_{X_3}^2 \leq C\|u(1)\|_{H^2}^2+ C\nu^{-2}E_2^2\big(E_6+\nu^{\frac{4}{3}}\|\Delta u_{\neq}^3\|_{X_3}^2\big)+C\nu^{-2}E_3^4.
\end{align*}
Due to ${E_2}< \ve_{0}\nu$, we get
 \begin{align*}
E_6+\nu^{\frac{4}{3}}\|\Delta u_{\neq}^3\|_{X_3}^2 \leq 2C\|u(1)\|_{H^2}^2+2C\nu^{-2}E_3^4
\end{align*}
 by taking $\ve_{0}$ small enough.

Finally, as $\partial_x^2u^j=\partial_x^2u_{\neq}^j $, we have \begin{align*}
E_5^2\leq 2E_6\leq C\big(\|u(1)\|_{H^2}^2+\nu^{-2}E_3^4\big).
\end{align*}

This completes the proof.\end{proof}

\subsection{Estimate of $E_3$}
Recall that for $j=2,3$,
\begin{align*}
\cL_0 u_{\neq}^j+\partial_j p^{L}+\partial_j p^{(1)}+\bu^1\partial_xu_{\neq}^j+(g_j)_{\neq}=0,\quad \Delta p^{L}=-2\partial_x{u}^2.
\end{align*}
Thanks to $\Delta\big(\cL_0u_{\neq}^2+\partial_j p^{L}\big)=\cL_0 (\Delta u_{\neq}^2),$ we obtain
\begin{align*}
&\cL_0\Delta u_{\neq}^2+\Delta\big(\partial_y p^{(1)}+\bu^1\partial_xu_{\neq}^2+(g_2)_{\neq}\big)=0,\\
&\cL_0u_{\neq}^3+\partial_z p^{L}+\partial_z p^{(1)}+\bu^1\partial_xu_{\neq}^3+(g_3)_{\neq}=0.
\end{align*}
As $\partial_z p^{L}=-2\partial_x\partial_z\Delta^{-1}u_{\neq}^2, $
we infer from Proposition \ref{prop:decay-L0-2} that
\begin{align*}
&\|\Delta u_{\neq}^2\|_{X_2}^2+\|(\partial_x^2+\partial_z^2)u_{\neq}^3\|_{X_2}^2\leq C\Big(\|\Delta u_{\neq}^2(1)\|_{L^2}^2+\|u_{\neq}^3(1)\|_{H^2}^2\\&\quad+\nu^{-1}\|e^{2\nu^{1/3}t} \nabla (\partial_y p^{(1)}+\bu^1\partial_xu_{\neq}^2+(g_2)_{\neq})\|_{L^2L^2}^2\\&\quad+\nu^{-1}\|e^{2\nu^{1/3}t} (\partial_x,\partial_z) (\partial_z p^{(1)}+\bu^1\partial_xu_{\neq}^3+(g_3)_{\neq})\|_{L^2L^2}^2\Big)\\
&\leq C\Big(\|u(1)\|_{H^2}^2+\nu^{-1}\|e^{2\nu^{1/3}t} \nabla (\bu^1\partial_xu_{\neq}^2+(g_2)_{\neq})\|_{L^2L^2}^2+\nu^{-1}\|e^{2\nu^{1/3}t} \Delta p^{(1)}\|_{L^2L^2}^2\\&\quad+\nu^{-1}\|e^{2\nu^{1/3}t} (\partial_x,\partial_z) (\bu^1\partial_xu_{\neq}^3+(g_3)_{\neq})\|_{L^2L^2}^2\Big).
\end{align*}
As $g_j=\sum\limits_{k=1}^6g_{j,k},\ (g_{j,4})_{\neq}=0,$ by Lemma \ref{lemg2g3M6}, Lemma \ref{lemg2g3M3} and  Lemma \ref{lemM3M5}, we obtain
\begin{align*}
&\|\Delta u_{\neq}^2\|_{X_2}^2+\|(\partial_x^2+\partial_z^2)u_{\neq}^3\|_{X_2}^2\\
&\leq  C\big(\|u(1)\|_{H^2}^2+\nu^{-2}E_2^2E_3^2+E_1^2E_3E_5+\nu^{-2}E_2^2E_6+\nu^{-2}E_3^4\big).
\end{align*}
Due to $E_2< \ve_{0}\nu {,\ E_3< \ve_{0}\nu}$ and $E_1< \ve_{0},$ by Proposition \ref{prop2}, we have
\begin{align*}
E_3^2\leq& C\big(\|\Delta u_{\neq}^2\|_{X_2}^2+\|(\partial_x^2+\partial_z^2)u_{\neq}^3\|_{X_2}^2+\nu^{\frac{4}{3}}\|\Delta u_{\neq}^3\|_{X_3}^2\big)\\ \leq&  C\big(\|u(1)\|_{H^2}^2+\nu^{-2}E_2^2E_3^2+E_1^2E_3E_5+{E_6+}\nu^{\frac{4}{3}}\|\Delta u_{\neq}^3\|_{X_3}^2+\nu^{-2}E_3^4\big)\\ \leq&  C(\|u(1)\|_{H^2}^2+\nu^{-2}E_2^2E_3^2+E_1^4E_3^2+E_5^2+\nu^{-2}E_3^4)\\ \leq&  C(\|u(1)\|_{H^2}^2+\nu^{-2}E_2^2E_3^2+E_1^4E_3^2+\nu^{-2}E_3^4) {\leq C\big(\|u(1)\|_{H^2}^2+\ve_0^2E_3^2\big)}.
\end{align*}
This shows that
\ben
E_3\le C\|u(1)\|_{H^2}.
\een

\subsection{Estimate of $E_4$}

As $\|(\partial_x,\partial_z)f_{\neq}\|_{H^3}\leq C\|(\partial_x^4,\partial_z^4,\partial_x\partial_y^3,\partial_z\partial_y^3)f_{\neq}\|_{L^2}, $ we have
\beno
\|e^{a\nu^{1/3}t}(\partial_x,\partial_z)f_{\neq}\|_{Y_0^3}^2\leq C\|(\partial_x^4,\partial_z^4,\partial_x\partial_y^3,\partial_z\partial_y^3)f_{\neq}\|_{X_a}^2.
\eeno
Notice that
\begin{align*}
&\cL_0\partial_x^4f_{\neq}=\partial_x^4\cL_0f_{\neq},\quad \cL_0\partial_z^4f_{\neq}=\partial_z^4\cL_0f_{\neq},\quad\cL_0\partial_yf_{\neq}=\partial_y\cL_0f_{\neq}-\partial_xf_{\neq},\\ &\cL_0\partial_y^2f_{\neq}=\partial_y^2\cL_0f_{\neq}-2\partial_x\partial_yf_{\neq},\quad\cL_0\partial_y^3f_{\neq}=\partial_y^3\cL_0f_{\neq}-3\partial_x\partial_y^2f_{\neq},\\ &\cL_0\partial_x\partial_y^3f_{\neq}=\partial_x\partial_y^3\cL_0f_{\neq}-3\partial_x^2\partial_y^2f_{\neq},\quad \cL_0\partial_z\partial_y^3f_{\neq}=\partial_z\partial_y^3\cL_0f_{\neq}-3\partial_x\partial_z\partial_y^2f_{\neq}.\end{align*}
It follows from Proposition \ref{prop:decay-L0} that
for $a\in[0,4],$
\begin{align*}
&\|(\partial_x^4,\partial_z^4,\partial_x\partial_y^3,\partial_z\partial_y^3)f_{\neq}\|_{X_a}^2\leq C\Big(\|f(1)\|_{H^4}^2+\nu^{-1}\|e^{a\nu^{1/3}t}\nabla\Delta\cL_0f_{\neq}\|_{L^2L^2}^2
\\&\quad+\nu^{-1}\|e^{a\nu^{1/3}t}\partial_x\partial_y(\partial_x,\partial_z)f_{\neq}\|_{L^2L^2}^2\Big)\\&\leq C\big(\|f(1)\|_{H^4}^2+\nu^{-2}\|e^{a\nu^{1/3}t}\Delta\cL_0f_{\neq}\|_{Y_0}^2
+\nu^{-1}\|e^{a\nu^{1/3}t}\nabla\partial_x(\partial_x,\partial_z)f_{\neq}\|_{L^2L^2}^2\big).
\end{align*}
Recall that
\begin{align*}
&\cL_0 u+\mathbb{P}\left(\begin{array}{l}u^2\\0\\0\end{array}\right)+\mathbb{P}(u\cdot\nabla u)=0.
\end{align*}
Then we infer that
\begin{align*} &E_4\leq C\|(\partial_x^4,\partial_z^4,\partial_x\partial_y^3,\partial_z\partial_y^3)u_{\neq}\|_{X_2}\leq C\Big(\|u(1)\|_{H^4}+\nu^{-1}\|e^{2\nu^{1/3}t}\mathbb{P}( \Delta u_{\neq}^2,0,0)^T\|_{Y_0}\\&\qquad+\nu^{-1}\|e^{2\nu^{1/3}t}\mathbb{P}\Delta (u\cdot\nabla u)_{\neq}\|_{Y_0}
+\nu^{-\frac{1}{2}}\|e^{2\nu^{1/3}t}\nabla\partial_x(\partial_x,\partial_z)u_{\neq}\|_{L^2L^2}\Big).
\end{align*}
As $\mathbb{P}\nabla=\nabla\mathbb{P},\ \mathbb{P}\Delta=\Delta\mathbb{P},\  \|\mathbb{P}f\|_{L^2}\leq\|f\|_{L^2}, $ we have
\begin{align*}
&\|e^{2\nu^{1/3}t}\mathbb{P}( \Delta u_{\neq}^2,0,0)^T\|_{Y_0}\leq \|e^{2\nu^{1/3}t}\Delta u_{\neq}^2\|_{Y_0} \leq C\|\Delta u_{\neq}^2\|_{X_2}\leq CE_3,\\
&\|e^{2\nu^{1/3}t}\mathbb{P}\Delta ( u\cdot\nabla u)_{\neq}\|_{Y_0}\leq \|e^{2\nu^{1/3}t}\Delta ( u\cdot\nabla u)_{\neq}\|_{Y_0}\leq \|e^{2\nu^{1/3}t}\Delta (u\cdot\nabla u)_{\neq}\|_{Y_0^2}, \end{align*}
and by Lemma \ref{lemuM3},
\beno
\|e^{2\nu^{1/3}t}\nabla\partial_x(\partial_x,\partial_z)u_{\neq}\|_{L^2L^2}\leq C\nu^{-\frac{1}{2}}E_3.
\eeno
Thus, we get
\begin{align}\label{M4e1}
&E_4\leq C\big(\|u(1)\|_{H^4}+\nu^{-1}E_3+\nu^{-1}\|e^{2\nu^{1/3}t} (u\cdot\nabla u)_{\neq}\|_{Y_0^2}\big).
\end{align}
As $(u\cdot\nabla u)_{\neq}=(u_{\neq}\cdot\nabla u_{\neq})_{\neq}+u_{\neq}\cdot\nabla \bu+\bu\cdot\nabla u_{\neq}, $ by Lemma \ref{lemM4M3M1}  {and Lemma \ref{lem:HH-Y0}}, we have
\begin{align*}
&\| e^{2\nu^{1/3}t}(u\cdot\nabla u)_{\neq}\|_{Y_0^2}\leq \| e^{2\nu^{1/3}t}u_{\neq}\cdot\nabla u_{\neq}\|_{Y_0^2}+\|e^{2\nu^{1/3}t}  (u_{\neq}\cdot\nabla \bu)\|_{Y_0^2}\\&\quad+\|e^{2\nu^{1/3}t}  (\bu\cdot\nabla u_{\neq})\|_{Y_0^2}\leq C\big(E_4(E_3+E_1\nu+E_2)+E_1E_3+E_1E_5+E_3E_2/\nu\big).
\end{align*}
Then we conclude that
\begin{align*}
E_4\leq& C\big(\|u(1)\|_{H^4}+\nu^{-1}E_3+E_4((E_3+E_2)/\nu+E_1)\\&+E_1(E_3+E_5)/\nu+E_3E_2/\nu^2\big).
\end{align*}
By taking $\ve_0$ small enough so that $C((E_3+E_2)/\nu+E_1)<1/2$, we deduce that
\ben
E_4\leq C\big(\|u(1)\|_{H^4}+\nu^{-1}E_3+\nu^{-1}E_5\big).
\een

\section*{Acknowledgement}
Z. Zhang is partially supported by NSF of China under Grant 11425103.

\end{document}